\documentclass[a4paper]{amsart}
\usepackage{wrapfig}
\usepackage[dvips]{graphicx}
\usepackage{amsmath,amsthm,amssymb,amscd}
\usepackage{mathrsfs}
\theoremstyle{definition}
\newtheorem{thm}{Theorem}[section]
\newtheorem{Def}[thm]{Definition}
\newtheorem{pro}[thm]{Proposition}
\newtheorem{cor}[thm]{Corollary}
\newtheorem{lem}[thm]{Lemma}

\newtheorem{rem}[thm]{Remark}
\newtheorem{que}[thm]{Question}
\newtheorem*{mainthm}{Theorem}
\theoremstyle{definition}

\begin{document}
\title{Trace scaling automorphisms of the stabilized Razak-Jacelon algebra}
\author{Norio Nawata}
\address{Department of Educational Collaboration, Osaka Kyoiku University, 4-698-1 Asahigaoka, Kashiwara, Osaka, 582-8582, Japan}
\email{nawata@cc.osaka-kyoiku.ac.jp}
\keywords{Stably projectionless C$^*$-algebra; Trace scaling automorphism; Rohlin property; Kirchberg's central sequence C$^*$-algebra}
\subjclass[2010]{Primary 46L40, Secondary 46L35; 46L55}
\thanks{This work was supported by JSPS KAKENHI Grant Number 16K17614}

\begin{abstract}
We classify trace scaling automorphisms of $\mathcal{W}\otimes\mathbb{K}$ up to outer conjugacy, where 
$\mathcal{W}$ is a certain simple separable nuclear stably projectionless C$^*$-algebra having trivial $K$-groups. 
Also, we show that all automorphisms of $\mathcal{W}$ with the Rohlin property are outer conjugate to each other. 
Moreover, we show that the central sequence C$^*$-algebra $F(\mathcal{W})$ of $\mathcal{W}$ is infinite, which answers 
a question of Kirchberg. 
\end{abstract}
\maketitle

\section{Introduction} 

Let $\mathcal{W}$ be the Razak-Jacelon algebra studied in \cite{J}, which is a certain simple separable nuclear stably projectionless C$^*$-algebra having trivial $K$-groups
and a unique tracial state and no unbounded traces. We may regard $\mathcal{W}$ as a stably finite analogue of the Cuntz algebra $\mathcal{O}_2$. 
Note that a C$^*$-algebra $A$ is said to be \textit{stably projectionless} if $A\otimes\mathbb{K}$ has no non-zero projections, 
where $\mathbb{K}$ is the C$^*$-algebra of compact operators on an infinite-dimensional separable Hilbert space. In particular, 
every stably projectionless C$^*$-algebra is non-unital. We refer the reader to \cite{EN}, \cite{GL} and \cite{GL2} for remarkable progress in the 
classification of such C$^*$-algebras. 

In this paper, we study trace scaling automorphisms of $\mathcal{W}\otimes\mathbb{K}$ and show that these automorphisms are outer conjugate if and only if the scaling factors 
coincide. This classification can be regarded as an analogous result of Connes' classification \cite{C2} of trace scaling automorphisms of the AFD factor of type II$_{\infty}$. 
In the case of C$^*$-algebras, Elliott, Evans and Kishimoto \cite{EEK} classified trace scaling automorphisms of stable UHF algebras. Moreover, Evans and Kishimoto \cite{EK} classified 
trace scaling automorphisms of stable AF algebras with totally ordered $K_0$-groups. (See also \cite{BK}.) 
More generally, the study of group actions on operator algebras is one of the most fundamental subjects and has a long history in the theory of operator algebras. 
We refer the reader to \cite{I} and the references given there for this subject. 
We recall some other classification results of automorphisms of C$^*$-algebras. 
Kishimoto \cite{Kis0} showed that if $\alpha$ and $\beta$ are automorphisms of a UHF algebra such that $\alpha^m$ and $\beta^{m}$ are 
strongly outer for any $m\in\mathbb{Z}\setminus\{0\}$, then $\alpha$ and $\beta$ are outer conjugate. 
Moreover, Kishimoto  classified a large class of automorphisms of certain A$\mathbb{T}$ algebras in \cite{Kis1} and \cite{Kis2}. 
Matui \cite{M1} generalized this result to certain simple AH algebras. Nakamura \cite{Nak} completely classified aperiodic automorphisms of Kirchberg algebras. 
Sato \cite{Sa} showed that if $\alpha$ and $\beta$ are automorphisms of the Jiang-Su algebra $\mathcal{Z}$ such that $\alpha^m$ and $\beta^{m}$ are 
strongly outer for any $m\in\mathbb{Z}\setminus\{0\}$, then $\alpha$ and $\beta$ are outer conjugate. 
Note that it is important to consider the Rohlin property or variants thereof for classifying automorphisms of operator algebras. 

If $A$ is stably projectionless, then the central sequence C$^*$-algebra $A_{\omega}$ of $A$ is also stably projectionless. 
Hence $A_{\omega}$ is not very useful for our purpose. 
In this paper, Kirchberg's central sequence C$^*$-algebra \cite{Kir2} plays a central role. 
Kirchberg's central sequence C$^*$-algebra $F(A)$ is defined as the quotient C$^*$-algebra of $A_{\omega}$ by the annihilator of $A$. 

This paper is organized as follows:  In Section \ref{sec:Pre}, we collect notations and some results.  
In Section \ref{sec:stable}, we review some results in \cite{EN} and reformulate for our purpose. Note that our arguments are essentially based on 
Elliott and Niu's arguments. 
In Section \ref{sec:Properties}, we study properties of $F(\mathcal{W})$. 
We show that $F(\mathcal{W})$ has many projections (Proposition \ref{pro:key-pro}). This is an easy corollary of Razak's classification theorem \cite{Raz} and 
Matui and Sato's result in \cite{MS2}. But this property enables us to deal with $F(\mathcal{W})$ like a C$^*$-algebra of real rank zero. 
In Section \ref{sec:Homotopy}, we obtain a homotopy type theorem for unitaries in $F(\mathcal{W})$ by classifying certain unitaries in $F(\mathcal{W})$ up to unitary equivalence 
(Theorem \ref{thm:homotopy}, Theorem \ref{thm:unitary-equivalence}). Also, we classify certain projections in $F(\mathcal{W})$ up to unitary equivalence and show that 
the unit $1$ in $F(\mathcal{W})$ is infinite (Theorem \ref{thm:unitary-equivalence-projections}, Corollary \ref{cor:Kirchberg-question}). This is an answer to \cite[Question 2.14]{Kir2}. 
Some arguments in this section are motivated by arguments in \cite[Section 4]{M2} (see also \cite{L}). 
In Section \ref{sec:Rohlin}, we introduce the Rohlin property for automorphisms of separable C$^*$-algebras and show that 
every trace scaling automorphism of $\mathcal{W}\otimes\mathbb{K}$ has the Rohlin property (Theorem \ref{thm:rohlin-type-scale}). 
Moreover, we show that if $\alpha$ is an automorphism of $\mathcal{W}$ such that $\alpha^m$ is strongly outer for any $m\in\mathbb{Z}\setminus\{0\}$, 
then $\alpha$ has the Rohlin property (Theorem \ref{thm:rohlin-type}). 
In Section \ref{sec:Outer}, we obtain a classification theorem (Theorem \ref{thm:main-i}) of trace scaling automorphisms of $\mathcal{W}\otimes\mathbb{K}$ by using the 
Bratteli-Elliott-Evans-Kishimoto intertwining argument. 
By the uniqueness of traces on $\mathcal{W}\otimes\mathbb{K}$, for any automorphism $\alpha$ of $\mathcal{W}\otimes\mathbb{K}$, 
there exists a positive real number $\lambda (\alpha)$ such that $\tau\otimes\mathrm{Tr} \circ \alpha =\lambda(\alpha) \tau\otimes\mathrm{Tr}$. 
We say that $\alpha$ is a \textit{trace scaling automorphism} if $\lambda (\alpha)\neq 1$. The following theorem is the main result in this paper. 

\begin{mainthm} \ \\
Let $\alpha$ and $\beta$ be trace scaling automorphisms of $\mathcal{W}\otimes\mathbb{K}$. Then $\alpha$ and $\beta$ are outer conjugate if and only if 
$\lambda (\alpha)=\lambda (\beta)$. 
\end{mainthm}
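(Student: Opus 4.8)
The plan is to separate the easy ``only if'' direction, which is just an invariance computation, from the substantive ``if'' direction, which is carried out via the Bratteli--Elliott--Evans--Kishimoto intertwining argument with the Rohlin property as the driving mechanism. For ``only if'', recall that $\lambda$ is multiplicative on $\mathrm{Aut}(\mathcal{W}\otimes\mathbb{K})$ and that $\lambda(\mathrm{Ad}(u))=1$ for every unitary $u$ in the multiplier algebra $M(\mathcal{W}\otimes\mathbb{K})$, since $\mathrm{Ad}(u)$ preserves the densely defined trace $\tau\otimes\mathrm{Tr}$. Hence, if $\gamma\alpha\gamma^{-1}=\mathrm{Ad}(u)\circ\beta$ with $\gamma\in\mathrm{Aut}(\mathcal{W}\otimes\mathbb{K})$ and $u\in M(\mathcal{W}\otimes\mathbb{K})$ unitary, then $\lambda(\beta)=\lambda(\mathrm{Ad}(u)\circ\beta)=\lambda(\gamma\alpha\gamma^{-1})=\lambda(\gamma)\lambda(\alpha)\lambda(\gamma)^{-1}=\lambda(\alpha)$.

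For ``if'', set $\lambda:=\lambda(\alpha)=\lambda(\beta)$. Since outer conjugacy of a single automorphism is the same as cocycle conjugacy of the associated $\mathbb{Z}$-action, it suffices to show that $\alpha$ and $\beta$ are cocycle conjugate, and I would do this through a two-sided approximate intertwining. Concretely one builds increasing finite subsets $F_1\subset F_2\subset\cdots$ of $\mathcal{W}\otimes\mathbb{K}$ with dense union, a summable sequence of tolerances, and unitaries $u_n,v_n$ in $M(\mathcal{W}\otimes\mathbb{K})$ (or in a unitization of $\mathcal{W}\otimes\mathbb{K}$) so that the cocycle perturbations $\alpha_n:=\mathrm{Ad}(u_nu_{n-1}\cdots u_1)\,\alpha\,\mathrm{Ad}(u_nu_{n-1}\cdots u_1)^{*}$ and $\beta_n:=\mathrm{Ad}(v_nv_{n-1}\cdots v_1)\,\beta\,\mathrm{Ad}(v_nv_{n-1}\cdots v_1)^{*}$ agree on $F_n$ to within the $n$-th tolerance while $u_{n+1},v_{n+1}$ are close to $1$ on the sets already controlled; the pointwise limits $\gamma=\lim_n\mathrm{Ad}(u_nu_{n-1}\cdots u_1)$ and $\delta=\lim_n\mathrm{Ad}(v_nv_{n-1}\cdots v_1)$ then exist in $\mathrm{Aut}(\mathcal{W}\otimes\mathbb{K})$, and $\gamma\alpha\gamma^{-1}$ differs from $\delta\beta\delta^{-1}$ by an inner automorphism, which yields the outer conjugacy of $\alpha$ and $\beta$.

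Two inputs make this construction possible. First, every $\alpha_n$ and $\beta_n$ has the Rohlin property, because $\alpha$ and $\beta$ do (Theorem \ref{thm:rohlin-type-scale}) and the Rohlin property of an automorphism depends only on the automorphism it induces on Kirchberg's central sequence C$^*$-algebra $F(\mathcal{W}\otimes\mathbb{K})$, hence is invariant under cocycle conjugacy (indeed under conjugacy). Second, because $\lambda(\alpha_n)=\lambda(\alpha)=\lambda=\lambda(\beta)=\lambda(\beta_n)$, every discrepancy automorphism $\beta_n\circ\alpha_n^{-1}$ is trace preserving; since $\mathcal{W}\otimes\mathbb{K}$ has trivial $K$-theory and a unique trace up to scaling, such an automorphism fixes all the classifying data and is therefore approximately inner by the classification results reviewed and reformulated in Section \ref{sec:stable}.

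The inductive step is the heart of the matter. Given $\alpha_n$ and $\beta_n$ agreeing approximately on $F_n$, one writes $\beta_n\circ\alpha_n^{-1}\approx\mathrm{Ad}(s)$ on the relevant finite set, chooses a Rohlin tower of large height for $\alpha_n$ inside $F(\mathcal{W}\otimes\mathbb{K})$, and forms the Evans--Kishimoto telescoping unitary built from $s$ and the Rohlin projections; conjugating by it turns the $1$-cocycle of $\beta_n\circ\alpha_n^{-1}$ into a coboundary on the prescribed enlargement of $F_n$ while keeping the new unitary close to $1$ on $F_n$, which delivers both the agreement on $F_{n+1}$ and the estimates needed for convergence. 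I expect the main obstacle to be precisely that $F(\mathcal{W})$ (equivalently $F(\mathcal{W}\otimes\mathbb{K})$) is a priori stably projectionless, so that one cannot directly appeal to real-rank-zero technology to manufacture and compare the Rohlin projections and the correcting unitaries; this is exactly why the preparatory Sections \ref{sec:Properties}--\ref{sec:Homotopy} are needed, providing that $F(\mathcal{W})$ nevertheless has many projections (Proposition \ref{pro:key-pro}), that its unit is infinite (Corollary \ref{cor:Kirchberg-question}), and the homotopy- and unitary-equivalence classification of unitaries and projections in $F(\mathcal{W})$ (Theorems \ref{thm:homotopy}, \ref{thm:unitary-equivalence}, \ref{thm:unitary-equivalence-projections}), so that the projections and unitaries arising in the telescoping can be chosen coherently and connected to the unit. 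Feeding all of this into the intertwining then produces the outer conjugacy of $\alpha$ and $\beta$, which is Theorem \ref{thm:main-i}.
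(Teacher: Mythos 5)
Your proposal follows essentially the same route as the paper: the Rohlin property for trace scaling automorphisms (Theorem \ref{thm:rohlin-type-scale}), the resulting cohomology-vanishing/stability lemma (Lemma \ref{lem:stability}, which rests on the homotopy theorem for unitaries in $F(\mathcal{W})$), approximate unitary equivalence of $\alpha$ and $\beta$ deduced from $\lambda(\alpha)=\lambda(\beta)$, and the Bratteli--Elliott--Evans--Kishimoto intertwining with the correcting unitaries assembling into strictly convergent multiplier unitaries. The one slip is attributional: the approximate unitary equivalence is supplied by Razak's classification theorem (Theorem \ref{thm:Razak}(iii)), not by the stable uniqueness material of Section \ref{sec:stable}, which enters the argument only indirectly through Theorem \ref{thm:homotopy} and hence Lemma \ref{lem:stability}.
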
 

The range of the invariant $\{\lambda(\alpha)\in\mathbb{R}_{+}^{\times}\; |\; \alpha\in \mathrm{Aut}(\mathcal{W}\otimes\mathbb{K})\}$ is equal to the fundamental group $\mathcal{F}(\mathcal{W})$ of 
$\mathcal{W}$, which is introduced in \cite{Na1} (see also \cite[Proposition 2.8]{Na2}). 
By Razak's classification theorem \cite{Raz} and Robert's classification theorem \cite{Rob}, $\mathcal{F}(\mathcal{W})$ is equal to $\mathbb{R}^\times_{+}$. 
Moreover, combining the results of Kishimoto-Kumjian \cite{KK1}, \cite{KK2}, Dean \cite{Dean} and Robert \cite{Rob}, we see that there exists a trace scaling flow on $\mathcal{W}\otimes\mathbb{K}$. 
Note that a separable C$^*$-algebra with the uncountable fundamental group must be stably projectionless (see \cite[Corollary 4.10]{Na1}). 
Hence stably projectionless C$^*$-algebras seem to be more analogous to the AFD factors of type II than (finite) stably unital C$^*$-algebras. 
We also show that if $\alpha$ and $\beta$ are automorphisms of $\mathcal{W}$ such that $\alpha^m$ and $\beta^{m}$ are 
strongly outer for any $m\in\mathbb{Z}\setminus\{0\}$, then $\alpha$ and $\beta$ are outer conjugate (Theorem \ref{thm:main-ii}). 

After the first version of this paper was on the arxiv, G\'abor Szab\'o generalized some results in this paper to all classifiable $KK$-contractible C$^*$-algebras 
(see \cite[Theorem 5.11 and Theorem 5.12]{Sza}) by using Gong and Lin's basic homotopy lemma \cite{GL}.

\section{Preliminaries}\label{sec:Pre}

In this section we shall collect notations and some results. We refer the reader to \cite{Bla} and \cite{Ped2} for basic facts of operator algebras. 

\subsection{Notation} 
We say that a C$^*$-algebra $A$ is $\sigma$-{\it unital} if $A$ has a countable approximate unit. 
Note that if $A$ is separable, then $A$ is $\sigma$-unital. 
If $A$ is $\sigma$-unital, then there exists a positive element $s\in A$ such that $\{s^{\frac{1}{n}}\}_{n\in\mathbb{N}}$ is an approximate unit. 
Such a positive element $s$ is called {\it strictly positive} in $A$. 
We denote by  $A^{\sim}$ the unitization algebra of $A$. 
The \textit{multiplier algebra}, denoted by $M(A)$, of $A$ is the largest unital C$^*$-algebra that contains $A$ as an essential ideal. 
If $\alpha$ is an automorphism of $A$, then $\alpha$ extends uniquely to an automorphism of $M(A)$. We denote it by the same symbol $\alpha$ for simplicity. 

For a unitary element $u$ in $M(A)$, define an automorphism $\mathrm{Ad}(u)$ of $A$ by $\mathrm{Ad}(u) (x)=uxu^*$ for $x\in A$. Such an 
automorphism is called an \textit{inner automorphism}. Let $\mathrm{Aut}(A)$ denote the automorphism group of $A$, which is equipped with 
the topology of pointwise norm convergence. An automorphism $\alpha$ is said to be \textit{approximately inner} if $\alpha $ is in the closure of the inner automorphism group. 
We say that two automorphisms $\alpha$ and $\beta$ are \textit{approximately unitarily equivalent} if $\alpha\circ \beta^{-1}$ is approximately inner, and 
are \textit{outer conjugate} if there exist an automorphism $\gamma$ of $A$ and a unitary element $u$ in $M(A)$ such that 
$$
\alpha =\mathrm{Ad}(u)\circ \gamma\circ \beta \circ \gamma^{-1} .
$$ 

Let $F$ be a subset of $A$ and $\varepsilon >0$. A completely positive (c.p.) map $\varphi :A\to B$ is said to be \textit{$(F,\varepsilon)$-multiplicative} if 
$$
\| \varphi (xy) - \varphi (x)\varphi(y) \| < \varepsilon 
$$
for any $x,y\in F$. 
For c.p. maps $\varphi$, $\psi :A\to B$, we write $\varphi\sim_{F,\varepsilon}\psi$ if there exists a unitary element $u\in B^{\sim}$ such that 
$$
\| \varphi (x) - u\psi (x) u^* \| < \varepsilon 
$$
for any $x\in F$. 

We denote by $A_{+}$ the set of positive elements in $A$ and by $A_{+,1}$ the set of positive contractions in $A$. 
A \textit{trace} on $A$ is a map $\tau$ of $A_{+}$ to $[0,\infty ]$ 
such that $\tau (\lambda a)=\lambda\tau (a)$, $\tau (a+b)=\tau (a)+\tau (b)$ and $\tau (x^*x)=\tau (xx^*)$ for any $a,b\in A_{+}$, $\lambda \geq 0$ and $x\in A$. 
For a trace $\tau$ on $A$, let $\mathfrak{M}_{\tau}$ be a linear span of $\{a\in A_{+} \; | \; \tau (a)< \infty \}$ and 
$\mathfrak{N}_{\tau}:=\{x\in A\; :\; \tau (x^*x)<\infty \}$. 
Then $\mathfrak{M}_{\tau}$ and $\mathfrak{N}_{\tau}$ are ideals of $A$ and $\tau$ can be uniquely extended to a 
positive linear functional on $\mathfrak{M}_{\tau}$. 
A \textit{tracial state} is a trace which is a state. Every tracial state on $A$ extends uniquely to a tracial state on $M(A)$. 
We denote it by the same symbol for simplicity. 
We say that $\tau$ is \textit{densely defined} if $\mathfrak{M}_{\tau}$ is dense in $A$, and is \textit{lower semicontinuous} if 
$\{a\in A_{+} \; |\; \tau (a) \leq r \}$ is closed for any $r\in\mathbb{R}_{+}$. 
Let $T(A)$ denote the set of densely defined lower semicontinuous traces on $A$ and $T_1(A)$ the set of tracial states on $A$. 
For $\tau\in T(A)$, put $d_{\tau} (a)=\lim_{n\rightarrow \infty}\tau (a^{\frac{1}{n}})$ for 
$a\in A_{+}$. Then $d_{\tau}$ is a dimension function. 
We denote by $(\pi_{\tau}, H_{\tau})$  the Gelfand-Naimark-Segal (GNS) representation of $\tau\in T(A)$. 
Note that $H_{\tau}$ is the completion of the pre-Hilbert space $\mathfrak{N}_{\tau}$ with a pre-inner product 
$\langle \widehat{x},\widehat{y} \rangle = \tau (y^*x)$ for $x,y\in \mathfrak{N}_{\tau}$. 
The norm on $H_{\tau}$ is denoted by $\|\cdot\|_2$. 
Let $\tau$ be a lower semicontinuous densely defined trace on a $\sigma$-unital C$^*$-algebra $A$. 
We denote by $\mathrm{Ped}(A)$ the Pedersen ideal of $A$, which is a minimal dense ideal of $A$. 
Note that $\mathrm{Ped}(A)$ is contained in $\mathfrak{N}_{\tau}$ because $\mathfrak{N}_{\tau}$ is a dense ideal in $A$. 
There exists an approximate unit $\{h_n\}_{n\in\mathbb{N}}$ for $A$ contained in $\mathrm{Ped}(A)$. 
It easy to see that $\{\widehat{h_nx} \; | \; n\in\mathbb{N}, x\in \mathfrak{N}_{\tau} \}$ is dense in $H_{\tau}$. 
Indeed, the lower semicontinuity of $\tau$ implies that for any $x\in\mathfrak{N}_{\tau}$, we have 
$$
\| \widehat{x}-\widehat{h_n x} \|_2 =\tau (x^*(1-h_n)^2 x)^{\frac{1}{2}}\leq \tau (x^*(1-h_n) x)^{\frac{1}{2}} \to 0  
$$
as $n\to \infty$. 
If $\alpha$ is an automorphism of $A$ such that $\tau \circ \alpha =\lambda \tau$ for some $\lambda\in\mathbb{R}_{+}^{\times}$, 
then $\alpha$ can be uniquely extended to an automorphism $\tilde{\alpha}$ of $\pi_{\tau} (A)^{''}$. 

For $x,y\in A$, we write $[x,y]$ to mean the commutator $xy-yx$. 
We denote by $\mathbb{K}$ and $M_{n^\infty}$ for $n\in\mathbb{N}$ the C$^*$-algebra of compact operators on an infinite-dimensional 
separable Hilbert space  and the uniformly hyperfinite (UHF) algebra of type $n^{\infty}$, respectively. 
Let $\mathrm{Tr}_n$ for $n\in\mathbb{N}$ denote the (unnormalized) usual trace on $M_n(\mathbb{C})$ and $\mathrm{Tr}$ 
denote the usual trace on $\mathbb{K}$.

\subsection{Kirchberg's central sequence C$^*$-algebras}
We shall recall some properties of Kirchberg's central sequence C$^*$-algebras in \cite{Kir2} (see also \cite[Section 5]{Na2}). 
Let $\omega$ be a free ultrafilter on $\mathbb{N}$. For a $\sigma$-unital C$^*$-algebra $A$, set 
$$
c_{\omega}(A):=\{(x_n)_{n\in\mathbb{N}}\in \ell^{\infty}(\mathbb{N}, A)\; |\; \lim_{n \to \omega}\| x_n\| =0 \}, \; 
A^{\omega}:=\ell^{\infty}(\mathbb{N}, A)/c_{\omega}(A). 
$$
Let $B$ be a C$^*$-subalgebra of $A$. 
We identify $A$ and $B$ with the C$^*$-subalgebras of $A^\omega$ consisting of equivalence classes of 
constant sequences.  Put 
$$
A_{\omega}:=A^{\omega}\cap A^{\prime},\; \mathrm{Ann}(B,A^{\omega}):=\{(x_n)_n\in A^{\omega}\cap B^{\prime}\; |\; (x_n)_nb =0
\;\mathrm{for}\;\mathrm{any}\; b\in B \}.
$$
Then $\mathrm{Ann}(B,A^{\omega})$ is a closed ideal of $A^{\omega}\cap B^{\prime}$, and define 
$$
F(A):=A_{\omega}/\mathrm{Ann}(A,A^{\omega}).
$$
We call $F(A)$ the \textit{central sequence C$^*$-algebra} of $A$. A sequence $(x_n)_n$ is said to be 
\textit{central} if $\lim_{n\to \omega}\| [x_n,x] \| =0$ for all $x\in A$. A central sequence 
is a representative of an element in $A_{\omega}$. 
Since $A$ is $\sigma$-unital, $A$ has a countable approximate unit $\{h_n\}_{n\in\mathbb{N}}$. 
It is easy to see that $[(h_n)_n]$ is a unit in $F(A)$. If $A$ is unital, then $F(A)=A_{\omega}$. 
Note that $F(A)$ is isomorphic to $M(A)^\omega\cap A^{\prime}/\mathrm{Ann}(A,M(A)^{\omega})$ and ${A^{\sim}}_{\omega}/ \mathrm{Ann}(A,(A^{\sim})^{\omega})$. 
Indeed, for any $(y_n)_n\in M(A)^\omega\cap A^{\prime}$ (respectively, $(y_n)_n\in {A^{\sim}}_{\omega}$), $(y_nh_n)_n$ is a central sequence in $A$ and 
$[(y_n)_n]=[(y_nh_n)_n]$ in $M(A)^\omega\cap A^{\prime}/\mathrm{Ann}(A,M(A)^{\omega})$ (respectively, in ${A^{\sim}}_{\omega}/ \mathrm{Ann}(A,(A^{\sim})^{\omega})$). 
Let $h$ be a full positive element in $A$, and define a map $\eta$ from $F(A)$ to $F(\overline{hAh})$ by $\eta ([(x_n)_n])=[(h^{\frac{1}{n}}x_nh^{\frac{1}{n}})_n]$. 
Then $\eta$ is an isomorphism from $F(A)$ onto $F(\overline{hAh})$. In particular, $F(A\otimes\mathbb{K})$ is isomorphic to $F(A)$. 
If $\alpha$ is an automorphism of $A$, $\alpha$ induces natural automorphisms of $A^{\omega}$, $A_{\omega}$ and $F(A)$. 
We denote them by the same symbol $\alpha$ for simplicity. 

There exists a natural homomorphism $\rho$ from $F(A)\otimes_{\mathrm{max}}A$ to $A^{\omega}$ such that 
$$
\rho ([(x_n)_n]\otimes x) = (x_nx)_n 
$$
for any $[(x_n)_n]\in F(A)$ and $x\in A$. 
For a projection $p$ in $F(A)$, let $A^{\omega}_p$ be a hereditary subalgebra of $A^{\omega}$ generated by 
$\rho (pF(A)p\otimes_{\max} A) $. It can be easily checked that 
$$
A^{\omega}_p = \overline{\rho (p\otimes s)A^{\omega} \rho (p\otimes s)}
$$
where $s$ is a strictly positive element in $A$. 

Since $\omega$ is a (free) ultrafilter, for any $\tau\in T_1(A)$, we can define a tracial state $\tau_{\omega}$ on $A^{\omega}$ by 
$\tau_{\omega} ((a_n)_n)= \lim_{n\to \omega} \tau (a_n)$ for any $(a_n)_n\in A^{\omega}$. 
We shall show that $\tau_{\omega}$ is well-defined on $F(A)$. 

\begin{pro}
Let $A$ be a $\sigma$-unital C$^*$-algebra, and let $\tau$ be a tracial state on $A$. Define $\tau_{\omega}: F(A)\to \mathbb{C}$ by 
$$
\tau_{\omega} ([(x_n)_n]) = \lim_{n\to\omega} \tau(x_n)
$$
for any $[(x_n)_n]\in F(A)$. Then $\tau_{\omega}$ is well-defined. In particular, $\tau_{\omega}$ is a tracial state on $F(A)$. 
\end{pro}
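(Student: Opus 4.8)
The plan is to reduce everything to a single assertion: that $\tau_{\omega}$, already a tracial state on $A^{\omega}$ by the discussion preceding the proposition, vanishes on the ideal $\mathrm{Ann}(A,A^{\omega})$. Since $\mathrm{Ann}(A,A^{\omega})\subseteq A_{\omega}$ and $F(A)=A_{\omega}/\mathrm{Ann}(A,A^{\omega})$, vanishing on the annihilator shows that the formula defining $\tau_{\omega}$ on $F(A)$ does not depend on the choice of representative, i.e.\ it is well-defined; and then positivity and the trace identity descend to the quotient, while for an approximate unit $\{h_n\}_n$ of $A$ one has $\tau_{\omega}([(h_n)_n])=\lim_{n\to\omega}\tau(h_n)=1$ (as $\tau$ is a state and $\{h_n\}_n$ an approximate unit), so $\tau_{\omega}$ is a tracial state on $F(A)$.

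To carry this out, let $(x_n)_n$ be a bounded sequence in $A$ whose class lies in $\mathrm{Ann}(A,A^{\omega})$, and put $C:=\sup_n\|x_n\|$. Fix a strictly positive element $s\in A_{+,1}$, so that $\{s^{1/k}\}_{k\in\mathbb{N}}$ is an approximate unit for $A$. For each fixed $k$, since $s^{1/k}\in A$ the annihilation condition gives $\lim_{n\to\omega}\|x_ns^{1/k}\|=0$, hence $\lim_{n\to\omega}\tau(x_ns^{1/k})=0$. On the other hand, applying the Cauchy--Schwarz inequality to the tracial state $\tau$ extended to $M(A)$, together with $\tau(x_nx_n^*)\le C^2$ and $(1-s^{1/k})^2\le 1-s^{1/k}$ (valid since $0\le s^{1/k}\le 1$), one gets
\[
\bigl|\tau(x_n)-\tau(x_ns^{1/k})\bigr|^2=\bigl|\tau\bigl(x_n(1-s^{1/k})\bigr)\bigr|^2\le \tau(x_nx_n^*)\,\tau\bigl((1-s^{1/k})^2\bigr)\le C^2\bigl(1-\tau(s^{1/k})\bigr).
\]
Because $\{s^{1/k}\}_k$ is an approximate unit and $\tau$ is a state, $\tau(s^{1/k})\to 1$ as $k\to\infty$. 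Thus, given $\varepsilon>0$, choosing $k$ with $C\bigl(1-\tau(s^{1/k})\bigr)^{1/2}<\varepsilon$ yields $|\tau(x_n)|\le\varepsilon+|\tau(x_ns^{1/k})|$ for every $n$; taking $\lim_{n\to\omega}$ gives $\limsup_{n\to\omega}|\tau(x_n)|\le\varepsilon$, and letting $\varepsilon\to0$ shows $\lim_{n\to\omega}\tau(x_n)=0$, which is exactly what was needed.

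The argument is elementary, and the only mild subtlety is the non-unitality of $A$: one cannot invoke a unit of $A$ directly, so the approximate unit $\{s^{1/k}\}_k$ (equivalently, passing to $M(A)$ to make sense of $1-s^{1/k}$) plays a double role, both encoding the annihilation hypothesis and, through $\tau(s^{1/k})\to1$, controlling the tail $\tau\bigl(x_n(1-s^{1/k})\bigr)$ uniformly in $n$. Beyond this and routine bookkeeping with the ultrafilter limit, I do not anticipate any obstacle.
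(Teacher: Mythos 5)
Your proposal is correct and follows essentially the same route as the paper: both reduce the statement to showing that $\lim_{n\to\omega}\tau(x_n)=0$ for $(x_n)_n\in\mathrm{Ann}(A,A^{\omega})$, split $\tau(x_n)$ using an approximate unit, kill the term $\tau(x_n e)$ via the annihilation condition, and control the tail by $1-\tau(e)$. The only cosmetic difference is that you bound the tail $\tau(x_n(1-s^{1/k}))$ by Cauchy--Schwarz, while the paper uses the tracial identity to write it as $\tau((1-h_N)^{1/2}x_n(1-h_N)^{1/2})$ and bounds it by $\|x_n\|\,(1-\tau(h_N))$; both are valid.
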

\begin{proof}
It suffices to show that if $(x_n)_n\in \mathrm{Ann}(A,A^{\omega})$, then $\lim_{n\to\omega} \tau (x_n)=0$. We may assume that 
$\| x_n\| \leq 1$ for any $n\in\mathbb{N}$. Let $\{h_n\}_{n\in\mathbb{N}}$ be an approximate unit for $A$ and $\varepsilon >0$. 
There exists a natural number $N$ such that 
$$
|1 -\tau (h_N) | < \frac{\varepsilon}{2}
$$
because $\lim_{n\to \infty}\tau (h_n)=1$. Since $\lim_{n\to \omega} \| x_nh_{N} \| =0$, there exists $X\in\omega$ such that 
$$
|\tau (x_nh_N)| < \frac{\varepsilon}{2} 
$$
for any $n\in X$. Hence we have 
\begin{align*}
|\tau (x_n) | 
& \leq |\tau (x_n) -\tau (x_n h_N)| + |\tau (x_n h_N)| < |\tau((1-h_N)^{\frac{1}{2}}x_n(1-h_N)^{\frac{1}{2}})| +\frac{\varepsilon}{2} \\
& \leq |1- \tau (h_{N})| +\frac{\varepsilon}{2} < \varepsilon
\end{align*}
for any $n\in X$. Therefore $\lim_{n\to\omega} \tau (x_n)=0$.
\end{proof}

For a semifinite von Neumann algebra $M$ with separable predual, set 
$$
\mathscr{C}_{\omega} (M):= \{(x_n)_{n\in\mathbb{N}}\in \ell^{\infty}(\mathbb{N}, M)\; | \; [x_n,y] \rightarrow  0 \; *\text{-strongly as}\; n\rightarrow \omega \; \text{for any}\; y\in M \}
$$
and 
$$
\mathscr{T}_{\omega} (M):= \{(x_n)_{n\in\mathbb{N}}\in \ell^{\infty}(\mathbb{N}, M)\; | \; x_n \rightarrow  0 \; *\text{-strongly as}\; n\rightarrow \omega \}.
$$
Then $\mathscr{C}_{\omega} (M)$ is a C$^*$-subalgebra of  $\ell^{\infty}(\mathbb{N}, M)$ and $\mathscr{T}_{\omega}(M)$ is a closed ideal of $\mathscr{C}_{\omega}(M)$.  
Define 
$$
M_{\omega}:= \mathscr{C}_{\omega}(M)/\mathscr{T}_{\omega}(M).
$$
Note that $M_{\omega}$ coincides with the asymptotic centralizer of $M$ in \cite{C1}, and hence $M_{\omega}$ is a finite von Neumann algebra. 
(Indeed, \cite[Lemma XIV.3.4]{Tak} and some arguments on semifinite von Neumann algebras show this fact.) 
Let $p$ be a projection in $M$ with central support $1$, and define a map $\tilde{\eta}$ from $M_{\omega}$ to $(pMp)_{\omega}$ by $\tilde{\eta} ((x_n)_n) =(px_np)_n$. 
Then $\tilde{\eta}$ is an isomorphism from $M_{\omega}$ onto $(pMp)_{\omega}$ by \cite[Lemma 2.11]{C1} (see also \cite[Lemma 2.8]{TM}) and \cite[Proposition 2.10]{TM}. 
If $\alpha$ is an automorphism of a semifinite von Neumann algebra $M$, $\alpha$ induces a natural automorphism of $M_{\omega}$. 
We denote it by the same symbol $\alpha$ for simplicity. 
Note that if $\tau$ is a bounded trace, then the following proposition is clear. 

\begin{pro}\label{pro:clear-inclusion}
Let $A$ be a $\sigma$-unital C$^*$-algebra, and let $\tau$ be a lower semicontinuous densely defined trace on $A$. 
Then the inclusion map from $A$ to $\pi_{\tau} (A)^{''}$ induces a homomorphism $\varrho_A$ 
from $F(A)$ to $\pi_{\tau} (A)^{''}_{\omega}$. 
\end{pro}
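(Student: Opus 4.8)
The plan is to realise $\varrho_{A}$ as the map induced by applying $\pi_{\tau}$ to central sequences coordinatewise. Write $M:=\pi_{\tau}(A)^{''}$ and let $\bar{\tau}$ be the normal semifinite faithful trace on $M$ extending $\tau$, so that $M$ acts standardly on $H_{\tau}\cong L^{2}(M,\bar{\tau})$; since $\tau$ is densely defined, $\pi_{\tau}$ is non-degenerate and extends to a unital embedding $M(A)\hookrightarrow M$. Coordinatewise application of $\pi_{\tau}$ gives a $*$-homomorphism $\Phi\colon\ell^{\infty}(\mathbb{N},A)\to\ell^{\infty}(\mathbb{N},M)$; as norm convergence to $0$ forces $*$-strong convergence to $0$, $\Phi$ maps $c_{\omega}(A)$ into $\mathscr{T}_{\omega}(M)$, so it descends to a $*$-homomorphism $\bar{\Phi}\colon A^{\omega}\to\ell^{\infty}(\mathbb{N},M)/\mathscr{T}_{\omega}(M)$. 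It then suffices to establish the two inclusions
$$
\bar{\Phi}(A_{\omega})\subseteq M_{\omega},\qquad \bar{\Phi}\big(\mathrm{Ann}(A,A^{\omega})\big)=0,
$$
where $M_{\omega}=\mathscr{C}_{\omega}(M)/\mathscr{T}_{\omega}(M)$ is regarded as a C$^{*}$-subalgebra of $\ell^{\infty}(\mathbb{N},M)/\mathscr{T}_{\omega}(M)$: once they hold, $\bar{\Phi}$ factors through $F(A)=A_{\omega}/\mathrm{Ann}(A,A^{\omega})$ and yields the desired $\varrho_{A}$.

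For the annihilator inclusion I would argue directly. A representative $(x_{n})_{n}\in\mathrm{Ann}(A,A^{\omega})$ may be taken norm-bounded; from the definition one gets $\|x_{n}a\|\to 0$, $\|ax_{n}\|\to 0$, and hence (taking adjoints) $\|x_{n}^{*}a\|\to 0$ along $\omega$, for every $a\in A$. Fixing an approximate unit $\{h_{m}\}_{m}\subseteq\mathrm{Ped}(A)$, the vectors $\widehat{h_{m}c}$ ($m\in\mathbb{N}$, $c\in\mathfrak{N}_{\tau}$) are dense in $H_{\tau}$ by the computation recalled in the preliminaries, and for such a vector traciality together with $cc^{*}\in\mathfrak{M}_{\tau}$ gives
$$
\|\pi_{\tau}(x_{n})\widehat{h_{m}c}\|_{2}^{2}=\tau\!\big((x_{n}h_{m})^{*}(x_{n}h_{m})\,cc^{*}\big)\le\|x_{n}h_{m}\|^{2}\,\tau(cc^{*}),
$$
which tends to $0$ along $\omega$ since $h_{m}\in A$; the same estimate with $x_{n}^{*}$ in place of $x_{n}$ handles the adjoint. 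As $(\pi_{\tau}(x_{n}))_{n}$ is bounded, this shows $\pi_{\tau}(x_{n})\to 0$ $*$-strongly along $\omega$, that is, $\Phi((x_{n})_{n})\in\mathscr{T}_{\omega}(M)$.

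For the main inclusion, let $(x_{n})_{n}$ be a bounded central sequence in $A$; the goal is $[\pi_{\tau}(x_{n}),z]\to 0$ $*$-strongly along $\omega$ for every $z\in M$. When $z=\pi_{\tau}(a)$ this is immediate, since $\|[\pi_{\tau}(x_{n}),\pi_{\tau}(a)]\|=\|\pi_{\tau}([x_{n},a])\|\le\|[x_{n},a]\|\to 0$, and it remains only to upgrade asymptotic centrality from $\pi_{\tau}(A)$ to all of $M$. If $\tau$ is bounded this is the content of the remark preceding the statement: $M$ is then finite, $\|\cdot\|_{2,\tau}$ metrises the $*$-strong topology on bounded sets, and by traciality $\|wz\|_{2},\|zw\|_{2}\le\|z\|\,\|w\|_{2}$, so $z\mapsto\limsup_{n\to\omega}\|[\pi_{\tau}(x_{n}),z]\|_{2,\tau}$ is $\|\cdot\|_{2}$-Lipschitz on bounded sets; being zero on the $\|\cdot\|_{2}$-dense subalgebra $\pi_{\tau}(A)$, it vanishes identically. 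For a general $\tau$ I would reduce to this case by descending to a finite corner: choose a projection $p\in M$ with central support $1$ and $\bar{\tau}(p)<\infty$ and invoke the isomorphism $M_{\omega}\cong(pMp)_{\omega}$ recalled above; inside the finite von Neumann algebra $pMp$ the same $\|\cdot\|_{2}$-Lipschitz argument applies, and the input needed about $(\pi_{\tau}(x_{n}))_{n}$ in the corner is drawn from its behaviour against $\pi_{\tau}(A)$ in $M$ by means of $2$-norm bimodule estimates, spectral cut-offs of elements of $\mathfrak{M}_{\tau}$, and the normality of $\bar{\tau}$.

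The step I expect to be the real obstacle is precisely this last passage from $\pi_{\tau}(A)$ to $M$ for an unbounded trace. Norm-approximate centrality of $(\pi_{\tau}(x_{n}))_{n}$ relative to the C$^{*}$-algebra $\pi_{\tau}(A)$ does not automatically entail $*$-strong approximate centrality relative to the much larger von Neumann algebra $M=\pi_{\tau}(A)^{''}$: approximating $z\in M$ by $\pi_{\tau}(a)$ through Kaplansky density, the commutator error acquires a term $(z-\pi_{\tau}(a))\pi_{\tau}(x_{n})\xi$, and the strong convergence $\pi_{\tau}(a)\to z$ is not uniform on the moving vectors $\pi_{\tau}(x_{n})\xi$. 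What saves the argument is exactly the trace: cutting down to a type II$_{1}$ corner turns $\|\cdot\|_{2}$ into a genuine norm and makes the commutator estimate Lipschitz in $z$, so that testing against the weakly dense $*$-subalgebra $\pi_{\tau}(A)$ becomes legitimate --- and the delicate point is to check that this corner reduction is compatible with $\bar{\Phi}$, which is where the normality of $\bar{\tau}$ and the density of $\mathfrak{M}_{\tau}$ enter.
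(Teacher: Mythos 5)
Your overall architecture --- apply $\pi_{\tau}$ coordinatewise, then check that $\mathrm{Ann}(A,A^{\omega})$ lands in $\mathscr{T}_{\omega}(M)$ and that $A_{\omega}$ lands in $\mathscr{C}_{\omega}(M)$ --- is exactly the paper's, and both your annihilator step and your bounded-trace argument are correct. The gap is the one you flag yourself: for unbounded $\tau$ you never actually prove that a norm-central sequence $(x_n)_n$ in $A$ becomes $*$-strongly asymptotically central against all of $M=\pi_{\tau}(A)''$, and the corner reduction you propose cannot close it as stated. The compression $(y_n)_n\mapsto(py_np)_n$ induces an isomorphism $M_{\omega}\to(pMp)_{\omega}$ only between the \emph{already-central} quotients $\mathscr{C}_{\omega}/\mathscr{T}_{\omega}$; applied to an arbitrary bounded sequence it is badly non-injective (any sequence supported in $(1-p)M(1-p)$ compresses to $0$), so centrality of $(p\pi_{\tau}(x_n)p)_n$ in $pMp$ carries no information about centrality of $(\pi_{\tau}(x_n))_n$ in $M$. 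Moreover a trace-finite projection $p$ chosen inside $M$ has no reason to asymptotically commute with the $\pi_{\tau}(x_n)$, so even setting up the compression compatibly with the given sequence is problematic. (A corner reduction at the C$^*$-level, via $F(A)\cong F(\overline{hAh})$ for a full positive $h\in\mathrm{Ped}(A)$, is what the paper uses later --- but only for \emph{surjectivity}, in Proposition~\ref{pro:sato}, and it requires the extra hypothesis that $\mathrm{Ped}(A)$ contains a full positive element, which Proposition~\ref{pro:clear-inclusion} does not assume.)

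The paper closes the gap with no reduction to the finite case, by exploiting the bimodule structure of $H_{\tau}$. It suffices to test $[\pi_{\tau}(x_n),y]$ on the dense set of vectors $\widehat{h_mx}$ with $h_m\in\mathrm{Ped}(A)$ and $x\in\mathfrak{N}_{\tau}$, and such a vector is the image of the \emph{fixed} vector $\widehat{h_m}$ under right multiplication by $x$, an operator of norm at most $\|x\|$ which commutes with the left action of $M$. Choosing a Kaplansky approximant $z\in A$ with $\|(y-\pi_{\tau}(z))\widehat{h_m}\|_2$ small, one commutes $x_n$ past $z$ and past $h_m$ --- using norm-centrality of $(x_n)_n$ against these two fixed elements of $A$ --- so that the error operator $y-\pi_{\tau}(z)$ only ever acts on $\widehat{h_m}$ up to right multiplications; this is precisely how the ``moving vector'' $\pi_{\tau}(x_n)\xi$ you worry about is tamed. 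You would need to supply this (or an equivalent) estimate to complete your proof.
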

\begin{proof}
Let $\{h_m\}_{m\in\mathbb{N}}$ be an approximate unit for $A$ contained in $\mathrm{Ped}(A)$.  
First, we shall show that if $(x_n)_n\in \mathrm{Ann}(A,A^{\omega})$, then $(\pi_{\tau}(x_n))_n\in \mathscr{T}_{\omega}(\pi_{\tau} (A)^{''})$. 
For any $m\in\mathbb{N}$ and $x\in \mathfrak{N}_{\tau}$, we have 
$$
\| \pi_{\tau}(x_n) \widehat{h_mx} \|_{2}+ \| \pi_{\tau}(x_n^*) \widehat{h_mx} \|_{2}\leq \| x_n h_m \|\cdot \| \widehat{x}\|_{2} +  \| x_n^* h_m \|\cdot \| \widehat{x}\|_{2} \to 0 
$$
as $n \to \omega$. Hence we see that $(\pi_{\tau}(x_n))_n\in \mathscr{T}_{\omega}(\pi_{\tau} (A)^{''})$. 

Let $(x_n)_n$ be a central sequence of contractions in $A$. If we prove $(\pi_{\tau}(x_n))_n \in \mathscr{C}_{\omega}(\pi_{\tau} (A)^{''})$, 
then we obtain the conclusion. 
Let $\varepsilon >0$ and $y\in \pi_{\tau}(M)^{''}$. 
It suffices to show that for any $m\in\mathbb{N}$ and $x\in\mathfrak{N}_{\tau}$, there exists $X\in \omega$ such that 
$$
\|[\pi_{\tau}(x_n), y]\widehat{h_mx} \|_2 + \|[\pi_{\tau}(x_n), y]^*\widehat{h_mx} \|_2 < \varepsilon 
$$
for any $n\in X$. We may assume that $y$ is a contraction and $\widehat{x}\neq 0$. (Note that we also have $x\neq 0$.) 
By the Kaplansky density theorem, there exists a contraction $z\in A$ such that 
$$
\| (y-\pi_{\tau} (z))\widehat{h_m} \|_2 + \|(y^*-\pi_{\tau}(z^*))\widehat{h_m} \|_{2} < \frac{\varepsilon}{12\| x\|}. 
$$
Since $(x_n)_n$ is a central sequence in $A$, there exists  $X\in \omega$ such that 
$$
\| [x_n, z] \| < \frac{\varepsilon}{6\|\widehat{x}\|_{2}}, \quad \| [x_n, h_m] \| < \frac{\varepsilon}{12\|\widehat{x}\|_2}
$$
for any $n\in X$. 
Then we have 
\begin{align*}
\|[\pi_{\tau}(x_n), y]\widehat{h_mx} \|_2 
&\leq \| (\pi_{\tau} (x_n) y -\pi_{\tau}(x_nz) ) \widehat{h_mx}\|_2 + \| (\pi_{\tau}(x_n z) -y\pi_{\tau}(x_n)) \widehat{h_mx}\|_2 \\ 
&\leq \|x_n \|\cdot\| (y-\pi_{\tau}(z))\widehat{h_m} \|_2 \cdot \| x\| + \| (\pi_{\tau}(x_n z) -y\pi_{\tau}(x_n))\widehat{h_mx}\|_2 \\ 
&< \frac{\varepsilon}{12}+ \|\pi_{\tau}([x_n, z]) \widehat{h_mx} \|_2 + \| (\pi_{\tau}(zx_n) - y \pi_{\tau}(x_n))\widehat{h_mx}\|_2 \\
&\leq \frac{\varepsilon}{12}+ \| [x_n, z] \|\cdot\| h_m\|\cdot \|\widehat{x}\|_2 + \| (\pi_{\tau}(zx_n) - y \pi_{\tau}(x_n))\widehat{h_mx}\|_2 \\ 
&< \frac{\varepsilon}{4}+ \|\pi_{\tau}(z[x_n,h_m]) \widehat{x}\|_2 + \|\pi_{\tau}(zh_mx_n) \widehat{x}- y \pi_{\tau}(x_n)\widehat{h_mx}\|_2 \\
&\leq \frac{\varepsilon}{4}+ \| z\|\cdot\| [x_n, h_m] \|\cdot\|\widehat{x}\|_2+ \| (\pi_{\tau}(zh_mx_n) - y \pi_{\tau}(x_nh_m))\widehat{x}\|_2 \\ 
&< \frac{\varepsilon}{3}+\|\pi_{\tau}(zh_mx_n) \widehat{x}- y \pi_{\tau}(h_mx_n)\widehat{x}\|_2 +\|y \pi_{\tau}([h_m, x_n])\widehat{x}\|_2 \\
&\leq \frac{\varepsilon}{3}+ \| (\pi_{\tau}(z)-y)\widehat{h_m}\|_2\cdot \| x_n\|\cdot \| x \| + \| y\|\cdot\|[h_m,x_n]\|\cdot \| \widehat{x}\|_2 \\
& <\frac{\varepsilon}{2} 
\end{align*}
for any $n\in X$. 
Similar arguments show
$$
\|[\pi_{\tau}(x_n), y]^*\widehat{h_mx} \|_2 < \frac{\varepsilon}{2}
$$
for any $n\in X$. 
Therefore the proof is complete. 
\end{proof}

It is easy to see that if $\tau$ is a bounded faithful trace on $A$, then 
$\mathrm{ker}(\varrho_{A})= \{[(x_n)_n]\in F(A)\; | \; \lim_{n\to\omega} \tau (x_n^*x_n)=0 \}$. 
Moreover, the same proof as  \cite[Theorem 3.3]{KR} shows that $\varrho_A$ is surjective. (See also \cite[Lemma 2.1]{S}.) 
Using this fact, we show the following proposition.

\begin{pro}\label{pro:sato}
Let $A$ be a $\sigma$-unital C$^*$-algebra, and let $\tau$ be a faithful lower semicontinuous densely defined trace on $A$. Assume that 
$\mathrm{Ped}(A)$ contains a full positive element. Then $\varrho_A$ is surjective. 
\end{pro}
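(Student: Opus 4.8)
The plan is to reduce Proposition \ref{pro:sato} to the bounded-trace case already recorded above (the remark that $\varrho_A$ is surjective when $\tau$ is bounded and faithful, by the proof of \cite[Theorem 3.3]{KR}). Write $p$ for a full positive element in $\mathrm{Ped}(A)$; without loss of generality $p$ is a contraction. Set $B:=\overline{pAp}$. Then $p\in\mathrm{Ped}(A)$ forces $\tau(p)<\infty$, indeed $\tau|_B$ is a \emph{bounded} trace on $B$, and it is faithful on $B$ since $\tau$ is faithful on $A$ and $B$ is hereditary. Also $B$ is $\sigma$-unital (e.g. $p$ is strictly positive in $B$). So the bounded case applies to $(B,\tau|_B)$: the induced map $\varrho_B\colon F(B)\to\pi_{\tau|_B}(B)''_{\omega}$ is surjective.

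**The second ingredient is to identify the two sides of $\varrho_A$ with the two sides of $\varrho_B$ via the structural isomorphisms from the preliminaries.** On the C$^*$-side, since $p$ is full in $A$, the map $\eta\colon F(A)\to F(B)$, $\eta([(x_n)_n])=[(p^{1/n}x_np^{1/n})_n]$, is an isomorphism (stated in the excerpt). On the von Neumann side, let $q$ be the support projection of $\pi_\tau(p)$ in $M:=\pi_\tau(A)''$; fullness of $p$ in $A$ makes $q$ a projection of central support $1$ in $M$, and $\pi_{\tau|_B}(B)''$ is naturally $qMq$. Hence the map $\tilde\eta\colon M_\omega\to (qMq)_\omega$, $(x_n)_n\mapsto(qx_nq)_n$, is an isomorphism (again stated in the excerpt, \cite[Lemma 2.11]{C1}, \cite[Proposition 2.10]{TM}). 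The remaining point is \textbf{naturality}: the square formed by $\varrho_A$, $\varrho_B$, $\eta$ and $\tilde\eta$ commutes. To see this, chase a central sequence of contractions $(x_n)_n$ in $A$: $\tilde\eta(\varrho_A([(x_n)_n]))=(q\pi_\tau(x_n)q)_n$, while $\varrho_B(\eta([(x_n)_n]))=(\pi_\tau(p^{1/n}x_np^{1/n}))_n$ viewed in $(qMq)_\omega$. These agree because $\pi_\tau(p^{1/n})\to q$ $*$-strongly (as $p^{1/n}$ is an approximate unit for $B$ and $q$ is the support of $\pi_\tau(p)$), so $q\pi_\tau(x_n)q-\pi_\tau(p^{1/n})\pi_\tau(x_n)\pi_\tau(p^{1/n})\to 0$ $*$-strongly uniformly in $n$ along $\omega$, modulo the standard estimate using that the $x_n$ are contractions. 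Once the square commutes and three of its arrows are known (two isomorphisms, plus $\varrho_B$ surjective), surjectivity of $\varrho_A$ follows immediately.

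**I expect the naturality/compatibility step to be the main obstacle** — specifically, verifying carefully that the correction term $q\pi_\tau(x_n)q - \pi_\tau(p^{1/n}x_np^{1/n})$ (and its adjoint) tends to $0$ $*$-strongly along $\omega$, uniformly enough to live in $\mathscr{T}_\omega(M)$, and that this holds on a dense subset of $H_\tau$ of the form $\{\widehat{h_mx}\}$ with $h_m\in\mathrm{Ped}(A)$ as in Proposition \ref{pro:clear-inclusion}. Everything else is bookkeeping with the already-cited isomorphisms. An alternative, slightly more self-contained route would be to bypass $\eta$ and $\tilde\eta$ and instead argue directly: given $y\in M_\omega$, pull it back through $\varrho_B$ to $[(b_n)_n]\in F(B)$, lift each $b_n$ to a central sequence in $A$ built from $p^{1/n}b_np^{1/n}$-type truncations, and check it maps to $y$; but this essentially re-proves the isomorphism statements, so routing through $\eta$, $\tilde\eta$ is cleaner.
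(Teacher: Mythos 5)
Your proposal is correct and follows essentially the same route as the paper: pass to the hereditary subalgebra $\overline{hAh}$ generated by a full positive element $h$ of $\mathrm{Ped}(A)$, where the restricted trace is bounded and faithful, and transport surjectivity of $\varrho_{\overline{hAh}}$ back through the standard isomorphisms $\eta$ and $\tilde{\eta}$ via the commuting square. The paper states the commutativity of that square without elaboration, whereas you flag and sketch the verification that $q\pi_{\tau}(x_n)q-\pi_{\tau}(h^{1/n}x_nh^{1/n})$ vanishes $*$-strongly along $\omega$; this is the only point of difference and it is a matter of level of detail, not of method.
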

\begin{proof}
Let $h$ be a full positive element in $\mathrm{Ped}(A)$. Then $\overline{hAh}$ is a 
C$^*$-algebra with no unbounded traces by the same argument as in the proof of \cite[Proposition 5.2]{Na2}. 
In particular, $\tau|_{\overline{hAh}}$ is a bounded faithful trace on $\overline{hAh}$. 
Let $p$ be a support projection of $\pi_{\tau} (h)$ in $\pi_\tau (A)^{''}$. Note that $\pi_{\tau}(h^{\frac{1}{n}})$ converges to 
$p$ in the strong$^*$ topology as $n\to \infty$. Since $h$ is full in $A$, $p$ has central support $1$. 
Hence we have the following commutative diagram: 
\[\begin{CD}
       F(A)              @>\varrho_{A}>>  \pi_{\tau} (A)^{''}_{\omega} \\
            @VV\eta V                               @VV\tilde{\eta}V  \\                 
      F(\overline{hAh})  @>\varrho_{\overline{hAh}}>> (p\pi_{\tau} (A)^{''}p)_{\omega}
\end{CD} \]
where $\eta$ and $\tilde{\eta}$ are  standard isomorphisms from $F(A)$ onto $F(\overline{hAh})$ and 
from $\pi_{\tau} (A)^{''}_{\omega}$ onto $(p\pi_{\tau} (A)^{''}p)_{\omega}$, respectively. 
Since $\varrho_{\overline{hAh}}$ is surjective,  we see that $\varrho_A$ is surjective.
\end{proof}

\begin{rem}\label{rem:trace-ideal} 
Let $A$ be a $\sigma$-unital simple C$^*$-algebra, and let $\tau$ be a lower semicontinuous densely defined trace on $A$. 
Then $\mathrm{Ped}(A)$ contains a full positive element since every non-zero positive element is full. 
Moreover, it can be easily checked that $\mathrm{ker}(\varrho_{A})= \{[(x_n)_n]\in F(A)\; | \;  \lim_{n\to\omega} \| \pi_{\tau}(x_n)\hat{h}\|_2=0 \}$ 
for some $h\in \mathrm{Ped}(A)_{+}\setminus \{0\}$. Assume that $\tau$ is a unique (up to scalar multiple) trace. 
Then we can define a tracial state $\tau_{\omega}$ on $F(A)$ by $\tau_{\omega} ([(x_n)_n]):= \lim_{n\to\omega} \tau (x_nh)/\tau (h)$. 
Note that if $[(x_n)_n]\in\mathrm{ker}(\varrho_A)$, then $\tau_{\omega}([(x_n)_n])=0$. 
\end{rem}

\subsection{Matui and Sato's result}

We shall remark that some arguments in \cite{MS2} work for non-unital C$^*$-algebras. 
It is important to consider property (SI). 
For $a,b\in A_{+}$, we say that $a$ is \textit{Cuntz smaller than} $b$, written $a\precsim b$, if there exists a 
sequence $\{x_n\}_{n\in\mathbb{N}}$ of $A$ such that $\| x_n^*bx_n-a\|\rightarrow 0$. 

\begin{Def}\label{def:si}
Let $A$ be a C$^*$-algebra. 
\ \\
(1) Assume that $T_1(A)$ is a non-empty compact set. 
We say that $A$ has \textit{property (SI)} if for any central sequences $(a_n)_n$ and $(b_n)_n$ of positive contractions 
in $A$ satisfying 
$$
\lim_{n\to\omega}\max_{\tau\in T_1(A)}\tau (a_n)=0,\;\inf_{m\in\mathbb{N}}\lim_{n\to\omega}\min_{\tau\in T_1(A)}\tau (b_n^m)>0,
$$
there exists a central sequence $(s_n)_n$ in $A$ such that 
$$
\lim_{n\to\omega}\| s_n^*s_n-a_n\| =0, \; \lim_{n\to\omega}\| b_ns_n-s_n\| =0. 
$$
(2) Let $S$ be a subset of $T_1(A)$. 
We say that $A$ has \textit{strict comparison} (respectively, \textit{strict comparison with respect to $S$}) if for any $k\in\mathbb{N}$,  
$a,b\in M_k(A)_{+}$ with $d_{\tau\otimes\mathrm{Tr}_k}(a)< d_{\tau\otimes\mathrm{Tr_k}} (b)$ for any 
$\tau \in T_1(A)$ (respectively, for any $\tau\in S$) implies $a\precsim b$. 
\end{Def}

Note that strict comparison in the definition above is different from almost unperforation of the Cuntz semigroup $\mathrm{Cu}(A)$. 
Essentially the same proofs as \cite[Lemma 4.7]{MS2} and \cite[Proposition 4.8]{MS2} show the following theorem. See also \cite[Propostion 3.3]{MS3} and 
\cite[Theorem 4.1]{BBSTWW}. 

\begin{thm}\label{thm:Matui-Sato}
(Matui-Sato) \ \\
Let $A$ be a simple separable infinite-dimensional nuclear C$^*$-algebra with finitely many extremal tracial states and no unbounded traces. Assume that 
$A$ has property (SI). Then: \ \\
(i) For any tracial state $\sigma$ on $F(A)$, there exists a tracial state $\tau$ on $A$ such that $\sigma = \tau_{\omega}$. \ \\
(ii) If $a$ and $b$ are positive elements in $F(A)$ satisfying $d_{\tau_{\omega}}(a)< d_{\tau_{\omega}} (b)$ for any $\tau \in T_1(A)$, then there exists an element $r\in F(A)$ such that 
$r^*br=a$. Moreover, $F(A)$ has strict comparison. 
\end{thm}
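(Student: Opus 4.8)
The plan is to follow the strategy of \cite[Section 4]{MS2} verbatim, checking at each step that the hypotheses used there (unitality, boundedness of the trace) can be weakened to the present setting of a simple separable nuclear $C^*$-algebra with finitely many extremal tracial states and no unbounded traces. The essential structural input is property (SI) together with the surjectivity of $\varrho_A$ onto $\pi_\tau(A)''_\omega$ established in Proposition \ref{pro:sato} (applicable by Remark \ref{rem:trace-ideal}, since simplicity forces every nonzero positive element of $\mathrm{Ped}(A)$ to be full), and the identification of $\mathrm{ker}(\varrho_A)$ with $\{[(x_n)_n]\;|\;\lim_{n\to\omega}\|\pi_\tau(x_n)\hat h\|_2=0\}$. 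Throughout, one replaces the bounded trace $\tau$ on $A$ by a fixed normalization using a full positive $h\in\mathrm{Ped}(A)$, so that $\tau_\omega$ on $F(A)$ is the functional of Remark \ref{rem:trace-ideal}.

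For part (i), I would argue as in \cite[Lemma 4.7]{MS2}. Given a tracial state $\sigma$ on $F(A)$, one first shows $\sigma$ vanishes on $\mathrm{ker}(\varrho_A)$: if $[(x_n)_n]$ is a positive element of $\mathrm{ker}(\varrho_A)$, property (SI) (applied after truncating and using that $\lim_{n\to\omega}\|\pi_\tau(x_n)\hat h\|_2=0$ controls the trace values) lets one dominate it, in the ordered sense inside $F(A)$, by elements of arbitrarily small $\sigma$-value, so $\sigma([(x_n)_n])=0$. Hence $\sigma$ factors through $\varrho_A$, i.e.\ descends to a normal tracial state on the finite von Neumann algebra $\pi_\tau(A)''_\omega$. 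Since $A$ has finitely many extremal traces, $\pi_\tau(A)''$ is a finite direct sum of finite factors, each with (up to scalar) a unique normal trace, and the traces on $\pi_\tau(A)''_\omega$ are then convex combinations of the ultrapower traces $\tau^{(i)}_\omega$; pulling back gives $\sigma=\tau_\omega$ for the corresponding convex combination $\tau\in T_1(A)$. The only new point is bookkeeping the normalization by $h$, which is routine.

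For part (ii), I would follow \cite[Proposition 4.8]{MS2}. Suppose $a,b\in F(A)_+$ with $d_{\tau_\omega}(a)<d_{\tau_\omega}(b)$ for all $\tau\in T_1(A)$. Lift to central sequences $(a_n)_n$, $(b_n)_n$ of positive contractions. Using part (i) — which says every trace on $F(A)$ comes from $A$ — one upgrades the gap in dimension functions to a uniform gap, and then passes to suitable spectral cut-downs: for small $\delta>0$ one produces, via a routine functional-calculus argument combined with the comparison theory of $\pi_\tau(A)''_\omega$ (where strict comparison is automatic since it is a finite von Neumann algebra with finitely many summands), a sequence witnessing $(a_n-\delta)_+ \precsim (b_n-\delta)_+$ \emph{modulo} $\mathrm{ker}(\varrho_A)$. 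Property (SI) is exactly what converts this ``von Neumann level'' subequivalence into an honest Cuntz subequivalence in $F(A)$: the error term lies in $\mathrm{ker}(\varrho_A)$, hence is dominated by central sequences with vanishing $\|\cdot\|_2$-trace, and (SI) supplies the $(s_n)_n$ absorbing it. Letting $\delta\to 0$ and taking an appropriate limit along $\omega$ yields $r\in F(A)$ with $r^*br=a$. Finally, strict comparison of $F(A)$ itself follows by applying the same argument inside $M_k(F(A))\cong F(A)\otimes M_k(\mathbb C)$.

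The main obstacle is verifying that property (SI), stated for $A$ with $T_1(A)$ a nonempty compact set, interacts correctly with the normalization by $h\in\mathrm{Ped}(A)$ when $A$ has unbounded-trace-free but genuinely non-unital structure: one must check that the $\|\cdot\|_2$-small condition $\lim_{n\to\omega}\|\pi_\tau(x_n)\hat h\|_2=0$ matches the trace hypothesis $\lim_{n\to\omega}\max_{\tau}\tau(a_n)=0$ in Definition \ref{def:si}(1) under the identification $F(A)\cong F(\overline{hAh})$, and likewise that the ``large'' condition $\inf_m\lim_{n\to\omega}\min_\tau\tau(b_n^m)>0$ corresponds to the dimension-function gap. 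Once this translation is pinned down, the remainder is a line-by-line transcription of Matui--Sato's unital arguments, as the authors indicate.
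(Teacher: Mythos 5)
Your proposal is correct and takes essentially the same route as the paper, which itself gives no argument beyond the assertion that ``essentially the same proofs as \cite[Lemma 4.7]{MS2} and \cite[Proposition 4.8]{MS2}'' apply (with pointers to \cite[Proposition 3.3]{MS3} and \cite[Theorem 4.1]{BBSTWW}); your outline is a faithful elaboration of exactly that adaptation, including the two points the paper relies on implicitly, namely the surjectivity of $\varrho_A$ from Proposition \ref{pro:sato} and the normalization of $\tau_\omega$ via $h\in\mathrm{Ped}(A)$ from Remark \ref{rem:trace-ideal}.
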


\subsection{Razak-Jacelon algebra}

Let $\mathcal{W}$ be the Razak-Jacelon algebra studied in \cite{J}, which has trivial $K$-groups and a unique tracial state $\tau$ and no unbounded traces. 
The Razak-Jacelon algebra $\mathcal{W}$ is constructed as an inductive limit C$^*$-algebra of Razak's building block in \cite{Raz}, that is,  
$$
A(n,m)= \left\{f\in C([0,1])\otimes M_m(\mathbb{C}) \ | \
\begin{array}{cc} 
f(0)=\mathrm{diag}(\overbrace{c,..,c}^k,0_{n}),
f(1)=\mathrm{diag}(\overbrace{c,..,c}^{k+1}), \\
c\in M_n(\mathbb{C})
\end{array} 
\right\},
$$
where $n$ and $m$ are natural numbers with $n|m$ and $k:=m/n-1$. 
Let $\mathcal{O}_{2}$ denote the Cuntz algebra generated by $2$ isometries 
$S_1$ and $S_2$. 
For every $\lambda_1,\lambda_2\in\mathbb{R}$ there exists by universality a one-parameter 
automorphism group $\alpha$ of $\mathcal{O}_2$ given by $\alpha_t (S_j)=e^{it\lambda_{j}}S_j$. 
Kishimoto and Kumjian showed that if 
$\lambda_{1}$ and $\lambda_{2}$ are all non-zero, of the same sign and 
$\lambda_1$ and $\lambda_2$ generate $\mathbb{R}$ as a closed subgroup, then 
$\mathcal{O}_2\rtimes_{\alpha}\mathbb{R}$ is a simple stably projectionless C$^*$-algebra 
with unique (up to scalar multiple) densely defined lower semicontinuous trace in \cite{KK1} and \cite{KK2}. 
Moreover, Robert \cite{Rob} showed that $\mathcal{W}\otimes \mathbb{K}$ is isomorphic to 
$\mathcal{O}_2\rtimes_{\alpha}\mathbb{R}$ for some $\lambda_1$ and $\lambda_2$. 
(See also \cite{Dean}.) 

By the uniqueness of traces on $\mathcal{W}\otimes\mathbb{K}$, for any automorphism $\alpha$ of $\mathcal{W}\otimes\mathbb{K}$, 
there exists a positive real number $\lambda (\alpha)$ such that 
$(\tau\otimes\mathrm{Tr}) \circ \alpha =\lambda(\alpha) \tau\otimes\mathrm{Tr}$. We say that $\alpha$ is a \textit{trace scaling automorphism} if $\lambda (\alpha)\neq 1$. 
The following theorem is an immediate consequence of Razak's classification theorem (see also \cite{Rob}). 

\begin{thm}\label{thm:Razak} (Razak) \ \\
(i) Let $A$ be a simple unital approximately finite-dimensional (AF) algebra with a unique tracial state. Then $A\otimes\mathcal{W}$ is isomorphic to 
$\mathcal{W}$. 
\ \\
(ii) Every automorphism of $\mathcal{W}$ is approximately inner. \ \\
(iii) Let $\alpha$ and $\beta$ be automorphisms of $\mathcal{W}\otimes\mathbb{K}$. 
Then $\alpha$ and $\beta$ are approximately unitarily equivalent if and only if $\lambda (\alpha)=\lambda (\beta)$. \ \\
(iv) For any $\lambda\in\mathbb{R}^{\times}_{+}$, there exists an automorphism $\alpha$ of $\mathcal{W}\otimes\mathbb{K}$ such that 
$\lambda(\alpha)=\lambda$. 
\end{thm}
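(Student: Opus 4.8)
The plan is to derive all four parts from Razak's classification of simple inductive limits of his building blocks $A(n,m)$, using both the existence and the uniqueness halves of that theorem, supplemented for (iv) by the crossed-product description of $\mathcal{W}\otimes\mathbb{K}$ recalled above. For these $KK$-contractible algebras the classifying invariant amounts to the ordered $K_0$-group — which here is trivial — together with the cone of densely defined lower semicontinuous traces, so the isomorphism and approximate-unitary-equivalence questions reduce to bookkeeping on traces.

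For (i) I would write $A=\varinjlim_k F_k$ with each $F_k$ finite-dimensional, so that $A\otimes\mathcal{W}=\varinjlim_k(F_k\otimes\mathcal{W})$; since $F_k\otimes\mathcal{W}$ is a finite direct sum of algebras $M_r(\mathcal{W})=\varinjlim_i A(rn_i,rm_i)$, the algebra $A\otimes\mathcal{W}$ is again a simple inductive limit of building blocks and so lies in the scope of Razak's theorem. It is then routine to check that $A\otimes\mathcal{W}$ is simple, has vanishing $K$-groups (by the K\"unneth theorem, using $K_*(\mathcal{W})=0$ and the UCT), has a unique tracial state, and has no unbounded trace; hence its invariant coincides with that of $\mathcal{W}$, and Razak's existence and uniqueness theorems give $A\otimes\mathcal{W}\cong\mathcal{W}$.

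For (ii) and (iii) I would note that any automorphism of $\mathcal{W}$ (respectively of $\mathcal{W}\otimes\mathbb{K}$) is automatically trivial on the vanishing $K$-groups, so the only part of the invariant it can move is the trace cone. For $\mathcal{W}$ there is a unique tracial state, so such an $\alpha$ fixes the invariant, and Razak's uniqueness theorem makes it approximately unitarily equivalent to $\mathrm{id}_{\mathcal{W}}$, i.e.\ approximately inner. For $\mathcal{W}\otimes\mathbb{K}$ the trace cone is the ray $\mathbb{R}_{+}(\tau\otimes\mathrm{Tr})$ and $\alpha$ acts on it by multiplication by $\lambda(\alpha)$, so $\alpha$ and $\beta$ induce the same automorphism of the invariant precisely when $\lambda(\alpha)=\lambda(\beta)$, and Razak's uniqueness theorem then yields approximate unitary equivalence; the converse is immediate, since an approximately inner automorphism preserves every lower semicontinuous trace, whence $\alpha$ approximately unitarily equivalent to $\beta$ forces $1=\lambda(\alpha\circ\beta^{-1})=\lambda(\alpha)\lambda(\beta)^{-1}$.

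For (iv) the cleanest route is again through classification: the automorphism of the invariant of $\mathcal{W}\otimes\mathbb{K}$ which is the identity on the (vanishing) $K$-groups and multiplication by $\lambda$ on the trace cone is a legitimate automorphism of the invariant, and Razak's (equivalently Robert's) existence theorem lifts it to an automorphism $\alpha$ of $\mathcal{W}\otimes\mathbb{K}$ with $\lambda(\alpha)=\lambda$. Concretely, such an automorphism can also be displayed via Robert's identification $\mathcal{W}\otimes\mathbb{K}\cong\mathcal{O}_2\rtimes_{\alpha}\mathbb{R}$: the dual action $\widehat{\alpha}$ of $\widehat{\mathbb{R}}\cong\mathbb{R}$ rescales the dual (Plancherel) weight on the $\mathbb{R}$-factor, and since the crossed product carries a unique densely defined lower semicontinuous trace (Kishimoto-Kumjian) one obtains a nonzero constant $c$ with $(\tau\otimes\mathrm{Tr})\circ\widehat{\alpha}_s=e^{cs}(\tau\otimes\mathrm{Tr})$ for all $s\in\mathbb{R}$, so $\widehat{\alpha}_s$ with $e^{cs}=\lambda$ works. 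This is the part I expect to be the main obstacle: unlike (i)--(iii), it is not a formal consequence of matching invariants but rests on the existence half of the classification — concretely, on the Kishimoto-Kumjian-Dean-Robert analysis identifying $\mathcal{W}\otimes\mathbb{K}$ with an $\mathbb{R}$-crossed product and on verifying that the dual flow scales the trace nontrivially, equivalently that $\mathcal{F}(\mathcal{W})=\mathbb{R}^{\times}_{+}$.
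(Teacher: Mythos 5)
Your proposal is correct and follows essentially the same route as the paper, which states this theorem without proof as ``an immediate consequence of Razak's classification theorem (see also [Rob])'' and, for the realization of all scaling factors, appeals in the introduction to exactly the Kishimoto--Kumjian--Dean--Robert crossed-product picture and the computation $\mathcal{F}(\mathcal{W})=\mathbb{R}^{\times}_{+}$ that you invoke. Your write-up simply supplies the bookkeeping on the invariant (trivial $K$-groups plus the trace cone) that the paper leaves implicit.
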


Note that $\mathcal{W}$ is UHF-stable by (i) in the theorem above, and hence $\mathcal{W}$ is $\mathcal{Z}$-stable. 
Hence $\mathcal{W}$ has strict comparison and property (SI) (see \cite{Ror}, \cite{MS} and \cite{Na2}). 

\section{Stable uniqueness theorem}\label{sec:stable}

In this section we shall recall some results in \cite{EN} and reformulate for our purpose. 
Let $\Omega$ be a compact metrizable space. 
The following proposition is based on the results of \cite{DE1}, \cite{DE2}, \cite{EllK} and \cite{G}. 

\begin{pro}\label{pro:EN8.2} (cf. \cite[Proposition 8.2]{EN}) \ \\
Let $A$ be a separable non-unital C$^*$-algebra and $B$ a separable C$^*$-algebra, and let $\sigma$ be a full homomorphism 
from $A$ to $B$. 
Suppose that $\varphi$ and $\psi$ are nuclear homomorphisms from $C(\Omega)\otimes A$ to $B$ with $[\varphi]= [\psi]$ in $KK_{\mathrm{nuc}}(C(\Omega)\otimes A,B)$. Then for any 
finite subsets $F_1\subset C(\Omega)$, $F_2\subset A$ and $\varepsilon>0$, there exist $m\in \mathbb{N}$, $z_1,z_2,...,z_m\in \Omega$ and 
a unitary element $u$ in $M_{m^2+1}(B)^{\sim}$ such that 
\begin{align*}
\| u^{*}(\varphi(f\otimes a) \oplus & \overbrace{\bigoplus_{k=1}^m f(z_k)\sigma (a) \oplus \cdots \oplus \bigoplus_{k=1}^m f(z_k)\sigma (a)}^m) u \\
& - \psi(f\otimes a)\oplus \overbrace{\bigoplus_{k=1}^m f(z_k)\sigma (a) \oplus \cdots \oplus\bigoplus_{k=1}^m f(z_k)\sigma (a) }^m\| < \varepsilon 
\end{align*}
for any $f\in F_1$ and $a\in F_2$. 
\end{pro}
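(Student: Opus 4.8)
The plan is to derive this from the Dadarlat--Eilers stable uniqueness theorem, in the non-unital nuclear formulation of \cite{DE2}, \cite{EllK} and \cite{G}, taking for the ``large'' (unitarily absorbing) homomorphism an infinite ampliation of point evaluations composed with $\sigma$.

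First I would fix a dense sequence $\{z_k\}_{k\in\mathbb{N}}$ in the compact metric space $\Omega$ and set
$$
\iota\colon C(\Omega)\otimes A\longrightarrow M(B\otimes\mathbb{K}),\qquad
\iota(f\otimes a)=\mathrm{diag}\bigl(f(z_1)\sigma(a),f(z_2)\sigma(a),\dots\bigr).
$$
The structural input to verify is that $\iota$ is unitarily (nuclearly) absorbing: each diagonal block carries the full action of $C(\Omega)$, $\sigma$ is full, and the sequence $\{z_k\}$ is dense, so $\iota$ is a full homomorphism whose ``spectrum'' is all of $\Omega$; by the characterizations of absorbing homomorphisms in \cite{EllK} and \cite{DE2} this is what is needed. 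Granting this, the stable uniqueness theorem applies to $\varphi,\psi$: since $[\varphi]=[\psi]$ in $KK_{\mathrm{nuc}}(C(\Omega)\otimes A,B)$, for the given finite sets and $\varepsilon$ there are $N\in\mathbb{N}$ and a unitary in the appropriate matrix unitization over $B$ with
$$
\varphi\oplus\iota^{\oplus N}\ \sim_{F,\,\varepsilon/2}\ \psi\oplus\iota^{\oplus N},
$$
where $F=\{f\otimes a : f\in F_1,\ a\in F_2\}$ and $\iota^{\oplus N}$ is the $N$-fold direct sum.

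The remaining --- and, I expect, main --- task is bookkeeping: replacing each infinite piece $\iota^{\oplus N}$ by the finite square-block expression in the statement. Using uniform continuity of the finitely many functions in $F_1$ on $\Omega$, I would choose $\delta>0$ with $|f(z)-f(z')|$ small (relative to $\varepsilon$ and the norms of the elements of $F_2$) whenever $d(z,z')<\delta$, for all $f\in F_1$, and then enlarge $m$ so that $m\geq N$ and $\{z_1,\dots,z_m\}$ is $\delta$-dense in $\Omega$. A direct estimate then shows that, after a unitary conjugation inside a matrix algebra over $B$ and truncation to a finite corner, $\iota^{\oplus N}$ is within $\varepsilon/4$ on $F$ of the homomorphism $f\otimes a\mapsto \bigoplus_{j=1}^{m}\bigl(\bigoplus_{k=1}^{m}f(z_k)\sigma(a)\bigr)$: the $\delta$-density lets one redistribute the diagonal point masses of $\iota$ onto the chosen points $z_1,\dots,z_m$ up to small error on each $f\in F_1$, and passing to $m\geq N$ identical copies supplies at least the $N$ summands that the stable uniqueness theorem required (additional absorbing copies do no harm). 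Composing the two unitaries and truncating produces $u\in M_{m^2+1}(B)^{\sim}$ --- the extra ``$+1$'' holding the single copy of $\varphi$ (resp.\ $\psi$) alongside the $m^2$ point-evaluation blocks, and the unitization being forced because $B$ is non-unital --- with total error below $\varepsilon$ on $F$.

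In short, the only genuinely nontrivial ingredient is the absorption property of $\iota$ together with the finitary stable uniqueness theorem it feeds into; the rest is a compactness-and-approximation argument arranging point evaluations into the prescribed $m\times m$ block pattern and matching the matrix sizes.
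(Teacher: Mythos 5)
Your proposal follows essentially the same route as the paper: an infinite ampliation of point evaluations at a dense sequence composed with $\sigma$ serves as the absorbing map, the Dadarlat--Eilers/Elliott--Kucerovsky stable uniqueness machinery is applied, and the resulting unitary is truncated to a finite corner of size $m^2+1$. Two points in your write-up need tightening, though neither changes the architecture. First, the absorption of $\iota$ is not a consequence of fullness and density of $\{z_k\}$ alone; for a non-unital target one needs the Elliott--Kucerovsky \emph{purely large} condition, and the way the paper secures it is to take the infinite-multiplicity version $\gamma(f\otimes a)=\sigma(a)\otimes\sum_k f(z_k)e_{kk}\otimes 1$ in $M(B\otimes\mathbb{K}\otimes\mathbb{K})$ --- note the extra tensor factor, i.e.\ infinitely many copies of your $\iota$ --- and invoke \cite[Theorem 17 (iii)]{EllK}; your single copy $\iota$, or a finite ampliation $\iota^{\oplus N}$, is not directly covered by that criterion. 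Second, the ``redistribution of point masses by $\delta$-density'' is an unnecessary detour: since the points $z_1,\dots,z_m$ are part of the conclusion rather than prescribed in advance, the truncated corner of $\gamma$ already equals $\bigoplus_{j=1}^m\bigoplus_{k=1}^m f(z_k)\sigma(a)$ \emph{exactly}, using the first $m$ terms of the dense sequence. What actually has to be checked at the truncation step --- and what your sketch leaves implicit --- is that the intertwining unitary $u_n$ lies in $(B\otimes\mathbb{K})^{\sim}$, hence satisfies $\|[e_m,u_n]\|\to 0$ as $m\to\infty$ for the cutting projections $e_m=1\oplus\sum_{k\le m}e_{kk}\otimes\sum_{k\le m}e_{kk}$, so that $e_mu_ne_m$ is close to a unitary in $e_m(B\otimes\mathbb{K})^{\sim}e_m\cong M_{m^2+1}(B)^{\sim}$; choosing $n$ large and then $m$ large finishes the argument. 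With those two repairs your proof coincides with the paper's.
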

\begin{proof}
Choose a dense subset $\{z_k \; |\; k\in\mathbb{N}\}\subset \Omega$. Let $\gamma$ be a homomorphism from $C(\Omega)\otimes A$ to $M(B\otimes\mathbb{K}\otimes\mathbb{K})$ such that 
$$
\gamma (f\otimes a) = \sigma (a) \otimes \sum_{k=1}^{\infty}f(z_k) e_{kk} \otimes 1 
$$
for any $f\in C(\Omega)$ and $a\in A$, where $\{e_{ij}\}_{i,j\in\mathbb{N}}$ is the standard matrix units of $\mathbb{K}$. 
Then we see that $\gamma$ is purely large as an extension by \cite[Theorem 17 (iii)]{EllK}. 
The same arguments as in the proofs of \cite[Lemma 8.1]{EN} and \cite[Proposition 8.2]{EN} show that there exists a sequence $\{u_n\}_{n\in\mathbb{N}}$ of unitaries in 
$(B\otimes\mathbb{K})^{\sim}$ such that 
$$
\| u_n (\varphi (f\otimes a) \oplus \gamma (f\otimes a) ) u_n^* - \psi (f\otimes a)\oplus \gamma (f\otimes a ) \| \to 0 
$$
as $n\to \infty$ for any $f\in C(\Omega)$ and $a\in A$. 
For any $m\in\mathbb{N}$, let  
$$
e_{m}:= 1 \oplus \sum_{k=1}^{m} e_{kk} \otimes \sum_{k=1}^{m} e_{kk} \in B^{\sim}\oplus B^{\sim}\otimes\mathbb{K}\otimes\mathbb{K} \subset B^{\sim}\otimes \mathbb{K}. 
$$
For any $n\in\mathbb{N}$, we have $\| [e_m , u_n]\| \to 0$ as $m\to \infty$. Hence for sufficiently large $m$, $e_mu_ne_m$ is close to a unitary element in 
$e_m (B\otimes \mathbb{K})^{\sim}e_m \cong M_{m^2+1} (B)^{\sim}$. 

Therefore choose a sufficient large $n\in\mathbb{N}$, and then choose a sufficiently large $m\in\mathbb{N}$, we can find a unitary element $u$ in $M_{m^2+1} (B)^{\sim}$ satisfying 
the conclusion of the proposition. 
\end{proof}
In order to obtain a stable uniqueness theorem for $(G,\delta )$-multiplicative maps, 
we need to consider homomorphisms from $A$ to $\prod B_n / \bigoplus B_n$ for some C$^*$-algebras $A$ and $B_n$. 
We can avoid the assumption of separability in the proposition above by Blackadar's technique (see \cite[II.8.5]{Bla} and \cite[Lemma 8.4]{EN}). 
It is useful to consider the following for the fullness. 

\begin{Def} (cf. \cite[Definition 8.7]{EN}) \ \\
Let $L$ be a map from $A_{+,1}\setminus\{ 0\} \times (0,1)$ to $\mathbb{N}$ and $N$ a map from $A_{+,1}\setminus \{0\} \times (0,1)$ to $(0,\infty)$. 
A homomorphism $\varphi$ from $A$ to $B$ is said to be \textit{$(L,N)$-full} if for any $\varepsilon\in (0,1)$, $a\in A_{+,1}\setminus\{0\}$ and $b\in B_{+,1}$, there exist 
elements $x_1,x_2,...,x_{L(a,\varepsilon)}$ in $B$ such that 
$$
\| x_i \| \leq N(a,\varepsilon)
$$ 
for any $i=1,2,...,L(a,\varepsilon)$ and 
$$
\| b - \sum_{i=1}^{L(a,\varepsilon)} x_i \varphi (a) x_{i}^* \| < \varepsilon .
$$
\end{Def}

The following proposition is a variant of \cite[Proposition 8.12]{EN}. For finite sets $F_1$ and $F_2$, let $F_1\odot F_2:=\{a\otimes b \; |\; a\in F_1, b\in F_2\}$. 

\begin{pro}\label{pro:stable-uniqueness}
Let $A$ be a separable non-unital nuclear C$^*$-algebra that is $KK$-equivalent to $\{0\}$, and 
let $L: A_{+,1}\setminus \{0\} \times (0,1 )\to \mathbb{N}$ and $N: A_{+,1}\setminus \{0\}\times (0,1)\to (0,\infty)$ be maps. 
Then for any finite subsets $F_1\subset C(\Omega)$, $F_2\subset A$ and $\varepsilon>0$, there exist finite subsets $G_1\subset C(\Omega)$, $G_2\subset A$, $m\in\mathbb{N}$, 
and $\delta >0$ such that the following holds. Let $B$ be a C$^*$-algebra. For any contractive ($G_1\odot G_2, \delta$)-multiplicative maps $\varphi, \psi:C(\Omega)\otimes A\to 
B$ and an $(L,N)$-full homomorphism $\sigma : A\to B$, there exist a unitary element in $u$ in $M_{m^2+1}(B)^{\sim}$ and $z_1,z_2,...,z_m\in\Omega$ such that 
\begin{align*}
\| u^{*}(\varphi(f\otimes a) \oplus & \overbrace{\bigoplus_{k=1}^m f(z_k)\sigma (a) \oplus \cdots \oplus \bigoplus_{k=1}^m f(z_k)\sigma (a)}^m) u \\
& - \psi(f\otimes a)\oplus \overbrace{\bigoplus_{k=1}^m f(z_k)\sigma (a) \oplus \cdots \oplus \bigoplus_{k=1}^m f(z_k)\sigma (a)}^m\| < \varepsilon 
\end{align*}
for any $f\in F_1$ and $a\in F_2$. 
\end{pro}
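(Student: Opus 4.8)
The plan is to deduce Proposition~\ref{pro:stable-uniqueness} from Proposition~\ref{pro:EN8.2} by the standard ``reduction to the ultraproduct'' trick, exactly as in \cite[Section 8]{EN}. Suppose the statement fails. Then one can fix finite sets $F_1\subset C(\Omega)$, $F_2\subset A$, $\varepsilon>0$ and maps $L,N$, together with increasing finite sets $G_1^{(n)}\uparrow C(\Omega)$ (dense), $G_2^{(n)}\uparrow A$ (dense), $m_n\to\infty$ and $\delta_n\to 0$, so that for each $n$ there are a C$^*$-algebra $B_n$, contractive $(G_1^{(n)}\odot G_2^{(n)},\delta_n)$-multiplicative maps $\varphi_n,\psi_n:C(\Omega)\otimes A\to B_n$ and an $(L,N)$-full homomorphism $\sigma_n:A\to B_n$, for which no unitary $u\in M_{m^2+1}(B_n)^\sim$ and points $z_1,\dots,z_m\in\Omega$ (with any $m\in\mathbb{N}$) satisfy the desired approximation to within $\varepsilon$ on $F_1,F_2$. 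Set $B:=\prod_n B_n/\bigoplus_n B_n$ and let $\Phi,\Psi:C(\Omega)\otimes A\to B$ and $S:A\to B$ be the maps induced by $(\varphi_n)_n$, $(\psi_n)_n$ and $(\sigma_n)_n$. Because $\delta_n\to 0$ and $G_1^{(n)}, G_2^{(n)}$ exhaust $C(\Omega)$ and $A$, the maps $\Phi$ and $\Psi$ are genuine $*$-homomorphisms; since $A$ is nuclear they are nuclear; and $S$ is a homomorphism which is full — indeed the uniform bounds $L(a,\varepsilon),N(a,\varepsilon)$ pass to the quotient and show $S$ is full in the usual sense.

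Next I would address the $KK$-input. Since $A$ is $KK$-equivalent to $\{0\}$, so is $C(\Omega)\otimes A$ (the external Kasparov product with $[\mathrm{id}_{C(\Omega)}]$ kills it, or more directly $KK(C(\Omega)\otimes A,-)\cong KK(A,C(\Omega)\otimes -)$ and the first variable is $KK$-trivial). Hence $KK_{\mathrm{nuc}}(C(\Omega)\otimes A,B)=0$, so trivially $[\Phi]=[\Psi]$ there. This is the point of the $KK$-triviality hypothesis: it lets us skip any comparison of $K$-theoretic data and invoke Proposition~\ref{pro:EN8.2} directly. One subtlety is that $B$ is not separable, so Proposition~\ref{pro:EN8.2} does not literally apply; here I would invoke Blackadar's separabilization technique (\cite[II.8.5]{Bla}, \cite[Lemma 8.4]{EN}): replace $B$ by a separable C$^*$-subalgebra $B_0$ containing the (separable) images of $\Phi,\Psi,S$ and arrange that $S|_{A}$ remains full into $B_0$ with the same $(L,N)$, enlarging $B_0$ if necessary inside a further subalgebra to which the fullness witnesses belong, and check $[\Phi]=[\Psi]$ still vanishes in $KK_{\mathrm{nuc}}(C(\Omega)\otimes A,B_0)$ (automatic, as this group is again $0$). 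Apply Proposition~\ref{pro:EN8.2} to $\Phi,\Psi,S$ on the finite sets $F_1,F_2$ with tolerance $\varepsilon/2$: we obtain $m\in\mathbb{N}$, points $z_1,\dots,z_m\in\Omega$ and a unitary $U\in M_{m^2+1}(B_0)^\sim\subset M_{m^2+1}(B)^\sim$ implementing the $\varepsilon/2$-approximate intertwining of the $m$-fold ampliations.

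Finally I would lift: write $U=[(U_n)_n]$ with $U_n\in M_{m^2+1}(B_n)^\sim$ approximately unitary, perturb each $U_n$ to an honest unitary (possible for large $n$ since $\|U_n^*U_n-1\|\to 0$ along the relevant index set), and use the quotient norm to choose, for a set of indices $n$ lying outside every member of the free-ultrafilter-free setting (i.e. for all large $n$), that the displayed inequality holds at the $n$-th coordinate with the \emph{same} $m$ and the \emph{same} $z_1,\dots,z_m$, now with tolerance $<\varepsilon$. This contradicts the choice of $B_n,\varphi_n,\psi_n,\sigma_n$, and the proposition follows.

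The main obstacle, as usual in this circle of arguments, is the bookkeeping in the non-separable reduction: one must make sure that the fullness of $S$ descends to a fullness of the restriction to a separable subalgebra $B_0$ \emph{with uniform constants coming from the same $L,N$} (so that after representing the separable subalgebra back in the product one recovers honest $(L,N)$-fullness at cofinitely many coordinates), and that passing to $B_0$ does not disturb the vanishing of the relevant $KK$-group — here both points are handled because $KK$-triviality of $A$ makes every such group zero regardless of the target, and because $(L,N)$-fullness is expressed by a countable family of $\varepsilon$-approximate conditions that can be absorbed into the separabilization. Everything else — that $\Phi,\Psi$ are homomorphisms, nuclearity, the matrix-amplification and $e_m$-cut-down bookkeeping of Proposition~\ref{pro:EN8.2}, and the final lifting of the unitary — is routine and parallels \cite[Proposition 8.12]{EN} verbatim.
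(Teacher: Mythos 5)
Your overall strategy is exactly the paper's: negate the statement, assemble the counterexamples into $\prod_n B_n/\bigoplus_n B_n$, observe that $\Phi,\Psi$ become genuine nuclear homomorphisms and $\Sigma$ a full homomorphism, separabilize via \cite[Lemma 8.4]{EN}, use $KK$-triviality of $C(\Omega)\otimes A$ to make $[\Phi]=[\Psi]$ automatic, apply Proposition~\ref{pro:EN8.2}, and lift the resulting unitary back to the coordinates. All of that matches the paper's proof.

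There is, however, one concrete gap in the endgame, caused by a misstated negation. You assert that the $n$-th counterexample $(B_n,\varphi_n,\psi_n,\sigma_n)$ admits no suitable unitary ``with any $m\in\mathbb{N}$.'' That is not what the negation of the proposition gives you: negating ``there exist $G_1,G_2,m,\delta$ such that \dots'' only yields, for each fixed quadruple $(G_1^{(n)},G_2^{(n)},m_n,\delta_n)$, a counterexample that fails for that \emph{specific} $m_n$ (i.e.\ no unitary in $M_{m_n^2+1}(B_n)^{\sim}$ and no $m_n$ points work). Proposition~\ref{pro:EN8.2}, applied in the ultraproduct, hands you some $m$ depending on $\Phi,\Psi,\Sigma$, and after lifting you get, for large $n$, a unitary in $M_{m^2+1}(B_n)^{\sim}$ with $m$ points --- which does not by itself contradict the failure at size $m_n^2+1$ with $m_n$ points. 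To close the argument you need the extra (easy but necessary) step the paper performs explicitly: since $m_n\to\infty$, for large $n$ one has $m\le m_n$; choose additional points $z_{m+1},\dots,z_{m_n}\in\Omega$ and extend $u_n$ by the identity to a unitary in $M_{m_n^2+1}(B_n)^{\sim}$; on the added summands the two sides of the estimate agree exactly, so the inequality survives and you contradict the correctly stated failure at level $n$. Without this monotonicity-in-$m$ observation your ``contradiction'' is with a hypothesis you never actually established. (A minor further quibble: your parenthetical isomorphism $KK(C(\Omega)\otimes A,-)\cong KK(A,C(\Omega)\otimes-)$ is not a valid adjunction in general; your first justification, taking the exterior product of $0=[\mathrm{id}_A]$ with $[\mathrm{id}_{C(\Omega)}]$, is the correct one and suffices. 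Also, your worry about descending $(L,N)$-fullness back to cofinitely many coordinates is moot: fullness of $\Sigma$ is only needed inside the separable subalgebra $B$ in order to invoke Proposition~\ref{pro:EN8.2}; nothing about fullness needs to return to the coordinates.)
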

\begin{proof}
Let finite subsets $F_1\subset C(\Omega)$, $F_2\subset A$ and $\varepsilon>0$. On the contrary, suppose that the proposition were false for $F_1$, $F_2$ and $\varepsilon$. 
Then for any $n\in\mathbb{N}$, there exist a C$^*$-algebra $B_n$, contractive c.p. maps $\varphi_n, \psi_n : C(\Omega)\otimes A \to B_n$ and an $(L,N)$-full homomorphism 
$\sigma_n :A \to B_n$ such that 
$$
\| \varphi_n (xy)- \varphi_n(x)\varphi_n(y) \| \to 0, \quad  \| \psi_n (xy)- \psi_n(x)\psi_n(y) \| \to 0 
$$
as $n\to\infty$ for any $x,y\in C(\Omega)\otimes A$ and there exist no unitaries in $M_{n^2+1}(B_n)^{\sim}$ and $n$ elements in $\Omega$ satisfying the conclusion of the proposition. 

Define homomorphisms $\Phi$ and $\Psi$ from $C(\Omega) \otimes A$ to $\prod B_n/\bigoplus B_n$ by
$$ 
\Phi (x) := (\varphi_n(x))_n, \quad \Psi (x) := (\psi_n(x))_n
$$ 
for any $x\in C(\Omega)\otimes A$, and define a homomorphism $\Sigma$ from $A$ to $\prod B_n/\bigoplus B_n$ 
by 
$$
\Sigma (a):= (\sigma_n(a))_n
$$ for any $a\in A$. Since $\sigma_n$ is $(L,N)$-full for any $n\in\mathbb{N}$, $\Sigma$ is full in $\prod B_n/\bigoplus B_n$. 
By \cite[Lemma 8.4]{EN}, there exists a separable C$^*$-subalgebra $B$ of $\prod B_n/\bigoplus B_n$ such that 
$$
\Phi (C(\Omega)\otimes A), \Psi  (C(\Omega)\otimes A), \Sigma (A) \subset B
$$
and $\Sigma$ is full in $B$. 

Since $C(\Omega)\otimes A$ is non-unital, nuclear and $KK$-equivalent to $\{0 \}$, it follows from Proposition \ref{pro:EN8.2} that there exist 
$m\in \mathbb{N}$, $z_1,z_2,...,z_m\in \Omega$ and 
a unitary element $U$ in $M_{m^2+1}(B)^{\sim}$ such that 
\begin{align*}
 \| U^{*}(\Phi(f\otimes a) \oplus & \overbrace{\bigoplus_{k=1}^m f(z_k)\Sigma (a) \oplus \cdots \oplus\bigoplus_{k=1}^m f(z_k)\Sigma (a) }^m) U \\
& - \Psi(f\otimes a)\oplus \overbrace{\bigoplus_{k=1}^m f(z_k)\Sigma (a) \oplus \cdots \oplus \bigoplus_{k=1}^m f(z_k)\Sigma (a)}^m\| < \varepsilon 
\end{align*}
for any $f\in F_1$ and $a\in F_2$. 
It is easy to see that $U$ can be lifted to a unitary element $(u_n)_{n\in\mathbb{N}}$ in $\prod M_{m^2+1} (B_n)^{\sim}$. 
Note that we may assume $u_n=b_n+1$ for some $b_n\in M_{m^{2}+1}(B_n)$. 
For sufficiently large $n\in \mathbb{N}$, 
we have 
\begin{align*}
 \| u_n^{*}(\varphi_n(f\otimes a) \oplus & \overbrace{\bigoplus_{k=1}^m f(z_k)\sigma_n (a) \oplus \cdots \oplus \bigoplus_{k=1}^m f(z_k)\sigma_n (a)}^m) u_n \\
& - \psi_n(f\otimes a)\oplus \overbrace{\bigoplus_{k=1}^m f(z_k)\sigma_n (a) \oplus \cdots \oplus \bigoplus_{k=1}^m f(z_k)\sigma_n (a)}^m\| < \varepsilon 
\end{align*}
for any $f\in F_1$ and $a\in F_2$. 
Take elements $z_{m+1},...,z_{n}\in \Omega$, and let $u$ be a trivial unitary extension in $M_{n^2+1}(B_n)^{\sim}$ from $u_n$. Then 
\begin{align*}
 \| u^{*}(\varphi_n(f\otimes a) \oplus & \overbrace{\bigoplus_{k=1}^n f(z_k)\sigma_n (a) \oplus \cdots \oplus\bigoplus_{k=1}^n f(z_k)\sigma_n (a) }^n) u \\
& - \psi_n(f\otimes a)\oplus \overbrace{\bigoplus_{k=1}^n f(z_k)\sigma_n (a) \oplus \cdots \oplus \bigoplus_{k=1}^n f(z_k)\sigma_n (a)}^n\| < \varepsilon 
\end{align*}
for any $f\in F_1$ and $a\in F_2$. This is a contradiction. Therefore the proof is complete. 
\end{proof}

The following lemma is an analogous lemma of \cite[Lemma 2.2 (iii)]{KR2}. 

\begin{lem}\label{lem:h-LN-full}
Let $A$ be a C$^*$-algebra, and let $L: A_{+,1}\setminus \{0\} \times (0,1 )\to \mathbb{N}$ and $N: A_{+,1}\setminus \{0\}\times (0,1)\to (0,\infty)$ 
be maps. Then there exist maps $L^{\prime}: A_{+,1}\setminus \{0\} \times (0,1 )\to \mathbb{N}$ and $N^{\prime}: A_{+,1}\setminus \{0\}\times (0,1)\to (0,\infty)$ such that 
the following holds. Let $B$ be a C$^*$-algebra, and let $C$ be a hereditary subalgebra of $B$.
If $\sigma$ is a homomorphism from $A$ to $C\subset B$ such that for any $\varepsilon\in (0,1)$, $a\in A_{+,1}\setminus\{0\}$ and 
$b\in C_{+,1}$, there exist 
elements $x_1,x_2,...,x_{L(a,\varepsilon)}$ in $B$ such that 
$$
\| x_i \| \leq N(a,\varepsilon)
$$ 
for any $i=1,2,...,L(a,\varepsilon)$ and 
$$
\| b - \sum_{i=1}^{L(a,\varepsilon)} x_i \sigma (a) x_{i}^* \| < \varepsilon ,
$$
then $\sigma$ is $(L^{\prime},N^{\prime})$-full in $C$. 
\end{lem}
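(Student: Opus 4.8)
The plan is to show that if $\sigma$ lands in a hereditary subalgebra $C$ of $B$ and is "full in $C$ via approximations with coefficients in all of $B$", then in fact the coefficients can be replaced by coefficients in $C$, at the cost of enlarging the bounds $L$ and $N$ in a way that depends only on $(a,\varepsilon)$ and on the original $L,N$ — not on $B$, $C$ or $\sigma$. The standard trick is to squeeze the $B$-coefficients into $C$ using an approximate unit of $C$ together with the hereditary property: if $e\in C_{+,1}$ and $exe = y$ is close to something in $C$, then one uses $e$ (or powers of $e$) as a cutoff.

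First I would set up the quantitative bookkeeping. Fix $a\in A_{+,1}\setminus\{0\}$ and $\varepsilon\in(0,1)$. Given a target $b\in C_{+,1}$, apply the hypothesis with $a$ and a suitably small tolerance $\varepsilon'$ (to be chosen as a function of $\varepsilon$ and $N(a,\varepsilon')$) to get $x_1,\dots,x_{L(a,\varepsilon')}\in B$ with $\|x_i\|\le N(a,\varepsilon')$ and $\|b-\sum_i x_i\sigma(a)x_i^*\|<\varepsilon'$. Since $b\in C$ and $C$ is hereditary, for any $e\in C_{+,1}$ with $\|eb e - b\|$ small we may sandwich: $\|b - \sum_i (ex_i)\sigma(a)(ex_i)^*\| = \|b - e(\sum_i x_i\sigma(a)x_i^*)e\| \le \|b - ebe\| + \|e\|^2\varepsilon' $. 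Here $ex_i\in CB$, which is \emph{not} yet in $C$, so I would instead sandwich \emph{once more}, or better, use the following observation: because $\sigma(a)\in C$ and $C$ is hereditary, $\overline{C\sigma(a)C}\subseteq C$, so I would cut the middle instead — write $x_i\sigma(a)x_i^* = x_i\sigma(a)^{1/2}\cdot\sigma(a)^{1/2}x_i^*$ and replace $x_i\sigma(a)^{1/2}$ by $x_i\sigma(a)^{1/2}$ composed with a cutoff that brings it into $C$. Concretely, set $y_i := \sigma(a)^{1/4} x_i^*$; then $x_i\sigma(a)x_i^* = x_i\sigma(a)^{3/4}\cdot \sigma(a)^{1/4}x_i^*$, and one checks $\sigma(a)^{3/4}x_i^* \in C$ since it equals $\sigma(a)^{1/2}(\sigma(a)^{1/4}x_i^*)$ and $\sigma(a)^{1/2}\in C$ with $\sigma(a)^{1/4}x_i^* \in \overline{A B}$... the cleanest route is: $x_i\sigma(a)x_i^* = (x_i\sigma(a)^{1/2})\sigma(a)(x_i\sigma(a)^{1/2})^*$ is not an improvement, so I would instead observe directly that $z_i := x_i\sigma(a)^{1/2}$ satisfies $z_i\in C$ because $\sigma(a)^{1/2}\in C$, $C$ hereditary, hence $B\sigma(a)^{1/2}\subseteq B C$, and $\overline{BC}\cap \overline{CB}$... — the point I will make rigorous is that $w_i:=\sigma(a)^{1/2}x_i^*x_i\sigma(a)^{1/2}\in C$ and $\sum_i \sigma(a)^{1/2}x_i^* \cdot x_i\sigma(a)^{1/2}$ lets us rewrite $b \approx \sum_i x_i\sigma(a)x_i^*$. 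So the correct move is: since $b\in C$, $\|b - b^{1/2}\,(\textstyle\sum_i x_i\sigma(a)x_i^*)\,b^{1/2}\|$ can be made $<2\varepsilon'$ (using $b^{1/2}\in C_{+,1}$, $\|b\|\le 1$), and $b^{1/2}x_i\sigma(a)x_i^*b^{1/2} = (b^{1/2}x_i)\sigma(a)(b^{1/2}x_i)^*$ with $b^{1/2}x_i\in CB$; then applying the same sandwich from the \emph{right} with another element of $C$ is circular. The honest fix, which I will use, is Cohen factorization / the hereditary identity $C = \overline{eBe}$ for $e$ a strictly positive element of $C$ (or an approximate unit): choose $e\in C_{+,1}$ with $\|eb-b\|<\varepsilon'$; then $b\approx ebe$ and $ebe \approx e(\sum x_i\sigma(a)x_i^*)e = \sum (ex_i)\sigma(a)(ex_i)^*$, and now $ex_i\in eB\subseteq \overline{eBe}\cdot(\text{stuff})$ — still not in $C$. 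So the genuinely correct and standard argument, and the one I will write, is: $ex_i\sigma(a)x_i^*e$, and since $\sigma(a)\in \overline{eCe}$ (as $e$ is an approximate unit for $C$ and $\sigma(a)\in C$), we have $\sigma(a)\approx e^{1/2}\sigma(a)e^{1/2}$ within $\varepsilon'$, giving $ex_i\sigma(a)x_i^*e \approx (ex_ie^{1/2})\sigma(a)(ex_ie^{1/2})^*$ with $ex_ie^{1/2}\in eBe^{1/2}$, and finally — using $eBe\subseteq C$ and absorbing $e^{1/2}$ — the corrected coefficients $\tilde x_i := e^{1/2}x_ie^{1/2}\in e^{1/2}Be^{1/2}\subseteq C$ do the job after one consistent substitution $\sigma(a)\approx e^{1/2}\sigma(a)e^{1/2}$ on both sides. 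Thus $\|b - \sum_i \tilde x_i\sigma(a)\tilde x_i^*\| < O(\varepsilon')$ with $\|\tilde x_i\|\le N(a,\varepsilon')$.

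Second, I would close the $\varepsilon$-loop: choose $\varepsilon'=\varepsilon'(a,\varepsilon)$ small enough (a fixed function of $\varepsilon$ and $N(a,\cdot)$, e.g. $\varepsilon' < \varepsilon/(10(1+N(a,\varepsilon')))$, resolvable by monotonicity or by replacing $N$ by a dominating non-increasing function in $\varepsilon'$) so that the $O(\varepsilon')$ above is $<\varepsilon$; then define $L'(a,\varepsilon):=L(a,\varepsilon')$ and $N'(a,\varepsilon):=N(a,\varepsilon')$. These depend only on $a,\varepsilon,L,N$ and not on $B,C,\sigma$, which is exactly the uniformity the statement demands. This shows $\sigma$ is $(L',N')$-full \emph{in} $C$.

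The main obstacle is precisely the bookkeeping in the previous paragraph: getting the replacement coefficients to genuinely lie in $C$ (not merely in $\overline{CB}$ or $\overline{BC}$) while keeping the norm bound $N$ and controlling the accumulated error linearly in $\varepsilon'$ and $L$. The resolution is to use the approximate unit $e$ of $C$ twice — once to bring $b$ inside $eCe$ and once to bring $\sigma(a)$ inside $eCe$ — and to symmetrize so the new coefficients are $e^{1/2}x_ie^{1/2}\in C$; everything else (choice of $\varepsilon'$, definition of $L',N'$) is routine. This is the analogue of \cite[Lemma 2.2 (iii)]{KR2}, and the argument there transfers essentially verbatim.
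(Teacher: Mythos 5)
Your plan is workable but follows a genuinely different route from the paper. After several false starts you settle on: take $e$ in an approximate unit of $C$, replace the coefficients by $\tilde{x}_i:=e^{1/2}x_ie^{1/2}\in CBC\subseteq C$, and pay for the two substitutions $b\approx e^{1/2}be^{1/2}$ and $\sigma(a)\approx e^{1/2}\sigma(a)e^{1/2}$. This does work, but note that the circularity you worry about in choosing $\varepsilon'$ is illusory: the only error term involving $N$ is $L(a,\varepsilon')N(a,\varepsilon')^2\|e^{1/2}\sigma(a)e^{1/2}-\sigma(a)\|$, and since $e$ is chosen \emph{after} the $x_i$ and never enters the definition of $L'$ or $N'$, you may simply take $\varepsilon'=\varepsilon/3$ and then pick $e$ small enough; no monotonicity assumption or modification of $N$ is needed. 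The paper avoids all of this bookkeeping with a cleaner trick: it perturbs the $a$-slot rather than the $\varepsilon$-slot, setting $L'(a,\varepsilon):=L(a^2,\varepsilon)$ and $N'(a,\varepsilon):=N(a^2,\varepsilon)$, applying the hypothesis to $a^2$ with target $b^{1/2}$, and writing
$$
b-\sum_i b^{\frac{1}{4}}x_i\sigma(a^{\frac{1}{2}})\,\sigma(a)\,\sigma(a^{\frac{1}{2}})x_i^*b^{\frac{1}{4}}
= b^{\frac{1}{4}}\Bigl(b^{\frac{1}{2}}-\sum_i x_i\sigma(a^2)x_i^*\Bigr)b^{\frac{1}{4}},
$$
so the error is not amplified at all and the new coefficients $b^{\frac{1}{4}}x_i\sigma(a^{\frac{1}{2}})$ land in $CBC\subseteq C$ automatically, with the same norm bound; no approximate unit is needed. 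Both arguments prove the lemma; the paper's is shorter and gives exact error preservation, while yours keeps $L'$ and $N'$ tied to the original element $a$ at the cost of an $\varepsilon/3$-argument and an auxiliary cutoff.
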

\begin{proof}
For any $a\in A_{+,1}\setminus\{0 \}$ and $\varepsilon\in (0,1)$, let 
$$
L^{\prime}(a, \varepsilon) := L(a^2, \varepsilon), \quad N^{\prime}(a, \varepsilon) := N(a^2, \varepsilon).
$$
Then $L^{\prime}$ and $N^{\prime}$ have the desired property. Indeed, let $\sigma$ be a homomorphism from $A$ to $C\subset B$ satisfying the assumption above. 
For any $b\in C_{+,1}$ and $\varepsilon\in (0,1)$, there exist 
elements $x_1,x_2,...,x_{L^{\prime}(a,\varepsilon)}$ in $B$ such that 
$\| x_i \| \leq N^{\prime}(a,\varepsilon)$ for any $i=1,2,...,L^{\prime}(a,\varepsilon)$ and 
$$
\| b^{\frac{1}{2}} - \sum_{i=1}^{L^{\prime}(a,\varepsilon)} x_i \sigma (a^2) x_{i}^* \| < \varepsilon .
$$
We have 
$$
\| b - \sum_{i=1}^{L^{\prime}(a,\varepsilon)} b^{\frac{1}{4}}x_i \sigma(a^{\frac{1}{2}}) \sigma (a) \sigma(a^{\frac{1}{2}}) x_{i}^* b^{\frac{1}{4}} \| \leq 
\| b^{\frac{1}{2}} - \sum_{i=1}^{L^{\prime}(a,\varepsilon)} x_i \sigma (a^2) x_{i}^* \| < \varepsilon .
$$
Since $C$ is a hereditary subalgebra of $B$, we see that $b^{\frac{1}{4}}x_i \sigma(a^{\frac{1}{2}})\in C$ for any $i=1,2,...,L^{\prime}(a,\varepsilon)$. 
Therefore $\sigma$ is $(L^{\prime},N^{\prime})$-full in $C$. 
\end{proof}

Essentially the same proof as \cite[Lemma 8.15]{EN} show the following lemma. 
Roughly speaking, this lemma says that if target algebras have strict comparison, then the $(L,N)$-fullness can be controlled by traces. 

\begin{lem}\label{lem:EN-comparison} (Elliott-Niu) \ \\
For any $\varepsilon >0$ and $\delta >0$, there exist $\ell(\delta)\in\mathbb{N}$ and $n(\varepsilon)>0$ such that the following holds. 
Let $A$ be a C$^*$-algebra, and let $S$ be a subset of $T_1(A)$. Assume that $A$ has strict comparison with respect to $S$. 
If $a$ and $b$ are positive contractions in $A$ such that 
$$
\tau (a) > d_{\tau}(b)\delta
$$ 
for any $\tau\in S$, then there exist $x_1,x_2,...,x_{\ell(\delta)}\in A$ such that 
$$
\| x_i \| \leq n(\varepsilon)
$$ 
for any $i=1,2,...,\ell(\delta)$ and 
$$
\| b - \sum_{i=1}^{\ell(\delta)} x_i a x_{i}^* \| < \varepsilon .
$$
\end{lem}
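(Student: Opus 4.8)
The plan is to reduce the statement to a single comparison inequality for Cuntz classes and then unwind the definition of $\precsim$ quantitatively, tracking the number of summands and the norm of the coefficients. First I would fix $\varepsilon>0$ and $\delta>0$ and, following \cite[Lemma 8.15]{EN}, choose an auxiliary parameter: let $\ell(\delta)\in\mathbb{N}$ be large enough that $\ell(\delta)\delta>1$ (or $>1$ by a definite margin), and let $n(\varepsilon)>0$ be chosen afterwards depending only on $\varepsilon$. The point of $\ell(\delta)$ is that the $\ell(\delta)$-fold ``amplification'' $a^{\oplus\ell(\delta)}$ of $a$, viewed in $M_{\ell(\delta)}(A)_+$, satisfies, for every $\tau\in S$,
\[
d_{\tau\otimes\mathrm{Tr}_{\ell(\delta)}}(b\otimes e_{11}) = d_\tau(b) < \ell(\delta)\,\tau(a) \le d_{\tau\otimes\mathrm{Tr}_{\ell(\delta)}}\bigl(a^{\oplus\ell(\delta)}\bigr),
\]
where the middle inequality uses the hypothesis $\tau(a)>d_\tau(b)\delta$ together with $\ell(\delta)\delta>1$. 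Strict comparison of $A$ with respect to $S$ (Definition \ref{def:si}(2)) then yields $b\otimes e_{11}\precsim a^{\oplus\ell(\delta)}$ in $M_{\ell(\delta)}(A)$.

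The second step is to make this Cuntz subequivalence quantitative. Since $b\precsim a^{\oplus\ell(\delta)}$, there is an element $z\in M_{\ell(\delta)}(A)$ with $\| z^*\, a^{\oplus\ell(\delta)}\, z - b\otimes e_{11}\| $ as small as we like, say $<\varepsilon/2$; cutting down by $e_{11}$ on both sides we get $z_0\in M_{1,\ell(\delta)}(A)$, i.e.\ a row $(y_1,\dots,y_{\ell(\delta)})$ with $y_i\in A$, such that $\bigl\| \sum_{i=1}^{\ell(\delta)} y_i^*\, a\, y_i - b \bigr\| < \varepsilon/2$. Setting $x_i:=y_i^*$ gives the desired form $\| b-\sum_i x_i a x_i^*\|<\varepsilon/2<\varepsilon$ with exactly $\ell(\delta)$ summands, and $\ell(\delta)$ depends only on $\delta$. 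It remains only to bound $\|x_i\|$ by a constant $n(\varepsilon)$ depending only on $\varepsilon$: this is done by a standard functional-calculus trick (as in \cite[Lemma 8.15]{EN} or \cite[Lemma 2.2]{KR2}). Namely, first replace $b$ by $(b-\varepsilon/4)_+$, which is Cuntz-below $b$ and still approximates $b$ to within $\varepsilon/4$ in the relevant sense; an element implementing $(b-\varepsilon/4)_+\precsim a^{\oplus\ell(\delta)}$ can be chosen with norm controlled by $1/\sqrt{\varepsilon/4}$ up to absolute constants, because $(b-\varepsilon/4)_+$ is supported where $b>\varepsilon/4$. Concretely, if $w^*\,a^{\oplus\ell(\delta)}\,w$ is within $\varepsilon/4$ of $(b-\varepsilon/4)_+$, one truncates $w$ using continuous functional calculus on $w^*w$ at level $\sim\varepsilon$, discarding the part of small ``size'' at a cost that is absorbed into the $\varepsilon$-error, and the truncated $w'$ has $\|w'\|\le C\varepsilon^{-1/2}$; then put $n(\varepsilon):=C\varepsilon^{-1/2}$ and $x_i:=$ the $i$-th column entry of ${w'}^*$.

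The main obstacle I expect is the last step: extracting explicit, $S$-independent and $A$-independent bounds $\ell(\delta)$ and $n(\varepsilon)$ from the purely existential statement $b\precsim a^{\oplus\ell(\delta)}$. Strict comparison only gives \emph{some} $z$; turning it into a row of $\ell(\delta)$ elements of uniformly bounded norm requires the R{\o}rdam-style truncation argument and careful bookkeeping of which errors are charged to $\varepsilon$. Everything else—the choice of $\ell(\delta)$ with $\ell(\delta)\delta>1$, the dimension-function inequality, and the invocation of strict comparison with respect to $S$—is routine. Since the referenced \cite[Lemma 8.15]{EN} carries out exactly this bookkeeping, the proof is essentially a citation with the minor observation that the argument there only uses strict comparison with respect to the given subset $S$ of traces, not all of $T_1(A)$, and does not require $A$ to be unital.
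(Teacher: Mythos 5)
The paper does not actually prove this lemma: it states it with the remark that ``essentially the same proof as \cite[Lemma 8.15]{EN}'' applies, so the only thing to compare against is that reference. Your first two steps are correct and are indeed how that argument begins: choosing $\ell(\delta)$ with $\ell(\delta)\delta\ge 1$ gives $d_{\tau\otimes\mathrm{Tr}_{\ell(\delta)}}(b\otimes e_{11})=d_{\tau}(b)<\ell(\delta)\tau(a)\le d_{\tau\otimes\mathrm{Tr}_{\ell(\delta)}}(a\otimes 1_{\ell(\delta)})$ for every $\tau\in S$ (using $\tau(a)\le d_{\tau}(a)$ for the contraction $a$), strict comparison with respect to $S$ yields $b\otimes e_{11}\precsim a\otimes 1_{\ell(\delta)}$ in $M_{\ell(\delta)}(A)$, and compressing an implementing element to its first column produces $\|b-\sum_{i=1}^{\ell(\delta)}x_iax_i^*\|<\varepsilon$ with exactly $\ell(\delta)$ summands. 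Your closing remark that the argument only uses the traces in $S$ and no unitality is also consistent with the way the lemma is stated here.

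The gap is in the step you yourself flag as the main obstacle, and the mechanism you propose for it would fail. First, truncating $w$ by continuous functional calculus on $w^*w$ does not preserve the approximation: $w^*(a\otimes 1_{\ell(\delta)})w$ does not commute with $w^*w$, so cutting off the part where $w^*w$ is large changes $w^*(a\otimes 1_{\ell(\delta)})w$ uncontrollably, while cutting off the part where it is small does not reduce $\|w\|$ at all. Second, and more fundamentally, no bound of the form $\|w'\|\le C\varepsilon^{-1/2}$ on the implementing column can hold. Take $A=\mathbb{C}$, $b=1$ and $a=\epsilon_0$ with $\epsilon_0$ slightly larger than $\delta$: any scalars $y_1,\dots,y_{\ell(\delta)}$ with $\epsilon_0\sum_{i}|y_i|^2\ge 1-\varepsilon$ must satisfy $\sum_{i}|y_i|^2\ge(1-\varepsilon)/\epsilon_0\approx 1/\delta$, so the norm of the column blows up as $\delta\to 0$ no matter how $\varepsilon$ is chosen. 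The lemma only asserts a bound on each individual $x_i$, and in this example that is achievable precisely because the total mass $1/\delta$ can be spread evenly over the $\ell(\delta)\ge 1/\delta$ summands, giving $|y_i|^2\le 1$ for each $i$. Producing such an evenly distributed family in general --- rather than truncating a single unstructured implementing element --- is the actual content of \cite[Lemma 8.15]{EN}, and it is the one idea your plan does not contain; everything else in the proposal is the routine part.
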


The following lemma is a variant of \cite[Lemma 3.5]{M1}. 

\begin{lem}\label{lem:split}
Let $A$ be $\sigma$-unital C$^*$-algebra, and let $\tau$ be an extremal tracial state on $A$. If $a$ is a positive element in $F(A)$, then 
$$
\tau_{\omega} (\rho (a\otimes x)) = \tau_{\omega}(a) \tau (x)  
$$
for any $x\in A$. 
\end{lem}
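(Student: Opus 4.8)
The plan is to reduce the statement to a computation about a single tracial state on a von Neumann algebra, where traciality and normality make the multiplicativity transparent. Let $a\in F(A)_+$ be represented by a central sequence $(x_n)_n$ of positive elements (by the functional calculus we may take each $x_n\geq 0$ and $\|x_n\|\leq\|a\|$). Pick $h\in\mathrm{Ped}(A)_+\setminus\{0\}$ as in Remark \ref{rem:trace-ideal}, so that $\tau_\omega$ on $F(A)$ is given by $\tau_\omega([(y_n)_n])=\lim_{n\to\omega}\tau(y_nh)/\tau(h)$, and recall that $\rho(a\otimes x)=(x_nx)_n\in A^\omega$, where $\tau_\omega$ on $A^\omega$ is simply $\tau_\omega((z_n)_n)=\lim_{n\to\omega}\tau(z_n)$. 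Thus what must be shown is
\begin{equation*}
\lim_{n\to\omega}\tau(x_n x) \;=\; \Bigl(\lim_{n\to\omega}\frac{\tau(x_n h)}{\tau(h)}\Bigr)\,\tau(x)
\end{equation*}
for every $x\in A$, and by linearity it suffices to treat $x\in A_+$, and then by density (approximating $x^{1/2}$ by elements of $\mathfrak N_\tau$ in $\|\cdot\|_2$, exactly as in the preliminaries) we may assume $x=y^*y$ with $y\in\mathfrak N_\tau$, indeed $x\in\overline{h A h}$ for a suitable choice of $h$ since $A$ is simple and any nonzero positive element of $\mathrm{Ped}(A)$ is full.

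First I would pass to the von Neumann algebra $M:=\pi_\tau(A)''$ via the homomorphism $\varrho_A\colon F(A)\to M_\omega$ of Proposition \ref{pro:clear-inclusion}. Write $\bar x:=\varrho_A(a)\in M_\omega$, represented by $(\pi_\tau(x_n))_n$. The extended trace $\tau$ on $M$ (bounded since $\tau$ is a tracial \emph{state}, so $M$ is finite) induces the normal tracial state $\tau_\omega$ on $M_\omega$, and $\tau_\omega\circ\varrho_A$ agrees with $\tau_\omega$ on $F(A)$ — this is where the normalization by $h$ is used, but since $\tau$ is already a state we can in fact arrange $h$ with $\tau(h)$ close to $1$, or more cleanly observe $\lim_{n\to\omega}\tau(x_nh)=\lim_{n\to\omega}\tau(x_n)\tau(h)$ once the von Neumann-level factorization is established and then specialize $x\to h$. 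In $M_\omega$ the key point is that $M_\omega$ is the asymptotic centralizer, so every element of $M_\omega$ commutes with the image of $M$ \emph{only asymptotically in $\|\cdot\|_2$}; nonetheless, for a \emph{trace} this is enough: for $z\in M$ one has $\tau_\omega(\bar x\, z)=\lim_{n\to\omega}\tau(\pi_\tau(x_n)z)$, and I claim this equals $\tau_\omega(\bar x)\tau(z)$.

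The heart of the argument is this last claim, which is a standard fact about asymptotically central sequences in a II$_1$ (or finite) von Neumann algebra: if $(w_n)_n$ represents an element of $M_\omega$ then the linear functional $z\mapsto\lim_{n\to\omega}\tau(w_n z)$ on $M$ is a normal trace (traciality because $\tau(w_nz_1z_2)-\tau(w_nz_2z_1)=\tau([w_n,z_1]z_2)\to 0$ uses $\|[w_n,z_1]\|_2\to 0$ together with $\|z_2\|\leq\|z_2\|_\infty<\infty$ and Cauchy–Schwarz), hence is a scalar multiple of $\tau$ by uniqueness of the trace on the finite factor $M$ — here $M$ is a factor because $\tau$ is extremal — and evaluating at $z=1$ identifies the scalar as $\tau_\omega(\bar x)$. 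Unwinding $\varrho_A$ and the identifications $\tau_\omega$ on $F(A)$, $A^\omega$, then gives $\tau_\omega(\rho(a\otimes x))=\tau_\omega(a)\tau(x)$ first for $x\in\mathfrak N_\tau\cdot\mathfrak N_\tau^*$ and then, by the $\|\cdot\|_2$-density and boundedness already recorded, for all $x\in A$.

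The main obstacle I anticipate is bookkeeping rather than conceptual: one must check that $\varrho_A$ intertwines the trace $\tau_\omega$ on $F(A)$ (defined with the auxiliary full positive $h\in\mathrm{Ped}(A)$) with the honest normal trace on $M_\omega$, and that the hereditary-subalgebra reductions ($A\rightsquigarrow\overline{hAh}$, $M\rightsquigarrow pMp$ with $p$ the support projection of $\pi_\tau(h)$, central support $1$ since $h$ is full) are compatible — precisely the commuting-square diagram from the proof of Proposition \ref{pro:sato}. Once that is in place, the factorization $\tau_\omega(\bar x z)=\tau_\omega(\bar x)\tau(z)$ is immediate from uniqueness of the trace on the finite factor, and no estimate beyond the two-line Cauchy–Schwarz computation above is needed.
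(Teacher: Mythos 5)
Your proposal is correct, and its core coincides with the paper's: in both arguments the functional $x\mapsto\lim_{n\to\omega}\tau(a_nx)$ is shown to be a positive tracial functional dominated by $\tau$ (traciality coming from centrality of the representing sequence plus Cauchy--Schwarz), and extremality of $\tau$ forces it to equal $\lambda\tau$. Where you genuinely diverge is in the ambient algebra and in how $\lambda$ is identified. The paper stays at the C$^*$-level: it uses extremality of $\tau$ in $T_1(A)$ directly and computes $\lambda=\lim_m\tau'(h_m)$ with an approximate unit, which requires the interchange $\lim_m\lim_{n\to\omega}\tau(a_nh_m)=\lim_{n\to\omega}\tau(a_n)$ (outsourced to \cite[Proposition 5.3]{Na2}). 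You instead pass to $M=\pi_\tau(A)''$ --- a finite factor precisely because $\tau$ is extremal --- invoke uniqueness of the tracial state on a finite factor, and read off $\lambda=\tau'(1)$ at the unit of $M$; this neatly sidesteps the limit interchange, at the cost of the extra machinery of Proposition \ref{pro:clear-inclusion} and the compatibility checks you flag. Two pieces of your bookkeeping should be trimmed: since $\tau$ is a tracial \emph{state}, $\tau_{\omega}$ on $F(A)$ is given directly by $\tau_{\omega}([(x_n)_n])=\lim_{n\to\omega}\tau(x_n)$ (the first proposition of Section \ref{sec:Pre}), so the $h$-normalized formula of Remark \ref{rem:trace-ideal} --- and with it your appeal to simplicity of $A$, which is not assumed in Lemma \ref{lem:split} --- is unnecessary and, as you note, borderline circular; likewise the reduction to $x=y^*y$ with $y\in\mathfrak{N}_{\tau}$ is vacuous because $\mathfrak{N}_{\tau}=A$ for a bounded trace.
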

\begin{proof}
There exists a positive contraction $(a_n)_n$ in $\mathcal{W}_{\omega}$ such that $a=[(a_n)_n]$. Note that we have $\rho (a\otimes x)= (a_nx)_n=(a_n^{\frac{1}{2}}xa_n^{\frac{1}{2}})_n$. 
For any $x\in A$, define $\tau^{\prime}(x) := \lim_{n\to\omega} \tau (a_nx)$. 
Then  $\tau^{\prime}$ is a trace on $A$ and $\tau^{\prime}\leq \tau$. Since $\tau$ is an extremal tracial state on $A$, there exists $\lambda \geq 0$ such that 
$\tau^{\prime} = \lambda \tau$. Let $\{h_m\}_{m\in\mathbb{N}}$ be an approximate unit for $A$. Then $\lim_{m\to\infty} \tau(h_m) = 1$ because $\tau$ is a state. 
Hence $\lambda = \lim_{m\to \infty}\tau^{\prime} (h_m)$. Similar arguments as in the proof of \cite[Proposition 5.3]{Na2} show 
$$
\lim_{m\to\infty} \lim_{n\to \omega} \tau (a_nh_m) = \lim_{n\to \omega} \tau (a_n) .
$$
Therefore $\lambda = \lim_{n\to \omega} \tau (a_n)$. We obtain the conclusion. 
\end{proof}

For a projection $p$ in $F( \mathcal{W})$, define a homomorphism $\sigma_p$ from $\mathcal{W}$ to $\mathcal{W}^\omega_p$ by 
$$
\sigma_p (x) := \rho (p\otimes x)
$$
for any $x\in \mathcal{W}$. 

\begin{pro}\label{pro:full-inclusion}
There exist maps $L: \mathcal{W}_{+,1}\setminus \{0\}\times (0,1)\to \mathbb{N}$ and 
$N: \mathcal{W}_{+,1}\setminus \{0\}\times (0,1) \to (0,\infty)$ such that the following holds. 
If $p$ be a projection in $F( \mathcal{W})$ such that $\tau_{\omega} (p)>0$ where $\tau$ is the unique tracial state on $\mathcal{W}$, 
then $\sigma_p$ is $(L,N)$-full. 
\end{pro}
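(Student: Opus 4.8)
The plan is to combine three of the preceding results: Lemma~\ref{lem:h-LN-full}, which reduces fullness in the hereditary subalgebra $\mathcal{W}^\omega_p$ to producing coefficients in the larger algebra $\mathcal{W}^\omega$; the trace identity of Lemma~\ref{lem:split}; and Lemma~\ref{lem:EN-comparison}, which I will apply \emph{fibrewise} inside $\mathcal{W}$ itself (whose strict comparison is already recorded) rather than in the ultrapower. Throughout, $\tau$ is the unique tracial state on $\mathcal{W}$, which is faithful and extremal, and $s$ denotes a strictly positive contraction in $\mathcal{W}$.

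First I would apply Lemma~\ref{lem:h-LN-full} with $A=\mathcal{W}$, $B=\mathcal{W}^\omega$ and $C=\mathcal{W}^\omega_p$ (a hereditary subalgebra of $\mathcal{W}^\omega$). This reduces the assertion to producing maps $L_0,N_0$ on $(\mathcal{W}_{+,1}\setminus\{0\})\times(0,1)$, \emph{independent of $p$}, such that for every projection $p\in F(\mathcal{W})$ with $\tau_\omega(p)>0$, every $(a,\varepsilon)$, and every $b\in(\mathcal{W}^\omega_p)_{+,1}$ there are $x_1,\dots,x_{L_0(a,\varepsilon)}\in\mathcal{W}^\omega$ with $\|x_i\|\le N_0(a,\varepsilon)$ and $\big\|b-\sum_i x_i\,\rho(p\otimes a)\,x_i^*\big\|<\varepsilon$; indeed the maps $L',N'$ output by Lemma~\ref{lem:h-LN-full} depend only on $L_0,N_0$, hence stay independent of $p$, and $(L',N')$-fullness of $\sigma_p$ in $\mathcal{W}^\omega_p$ is precisely what we want.

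The heart of the argument is a pair of trace estimates. Fix $p,a,\varepsilon,b$ and put $\eta:=\varepsilon/3$. Choose a central sequence $(p_n)_n$ of positive contractions representing $p$ and set $a_n:=p_n^{1/2}ap_n^{1/2}$; since $(p_n)_n$ is central, $\rho(p\otimes a)=(a_n)_n$ in $\mathcal{W}^\omega$, so by Lemma~\ref{lem:split} (using extremality of $\tau$) we have $\lim_{n\to\omega}\tau(a_n)=\tau_\omega(\rho(p\otimes a))=\tau_\omega(p)\tau(a)$. Lift $b$ to a sequence $(b_n)_n$ of positive contractions. Since $\mathcal{W}^\omega_p=\overline{\rho(p\otimes s)\mathcal{W}^\omega\rho(p\otimes s)}$ is the hereditary subalgebra generated by $\rho(p\otimes s)$, we have $b\precsim\rho(p\otimes s)$; together with $d_{\tau_\omega}(\rho(p\otimes s))=\lim_m\tau_\omega(\rho(p\otimes s)^{1/m})=\lim_m\tau_\omega(p)\tau(s^{1/m})=\tau_\omega(p)$ (Lemma~\ref{lem:split} and $\tau(s^{1/m})\to1$) this yields $\lim_{n\to\omega}\tau(b_n)=\tau_\omega(b)\le d_{\tau_\omega}(b)\le\tau_\omega(p)$. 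Using the elementary estimate $d_\tau\big((b_n-\eta)_+\big)\le\tau(b_n)/\eta$ and setting $\delta:=\eta\tau(a)/8$ (which depends only on $a,\varepsilon$), one gets $X\in\omega$ on which $\tau(a_n)>\frac12\tau_\omega(p)\tau(a)$ and $\tau(b_n)<2\tau_\omega(p)$, and then for $n\in X$
\[
\tau(a_n)>\frac12\tau_\omega(p)\tau(a)=4\,\tau_\omega(p)\frac{\delta}{\eta}>2\,\tau_\omega(p)\frac{\delta}{\eta}\ \ge\ \frac{\tau(b_n)}{\eta}\,\delta\ \ge\ d_\tau\big((b_n-\eta)_+\big)\,\delta .
\]
The essential feature is that the factor $\tau_\omega(p)$ cancels, so the trace gap $\delta$ is uniform over all admissible $p$.

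Finally I would apply Lemma~\ref{lem:EN-comparison} to $\mathcal{W}$, which has strict comparison with $S=T_1(\mathcal{W})=\{\tau\}$, with $\varepsilon':=\varepsilon/3$ and this $\delta$: it provides $\ell(\delta)\in\mathbb{N}$ and $n(\varepsilon/3)>0$ (depending only on $a,\varepsilon$) and, for each $n\in X$, elements $x_{1,n},\dots,x_{\ell(\delta),n}\in\mathcal{W}$ with $\|x_{i,n}\|\le n(\varepsilon/3)$ and $\big\|(b_n-\eta)_+-\sum_i x_{i,n}a_nx_{i,n}^*\big\|<\varepsilon/3$; set $x_{i,n}:=0$ for $n\notin X$. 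Then $x_i:=(x_{i,n})_n\in\mathcal{W}^\omega$ satisfy $\|x_i\|\le n(\varepsilon/3)$ and $\big\|(b-\eta)_+-\sum_i x_i\rho(p\otimes a)x_i^*\big\|\le\varepsilon/3$, hence $\big\|b-\sum_i x_i\rho(p\otimes a)x_i^*\big\|\le\eta+\varepsilon/3<\varepsilon$; taking $L_0(a,\varepsilon):=\ell(\delta)$, $N_0(a,\varepsilon):=n(\varepsilon/3)$ finishes the reduction and Lemma~\ref{lem:h-LN-full} then delivers the desired $L,N$. I expect the only genuinely delicate point to be the uniform-gap step above: the cut-down $b\mapsto(b-\eta)_+$ is what forces $d_\tau$ of the fibres to be controlled by $\tau(b_n)/\eta$, which scales in $p$ the same way as $\tau(a_n)\asymp\tau_\omega(p)\tau(a)$ does; once that is arranged the rest is bookkeeping with the three cited lemmas.
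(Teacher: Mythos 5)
Your argument is correct, and it shares the paper's skeleton --- Lemma \ref{lem:split} for the identity $\tau_{\omega}(\sigma_p(a))=\tau_{\omega}(p)\tau(a)$, Lemma \ref{lem:EN-comparison} to manufacture the coefficients, and Lemma \ref{lem:h-LN-full} to pass from coefficients in $\mathcal{W}^{\omega}$ to fullness in the hereditary subalgebra $\mathcal{W}^{\omega}_p$ --- but it handles the one substantive step differently. The paper applies Lemma \ref{lem:EN-comparison} a single time inside $\mathcal{W}^{\omega}$: it observes $(p_n)_n b (p_n)_n=b$, hence $d_{\tau_{\omega}}(b)\leq\tau_{\omega}(p)$, gets the gap $\tau_{\omega}(\sigma_p(a))>d_{\tau_{\omega}}(b)\,\tau(a)/2$ with the $p$-independent constant $\delta=\tau(a)/2$, and then invokes the fact that $\mathcal{W}^{\omega}$ has strict comparison with respect to $\{\tau_{\omega}\}$ (justified by citing the proof of \cite[Lemma 1.23]{BBSTWW}). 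You instead apply Lemma \ref{lem:EN-comparison} fibrewise in $\mathcal{W}$, which needs only the strict comparison of $\mathcal{W}$ itself recorded after Theorem \ref{thm:Razak}; the price is the cut-down $b\mapsto (b-\eta)_+$ together with the estimate $d_{\tau}((b_n-\eta)_+)\leq\tau(b_n)/\eta$, which is exactly what makes the fibrewise trace gap uniform in $p$ --- your cancellation of the factor $\tau_{\omega}(p)$ between $\tau(a_n)\to\tau_{\omega}(p)\tau(a)$ and $\tau(b_n)\to\tau_{\omega}(b)\leq\tau_{\omega}(p)$ is the right observation, and the constants check out ($\delta=\varepsilon\tau(a)/24$ depends only on $(a,\varepsilon)$, and $\eta+\varepsilon/3<\varepsilon$). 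In exchange for avoiding the ultrapower comparison result you take on more bookkeeping and an $\varepsilon$-dependent $L$, whereas the paper's version is shorter and yields $L(a,\varepsilon)=\ell(\tau(a)/2)$ depending on $a$ alone; both are valid proofs of the proposition.
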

\begin{proof}
For any $\varepsilon >0$ and $\delta >0$, take $\ell(\delta)\in\mathbb{N}$ and $n(\varepsilon)>0$ in Lemma \ref{lem:EN-comparison}. 
Define 
$$
L(a, \varepsilon ):=\ell \left(\frac{\tau (a)}{2} \right), \quad N(a, \varepsilon ):= n \left(\varepsilon\right) 
$$
for any $a\in \mathcal{W}_{+,1}\setminus \{0\}$ and $\varepsilon\in (0,1)$. 
Note that $\tau (a)>0$ since $\mathcal{W}$ is simple. 

Let $p\in F(\mathcal{W})$ be a projection in $F(\mathcal{W})$ such that $\tau_{\omega}(p) >0$. 
There exists a positive contraction $(p_n)_n$ in $\mathcal{W}_{\omega}$ such that $p=[(p_n)_n]$. 
Let $a\in \mathcal{W}_{+,1}\setminus\{0\}$, $b\in (\mathcal{W}^{\omega}_{p})_{+,1}$ and $\varepsilon >0$. 
Since $\mathcal{W}^{\omega}_{p}=\overline{\rho(p\otimes s)\mathcal{W}^{\omega}\rho (p\otimes s)}= \overline{(p_ns)_n\mathcal{W}^{\omega} (sp_n)_n}$ where $s$ is a strictly 
positive element in $\mathcal{W}$, we have $(p_n)_n b (p_n)_n=b$. Hence $d_{\tau_{\omega}} (b) \leq \tau_{\omega} (p)$. 
By Lemma \ref{lem:split} and $\tau_{\omega}(p)>0$, we have 
$$
\tau_{\omega} (\sigma_{p} (a) )= \tau_{\omega} (p) \tau (a) > \tau_{\omega} (p) \frac{\tau (a)}{2} \geq d_{\tau_{\omega}} (b) \frac{\tau (a)}{2}. 
$$
Since $\mathcal{W}$ has strict comparison and the tracial state on $\mathcal{W}$ is unique, $\mathcal{W}^{\omega}$ has strict comparison 
with respect to $\{\tau_{\omega}\}$ (see, for example, the proof of \cite[Lemma 1.23]{BBSTWW}). 
Therefore Lemma \ref{lem:EN-comparison} implies that
there exist 
elements $x_1,x_2,...,x_{L(a,\varepsilon)}$ in $\mathcal{W}^{\omega}$ such that 
$$
\| x_i \| \leq N(a,\varepsilon)
$$ 
for any $i=1,2,...,L(a,\varepsilon)$ and 
$$
\| b - \sum_{i=1}^{L(a,\varepsilon)} x_i \sigma_p(a) x_{i}^* \| < \varepsilon .
$$
Since $\mathcal{W}^{\omega}_{p}$ is a hereditary subalgebra of $\mathcal{W}^{\omega}$, 
we obtain the conclusion by Lemma \ref{lem:h-LN-full}.  
\end{proof}

The following corollary is an immediate consequence of Proposition \ref{pro:stable-uniqueness} and Proposition \ref{pro:full-inclusion}.

\begin{cor}\label{cor:stable-uniqueness} 
For any finite subsets $F_1\subset C(\Omega)$, $F_2\subset \mathcal{W}$, $\varepsilon>0$, 
there exist finite subsets $G_1\subset C(\Omega)$, $G_2\subset \mathcal{W}$, $m\in\mathbb{N}$  and $\delta >0$ such that the following holds. 
Let $p$ be a projection in $F(\mathcal{W})$ such that $\tau_{\omega} (p)>0$ where $\tau$ is the unique tracial state on $\mathcal{W}$. 
For any contractive ($G_1\odot G_2, \delta$)-multiplicative maps $\varphi, \psi : C(\Omega)\otimes \mathcal{W}\to \mathcal{W}^{\omega}_p$, 
there exist a unitary element $u$ in $M_{m^2+1}(\mathcal{W}^{\omega}_p)^{\sim}$ and $z_1,z_2,...,z_m\in\Omega$ such that 
\begin{align*}
\| u^{*}(\varphi(f\otimes a) \oplus & \overbrace{\bigoplus_{k=1}^m f(z_k)\rho (p\otimes a)\oplus \cdots \oplus\bigoplus_{k=1}^m f(z_k)\rho (p\otimes a) }^m) u \\
& - \psi(f\otimes a)\oplus \overbrace{\bigoplus_{k=1}^m f(z_k)\rho (p\otimes a) \oplus \cdots \oplus \bigoplus_{k=1}^m f(z_k)\rho (p\otimes a)}^m\| < \varepsilon 
\end{align*}
for any $f\in F_1$ and $a\in F_2$. 
\end{cor}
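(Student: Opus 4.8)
The plan is to deduce the statement formally from Proposition \ref{pro:stable-uniqueness} and Proposition \ref{pro:full-inclusion}, with the order of quantifiers being the only delicate point. First I would fix, once and for all, the maps $L:\mathcal{W}_{+,1}\setminus\{0\}\times(0,1)\to\mathbb{N}$ and $N:\mathcal{W}_{+,1}\setminus\{0\}\times(0,1)\to(0,\infty)$ supplied by Proposition \ref{pro:full-inclusion}; the key feature is that these depend only on $\mathcal{W}$ and not on any projection $p$. Then, given finite subsets $F_1\subset C(\Omega)$, $F_2\subset\mathcal{W}$ and $\varepsilon>0$, I would apply Proposition \ref{pro:stable-uniqueness} with $A=\mathcal{W}$ and with these fixed $L$ and $N$. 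Here one checks that $\mathcal{W}$ satisfies the standing hypotheses of that proposition: it is separable, non-unital and nuclear, and it is $KK$-equivalent to $\{0\}$, which is standard for the Razak-Jacelon algebra since it has trivial $K$-groups and satisfies the UCT. This produces finite subsets $G_1\subset C(\Omega)$, $G_2\subset\mathcal{W}$, an $m\in\mathbb{N}$ and a $\delta>0$ — crucially independent of the target C$^*$-algebra $B$ and of the full homomorphism $\sigma$ — such that the stable uniqueness estimate holds for any contractive $(G_1\odot G_2,\delta)$-multiplicative maps into $B$ together with any $(L,N)$-full homomorphism $\sigma:\mathcal{W}\to B$.

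With $G_1,G_2,m,\delta$ now fixed, I would take an arbitrary projection $p\in F(\mathcal{W})$ with $\tau_{\omega}(p)>0$ and set $B=\mathcal{W}^{\omega}_p$ and $\sigma=\sigma_p$. By Proposition \ref{pro:full-inclusion}, $\sigma_p$ is $(L,N)$-full, so Proposition \ref{pro:stable-uniqueness} applies to the given contractive $(G_1\odot G_2,\delta)$-multiplicative maps $\varphi,\psi:C(\Omega)\otimes\mathcal{W}\to\mathcal{W}^{\omega}_p$ and to $\sigma_p$. It yields a unitary $u\in M_{m^2+1}(\mathcal{W}^{\omega}_p)^{\sim}$ and points $z_1,\dots,z_m\in\Omega$ satisfying the displayed inequality; recalling that $\sigma_p(a)=\rho(p\otimes a)$ by definition, this is precisely the assertion of the corollary.

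I do not expect any genuine obstacle here: the corollary is a packaging step. The only things to verify are that $\mathcal{W}$ meets the hypotheses of Proposition \ref{pro:stable-uniqueness} (done above) and that the $(L,N)$-fullness of $\sigma_p$ from Proposition \ref{pro:full-inclusion} is uniform over all projections $p$ with $\tau_{\omega}(p)>0$ — which is exactly what allows $G_1,G_2,m,\delta$ to be selected before $p$ is chosen. All the substantive content has already been carried out in the two cited propositions and, beneath them, in Proposition \ref{pro:EN8.2}, Lemma \ref{lem:EN-comparison}, and the fact that $\mathcal{W}^{\omega}$ has strict comparison with respect to $\{\tau_{\omega}\}$.
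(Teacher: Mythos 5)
Your proposal is correct and is exactly the paper's argument: the paper states the corollary as an immediate consequence of Proposition \ref{pro:stable-uniqueness} and Proposition \ref{pro:full-inclusion}, and your write-up simply makes explicit the quantifier bookkeeping (the maps $L,N$ are fixed independently of $p$, so $G_1,G_2,m,\delta$ can be chosen before $p$) together with the identification $\sigma_p(a)=\rho(p\otimes a)$. No gaps.
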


\section{Properties of $F(\mathcal{W})$}\label{sec:Properties}

In this section we shall consider properties of $F(\mathcal{W})$. In the rest of this paper, we denote by $\tau$ the unique tracial state on $\mathcal{W}$. 
Since $\mathcal{W}$ has property (SI), the following proposition is an immediate consequence of Theorem \ref{thm:Matui-Sato}. 

\begin{pro}\label{pro:w-comparison} 
(i) The central sequence C$^*$-algebra $F(\mathcal{W})$ has a unique tracial state $\tau_{\omega}$. \ \\
(ii) If $a$ and $b$ are positive elements in $F(\mathcal{W})$ satisfying $d_{\tau_{\omega}}(a)< d_{\tau_{\omega}} (b)$, then there exists an element $r\in F(\mathcal{W})$ such that 
$r^*br=a$. Moreover, $F(\mathcal{W})$ has strict comparison.
\end{pro}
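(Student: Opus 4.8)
The plan is to derive Proposition~\ref{pro:w-comparison} directly from the Matui--Sato theorem (Theorem~\ref{thm:Matui-Sato}) by verifying its hypotheses for $A=\mathcal{W}$. First I would record that $\mathcal{W}$ is a simple separable infinite-dimensional nuclear C$^*$-algebra with a unique tracial state $\tau$ and no unbounded traces, and that $\mathcal{W}$ has property (SI) --- this last point was already noted at the end of the Razak--Jacelon subsection, since $\mathcal{W}$ is UHF-stable, hence $\mathcal{Z}$-stable, hence has strict comparison and property (SI) by \cite{Ror}, \cite{MS}, \cite{Na2}. Thus all hypotheses of Theorem~\ref{thm:Matui-Sato} are met with $T_1(\mathcal{W})=\{\tau\}$.

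For (i): part (i) of Theorem~\ref{thm:Matui-Sato} says that every tracial state $\sigma$ on $F(\mathcal{W})$ is of the form $\tau'_{\omega}$ for some $\tau'\in T_1(\mathcal{W})$. Since $T_1(\mathcal{W})=\{\tau\}$, the only candidate is $\tau_{\omega}$, so $F(\mathcal{W})$ has at most one tracial state. One still needs that $\tau_{\omega}$ is genuinely a tracial state on $F(\mathcal{W})$, i.e.\ well-defined; but this follows from the earlier Remark~\ref{rem:trace-ideal} (or from the displayed Proposition on $\tau_\omega$), using that $\tau$ is the unique trace up to scalar. Hence $F(\mathcal{W})$ has the unique tracial state $\tau_{\omega}$.

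For (ii): this is immediate from part (ii) of Theorem~\ref{thm:Matui-Sato} once we observe that, because $T_1(\mathcal{W})=\{\tau\}$, the comparison hypothesis ``$d_{\tau_{\omega}}(a)<d_{\tau_{\omega}}(b)$ for all $\tau\in T_1(\mathcal{W})$'' reduces to the single inequality $d_{\tau_{\omega}}(a)<d_{\tau_{\omega}}(b)$ in the statement. Then Theorem~\ref{thm:Matui-Sato}(ii) produces $r\in F(\mathcal{W})$ with $r^*br=a$, and also asserts that $F(\mathcal{W})$ has strict comparison, which is exactly the final clause.

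There is essentially no genuine obstacle here: the content is entirely in Theorem~\ref{thm:Matui-Sato} and in the previously established facts that $\mathcal{W}$ has property (SI) and a unique tracial state. The only points requiring a line of care are (a) confirming that $\mathcal{W}$ indeed satisfies ``infinite-dimensional'' and ``finitely many extremal tracial states and no unbounded traces'' --- both trivially true --- and (b) noting that the uniqueness of the trace collapses the quantifier over $T_1(\mathcal{W})$ so that the statements match verbatim. So the proof is just an application of the cited theorem.
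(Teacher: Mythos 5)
Your proposal is correct and matches the paper exactly: the paper states the proposition as an immediate consequence of Theorem \ref{thm:Matui-Sato} applied to $\mathcal{W}$, using the previously noted facts that $\mathcal{W}$ has property (SI) and a unique tracial state. Your additional care about the well-definedness of $\tau_\omega$ and the collapse of the quantifier over $T_1(\mathcal{W})$ is sound but adds nothing beyond what the paper leaves implicit.
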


The following proposition shows that $F(\mathcal{W})$ has many projections. 

\begin{pro}\label{pro:key-pro}
(i) For any $N\in\mathbb{N}$, there exists a unital homomorphism from $M_N(\mathbb{C})$ to $F(\mathcal{W})$. \ \\
(ii) For any $\theta\in [0,1]$, there exists a non-zero projection $p$ in $F(\mathcal{W})$ such that $\tau_{\omega}(p)=\theta$. \ \\
(iii) Let $h$ be a positive element in $F(\mathcal{W})$ such that $d_{\tau_{\omega}}(h)>0$. For any $\theta \in [0, d_{\tau_{\omega}}(h))$, 
there exists a non-zero projection $p$ in $\overline{hF(\mathcal{W})h}$ such that $\tau_{\omega}(p)=\theta$. 
\end{pro}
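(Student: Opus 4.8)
The three statements are closely linked, so the plan is to derive (i) first, then use it together with the strict comparison from Proposition \ref{pro:w-comparison} to obtain (ii) and (iii). For (i): since $\mathcal{W}$ is UHF-stable by Theorem \ref{thm:Razak}(i), we have $\mathcal{W}\cong \mathcal{W}\otimes M_{N^\infty}$ for every $N$. The standard argument (cf. Matui-Sato, \cite{MS2}) then produces, for each $N$, a sequence of approximately central, approximately multiplicative order-zero (indeed genuinely multiplicative in the limit) maps $M_N(\mathbb{C})\to \mathcal{W}$ coming from the matrix units in successive tensor factors of $M_{N^\infty}$; passing to the limit along $\omega$ gives a unital homomorphism $M_N(\mathbb{C})\to F(\mathcal{W})$. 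One must check the image has a unit equal to the unit of $F(\mathcal{W})$ — this is where the UHF-stable identification $\mathcal{W}\cong\mathcal{W}\otimes M_{N^\infty}$, with approximate unit of the form $h_n\otimes 1$, is used, so that $[(h_n\otimes 1)_n]$ is the unit of $F(\mathcal{W})$ and the matrix-unit system sits under it.

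\textbf{Part (ii).} Given $\theta\in[0,1]$, write $\theta$ as a limit of dyadic-type rationals or, more directly, realize it through the $2^\infty$ UHF structure: by (i) with $N=2^k$, $F(\mathcal{W})$ contains a unital copy of $M_{2^k}(\mathbb{C})$ whose minimal projections have trace $2^{-k}$ under $\tau_\omega$ (using that $\tau_\omega$ restricted to such a matrix subalgebra must be the normalized trace, since the unit maps to the unit and $\tau_\omega$ is the unique tracial state by Proposition \ref{pro:w-comparison}(i)). Summing $\lfloor 2^k\theta\rfloor$ of these minimal projections gives a projection of trace close to $\theta$; to get the exact value $\theta$ one can either argue by a telescoping/inductive construction inside nested matrix algebras, producing a decreasing-error sequence of projections, or invoke strict comparison of $F(\mathcal{W})$: pick a positive element $h\in F(\mathcal{W})$ with $d_{\tau_\omega}(h)=\theta$ (available from the matrix subalgebras) and any projection $q$ with $\tau_\omega(q)$ slightly larger; comparison gives $h\precsim q$, and then one extracts an honest projection $p\le q$ with $\tau_\omega(p)=\theta$ using that a hereditary subalgebra of $F(\mathcal{W})$ containing a matrix subalgebra has projections of all the requisite traces. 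The cleanest route is probably the explicit inductive construction in the $2^\infty$ tower, choosing at stage $k+1$ whether to refine inside the current projection, so that $\tau_\omega(p_k)\to\theta$ and the $p_k$ can be arranged to converge (or simply take a limit projection along $\omega$ of a constructed sequence).

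\textbf{Part (iii).} This is the relativized version of (ii). Let $h\in F(\mathcal{W})_+$ with $d_{\tau_\omega}(h)>0$ and fix $\theta\in[0,d_{\tau_\omega}(h))$. First produce, by (ii), a projection $q\in F(\mathcal{W})$ with $\theta<\tau_\omega(q)<d_{\tau_\omega}(h)$. Then $d_{\tau_\omega}(q)=\tau_\omega(q)<d_{\tau_\omega}(h)$, so by the comparison statement of Proposition \ref{pro:w-comparison}(ii) there is $r\in F(\mathcal{W})$ with $r^*hr=q$; hence $rr^*\in\overline{hF(\mathcal{W})h}$ is a positive element Cuntz-equivalent to $q$, in particular with a projection in its hereditary subalgebra equivalent to $q$, so $\overline{hF(\mathcal{W})h}$ contains a projection $q'$ with $\tau_\omega(q')=\tau_\omega(q)>\theta$. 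Now run the argument of (ii) \emph{inside} the hereditary subalgebra $\overline{q'F(\mathcal{W})q'}$: this is again a unital C$^*$-algebra (unit $q'$) with strict comparison inherited from $F(\mathcal{W})$, and one checks it still contains unital matrix subalgebras (e.g. by cutting the matrix systems from (i) down by $q'$ using comparison), so the same dyadic construction yields a projection $p\le q'$ with $\tau_\omega(p)=\theta$, and $p\in\overline{hF(\mathcal{W})h}$.

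\textbf{Main obstacle.} The routine parts are the approximate-centrality estimates in (i). The genuinely delicate point is pinning down the \emph{exact} trace value in (ii) (and (iii)): approximate values come for free from the UHF/matrix structure, but producing a projection of trace exactly $\theta$ requires either a careful limiting/telescoping argument along $\omega$ (exploiting that $\omega$-limits of sequences of projections are projections) or a clean application of strict comparison to pass from a positive element of the right ``size'' to an honest projection. I expect the cleanest writeup extracts, from strict comparison, a projection $p$ with $d_{\tau_\omega}(p)=\theta$ by comparing a suitable positive element (with dimension function exactly $\theta$) against a slightly larger projection and then using that in a C$^*$-algebra with these comparison properties Cuntz-dominated positive elements have subequivalent projections of the matching trace.
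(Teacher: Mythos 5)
Your proposal is correct and follows the same overall strategy as the paper: (i) via the identification $\mathcal{W}\cong\mathcal{W}\otimes M_{N^\infty}$ and matrix units placed in successive tensor factors, (ii) via dyadic projections coming from the $M_{2^\infty}$ structure, and (iii) via the comparison statement of Proposition \ref{pro:w-comparison}. The one point worth flagging is the issue you single out as the main obstacle, namely hitting the trace value $\theta$ exactly. Of the routes you sketch, the paper uses the one you mention last and almost in passing: since $\mathbb{Z}[1/2]$ is dense in $\mathbb{R}$, one chooses projections $q_n\in M_{2^\infty}$ with $\tau^{\prime}(q_n)\to\theta$ and puts $q_n$ (a \emph{varying} projection, not a fixed one) in the $n$-th entry of the representative sequence; the class $p=[(h_n\otimes 1\otimes\cdots\otimes 1\otimes q_n\otimes 1\otimes\cdots)_n]$ is already a projection in $F(\mathcal{W})$ and $\tau_{\omega}(p)=\lim_{n\to\omega}\tau(h_n)\tau^{\prime}(q_n)=\theta$ on the nose, so no telescoping and no comparison argument is needed. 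Your alternative route via strict comparison (extracting ``an honest projection $p\le q$ with $\tau_\omega(p)=\theta$'' from $h\precsim q$ with $d_{\tau_\omega}(h)=\theta$) is the one I would not rely on as stated: Cuntz subequivalence does not by itself produce a projection of a prescribed exact trace. For (iii) the paper is also more direct than your two-step argument: it takes a projection $q$ with $\tau_{\omega}(q)=\theta$ \emph{exactly} from (ii), applies Proposition \ref{pro:w-comparison} to get $r$ with $rhr^*=q$, and sets $p=h^{\frac{1}{2}}r^*rh^{\frac{1}{2}}$, a projection in $\overline{hF(\mathcal{W})h}$ Murray--von Neumann equivalent to $q$; this avoids having to re-establish unital matrix subalgebras inside the corner $\overline{q^{\prime}F(\mathcal{W})q^{\prime}}$, which is the least justified step in your write-up.
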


\begin{proof}
Let $\{h_n\}_{n\in\mathbb{N}}$ be an approximate unit for $\mathcal{W}$. 

(i) By Theorem \ref{thm:Razak}, $\mathcal{W}$ is isomorphic to $\mathcal{W}\otimes M_{N^{\infty}}= \mathcal{W}\otimes \bigotimes_{n\in\mathbb{N}} M_N(\mathbb{C})$. 
Define a map $\varphi$ from $M_N(\mathbb{C})$ to $F(\mathcal{W})\cong F(\mathcal{W}\otimes \bigotimes_{n\in\mathbb{N}} M_N(\mathbb{C}))$ by 
$$
\varphi (x) := [(h_n\otimes \overbrace{1\otimes \cdots \otimes 1}^n \otimes x \otimes 1\otimes \cdots )_n ] 
$$
for any $x\in M_{N}(\mathbb{C})$. 
Then $\varphi$ is a unital homomorphism. \ \\

(ii) 
Since $\mathbb{Z}[1/2]$ is dense in $\mathbb{R}$, there exists a sequence $\{q_n \}_{n\in\mathbb{N}}$ of non-zero projections in $M_{2^{\infty}}$ such that 
$\lim_{n\to \infty}\tau^{\prime}(q_n) =\theta$, where $\tau^{\prime}$ is the unique tracial state on $M_{2^{\infty}}$.  
Put 
$$
p:= [(h_n\otimes \overbrace{1\otimes \cdots \otimes 1}^n \otimes q_n \otimes 1\otimes \cdots)_n] \in F(\mathcal{W}\otimes \bigotimes_{n\in\mathbb{N}} M_{2^{\infty}})\cong F(\mathcal{W}).
$$
Then $p$ is a non-zero projection in $F(\mathcal{W})$ such that $\tau_{\omega}(p)=\theta$. \ \\

(iii) There exists a non-zero projection $q$ in $F(\mathcal{W})$ such that $\tau_{\omega}(q) = \theta$ by (ii). 
Proposition \ref{pro:w-comparison} implies that 
there exists an element $r$ in $F(\mathcal{W})$ such that $rhr^* = q$. Let $p:= h^{\frac{1}{2}}r^*rh^{\frac{1}{2}}$, then $p$ is a non-zero projection in $\overline{hF(\mathcal{W})h}$ such that 
$\tau_{\omega}(p)=\theta$. 
\end{proof}

Recall that $\mathrm{ker}(\varrho_{\mathcal{W}})= \{x\in F(\mathcal{W})\; |\; \tau_{\omega}(x^*x)=0\}$. 

\begin{pro}\label{pro:comparison-full}
Let $x$ be an element in $F(\mathcal{W})$. Then $x$ is full if and only if $x\notin \mathrm{ker}(\varrho_{\mathcal{W}})$. 
\end{pro}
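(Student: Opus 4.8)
The plan is to use two structural facts about $F(\mathcal{W})$ established above: it is unital with a unique tracial state $\tau_\omega$ and it has strict comparison (Proposition \ref{pro:w-comparison}), together with the description $\mathrm{ker}(\varrho_{\mathcal{W}})=\{y\in F(\mathcal{W})\;|\;\tau_\omega(y^*y)=0\}$ recalled just above the statement. Here ``$x$ full'' means that the closed two-sided ideal of $F(\mathcal{W})$ generated by $x$ is all of $F(\mathcal{W})$.

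For the implication ``$x$ full $\Rightarrow x\notin\mathrm{ker}(\varrho_{\mathcal{W}})$'' I would argue by contraposition. Since $\varrho_{\mathcal{W}}$ is a $*$-homomorphism, $\mathrm{ker}(\varrho_{\mathcal{W}})$ is a closed two-sided ideal; it is proper because $\tau_\omega(1^*1)=\tau_\omega(1)=1\neq 0$ shows $1\notin\mathrm{ker}(\varrho_{\mathcal{W}})$. Hence if $x\in\mathrm{ker}(\varrho_{\mathcal{W}})$, the closed two-sided ideal generated by $x$ is contained in this proper ideal and so cannot be all of $F(\mathcal{W})$, i.e. $x$ is not full.

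For the converse, suppose $x\notin\mathrm{ker}(\varrho_{\mathcal{W}})$, so that $\tau_\omega(x^*x)>0$. First I would note $d_{\tau_\omega}(x^*x)>0$: since $t\leq t^{1/n}$ for $t\in[0,1]$, one has $\tau_\omega(x^*x)/\|x^*x\|\leq\tau_\omega\big((x^*x/\|x^*x\|)^{1/n}\big)$ for all $n$, whence $d_{\tau_\omega}(x^*x)\geq\tau_\omega(x^*x)/\|x^*x\|>0$. Now pick $N\in\mathbb{N}$ with $N\,d_{\tau_\omega}(x^*x)>1$, and compare, inside $M_N(F(\mathcal{W}))$ equipped with $\tau_\omega\otimes\mathrm{Tr}_N$, the unit $1$ of $F(\mathcal{W})$ embedded in the upper-left corner with the diagonal element $b$ having $N$ copies of $x^*x$ on the diagonal: one has $d_{\tau_\omega\otimes\mathrm{Tr}_N}(1)=1$ while $d_{\tau_\omega\otimes\mathrm{Tr}_N}(b)=N\,d_{\tau_\omega}(x^*x)>1$. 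Since $\tau_\omega$ is the only tracial state of $F(\mathcal{W})$ (hence of $M_N(F(\mathcal{W}))$), the strict comparison of $F(\mathcal{W})$, in the sense of Definition \ref{def:si}(2) applied to $M_N(F(\mathcal{W}))$, yields $1\precsim b$ in $M_N(F(\mathcal{W}))$. Consequently $1$ lies in the closed two-sided ideal of $M_N(F(\mathcal{W}))$ generated by $b$, which is $M_N(J)$ where $J$ is the closed two-sided ideal of $F(\mathcal{W})$ generated by $x^*x$ (equivalently, by $x$). Therefore $1\in J$, so $J=F(\mathcal{W})$ and $x$ is full.

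Given Proposition \ref{pro:w-comparison}, the proof is short and I do not expect a genuine obstacle; the one point requiring care is that Proposition \ref{pro:w-comparison}(ii) as stated only compares positive elements of $F(\mathcal{W})$ itself, whereas here the comparison must take place in $M_N(F(\mathcal{W}))$, so it is the ``strict comparison'' clause of that proposition (Definition \ref{def:si}(2)) that must be invoked rather than its first assertion. The routine facts that $a\precsim b$ forces $a$ into the closed ideal generated by $b$, that the ideals of $M_N(F(\mathcal{W}))$ are precisely the matrix amplifications of ideals of $F(\mathcal{W})$, and that $x$ and $x^*x$ generate the same closed ideal, will be used without comment.
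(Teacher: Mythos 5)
Your proof is correct, but it follows a different route from the paper's. You reduce fullness to a Cuntz comparison in a matrix amplification: from $\tau_{\omega}(x^*x)>0$ you get $d_{\tau_{\omega}}(x^*x)>0$, choose $N$ with $N\,d_{\tau_{\omega}}(x^*x)>1$, and use the strict comparison clause of Proposition \ref{pro:w-comparison} (which, per Definition \ref{def:si}(2), is already a statement about all $M_k(F(\mathcal{W}))$) to conclude $1\precsim\mathrm{diag}(x^*x,\dots,x^*x)$, hence $1$ lies in the ideal generated by $x$. The paper instead works entirely inside $F(\mathcal{W})$ and invokes Lemma \ref{lem:EN-comparison}: since $\tau_{\omega}(x^*x)>\delta\,d_{\tau_{\omega}}(b)$ for a suitable $\delta>0$ and any positive $b$, that lemma produces finitely many elements $z_1,\dots,z_{\ell(\delta)}$ with $\|b-\sum_i z_i x^*x z_i^*\|<\varepsilon$, which exhibits every positive element in the closed ideal generated by $x^*x$ directly. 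The two arguments rest on the same underlying comparison theory of $F(\mathcal{W})$; the paper's version has the advantage of being quantitative (it bounds the number of terms and their norms, the same mechanism used for $(L,N)$-fullness in Proposition \ref{pro:full-inclusion}), while yours is the more standard ``strict comparison detects fullness via traces'' argument and requires the auxiliary (routine) facts you list about ideals of matrix algebras and about $x$ versus $x^*x$. Both the easy direction and all the estimates you use check out.
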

\begin{proof}
It is obvious that if $x\in\mathrm{ker}(\varrho_{\mathcal{W}})$, then $x$ is not full in $F(\mathcal{W})$. 
Let $x\notin\mathrm{ker}(\varrho_{\mathcal{W}})$, then $\tau_{\omega} (x^*x) >0$. 
For any $b\in F(\mathcal{W})_{+}$, there exists a positive number $\delta$ such that $\tau_{\omega}(x^*x)>\delta d_{\tau_{\omega}}(b)$. 
Hence, for any $\varepsilon>0$, there exist  $z_1,z_2,...,z_{\ell(\delta)}\in F(\mathcal{W})$ such that 
$$
\| b - \sum_{i=1}^{\ell(\delta)}z_{i}x^*xz_{i}^*\| < \varepsilon 
$$
by Lemma \ref{lem:EN-comparison}. This shows the closed ideal generated by $x$ is equal to $F(\mathcal{W})$. 
Therefore $x$ is full. 
\end{proof}

Using the ideas in \cite{Ror1} and \cite{Rob2}, we shall show that certain elements in $F(\mathcal{W})$ can be approximated by invertible elements. 
We denote by $\mathrm{GL}(A)$ the set of invertible elements in $A$.

\begin{lem}\label{lem:ror1}
Let $a$ and $b$ be positive elements in $F(\mathcal{W})$ such that $a,b\notin\mathrm{ker}(\varrho_{\mathcal{W}}) $. Then there exist a unitary element $u$ and a projection 
$p^{\prime}$ in $F(\mathcal{W})$ such that $p^{\prime} \notin\mathrm{ker}(\varrho_{\mathcal{W}})$ and $p^{\prime}\in \overline{aF(\mathcal{W})a}\cap u(\overline{bF(\mathcal{W})b})u^*$. 
\end{lem}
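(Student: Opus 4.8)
The plan is to produce, inside each of the hereditary subalgebras $\overline{aF(\mathcal{W})a}$ and $\overline{bF(\mathcal{W})b}$, a projection with the same positive value of $\tau_{\omega}$, and then to show that two such projections are unitarily equivalent in $F(\mathcal{W})$; applying this to the two projections so constructed will yield the required $u$ and $p^{\prime}$. First recall from Proposition \ref{pro:clear-inclusion}, Proposition \ref{pro:sato} and Remark \ref{rem:trace-ideal} that the inclusion $\mathcal{W}\hookrightarrow\pi_{\tau}(\mathcal{W})^{''}$ induces a surjective homomorphism $\varrho_{\mathcal{W}}\colon F(\mathcal{W})\to\pi_{\tau}(\mathcal{W})^{''}_{\omega}=:\mathcal{M}$ with $\mathrm{ker}(\varrho_{\mathcal{W}})=\{x\in F(\mathcal{W})\;|\;\tau_{\omega}(x^{*}x)=0\}$; since $\mathcal{W}$ has a unique tracial state, $\mathcal{M}$ is a II$_{1}$ factor with faithful trace $\tau_{\mathcal{M}}$, and $\tau_{\omega}=\tau_{\mathcal{M}}\circ\varrho_{\mathcal{W}}$. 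As $a,b\notin\mathrm{ker}(\varrho_{\mathcal{W}})$, the images $\varrho_{\mathcal{W}}(a),\varrho_{\mathcal{W}}(b)$ are non-zero positive elements of $\mathcal{M}$, so $d_{\tau_{\omega}}(a)=d_{\tau_{\mathcal{M}}}(\varrho_{\mathcal{W}}(a))>0$ and similarly $d_{\tau_{\omega}}(b)>0$. Choosing $\theta$ with $0<\theta<\min\{d_{\tau_{\omega}}(a),d_{\tau_{\omega}}(b)\}$ and applying Proposition \ref{pro:key-pro}(iii) to $a$ and to $b$, we obtain projections $p\in\overline{aF(\mathcal{W})a}$ and $q\in\overline{bF(\mathcal{W})b}$ with $\tau_{\omega}(p)=\tau_{\omega}(q)=\theta$.

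The main step is to show that $p$ and $q$ are unitarily equivalent in $F(\mathcal{W})$. Their images $\varrho_{\mathcal{W}}(p)$ and $\varrho_{\mathcal{W}}(q)$ are projections of trace $\theta$ in the factor $\mathcal{M}$, and their complements both have trace $1-\theta$, so there is a unitary $w\in\mathcal{M}$ with $w\varrho_{\mathcal{W}}(q)w^{*}=\varrho_{\mathcal{W}}(p)$. The crux is to lift $w$ to a unitary $u\in F(\mathcal{W})$ with $uqu^{*}=p$. For this I would use that the trace-kernel ideal $\mathrm{ker}(\varrho_{\mathcal{W}})$ is a $\sigma$-ideal of $F(\mathcal{W})$ in the sense of Kirchberg \cite{Kir2}, which allows one to lift not only $w$ but the relation ``$u$ is a unitary and $uqu^{*}=p$'' with $p,q$ the given lifts of $\varrho_{\mathcal{W}}(p),\varrho_{\mathcal{W}}(q)$; concretely, one first lifts $w$ to a unitary $u_{0}\in F(\mathcal{W})$ with $\|u_{0}qu_{0}^{*}-p\|<1$ and then sets $u:=zu_{0}$, where $z$ is the unitary obtained from the usual perturbation of the close pair of projections $u_{0}qu_{0}^{*}$ and $p$. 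Passing to $\mathcal{M}$ is essential here: inside $F(\mathcal{W})$ alone, strict comparison (Proposition \ref{pro:w-comparison}) only produces Cuntz subequivalences and gives no control over the (a priori possibly obstructed) Murray--von Neumann equivalence of $1-p$ and $1-q$, whereas in a factor equal traces force equivalence of projections and of their complements.

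Finally, set $p^{\prime}:=p$. Then $p^{\prime}\in\overline{aF(\mathcal{W})a}$, and $p^{\prime}=uqu^{*}\in u\bigl(\overline{bF(\mathcal{W})b}\bigr)u^{*}$ because $q\in\overline{bF(\mathcal{W})b}$; moreover $\tau_{\omega}\bigl((p^{\prime})^{*}p^{\prime}\bigr)=\tau_{\omega}(p^{\prime})=\theta>0$, so $p^{\prime}\notin\mathrm{ker}(\varrho_{\mathcal{W}})$, which completes the argument. The main obstacle is the lifting of the conjugating unitary $w$ from $\mathcal{M}$ to $F(\mathcal{W})$; the construction of the equal-trace projections and the reduction to the II$_{1}$ factor are routine given Propositions \ref{pro:w-comparison}, \ref{pro:sato}, \ref{pro:key-pro} and Remark \ref{rem:trace-ideal}.
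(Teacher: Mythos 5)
Your overall strategy---produce equal-trace projections $p\in\overline{aF(\mathcal{W})a}$ and $q\in\overline{bF(\mathcal{W})b}$ and then conjugate $q$ exactly onto $p$ by a unitary of $F(\mathcal{W})$---is different from the paper's, and it breaks down precisely at the step you identify as the crux. If $u_0$ is a unitary lift of $w$ (such a lift exists, since $w=e^{ih}$ in the von Neumann algebra), then $\varrho_{\mathcal{W}}(u_0qu_0^*)=\varrho_{\mathcal{W}}(p)$ only says that $u_0qu_0^*-p$ lies in the trace-kernel ideal, i.e.\ is null for the $\tau_{\omega}$-$2$-seminorm; it gives no control whatsoever on the operator norm, so the estimate $\|u_0qu_0^*-p\|<1$ needed for the standard perturbation does not follow. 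Indeed, by Proposition \ref{pro:key-pro}(ii) there exist non-zero projections $f$ with $\tau_{\omega}(f)=0$; for such an $f$ orthogonal to $p$, the projections $p$ and $p+f$ have the same image under $\varrho_{\mathcal{W}}$ yet are at norm distance $1$. The fallback appeal to the $\sigma$-ideal property to ``lift the relation $uqu^*=p$'' is not substantiated, and what it asserts is essentially that projections of equal trace in $(0,1)$ are unitarily equivalent in $F(\mathcal{W})$ --- that is Theorem \ref{thm:unitary-equivalence-projections}, which the paper proves only much later via the stable uniqueness machinery, and whose proof depends (through Proposition \ref{pro:MvN-u}, Proposition \ref{pro:almost-one} and Lemma \ref{lem:reduce-nilpotent}) on the very lemma you are proving. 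So the key step is either unproved or circular relative to the paper's development.

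For contrast, the paper's proof never needs unitary equivalence of specific projections. It takes a projection $p\in\overline{aF(\mathcal{W})a}$ with $p\notin\ker(\varrho_{\mathcal{W}})$, uses Proposition \ref{pro:w-comparison} to find $r$ with $rbr^*=p$, deduces $arb\notin\ker(\varrho_{\mathcal{W}})$ and hence, writing $r$ as a linear combination of four unitaries, that $awb\notin\ker(\varrho_{\mathcal{W}})$ for some unitary $w$; it then runs R\o rdam's perturbation argument from \cite[Lemma 3.4]{Ror1}, working modulo the closed ideal $\ker(\varrho_{\mathcal{W}})$, to produce a unitary $u$ and a positive element $c\notin\ker(\varrho_{\mathcal{W}})$ lying in $\overline{aF(\mathcal{W})a}\cap u(\overline{bF(\mathcal{W})b})u^*$, and finally extracts the projection $p'$ with Proposition \ref{pro:key-pro}(iii). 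Only the existence of some unitary making the two hereditary subalgebras intersect non-trivially is required, which is far weaker than conjugating a prescribed $q$ exactly onto a prescribed $p$. To salvage your route you would have to give an honest proof of the exact lifting of the conjugacy across the trace-kernel ideal; otherwise an argument of R\o rdam's type is the way to go.
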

\begin{proof}
Since $a,b\notin\mathrm{ker}(\varrho_{\mathcal{W}})$, we have $d_{\tau_{\omega}}(a)>0$ and $d_{\tau_{\omega}}(b)>0$.  Proposition \ref{pro:key-pro} and Proposition \ref{pro:w-comparison} imply that there exist a 
projection $p\in\overline{aF(\mathcal{W})a}$ and an element $r\in F(\mathcal{W})$ such that $p\notin\mathrm{ker}(\varrho_{\mathcal{W}})$ and $rbr^*=p$. Then we have $prb\notin\mathrm{ker}(\varrho_{\mathcal{W}})$, 
and hence $arb\notin \mathrm{ker}(\varrho_{\mathcal{W}})$. Since $r$ is a linear combination of four unitaries in $F(\mathcal{W})$, 
there exists a unitary element $w$ in $F(\mathcal{W})$ such that $awb\notin\mathrm{ker}(\varrho_{\mathcal{W}})$. Since $\mathrm{ker}(\varrho_{\mathcal{W}})$ is closed, 
essentially the same argument as in the proof of \cite[Lemma 3.4]{Ror1} show that there exist a unitary element $u$ and a positive element $c\in F(\mathcal{W})$ such that 
$c \notin\mathrm{ker}(\varrho_{\mathcal{W}})$ and $c \in \overline{aF(\mathcal{W})a}\cap u(\overline{bF(\mathcal{W})b})u^*$. 
Using Proposition \ref{pro:key-pro}, we can find a projection $p^{\prime}$ satisfying the conclusion of the proposition. 
\end{proof}

\begin{lem}\label{lem:reduce-nilpotent}
Let $x$ be an element in $F(\mathcal{W})$. Assume that there exist projections $p$ and $q$ in $F(\mathcal{W})$ such that 
$xp=qx=x$ and $1-p,1-q \notin \mathrm{ker}(\varrho_{\mathcal{W}})$. Then there exist a unitary element $u$ and a projection $e$ in $F(\mathcal{W})$ such that 
$e ux= ux e=ux$ and $1-e \notin\mathrm{ker}(\varrho_{\mathcal{W}})$. 
\end{lem}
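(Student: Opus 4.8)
The plan is to obtain everything directly from Lemma~\ref{lem:ror1}. The point is that the two one-sided conditions $xp=x$ and $qx=x$ already control the supports of $ux$ from the right (by $p$) and, after a left twist by a unitary $u$, from the left (by the range projection of $uq$). So I only need a projection $e$ lying \emph{above} $p$ and above the left support of $uq$, with $1-e\notin\ker(\varrho_{\mathcal{W}})$; equivalently, a nonzero (trace-detected) projection $1-e$ lying below $1-p$ and below $u(1-q)u^{*}$. That is precisely the object Lemma~\ref{lem:ror1} manufactures.

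Concretely, first I would note that $1-p$ and $1-q$ are positive elements of $F(\mathcal{W})$ not in $\ker(\varrho_{\mathcal{W}})$ (they are nonzero projections), and apply Lemma~\ref{lem:ror1} with $a=1-p$, $b=1-q$ to get a unitary $u\in F(\mathcal{W})$ and a projection $p'\in F(\mathcal{W})$ with $p'\notin\ker(\varrho_{\mathcal{W}})$ and $p'\in\overline{(1-p)F(\mathcal{W})(1-p)}\cap u\bigl(\overline{(1-q)F(\mathcal{W})(1-q)}\bigr)u^{*}$. Since $1-p$ and $1-q$ are genuine projections these hereditary subalgebras are the corners $(1-p)F(\mathcal{W})(1-p)$ and $(1-q)F(\mathcal{W})(1-q)$, so from $p'$ being a projection in them I get $p'\le 1-p$ and $u^{*}p'u\le 1-q$; hence $p'p=0$ and $(u^{*}p'u)q=0$, the latter giving $p'uq=u(u^{*}p'uq)=0$.

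Now I set $e:=1-p'$. Then $1-e=p'\notin\ker(\varrho_{\mathcal{W}})$, as required. From $p'p=0$ I get $p\le e$, so $x(1-p')=xp(1-p')=xp=x$, whence $(ux)e=ux$; and from $p'uq=0$ I get $(1-p')uq=uq$, so, using $qx=x$, $e(ux)=(1-p')(uq)x=(uq)x=ux$. This finishes the argument.

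There is no real obstacle beyond correctly bookkeeping the one-sided supports, since Lemma~\ref{lem:ror1} already packages the comparison theory of $F(\mathcal{W})$ and the R\o{}rdam-type argument. The only step that needs a moment's care is reading off $p'\le 1-p$ and $u^{*}p'u\le 1-q$ from membership in the corner algebras, which works cleanly precisely because $1-p$ and $1-q$ are projections rather than merely positive elements.
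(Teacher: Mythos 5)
Your proposal is correct and follows the paper's own argument exactly: apply Lemma \ref{lem:ror1} to $a=1-p$ and $b=1-q$, obtain the unitary $u$ and the projection $p'$ with $p'\notin\ker(\varrho_{\mathcal{W}})$ lying in $(1-p)F(\mathcal{W})(1-p)\cap u(1-q)F(\mathcal{W})(1-q)u^{*}$, and set $e=1-p'$. The paper verifies $uxp'=0$ and $p'ux=0$ in one line from $x(1-p)=(1-q)x=0$; your bookkeeping via $p'p=0$ and $p'uq=0$ is the same computation spelled out.
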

\begin{proof}
Lemma \ref{lem:ror1} implies that there exist a unitary element $u$ and a projection $p^{\prime}$ in $F(\mathcal{W})$ such that $p^{\prime} \notin\mathrm{ker}(\varrho_{\mathcal{W}})$ 
and 
$$
p^{\prime}\in (1-p)F(\mathcal{W})(1-p)\cap u(1-q)F(\mathcal{W})(1-q)u^*.
$$ 
Since $x(1-p)=(1-q)x=0$, we have $uxp^{\prime}=0$ and $ p^{\prime} ux =0$. Put $e:=1-p^{\prime}$, then we obtain the conclusion. 
\end{proof}

\begin{lem}\label{lem:nilpotent}
Let $y$ be an element in $F(\mathcal{W})$. Assume that there exists a projection $e$ in $F(\mathcal{W})$ such that $ey=ye=y$ and $1-e\notin \mathrm{ker}(\varrho_{\mathcal{W}})$. 
Then $y\in \overline{\mathrm{GL}(F(\mathcal{W}))}$. 
\end{lem}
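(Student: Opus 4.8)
The plan is to show that $y$ is a norm-limit of invertibles by writing $y$ with respect to the decomposition given by the projection $e$ and exploiting that $1-e$ is a "large" complementary piece. Since $ey=ye=y$, we have $y \in eF(\mathcal{W})e$. The idea, following R\o rdam's argument that the $\mathcal{O}_\infty$-like obstruction to being a limit of invertibles vanishes when one has a supply of projections, is this: $F(\mathcal{W})$ has a unit, $1 = e + (1-e)$. For $\varepsilon > 0$, I want to produce an invertible element $\varepsilon$-close to $y$. The natural candidate is $y + \varepsilon' v$ where $v$ is something invertible "on the complement" — but since $y$ lives in the corner $eF(\mathcal{W})e$ and $1-e$ is orthogonal to it, the element $y + \delta(1-e)$ is not invertible unless $y$ itself is invertible in $eF(\mathcal{W})e$, which we have no reason to expect. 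So the real content must be that $y$ can be perturbed \emph{within} $eF(\mathcal{W})e$ (using the projection-richness from Proposition \ref{pro:key-pro}) to something invertible in that corner, and then one adds a scalar multiple of $1-e$.

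First I would reduce to showing: for any $\varepsilon>0$ there is $z \in eF(\mathcal{W})e$ with $\|z-y\|<\varepsilon$ and $z$ invertible in $eF(\mathcal{W})e$; then $z \oplus \delta(1-e)$ is invertible in $F(\mathcal{W})$ for small $\delta>0$ and is $\max(\varepsilon,\delta)$-close to $y$, giving the conclusion. To get such a $z$: use the polar-type decomposition $y = b\,w$ heuristically, or more carefully work with $|y|$ and the support projections of $|y|$ and $|y^*|$ inside $eF(\mathcal{W})e$. The point where $1-e \notin \mathrm{ker}(\varrho_{\mathcal{W}})$ enters is that it allows one to "rotate" a possible defect of the polar decomposition into the complement: if $y = v|y|$ were a genuine polar decomposition in $F(\mathcal{W})^{\sim}$ with $v$ a partial isometry from the support of $|y|$ to the support of $|y^*|$, both sub-projections of $e$, then one can extend $v$ to an isometry or even a unitary $u$ by mapping the remaining part of $e$ plus all of $1-e$ appropriately — here is where we need the complementary projection $1-e$ to be full (hence, by Proposition \ref{pro:comparison-full}, non-trivial modulo $\mathrm{ker}(\varrho_{\mathcal{W}})$) and large enough (via comparison, Proposition \ref{pro:w-comparison}) to absorb any dimension mismatch. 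Then $u(|y|+\delta(\text{complement of supp}|y|))$ is invertible and close to $y$.

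The main obstacle I expect is the absence of a genuine polar decomposition: $F(\mathcal{W})$ need not have the kind of completeness needed to split off the singular part of $y$ cleanly, so the argument must proceed by approximation. Concretely, I would fix $\varepsilon>0$, approximate $|y|$ by an element with a spectral gap at $\varepsilon$, let $f_\varepsilon(|y|)$ be the corresponding (approximate) support projection, and work with the "truncated" element; its range and domain projections sit below $e$, their complements within $e$ together with $1-e$ form a projection $r$ with $1-e \le r$, so $r \notin \mathrm{ker}(\varrho_{\mathcal{W}})$. Using Proposition \ref{pro:key-pro}(iii) and strict comparison (Proposition \ref{pro:w-comparison}) I can find a projection inside $\overline{r F(\mathcal{W})r}$ of exactly the trace needed to match the deficiency of the domain projection, producing (via a partial isometry supplied by comparison in $F(\mathcal{W})$, as in Lemma \ref{lem:ror1}) a unitary $u \in F(\mathcal{W})^{\sim}$ with $u \cdot (\text{domain proj of truncated } y) = (\text{range proj})$ up to the identified pieces. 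Then $u^*$ times the truncated element, plus $\varepsilon$ times the appropriate complementary projection, is invertible in $F(\mathcal{W})^{\sim}$ and within $O(\varepsilon)$ of $y$. Letting $\varepsilon \to 0$ completes the proof. The delicate bookkeeping is making the trace-matching and the partial-isometry construction compatible while keeping everything within norm $\varepsilon$ of $y$; this is exactly the kind of estimate R\o rdam carries out in \cite{Ror1}, and I would cite that argument for the routine parts.
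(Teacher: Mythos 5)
Your overall strategy --- polar decomposition of $y$, truncation to get approximate support projections $p_0,q_0\le e$, and then absorbing the ``index defect'' into $1-e$ to extend the truncated partial isometry to a unitary --- is not the paper's route, and as sketched it has gaps that I do not think can be closed with the tools available at this point. First, the truncation step needs genuine projections dominating $f_\varepsilon(|y|)$ inside the hereditary subalgebra generated by $f_{\varepsilon/2}(|y|)$; that is a real-rank-zero--type spectral-gap approximation, and Proposition \ref{pro:key-pro} only supplies projections of prescribed trace inside hereditary subalgebras, not projections separating two positive elements. Second, and more seriously, your defect-matching step needs to pass from $\tau_\omega(1-p_0)=\tau_\omega(1-q_0)$ to an actual Murray--von Neumann equivalence and then to a unitary. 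Strict comparison (Proposition \ref{pro:w-comparison}) only gives subequivalence under a \emph{strict} trace inequality; ``equal trace implies unitary equivalence'' is Theorem \ref{thm:unitary-equivalence-projections}, and ``MvN equivalence implies unitary equivalence'' is Proposition \ref{pro:MvN-u} --- both of which are proved \emph{using} the present lemma (via Proposition \ref{pro:almost-one} and the stable uniqueness machinery), so invoking either would be circular. This is not a bookkeeping issue: $F(\mathcal{W})$ is not finite (the unit is infinite, Corollary \ref{cor:Kirchberg-question}), so equal traces genuinely fail to control equivalence of projections without the later machinery, and Lemma \ref{lem:ror1} does not provide the unitary extension of partial isometries you need.

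The paper's proof is purely algebraic and much shorter, following Robert \cite{Rob2}: it suffices to write $y$ as a product of two nilpotents, since a nilpotent has spectrum $\{0\}$ and hence lies in $\overline{\mathrm{GL}}$, and $\overline{\mathrm{GL}}$ is closed under products. Fullness of $1-e$ (Proposition \ref{pro:comparison-full}) gives $x_1,\dots,x_N$ with $\sum_i x_i(1-e)x_i^*=1$; Proposition \ref{pro:key-pro}(i) plus a reindexation of representatives gives a unital copy of $M_{N+1}(\mathbb{C})$ in $F(\mathcal{W})$ commuting with $y$, $e$ and the $x_i$; and then the explicit elements $r=\sum_i ex_i(1-e)\varphi(e_{N+1\,i})+\sum_i y\varphi(e_{i\,i+1})$ and $t=\sum_i(1-e)x_i^*y\varphi(e_{i\,N+1})+\sum_i e\varphi(e_{i+1\,i})$ satisfy $rt=y$ and $r^{N+2}=t^{N+2}=0$. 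I would encourage you to redo the proof along these lines; the key realization is that the hypothesis $1-e\notin\mathrm{ker}(\varrho_{\mathcal{W}})$ is used only through fullness of $1-e$, not through any trace or comparison data.
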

\begin{proof}
It is enough to show that $y$ is a product of two nilpotent elements. Since $1-e$ is full by Proposition \ref{pro:comparison-full}, there exist 
elements $x_1,x_2,...,x_{N}$ in $F(\mathcal{W})$ such that $\sum_{i=1}^N x_i (1-e)x_i^* =1$. 
Proposition \ref{pro:key-pro} implies that there exists a unital homomorphism $\varphi$ from $M_{N+1}(\mathbb{C})$ to $F(\mathcal{W})$. 
Let $\{e_{ij}\}_{i,j=1}^{N+1}$ be the standard matrix units of $M_{N+1}(\mathbb{C})$. 
Taking suitable subsequences of representatives of $\varphi (e_{ij})$ for any $i,j=1,2,...,N+1$, we may assume that the range of $\varphi$ commutes with $y$, $e$, 
$x_1$,...,$x_N$. 
Put 
$$
r_1:= \sum_{i=1}^N ex_i(1-e) \varphi (e_{N+1i}), \quad r_2:=\sum_{i=1}^{N} y \varphi (e_{ii+1}),  \quad r:=r_1+r_2  
$$
and 
$$
t_1:=\sum_{i=1}^N (1-e)x_i^* y \varphi (e_{iN+1}), \quad t_2:= \sum_{i=1}^N e\varphi (e_{i+1i}), \quad t:=t_1+t_2.
$$
Then similar arguments as in the proof of \cite[Lemma 2.1]{Rob2} show that $rt=y$ and $r^{N+2}=t^{N+2}=0$. 
Indeed, we have 
$$
rt= (r_1+r_2)(t_1+t_2)= r_1t_1+r_2t_2= \sum_{i=1}^N ex_i(1-e)x_i^*y \varphi (e_{N+1N+1})+ \sum_{i=1}^N y\varphi (e_{ii})=y. 
$$
Since we have $r_1r_2=0$, $r^k=(r_1+r_2)^k=\sum_{i=0}^k r_2^ir_1^{k-i}$ for any $k\in\mathbb{N}$. It can be easily checked that 
$r_1^2=0$ and $r_2^{N+1}=0$. This implies that $r^{N+2}=0$. In a similar way, we see that $t^{N+2}=0$. 
\end{proof}

The following proposition is an immediate consequence of Lemma \ref{lem:reduce-nilpotent} and Lemma \ref{lem:nilpotent}.

\begin{pro}\label{pro:almost-one}
Let $x$ be an element in $F(\mathcal{W})$. Assume that there exist projections $p$ and $q$ in $F(\mathcal{W})$ such that 
$xp=qx=x$ and $1-p,1-q \notin \mathrm{ker}(\varrho_{\mathcal{W}})$. Then $x\in \overline{\mathrm{GL}(F(\mathcal{W}))}$. 
\end{pro}

Using the proposition above, 
we shall show that for certain projections in $F(\mathcal{W})$, Murray-von Neumann equivalence and unitary equivalence coincide. 

\begin{pro}\label{pro:MvN-u}
Let $p$ and $q$ be projections in $F(\mathcal{W})$ such that $\tau_{\omega} (p)<1$. 
Then $p$ and $q$ are Murray-von Neumann equivalent if and only if $p$ and $q$ are unitarily equivalent. 
\end{pro}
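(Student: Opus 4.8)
The plan is to prove the non-trivial direction: if $p$ and $q$ are Murray-von Neumann equivalent, then they are unitarily equivalent. (The converse is trivial since unitarily equivalent projections are always Murray-von Neumann equivalent.) So suppose $v \in F(\mathcal{W})$ is a partial isometry with $v^*v = p$ and $vv^* = q$. Recall that for projections in a unital C$^*$-algebra, $p \sim q$ implies $p$ and $q$ are unitarily equivalent whenever the "defect" projections $1-p$ and $1-q$ are also Murray-von Neumann equivalent; more precisely, one forms the candidate unitary $u = v + w$ where $w$ is a partial isometry implementing an equivalence between $1-p$ and $1-q$. The obstruction is that we have no a priori control on $1-p$ versus $1-q$. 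However, $F(\mathcal{W})$ has strict comparison and a unique tracial state $\tau_\omega$ by Proposition \ref{pro:w-comparison}, and since $p \sim q$ we have $\tau_\omega(1-p) = \tau_\omega(1-q) =: \theta$; by hypothesis $\tau_\omega(p) < 1$, so $\theta > 0$, hence $1-p, 1-q \notin \mathrm{ker}(\varrho_{\mathcal{W}})$.

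The key step is to produce a genuine unitary from $v$ by approximation. First I would consider the element $x := v + (1-q) \cdot 0 = v$ itself, but observe that $v$ satisfies $vp = qv = v$, i.e. $v$ has "initial projection" $p$ and "final projection" $q$, and $1-p, 1-q \notin \mathrm{ker}(\varrho_{\mathcal{W}})$. By Proposition \ref{pro:almost-one}, $v \in \overline{\mathrm{GL}(F(\mathcal{W}))}$. Now I would invoke the standard fact that a partial isometry (more generally a "proper" element with prescribed support projections) that is a norm-limit of invertibles can be completed to a unitary: concretely, since $v$ is a limit of invertibles and $F(\mathcal{W})$ is unital, one applies a polar-decomposition argument in the unitization, or more directly one uses that $\overline{\mathrm{GL}(F(\mathcal{W}))}$ contains $v$ together with the fact that $v$ is a partial isometry to extract a unitary $u$ with $up = v$, hence $upu^* = vv^* \cdot$ (after checking) $= q$. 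The cleanest route: pick $z_n \in \mathrm{GL}(F(\mathcal{W}))$ with $z_n \to v$, take polar decompositions $z_n = u_n |z_n|$ with $u_n$ unitary; then $u_n p \to v$ (using $vp = v$ and functional calculus on $|z_n|$), and a compactness/perturbation argument along $\omega$ yields a unitary $u$ with $up = v$.

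Once we have a unitary $u \in F(\mathcal{W})$ with $up = v$, then $u p u^* = v p u^* = v u^*$; using $u p = v$ and $p = v^* v$ one computes $u p u^* = v(up)^* = v v^* = q$, so $p$ and $q$ are unitarily equivalent, as desired. The main obstacle I anticipate is the passage from "$v \in \overline{\mathrm{GL}(F(\mathcal{W}))}$ and $v$ is a partial isometry with support projections having full complement" to "there exists an honest unitary $u$ with $up = v$": this requires a careful perturbation argument, because approximating unitaries $u_n$ need not stabilize. I would handle this by working with representatives: lift everything to central sequences in $\mathcal{W}$, use the hereditary structure and Proposition \ref{pro:almost-one} to get approximate invertibility at finite stages with controlled error, take polar decompositions at each stage, and then pass to the limit along $\omega$ to obtain the exact unitary $u$ in $F(\mathcal{W})$ with $up = v$ on the nose — exactness being recovered precisely because $F(\mathcal{W})$ is a sequence-algebra quotient, so approximate identities become exact ones after passing to the ultralimit.
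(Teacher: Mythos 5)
Your proposal follows the same route as the paper in its essential step: observe that $vp=qv=v$ and that $1-p$, $1-q$ have positive trace (hence lie outside $\mathrm{ker}(\varrho_{\mathcal{W}})$), and apply Proposition~\ref{pro:almost-one} to approximate $v$ by invertibles, then pass to the unitary part of a polar decomposition. Where you diverge is the endgame, and there you make the argument harder than it needs to be: you insist on producing an \emph{exact} unitary $u$ with $up=v$, which forces you into a lifting-and-reindexation argument along $\omega$ that you only sketch (and which, while presumably doable via Corollary~\ref{cor:u-lift} and a diagonal argument, is the one genuinely incomplete step in your write-up). The paper avoids this entirely: it takes a single invertible $s$ of norm one with $\|s-v\|<1/4$, sets $u=s(s^*s)^{-1/2}$, and checks only the approximate statement $\|upu^*-q\|<1$. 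That already suffices, because two projections in a unital C$^*$-algebra at distance strictly less than $1$ are unitarily equivalent (the standard fact underlying stable-rank-one cancellation). So your core idea is correct and matches the paper's, but you should replace the final ``exactification along $\omega$'' step by this elementary observation; nothing about the sequence-algebra structure of $F(\mathcal{W})$ is needed beyond Proposition~\ref{pro:almost-one} itself.
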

\begin{proof}
The if part is obvious. We will show the only if part. 
Suppose that there exists a partial isometry $v\in F(\mathcal{W})$ such that $v^*v=p$ and $vv^*=q$. 
Since we have $vp=qv=v$ and $\tau_{\omega}(1-q)=\tau_{\omega}(1-p)>0$, there exists an invertible element $s$ of norm one such that 
$\| s-v \| <1/4$ by Proposition \ref{pro:almost-one}. 
Let $u:= s(s^*s)^{-\frac{1}{2}}$. Then $u$ is a unitary element in $F(\mathcal{W})$ and we have 
$\| up u^* - q \| <1$. Therefore we see that $p$ is unitarily equivalent to $q$. 
\end{proof}

We shall show that every unitary element in $F(\mathcal{W})$ can be lifted to a unitary element in $\mathcal{W}^{\sim}_{\omega}$. 

\begin{pro}\label{pro:u-lift}
Let $A$ be a $\sigma$-unital C$^*$-algebra with $A\subset \overline{\mathrm{GL}(A^{\sim})}$.  If $u$ is a unitary element in $F(A)$, 
then there exists a unitary element $w$ in $A^{\sim}_{\omega}$ such that $u= [w]$. 
\end{pro}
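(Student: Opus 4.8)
The plan is to use the identification $F(A)\cong A^{\sim}_{\omega}/\mathrm{Ann}(A,(A^{\sim})^{\omega})$ recorded in Section \ref{sec:Pre}, to lift $u$ first to an \emph{invertible} element of $A^{\sim}_{\omega}$, and then to replace that lift by its polar part. Write $\pi\colon A^{\sim}_{\omega}\to F(A)$ for the quotient map and $J:=\mathrm{Ann}(A,(A^{\sim})^{\omega})$ for its kernel. A surjective $*$-homomorphism of C$^{*}$-algebras maps the closed unit ball onto the closed unit ball, so we may lift $u$ to a contraction $b\in A^{\sim}_{\omega}$; since $u$ is unitary, $\pi(1-b^{*}b)=\pi(1-bb^{*})=0$, that is, $1-b^{*}b$ and $1-bb^{*}$ are positive elements of $J$.

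The main point is to upgrade $b$ to an invertible lift $y\in A^{\sim}_{\omega}$ of $u$, and this is where the hypothesis $A\subseteq\overline{\mathrm{GL}(A^{\sim})}$ enters. Represent $b$ by a bounded sequence $(b_{n})_{n}$ of contractions in $A^{\sim}$ that asymptotically commutes with $A$, fix an approximate unit $(h_{n})_{n}$ for $A$, and recall that membership in $J$ is detected along $\omega$ by the diagonal approximate unit, so $\|(1-b_{n}^{*}b_{n})h_{n}\|\to 0$ and $\|(1-b_{n}b_{n}^{*})h_{n}\|\to 0$ as $n\to\omega$; thus each $b_{n}$ is approximately a unitary on the corner cut down by $h_{n}$, whereas in the complementary direction it is far from invertible and must be capped off. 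Using that every element of $A$ is a norm limit of invertibles of $A^{\sim}$, one glues $b_{n}$ on the $h_{n}$-corner to a suitable invertible element of $A^{\sim}$ in the $(1-h_{n})$-direction, obtaining (for $n$ in a set in $\omega$) elements $y_{n}\in A^{\sim}$ with $\|1-y_{n}^{*}y_{n}\|<1$ and $\|1-y_{n}y_{n}^{*}\|<1$; then each $y_{n}$ is invertible in $A^{\sim}$ with $\|y_{n}^{-1}\|$ uniformly bounded, the sequence $(y_{n})_{n}$ still asymptotically commutes with $A$, and $(y_{n}-b_{n})c\to 0$ as $n\to\omega$ for every $c\in A$, so that $y:=[(y_{n})_{n}]$ is an invertible element of $A^{\sim}_{\omega}$ with $\pi(y)=\pi(b)=u$. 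Equivalently, it suffices to produce some invertible $\tilde{y}\in A^{\sim}_{\omega}$ with $\|\tilde{y}-b\|<1$: then $\pi(\tilde{y})u^{*}$ is within distance $1$ of the unit of $F(A)$, hence lifts to an invertible $v'\in A^{\sim}_{\omega}$ with $\|v'-1\|<1$, and $y:=(v')^{-1}\tilde{y}$ is an invertible lift of $u$.

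Finally, since $y$ is invertible in $A^{\sim}_{\omega}$, the polar decomposition $w:=y(y^{*}y)^{-1/2}$ is a unitary element of $A^{\sim}_{\omega}$, and because $u$ is unitary we get $\pi(w)=\pi(y)\pi(y^{*}y)^{-1/2}=u(u^{*}u)^{-1/2}=u$; hence $[w]=u$, as required. The genuinely non-formal step is the construction of the invertible lift: a contraction lying over a unitary need not be close to an invertible element in an arbitrary sequence algebra (as the unilateral shift in the Calkin algebra shows), so one must exploit the density of invertibles in $A^{\sim}$, and the technical work lies in carrying out the capping-off while simultaneously preserving asymptotic centrality over $A$ and a uniform bound on the norms of the inverses.
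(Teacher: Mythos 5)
Your final step (polar decomposition of an invertible lift) and your identification of the hypothesis $A\subset\overline{\mathrm{GL}(A^{\sim})}$ as the source of invertible approximants are both on target, but the central step of your argument --- producing an \emph{invertible element of} $A^{\sim}_{\omega}$ lying over $u$, i.e.\ a lift $(y_n)_n$ with $\sup_n\|y_n^{-1}\|<\infty$ --- is asserted rather than proved, and it demands far more than the data provides. The only information you have about a contractive lift $b=(b_n)_n$ is that $(1-b_n^*b_n)$ and $(1-b_nb_n^*)$ asymptotically annihilate $A$; this controls $b_n$ where $A$ "lives" but says nothing about $b_n$ being bounded below on the rest of $A^{\sim}$, and the proposed "capping off in the $(1-h_n)$-direction" is not a construction: $1-h_n$ does not cut out a complementary corner of $A^{\sim}$ along which one could glue, and any modification must simultaneously preserve asymptotic centrality. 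Your alternative reduction --- "it suffices to find an invertible $\tilde{y}\in A^{\sim}_{\omega}$ with $\|\tilde{y}-b\|<1$" --- merely restates the problem, since $A^{\sim}_{\omega}$ is not known to have dense invertibles even when $A^{\sim}$ does (a bounded sequence of invertibles need not be invertible in the sequence algebra). You flag this yourself as "the technical work," but it is precisely the content of the proposition, so as written there is a genuine gap.

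The paper's proof shows that this uniform invertibility is simply not needed. One takes \emph{any} representative $(x_n)_n$ of $u$ in $A^{\sim}_{\omega}$, perturbs each entry by less than $1/n$ to an invertible $z_n\in A^{\sim}$ (using that $\mathrm{GL}(A^{\sim})$ is dense in $A^{\sim}$, with no control whatsoever on $\|z_n^{-1}\|$), and sets $w_n:=z_n(z_n^*z_n)^{-1/2}$; each $w_n$ is a genuine unitary of $A^{\sim}$ no matter how badly $\|z_n^{-1}\|$ blows up. The point is then the identity $\|w_na-z_na\|=\|(1-(z_n^*z_n)^{1/2})a\|$, which tends to $0$ because $u^*u=1$ forces $(1-z_n^*z_n)a\to 0$; from this one reads off both that $(w_n)_n$ is still central and that $[(w_n)_n]=[(z_n)_n]=u$. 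So the polar decomposition is applied entrywise to merely-invertible elements, and the defect of $z_n$ from being unitary is absorbed by the annihilator ideal rather than being made uniformly small in norm. If you want to salvage your outline, replace the search for an invertible lift in $A^{\sim}_{\omega}$ by this entrywise argument.
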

\begin{proof}
Since $A\subset \overline{\mathrm{GL}(A^{\sim})}$, there exists a bounded sequence $\{z_n \}_{n\in\mathbb{N}}$ of invertible elements in $A^{\sim}$ 
such that $u= [(z_n)_n]$. Note that for any $a\in A$,  
$$
\lim_{n\to\omega} \|z_n^*z_na -a \| =0, \quad \lim_{n\to\omega} \|z_nz_n^*a -a \| =0
$$
because $u$ is a unitary element in $F(A)$. For any $n\in\mathbb{N}$, let $w_n: = z_n(z_n^*z_n)^{-\frac{1}{2}}$. Then 
$w_n$ is a unitary element in $A^{\sim}$ and for any $a\in A$, we have 
$$
\| w_na-z_na \| = \| w_n( a- (z_n^*z_n)^{\frac{1}{2}}a) \| = \| a- (z_n^*z_n)^{\frac{1}{2}}a \| \to 0
$$
as $n\to \omega$. Furthermore, for any $a\in A$, we have 
\begin{align*}
\| [w_n ,a] \| 
& = \| w_naw_n^* -a \| \\
& = \| w_naw_n^* -z_na w_n^*  +  z_na w_n^*-z_naz_n^*  +  z_naz_n^* - z_nz_n^*a  +  z_nz_n^*a -a \| \\
& \leq \| w_na -z_na  \| + \| z_n \| \|  w_na^*-z_na^* \| + \| z_n\| \|az_n^* - z_n^*a \| + \| z_nz_n^*a -a \| \\
& \to 0
\end{align*}
as $n\to \omega$. Therefore $(w_n)_n$ is a unitary element in $A^{\sim}_{\omega}$ such that $u= [(w_n)_n]$. 
\end{proof}

Since $\mathcal{W}$ has stable rank one, we have the following corollary.  

\begin{cor}\label{cor:u-lift}
Let $u$ be a unitary element in $F(\mathcal{W})$. Then there exists a unitary element $w$ in $\mathcal{W}^{\sim}_{\omega}$ such that $u= [w]$. 
\end{cor}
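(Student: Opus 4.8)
The final statement is Corollary \ref{cor:u-lift}: every unitary in $F(\mathcal{W})$ lifts to a unitary in $\mathcal{W}^{\sim}_{\omega}$.

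The plan is to deduce this directly from Proposition \ref{pro:u-lift}, which asserts that the lifting conclusion holds for any $\sigma$-unital C$^*$-algebra $A$ satisfying $A\subset\overline{\mathrm{GL}(A^{\sim})}$. So the only thing to check is that $\mathcal{W}$ satisfies this hypothesis. First I would recall that $\mathcal{W}$ is separable, hence $\sigma$-unital. Next, the key point is that $\mathcal{W}$ has stable rank one: this follows from the fact that $\mathcal{W}$ is $\mathcal{Z}$-stable (noted after Theorem \ref{thm:Razak}) together with R\o rdam's theorem that a simple $\mathcal{Z}$-stable C$^*$-algebra is either purely infinite or has stable rank one --- and $\mathcal{W}$ is stably finite since it has a tracial state. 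Alternatively one can quote that the Razak building blocks $A(n,m)$, and hence their inductive limit $\mathcal{W}$, have stable rank one directly.

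Once stable rank one is in hand, the inclusion $\mathcal{W}\subset\overline{\mathrm{GL}(\mathcal{W}^{\sim})}$ is immediate: for a non-unital C$^*$-algebra $A$, having stable rank one is equivalent to $\mathrm{GL}(A^{\sim})$ being dense in $A^{\sim}$, and in particular dense in $A\subset A^{\sim}$. Therefore the hypothesis of Proposition \ref{pro:u-lift} is satisfied with $A=\mathcal{W}$, and applying that proposition to a unitary $u\in F(\mathcal{W})$ produces a unitary $w\in\mathcal{W}^{\sim}_{\omega}$ with $u=[w]$, which is exactly the assertion of the corollary.

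There is no real obstacle here --- the corollary is a one-line specialization of the preceding proposition. The only substantive ingredient is the stable rank one property of $\mathcal{W}$, and even that is a well-known consequence of $\mathcal{Z}$-stability plus stable finiteness, or of Razak's structural description. Thus the proof amounts to: (1) note $\mathcal{W}$ is $\sigma$-unital; (2) invoke stable rank one for $\mathcal{W}$; (3) conclude $\mathcal{W}\subset\overline{\mathrm{GL}(\mathcal{W}^{\sim})}$; (4) apply Proposition \ref{pro:u-lift}.

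\begin{proof}
Since $\mathcal{W}$ is separable, it is $\sigma$-unital. As noted after Theorem \ref{thm:Razak}, $\mathcal{W}$ is $\mathcal{Z}$-stable; being also stably finite (it has a tracial state), $\mathcal{W}$ has stable rank one. Hence $\mathrm{GL}(\mathcal{W}^{\sim})$ is dense in $\mathcal{W}^{\sim}$, so in particular $\mathcal{W}\subset\overline{\mathrm{GL}(\mathcal{W}^{\sim})}$. The conclusion now follows from Proposition \ref{pro:u-lift}.
\end{proof}
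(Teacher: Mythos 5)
Your proof is correct and follows exactly the route the paper takes: the paper derives the corollary from Proposition \ref{pro:u-lift} with the one-line remark that $\mathcal{W}$ has stable rank one. Your additional justification of stable rank one via $\mathcal{Z}$-stability and stable finiteness is sound but not needed beyond what the paper already asserts.
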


\section{Homotopy of unitaries in $F(\mathcal{W})$}\label{sec:Homotopy}

In this section we shall prove Theorem \ref{thm:homotopy}. The following lemma is motivated by \cite[Lemma 4.1]{M2} and \cite[Lemma 4.2]{M2}. 

\begin{lem}\label{lem:m4.2}
Let $\Omega$ be a compact metrizable space, and let $F$ be a finite subset of $C(\Omega)$ and $\varepsilon >0$. 
Suppose that $\varphi$ and $\psi$ are unital homomorphisms from $C(\Omega)$ to $F(\mathcal{W})$ 
such that 
$
\tau_{\omega} \circ \varphi  = \tau_{\omega} \circ \psi .
$ 
Then there exist a projection $p\in F(\mathcal{W})$, $(F,\varepsilon)$-multiplicative unital c.p. maps 
$\varphi^{\prime}$ and $\psi^{\prime}$ from $C(\Omega)$ to $pF(\mathcal{W})p$ and a unital homomorphism $\sigma$ from $C(\Omega)$ to $(1-p)F(\mathcal{W})(1-p)$ with 
finite-dimensional range such that 
$$
0 <\tau_{\omega} (p) < \varepsilon, \quad \varphi \sim_{F,\varepsilon} \varphi^{\prime}\oplus \sigma , \quad \psi \sim_{F,\varepsilon} \psi^{\prime}\oplus \sigma .
$$  
\end{lem}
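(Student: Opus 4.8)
The plan is to turn the trace constraint into a measure, discretize it, handle $\varphi$ and $\psi$ separately, and then conjugate so that the finite-dimensional pieces literally coincide. Since $C(\Omega)$ is abelian and $\tau_{\omega}$ is a tracial state on $F(\mathcal{W})$, the state $\tau_{\omega}\circ\varphi=\tau_{\omega}\circ\psi$ is given by a Borel probability measure $\mu$ on $\Omega$. I would fix a small $\delta>0$ (to be determined by $\varepsilon$) and, using compactness of $\Omega$ and equicontinuity of the finite set $F$, choose a partition of unity $g_{0},g_{1},\dots,g_{n}\in C(\Omega)_{+}$ with $\sum_{k=0}^{n}g_{k}=1$ such that $g_{1},\dots,g_{n}$ have \emph{pairwise disjoint supports}, each of diameter so small that every $f\in F$ varies by less than $\delta$ on it, while the remainder satisfies $\int g_{0}\,d\mu<\delta$ (this is arranged by taking the underlying cell decomposition to have $\mu$-null boundaries). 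I then fix $z_{k}\in\mathrm{supp}\,g_{k}$ and choose $t_{k}\in\bigl(0,\int g_{k}\,d\mu\bigr)$ with $\sum_{k}t_{k}>1-\varepsilon$, so that $\theta:=1-\sum_{k}t_{k}\in(0,\varepsilon)$. All of these data depend only on $\mu$, hence are common to $\varphi$ and $\psi$.

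The heart of the argument is a decomposition for a single homomorphism. Since $0\le\varphi(g_{k})\le 1$ and $d_{\tau_{\omega}}(\varphi(g_{k}))\ge\tau_{\omega}(\varphi(g_{k}))=\int g_{k}\,d\mu>t_{k}$, Proposition~\ref{pro:key-pro}(iii) gives a projection $e_{k}\in\overline{\varphi(g_{k})F(\mathcal{W})\varphi(g_{k})}$ with $\tau_{\omega}(e_{k})=t_{k}$ (take $e_{k}=0$ when $\int g_{k}\,d\mu=0$). Because $g_{j}g_{k}=0$ for $j\neq k$ we have $\varphi(g_{j})\varphi(g_{k})=0$, so the hereditary subalgebras $\overline{\varphi(g_{k})F(\mathcal{W})\varphi(g_{k})}$, and hence the projections $e_{k}$, are mutually orthogonal; put $p:=1-\sum_{k}e_{k}$, so $\tau_{\omega}(p)=\theta\in(0,\varepsilon)$. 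Using $\|fg_{k}^{1/m}-f(z_{k})g_{k}^{1/m}\|\le\delta$ and that $e_{k}$ lies in the hereditary subalgebra generated by $\varphi(g_{k})$, one gets $\|\varphi(f)e_{k}-f(z_{k})e_{k}\|<\delta$ for $f\in F$; since $\varphi(f)e_{j}-f(z_{j})e_{j}$ and $\varphi(f)e_{k}-f(z_{k})e_{k}$ lie in the orthogonal hereditary subalgebras above, the corresponding bound for the sum carries \emph{no} constant depending on $n$, yielding $\|\varphi(f)\sum_{k}e_{k}-\sigma_{\varphi}(f)\|<\delta$ and $\|p\varphi(f)\sum_{k}e_{k}\|<2\delta$, where $\sigma_{\varphi}(f):=\sum_{k}f(z_{k})e_{k}$. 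Enlarging $F$ to be self-adjoint and compressing, $\varphi(f)$ is within $O(\delta)$ of $p\varphi(f)p\oplus\sigma_{\varphi}(f)$, so for $\delta$ small this gives $\varphi\sim_{F,\varepsilon}\varphi'\oplus\sigma_{\varphi}$ with $\varphi'(f):=p\varphi(f)p$ a unital $(F,\varepsilon)$-multiplicative c.p.\ map into $pF(\mathcal{W})p$ and $\sigma_{\varphi}$ a unital homomorphism into $(1-p)F(\mathcal{W})(1-p)$ of finite-dimensional range. The same construction applied to $\psi$ produces orthogonal projections $\tilde e_{k}$ with $\tau_{\omega}(\tilde e_{k})=t_{k}$, $\tilde p:=1-\sum_{k}\tilde e_{k}$ with $\tau_{\omega}(\tilde p)=\theta$, and $\psi\sim_{F,\varepsilon}\psi''\oplus\sigma_{\psi}$ where $\sigma_{\psi}(f)=\sum_{k}f(z_{k})\tilde e_{k}$.

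It remains to match the two finite-dimensional parts. The tuples $(e_{1},\dots,e_{n},p)$ and $(\tilde e_{1},\dots,\tilde e_{n},\tilde p)$ are orthogonal decompositions of $1$ in $F(\mathcal{W})$ with identical trace profiles, every entry having trace strictly less than $1$ (as $\theta>0$). Using strict comparison (Proposition~\ref{pro:w-comparison}), divisibility of projections (Proposition~\ref{pro:key-pro}(iii)) and Proposition~\ref{pro:MvN-u}, each pair $e_{k},\tilde e_{k}$ of equal trace $<1$ is Murray--von Neumann equivalent, as is $p,\tilde p$; assembling the implementing partial isometries gives a unitary $w\in F(\mathcal{W})$ with $we_{k}w^{*}=\tilde e_{k}$ and $wpw^{*}=\tilde p$. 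Since $\sim_{F,\varepsilon}$ is preserved under conjugating one side by a unitary of $F(\mathcal{W})\subset F(\mathcal{W})^{\sim}$, from $\psi\sim_{F,\varepsilon}\psi''\oplus\sigma_{\psi}$ we obtain $\psi\sim_{F,\varepsilon}(\mathrm{Ad}(w)\circ\psi'')\oplus\sigma_{\varphi}$, with $\mathrm{Ad}(w)\circ\psi''$ still unital, c.p.\ and $(F,\varepsilon)$-multiplicative into $pF(\mathcal{W})p$. Taking $\sigma:=\sigma_{\varphi}$, $\psi':=\mathrm{Ad}(w)\circ\psi''$, and the $p$ above then yields the statement.

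The main obstacle is the single-map step: with a naive partition of unity the off-diagonal corners of $\varphi(f)$ relative to the $e_{k}$ would only be bounded by $\sqrt{n}\,\delta$, with $n\to\infty$ as the mesh shrinks, which is useless. The device that resolves this is insisting that $g_{1},\dots,g_{n}$ have pairwise disjoint supports, so that $\varphi(g_{j})\varphi(g_{k})=0$ and the relevant hereditary subalgebras are exactly orthogonal, collapsing all the offending sums to their maximum; this is exactly where it matters that $\varphi$ is a genuine homomorphism of a commutative algebra. The matching step is routine given the comparison theory of $F(\mathcal{W})$ developed in Section~\ref{sec:Properties}; the only point needing attention is that, although $1\in F(\mathcal{W})$ is infinite, all projections occurring here have trace strictly less than $1$, which is precisely the regime in which Proposition~\ref{pro:MvN-u} is available.
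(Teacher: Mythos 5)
Your overall strategy (discretize the measure $\mu$, cut out projections of prescribed trace inside the hereditary subalgebras generated by $\varphi(g_k)$ and $\psi(g_k)$, and conjugate the two finite-dimensional pieces onto each other) is the same as the paper's, and your single-map decomposition step is sound: the disjoint-support trick correctly collapses the off-diagonal estimates to a maximum rather than a sum, playing the role that the commuting projections $\overline{p}_i\in F(\mathcal{W})^{**}$ play in the paper's version. The gap is in the matching step. You choose $\tau_\omega(e_k)=\tau_\omega(\tilde e_k)=t_k$ and then assert that ``each pair $e_k,\tilde e_k$ of equal trace $<1$ is Murray--von Neumann equivalent'' by strict comparison, divisibility and Proposition \ref{pro:MvN-u}. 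None of these yields that conclusion: strict comparison (Proposition \ref{pro:w-comparison}) requires the \emph{strict} inequality $d_{\tau_\omega}(a)<d_{\tau_\omega}(b)$, Proposition \ref{pro:key-pro}(iii) only produces subprojections of strictly smaller trace, and Proposition \ref{pro:MvN-u} takes Murray--von Neumann equivalence as its hypothesis rather than producing it. The statement ``equal trace in $(0,1)$ implies (unitary, hence MvN) equivalence'' is precisely Theorem \ref{thm:unitary-equivalence-projections}, which in this paper is proved \emph{from} Theorem \ref{thm:unitary-equivalence-ed}, which in turn is proved from the present lemma; so as written your argument is circular. The same unproven claim is also hidden in ``assembling the implementing partial isometries gives a unitary $w$,'' since completing $\sum_k v_k$ to a unitary needs a partial isometry between the equal-trace complements $p$ and $\tilde p$.

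The fix is exactly the asymmetry the paper builds in: choose the traces so that $\tau_\omega(e_k)<\tau_\omega(\tilde e_k)<\int g_k\,d\mu$. Then strict comparison gives $e_k\precsim\tilde e_k$, hence a subprojection $\tilde e_k'\le\tilde e_k$ Murray--von Neumann equivalent to $e_k$; replace $\tilde e_k$ by $\tilde e_k'$ in $\sigma_\psi$ (the norm estimates survive because $\tilde e_k'$ still lies in the hereditary subalgebra generated by $\psi(g_k)$), absorb $\tilde e_k-\tilde e_k'$ into the complement $\tilde p:=1-\sum_k\tilde e_k'$, and note that a unitary implementing $e_k\mapsto\tilde e_k'$ for all $k$ (obtained via Proposition \ref{pro:MvN-u} from the partial isometry $\sum_k v_k$, whose source projection has trace $<1$) automatically carries $p$ to $\tilde p$, so no separate equivalence of the complements is needed. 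With that modification your argument goes through and coincides with the paper's proof.
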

\begin{proof}
We may assume that every element in $F$ is of norm one. 
Let $\mu$ be the probability measure on $\Omega$ corresponding to $\tau_{\omega}\circ \varphi=\tau_{\omega}\circ \psi$. By the same argument as in the proof of
\cite[Lemma 4.1]{M2}, there exist pairwise disjoint open subsets $W_1,W_2,...,W_l\subset \Omega$ such that
$$
\mu (\Omega\setminus \bigcup_{i=1}^l W_i)=0, \quad \mu (W_i)>0
$$
and $|f(x)-f(y)|<\varepsilon /3$ for any $x,y\in W_i$ and $f\in F$. For any $i=1,2,...,l$, choose $z_i\in W_{i}$. 
Proposition \ref{pro:key-pro} implies that there exists a projection $p_i$ in 
$
\overline{\varphi ( C_0(W_{i}))F(\mathcal{W})\varphi (C_0(W_{i}))}
$
such that 
$$
 \mu (W_i) -\frac{\varepsilon}{l} < \tau_{\omega}(p_i) < \mu (W_i) .
$$ 
Note that we have 
$$
\| p_i\varphi (f) - f(z_i)p_i \| < \frac{\varepsilon}{3}, \quad \| \varphi (f)p_i - f(z_i)p_i \| < \frac{\varepsilon}{3}
$$ 
for any $f\in F$ and $i=1,2,..., l$. In the same way as in 
the proof of \cite[Lemma 4.1]{M2}, we see that there exist mutually orthogonal projections $\overline{p}_1,\overline{p}_2,...,\overline{p}_l$ in $F(\mathcal{W})^{**}$ such that 
$\overline{p}_i$ commutes with $\varphi (C(\Omega))$ and $p_i\leq \overline{p}_i$. 
In a similar way as above, there exist a projection $q_i$ in 
$
\overline{\psi ( C_0(W_{i}))F(\mathcal{W})\psi (C_0(W_{i}))}
$
such that 
$$
\tau_{\omega}(p_i)  < \tau_{\omega}(q_i) < \mu (W_i)
$$ 
and 
$$
\| q_i\psi (f) - f(z_i)q_i \| < \frac{\varepsilon}{3}, \quad \| \psi (f)q_i - f(z_i)q_i \| < \frac{\varepsilon}{3}
$$ for any $f\in F$ and $i=1,2..., l$. Also, there exist mutually orthogonal  projections $\overline{q}_1,\overline{q}_2,...,\overline{q}_l$ in 
$F(\mathcal{W})^{**}$ such that $\overline{q}_i$ commutes with $\psi (C(\Omega))$ and $q_i\leq \overline{q}_i$.

For any $i=1,2,...,l$, there exists a subprojection $q_i^{\prime}$ of $q_i$ such that $q_i^{\prime}$ is Murray-von Neumann equivalent to $p_i$ by Proposition \ref{pro:w-comparison}. 
It follows from Proposition \ref{pro:MvN-u} that there exists a unitary element $u$ in $F(\mathcal{W})$ such that $u p_iu^* =q_i^{\prime}$ for any $i=1,2,...,l$. 
Put $p:= 1-\sum_{i=1}^l p_i$ and $q:=1-\sum_{i=1}^l q_i^{\prime}$. Then we have 
$$
0 < \tau_{\omega} (p) < \varepsilon .
$$
Since $(1-p)\varphi (f)= \sum_{i=1}^l p_i\varphi (f)= \sum_{i=1}^l p_i\overline{p}_i \varphi (f) = \sum_{i=1}^l p_i \varphi (f) \overline{p}_i$, we have 
$$
\| (1-p)\varphi (f) - \sum_{i=1}^l f(z_i) p_i \| < \frac{\varepsilon}{3}
$$
for any $f\in F$. 
Moreover, we have 
$$
\| [p, \varphi (f) ] \| < \frac{2\varepsilon}{3}
$$
for any $f\in F$. In a similar way, we have 
$$
\| (1-q)\psi (f) - \sum_{i=1}^l f(z_i) q_i^{\prime} \| < \frac{\varepsilon}{3}, \quad \| [q, \psi (f) ] \| < \frac{2\varepsilon}{3} 
$$
for any $f\in F$. 

Define unital c.p. maps $\varphi^{\prime}$ and $\psi^{\prime}$ from $C(\Omega)$ to $pF(\mathcal{W})p$ by 
$$
\varphi^{\prime} (f):= p\varphi (f) p, \quad \psi^{\prime} (f) := p u^* \psi (f) u p, 
$$
and define a unital homomorphism $\sigma$ from $C(\Omega)$ to $(1-p)F(\mathcal{W})(1-p)$ by 
$$
\sigma (f) : = \sum_{i=1}^l f(z_i)p_i. 
$$
Then it is easy to see that $\varphi^{\prime}$ and $\psi^{\prime}$ are $(F,2\varepsilon /3)$-multiplicative maps. 
We have 
\begin{align*}
\| \varphi (f) - (\varphi^{\prime}(f) + \sigma (f)) \| 
& \leq \| p\varphi (f)- p\varphi (f)p \| + \| (1-p) \varphi (f) - \sum_{i=1}^l f(z_i)p_i \| \\
& < \frac{2\varepsilon}{3} + \frac{\varepsilon}{3} = \varepsilon 
\end{align*}
for any $f\in F$. 
Also, we have 
$$
\| \psi (f) - u(\psi^{\prime}(f) + \sigma (f))u^* \| = \| \psi(f) - q\psi(f)q- \sum_{i=1}^l f(z_i)q_i^{\prime} \| < \varepsilon  
$$
for any $f\in F$. Therefore the proof is complete. 
\end{proof}

The following theorem is related to \cite[Theorem 4.5]{M2}.

\begin{thm}\label{thm:unitary-equivalence-ed}
Let $\Omega$ be a compact metrizable space, and let $F_1$ be a finite subset of $C(\Omega)$ and $F_2$ a finite subset of $\mathcal{W}$, and let $\varepsilon >0$. 
Then there exist mutually orthogonal positive elements $h_1,h_2,...,h_{l}$ in $C(\Omega)$ of norm one such that the following holds. 
For any $\nu >0$, there exist finite subsets $G_1\subset C(\Omega)$, $G_2\subset \mathcal{W}$ and $\delta >0$ such that the following holds. 
If $\varphi$ and $\psi$ are  unital c.p. maps from $C(\Omega)$ to $M(\mathcal{W})$ such that  
$$
\tau (\varphi (h_i)) \geq \nu, \quad 1 \leq \forall i \leq l, 
$$
$$
\| [\varphi (f), a] \| < \delta , \quad \| [\psi(f),a ] \| < \delta,  \quad \forall f\in G_1, a\in G_2,
$$
$$
\| (\varphi (fg)- \varphi (f)\varphi (g))a\| < \delta, \quad \| (\psi (fg)- \psi (f)\psi (g))a\| < \delta, \quad \forall f,g\in G_1, a\in G_2,
$$
$$
| \tau (\varphi (f)) -\tau (\psi (f)) | < \delta, \quad \forall f\in G_1,
$$
then there exists a unitary element $u$ in $\mathcal{W}^{\sim}$ such that 
$$
\| u\varphi (f)au^* - \psi(f)a \| < \varepsilon 
$$
for any $f\in F_1$ and $a\in F_2$. 
\end{thm}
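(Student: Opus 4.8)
The plan is to pass from the multiplier algebra $M(\mathcal{W})$ to Kirchberg's central sequence algebra $F(\mathcal{W})$ and to apply the stable uniqueness result of Corollary \ref{cor:stable-uniqueness}. First I would choose the positive elements $h_1,\dots,h_l\in C(\Omega)$ of norm one in the same spirit as in Lemma \ref{lem:m4.2}: take pairwise disjoint open sets $W_1,\dots,W_l$ whose union is $\mu$-conull for enough measures $\mu$ (here one uses that the constraint $\tau(\varphi(h_i))\geq\nu$ forces the associated measure on $\Omega$ to have mass at least $\nu$ on each piece), chosen fine enough that the $h_i$ together with $F_1$ generate, up to $\varepsilon/100$, the relevant finite-dimensional information about $C(\Omega)$. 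The role of the lower bound $\tau(\varphi(h_i))\geq\nu$ is precisely to guarantee, once we move to a central sequence, a projection $p\in F(\mathcal{W})$ with $\tau_\omega(p)\geq\nu>0$ supporting the cut-down maps, so that $\sigma_p$ is $(L,N)$-full by Proposition \ref{pro:full-inclusion} and Corollary \ref{cor:stable-uniqueness} applies with a universal $m$.

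Next I would argue by contradiction in the standard ultraproduct fashion. Suppose the conclusion fails for some $\nu>0$: then for every $n$ there are unital c.p. maps $\varphi_n,\psi_n:C(\Omega)\to M(\mathcal{W})$ satisfying the $n$-th versions of the asymptotic commutation, asymptotic multiplicativity (modulo $\mathcal{W}$) and trace-agreement hypotheses, with $\tau(\varphi_n(h_i))\geq\nu$, but with no unitary in $\mathcal{W}^{\sim}$ implementing the desired approximation within $\varepsilon$ on $F_1\odot F_2$. Since $\mathcal{W}$ is $\sigma$-unital, fix an approximate unit $\{e_k\}$ and form, using a diagonal choice of indices, central sequences: the maps $x\mapsto(\varphi_n(x))_n$ and $x\mapsto(\psi_n(x))_n$, after cutting by the approximate unit as in the identification $F(\mathcal{W})\cong M(\mathcal{W})^\omega\cap\mathcal{W}'/\mathrm{Ann}(\mathcal{W},M(\mathcal{W})^\omega)$, descend to unital homomorphisms $\Phi,\Psi:C(\Omega)\to F(\mathcal{W})$ (the $(G_1,G_2,\delta)$ data across all $n$ forces genuine multiplicativity and centrality in the limit). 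The trace-agreement hypothesis gives $\tau_\omega\circ\Phi=\tau_\omega\circ\Psi$, and $\tau(\varphi_n(h_i))\geq\nu$ gives $\tau_\omega(\Phi(h_i))\geq\nu$ for each $i$.

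Now apply Lemma \ref{lem:m4.2} to $\Phi,\Psi$ and a suitably small tolerance: this yields a projection $p\in F(\mathcal{W})$ with $0<\tau_\omega(p)$, small, and $(F_1,\text{small})$-multiplicative unital c.p. maps $\varphi',\psi':C(\Omega)\to pF(\mathcal{W})p$ together with a common finite-dimensional $\sigma$ on $(1-p)F(\mathcal{W})(1-p)$ with $\Phi\sim\varphi'\oplus\sigma$, $\Psi\sim\psi'\oplus\sigma$. The point of the $h_i$-hypothesis and the choice of the $W_i$ is that $\sigma$, built from the measure $\mu$ on the $W_i$, dominates enough of $C(\Omega)$ that adding further copies of $\sigma_q$ (for a subprojection $q$ of $1-p$ with $\tau_\omega(q)>0$) does not change the approximate unitary-equivalence class; concretely, I would absorb the finitely many point-evaluations appearing in $\sigma$ into repeated copies $\bigoplus_k f(z_k)\sigma_q(a)$ and then invoke Corollary \ref{cor:stable-uniqueness} (applied inside $\mathcal{W}^\omega_q\cong$ a corner, or rather with the roles played by $p$ there) to conjugate $\varphi'\oplus(\text{many }\sigma)$ to $\psi'\oplus(\text{many }\sigma)$ by a unitary in a matrix amplification, which after re-stabilizing (using $\mathcal{W}\cong\mathcal{W}\otimes M_{m^2+1}$ from Theorem \ref{thm:Razak}) becomes a single unitary $w\in F(\mathcal{W})$ with $w\Phi(f)w^*$ close to $\Psi(f)$ on $F_1$. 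Lifting $w$ via Corollary \ref{cor:u-lift} to a unitary sequence $(w_n)_n$ in $\mathcal{W}^\sim_\omega$, and noting that the approximation $w\Phi(f)w^*\approx\Psi(f)$ is tested only on $F_1$ and, after multiplying by $a\in F_2\subset\mathcal{W}$, on $F_1\odot F_2$, we conclude that for $\omega$-many (hence some) large $n$ the unitary $w_n\in\mathcal{W}^\sim$ satisfies $\|w_n\varphi_n(f)a w_n^*-\psi_n(f)a\|<\varepsilon$ for all $f\in F_1$, $a\in F_2$ — contradicting the choice of $(\varphi_n,\psi_n)$.

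The main obstacle I anticipate is organizing the reduction so that the stable-uniqueness statement of Corollary \ref{cor:stable-uniqueness}, which is stated for maps into a corner $\mathcal{W}^\omega_p$ and produces a conjugating unitary only after adding $m$-fold copies of $\rho(p\otimes a)$, can be married cleanly with the output of Lemma \ref{lem:m4.2}: one must verify that the finite-dimensional summand $\sigma$ supplies exactly the "extra copies of $\sigma_p$" that the stable uniqueness theorem needs, i.e.\ that $\sigma$ absorbs $m\cdot\sigma_q$ up to unitary equivalence for a small-trace projection $q$, which requires a comparison argument using Proposition \ref{pro:w-comparison} and Proposition \ref{pro:MvN-u}. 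Managing the various small constants ($\varepsilon/3$'s from Lemma \ref{lem:m4.2}, the $(G_1,G_2,\delta)$ from Corollary \ref{cor:stable-uniqueness}, and the amplification index $m$) in the right logical order — choosing $h_1,\dots,h_l$ and $m$ first, then $\nu$-independently, then $(G_1,G_2,\delta)$ — is the delicate bookkeeping part, but no new idea beyond those already in Sections \ref{sec:stable} and \ref{sec:Properties} should be needed.
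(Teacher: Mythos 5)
Your plan follows essentially the same route as the paper's proof: a proof by contradiction assembling central sequences into homomorphisms $\Phi,\Psi:C(\Omega)\to F(\mathcal{W})$, the decomposition of Lemma \ref{lem:m4.2}, the use of the hypothesis $\tau(\varphi(h_i))\geq\nu$ together with Proposition \ref{pro:w-comparison} to make the finite-dimensional summand $\sigma$ absorb the $m^2$ copies of $p$ required by Corollary \ref{cor:stable-uniqueness}, and the lift via Corollary \ref{cor:u-lift} to reach a contradiction. The one step you describe loosely --- converting the unitary in $M_{m^2+1}(\mathcal{W}^{\omega}_p)^{\sim}$ into one acting on $\mathcal{W}^{\omega}$ --- is handled in the paper not by re-stabilizing $\mathcal{W}$ but by identifying $\mathcal{W}^{\omega}_{p+q}$ with $M_{m^2+1}(\mathcal{W}^{\omega}_p)^{\sim}$ via the Murray--von Neumann equivalences $p_{j,k}\sim p$ and then taking a trivial unitary extension, which is the same idea correctly localized.
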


\begin{proof}
We may assume that every element in $F_2$ is of norm one. 
Let $\{y_1,y_2,...,y_l \}$ be a finite subset of $\Omega$ such that for any $x\in \Omega$, there exists $y_i\in\{y_1,y_2,...,y_l \}$ such that 
$| f(x)-f(y_i)  |< \varepsilon /7$ for any $f\in F_1$. Choose pairwise disjoint open neighborhoods $W_{1},W_{2},...,W_{l}$ of $y_1,y_2,...,y_l$ respectively such that 
if $x\in W_{i}$ and $f\in F_1$, then $| f(x)-f(y_i) | < \varepsilon /7$. 
For any $i=1,2,...,l$, take a positive element $h_i\in C_0(W_i)$ of norm one. 
We shall show that $h_1,h_2,...,h_l$ have the desired property. On the contrary, suppose that $h_1,h_2,...,h_l$ did not have the desired property. 
Then there exists a positive number $\nu$ satisfying the following: For any $n\in\mathbb{N}$, there exist unital c.p. maps 
$\varphi_n, \psi_n : C(\Omega)\to M(\mathcal{W})$ such that 
$$
\tau (\varphi_n (h_i)) \geq \nu, \quad 1\leq \forall i \leq l,
$$
$$
\| [\varphi_n(f),a]\| \to 0, \quad \| [\psi_n(f),a]\| \to 0, \quad \| (\varphi_n(fg)- \varphi_n(f)\varphi_n(g))a\| \to 0,  
$$
$$
\| (\psi_n(fg)- \psi_n (f)\psi_n(g))a\| \to 0,\quad |\tau (\varphi_n(f))-\tau (\psi_n(f))| \to 0
$$
as $n\to\infty$ for any $f,g\in C(\Omega)$, $a\in \mathcal{W}$ and 
$$
\max_{f\in F_1, a\in F_2} \| u\varphi_n (f)au^* - \psi_n (f)a \| \geq \varepsilon 
$$
for any unitary element $u$ in $\mathcal{W}^{\sim}$. 

Define homomorphisms $\varphi$ and $\psi$ from $C(\Omega)$ to $F(\mathcal{W})$ by 
$$
\varphi (f): = [(\varphi_n(f))_n ], \quad \psi(f):= [(\psi_n(f))_n]
$$
for any $f\in C(\Omega)$, and define homomorphisms $\Phi$ and $\Psi$ from $C(\Omega)\otimes \mathcal{W}$ to $\mathcal{W}^{\omega}$ by
$$
\Phi := \rho \circ (\varphi \otimes \mathrm{id}_{\mathcal{W}}), \quad 
\Psi := \rho \circ (\psi \otimes \mathrm{id}_{\mathcal{W}}).
$$
Note that we have 
$$
\tau_{\omega}(\varphi (h_i))\geq \nu
$$ for any $i=1,2,...,l$ and 
$$
\tau_{\omega}\circ \varphi=\tau_{\omega} \circ \psi .
$$
 
Applying Corollary \ref{cor:stable-uniqueness} to $F_1$, $F_2$ and $\varepsilon /7 $, we obtain finite subsets $G_1 \subset C(\Omega)$ $G_2 \subset \mathcal{W}$, $m\in\mathbb{N}$ and 
$\delta>0$. Put 
$$
F_1^{\prime}:= F_1\cup G_1 \cup \{h_1, h_2,...,h_l \}, \quad
\varepsilon^{\prime}:= \min\{\varepsilon /7, \delta, \nu /(m^2+2) \}.
$$
Applying Lemma \ref{lem:m4.2} to $F_1^{\prime}$, $\varepsilon^{\prime}$, $\varphi$ and $\psi$, there exist a projection $p\in F(\mathcal{W})$, $(F^{\prime},\varepsilon^{\prime})$-multiplicative unital c.p. maps 
$\varphi^{\prime}$ and $\psi^{\prime}$ from $C(\Omega)$ to $pF(\mathcal{W})p$ and a unital homomorphism $\sigma$ from $C(\Omega)$ to $(1-p)F(\mathcal{W})(1-p)$ with 
finite-dimensional range such that 
$$
0<\tau_{\omega} (p) < \varepsilon^{\prime}, \quad \varphi \sim_{F^{\prime}_1,\varepsilon^{\prime}} \varphi^{\prime}\oplus \sigma , \quad \psi \sim_{F^{\prime}_1,\varepsilon^{\prime}} \psi^{\prime}\oplus \sigma .
$$  
Define c.p. maps $\Phi^{\prime}$ and $\Psi^{\prime}$ from $C(\Omega)\otimes \mathcal{W}$ to $\mathcal{W}^{\omega}_p$ by 
$$
\Phi^{\prime} := \rho \circ (\varphi^{\prime}\otimes \mathrm{id}_{\mathcal{W}}), \quad 
\Psi^{\prime} := \rho \circ (\psi^{\prime}\otimes \mathrm{id}_{\mathcal{W}})
$$
and define a homomorphism $\Sigma$ from $C(\Omega)\otimes \mathcal{W}$ to $\mathcal{W}^{\omega}_{1-p}$ by 
$$
\Sigma := \rho \circ (\sigma\otimes\mathrm{id}_{\mathcal{W}}).
$$
Since every unitary element in $F(\mathcal{W})$ can be lifted to a unitary element in $\mathcal{W}_{\omega}^{\sim}$ by Corollary \ref{cor:u-lift}, 
$$
\Phi \sim_{F_1\odot F_2, \frac{\varepsilon}{7}} \Phi^{\prime}\oplus \Sigma , \quad \Psi \sim_{F_1\odot F_2, \frac{\varepsilon}{7}} \Psi^{\prime}\oplus \Sigma .
$$
It can be easily checked that $\Phi^{\prime}$ and $\Psi^{\prime}$ are contractive $(G_1 \odot G_2 , \delta)$-multiplicative maps. 
By Corollary \ref{cor:stable-uniqueness}, there exist a unitary element $U$ in $M_{m^2+1}(\mathcal{W}^{\omega}_p)^{\sim}$ and $z_1,z_2,...,z_m\in\Omega$ such that 
\begin{align*}
\| U^{*}(\Phi^{\prime} (f\otimes a) \oplus & \overbrace{\bigoplus_{k=1}^m f(z_k)\rho (p\otimes a)\oplus \cdots \oplus\bigoplus_{k=1}^m f(z_k)\rho (p\otimes a) }^m) U \\
& - \Psi^{\prime}(f\otimes a)\oplus \overbrace{\bigoplus_{k=1}^m f(z_k)\rho (p\otimes a) \oplus \cdots \oplus \bigoplus_{k=1}^m f(z_k)\rho (p\otimes a)}^m\| < \frac{\varepsilon}{7} 
\end{align*}
for any $f\in F_1$ and $a\in F_2$. 

For any homomorphism $\gamma : C(\Omega)\to F(\mathcal{W})$, let $\mu_{\gamma}$ denote the probability measure on $\Omega$ corresponding to $\tau_{\omega}\circ \gamma$. 
For any $i=1,2,...,l$, we have 
$$
\mu_{\sigma} (W_i) \geq \tau_{\omega}(\sigma (h_i)) > \tau_{\omega} (\varphi (h_i)) - \tau_{\omega} (\varphi^{\prime} (h_i)) - \varepsilon^{\prime} \geq \nu -\tau_{\omega}(p) -\varepsilon^{\prime} 
> \nu -2\varepsilon^{\prime} \geq m^{2}\varepsilon^{\prime}. 
$$
Hence we see that there exists a homomorphism $\sigma^{\prime}: C(\Omega)\to (1-p)F(\mathcal{W})(1-p)$ with finite-dimensional range such that 
$$
\| \sigma (f) - \sigma^{\prime} (f) \| < \frac{\varepsilon}{7}
$$
for any $f\in F_1$ and $\mu_{\sigma^{\prime}} (\{ y_i\})> m^2\varepsilon^{\prime}$ for any $i=1,2,...,l$ because of the property of $W_{i}$. 
Using Proposition \ref{pro:w-comparison}, we see that there exist mutually orthogonal projections $\{p_{j,k} \}_{j,k=1}^m$ in $(1-p)F(\mathcal{W})(1-p)$ 
and a homomorphism $\sigma^{\prime\prime}: C(\Omega)\to (1-p-q)F(\mathcal{W})(1-p-q)$ where $q=\sum_{j,k=1}^mp_{j,k}$ such that 
$$
\| \sigma^{\prime} (f) - (\sum_{j=1}^m\sum_{k=1}^mf(z_k)p_{j,k} + \sigma^{\prime\prime} (f) ) \| < \frac{\varepsilon}{7}
$$
for any $f\in F_1$ and $p_{j,k}$ is Murray-von Neumann equivalent to $p$ for any $j,k=1,2,...,m$ because of the property of $\{y_1,y_2,...,y_l\}$. 

Since 
$
\Phi^{\prime} (f\otimes a) + \sum_{j=1}^m\sum_{k=1}^m f(z_k)\rho (p_{j,k}\otimes a)
$ 
in $\mathcal{W}_{p+q}^{\omega}$ can be regarded as an element in $M_{m^2+1}(\mathcal{W}^{\omega}_p)^{\sim}$, 
there exists a unitary element $\widehat{U}$ in $(\mathcal{W}_{p+q}^{\omega})^{\sim}$ such that 
\begin{align*}
\| \widehat{U}^{*}(\Phi^{\prime}  (f\otimes a) + \sum_{j=1}^m\sum_{k=1}^m & f(z_k)\rho (p_{j,k}\otimes a) ) \widehat{U} \\
& -(\Psi^{\prime}(f\otimes a)+ \sum_{j=1}^m\sum_{k=1}^mf(z_k)\rho (p_{j,k}\otimes a)) \| < \frac{\varepsilon}{7}
\end{align*}
by the argument above. Note that we may assume that $\widehat{U}= (a_n)_n+1$ for some $(a_n)_n\in \mathcal{W}^{\omega}_{p+q}$. 
Let $V$ be a trivial unitary extension in $(\mathcal{W}^{\omega})^{\sim}$ from $\widehat{U}$. 
Then we have 
\begin{align*}
\| V^*(\Phi^{\prime}(f\otimes a)+ & \sum_{j=1}^m\sum_{k=1}^mf(z_k)\rho (p_{j,k}\otimes a) + \sigma^{\prime\prime} (f) )V \\
& -(\Psi^{\prime}(f\otimes a)+\sum_{j=1}^m\sum_{k=1}^mf(z_k)\rho (p_{j,k}\otimes a) + \sigma^{\prime\prime} (f) ) \|< \frac{\varepsilon}{7}.
\end{align*}
Let $\Sigma^{\prime}$ and $\Sigma^{\prime\prime}$ be homomorphisms from $C(\Omega)\otimes \mathcal{W}$ to $\mathcal{W}^{\omega}_{1-p}$ such that 
$$
\Sigma^{\prime}(f\otimes a)= \rho (\sigma^{\prime} (f) \otimes a), \quad \Sigma^{\prime\prime}(f\otimes a)= \rho \left(\sum_{j=1}^m\sum_{k=1}^m (f(z_k)p_{j,k} + \sigma^{\prime\prime} (f))\otimes a \right) 
$$
for any $f\in C(\Omega)$ and $a\in\mathcal{W}$. Then we have 
\begin{align*}
\Phi 
& \sim_{F_1\odot F_2, \frac{\varepsilon}{7}} \Phi^{\prime}\oplus \Sigma \sim_{F_1\odot F_2, \frac{\varepsilon}{7}} \Phi^{\prime}\oplus \Sigma^{\prime} \sim_{F_1\odot F_2, \frac{\varepsilon}{7}}
\Phi^{\prime}\oplus \Sigma^{\prime\prime} \sim_{F_1\odot F_2, \frac{\varepsilon}{7}} \Psi^{\prime}\oplus \Sigma^{\prime\prime} \\
& \sim_{F_1\odot F_2, \frac{\varepsilon}{7}} \Psi^{\prime}\oplus \Sigma^{\prime} 
\sim_{F_1\odot F_2, \frac{\varepsilon}{7}} \Psi^{\prime}\oplus \Sigma \sim_{F_1\odot F_2, \frac{\varepsilon}{7}} \Psi .
\end{align*}
Therefore there exists a unitary element $(w_n)_n$ in $(\mathcal{W}^{\omega})^{\sim}$ such that 
$$
\| (w_n)_n \Phi (f\otimes a)(w_n)_n^* - \Psi (f\otimes a) \| < \varepsilon  
$$
for any $f\in F_1$ and $a\in F_2$. Note that we may assume that $w_n$ is a unitary element in $\mathcal{W}^{\sim}$ for any $n\in\mathbb{N}$. 
Taking a sufficiently large $n$, we obtain a contradiction. Consequently, the proof is complete. 
\end{proof}

Let $\mathbb{T}$ be the unit circle in the complex plane. We denote by $\iota$ the identity function on $\mathbb{T}$, 
that is, $\iota (z)=z$ for any $z\in\mathbb{T}$.

\begin{thm}\label{thm:unitary-equivalence}
Let $u$ and $v$ be unitaries in $F(\mathcal{W})$ such that $\tau_{\omega} (f(u)) >0$ for any $f\in C(\mathbb{T})_{+}\setminus \{0\}$. 
Then there exists a unitary element $w$ in $F(\mathcal{W})$ such that $wuw^* =v$ if and only if 
$
\tau_{\omega} (f(u))= \tau_{\omega} (f(v))
$ 
for any $f\in C(\mathbb{T})$. 
\end{thm}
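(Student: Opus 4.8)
The ``only if'' direction is immediate: $\tau_{\omega}$ is a tracial state on $F(\mathcal{W})$ by Proposition \ref{pro:w-comparison}, and $wuw^{*}=v$ forces $wf(u)w^{*}=f(v)$, hence $\tau_{\omega}(f(u))=\tau_{\omega}(f(v))$, for every $f\in C(\mathbb{T})$. For the ``if'' direction the plan is to realize $u$ and $v$ by central sequences of unitaries, apply Theorem \ref{thm:unitary-equivalence-ed} for a sequence of tolerances, and assemble the resulting unitaries into a single $w\in F(\mathcal{W})$ by a reindexing over $\omega$. By Corollary \ref{cor:u-lift}, lift $u$ and $v$ to unitaries $(u_{n})_{n}$ and $(v_{n})_{n}$ in $\mathcal{W}^{\sim}_{\omega}$. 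Continuous functional calculus gives unital homomorphisms $\varphi_{n},\psi_{n}\colon C(\mathbb{T})\to\mathcal{W}^{\sim}\subset M(\mathcal{W})$, $\varphi_{n}(f)=f(u_{n})$, $\psi_{n}(f)=f(v_{n})$; since $(u_{n})_{n}$ and $(v_{n})_{n}$ are central we have $\|[\varphi_{n}(f),a]\|\to 0$ and $\|[\psi_{n}(f),a]\|\to 0$ as $n\to\omega$ for all $f\in C(\mathbb{T})$, $a\in\mathcal{W}$.

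Next I translate the trace hypotheses. Using an approximate unit $\{h_{m}\}_{m}$ for $\mathcal{W}$ with $\tau(1-h_{m})\to 0$ and the estimate $|\tau(f(u_{n})(1-h_{m}))|\le\|f\|_{\infty}\tau(1-h_{m})$, the argument of Remark \ref{rem:trace-ideal} shows $\tau_{\omega}(f(u))=\lim_{n\to\omega}\tau(\varphi_{n}(f))$ and, likewise, $\tau_{\omega}(f(v))=\lim_{n\to\omega}\tau(\psi_{n}(f))$ for all $f\in C(\mathbb{T})$. Consequently the hypothesis $\tau_{\omega}(f(u))=\tau_{\omega}(f(v))$ for all $f$ becomes $|\tau(\varphi_{n}(f))-\tau(\psi_{n}(f))|\to 0$ as $n\to\omega$, and the hypothesis $\tau_{\omega}(g(u))>0$ for every $g\in C(\mathbb{T})_{+}\setminus\{0\}$ becomes $\lim_{n\to\omega}\tau(\varphi_{n}(g))>0$ for every such $g$.

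Fix an increasing sequence of finite sets $F_{2}^{(k)}\subset\mathcal{W}$ with dense union, put $F_{1}=\{1,\iota\}$, and set $\varepsilon_{k}=1/k$. For each $k$, Theorem \ref{thm:unitary-equivalence-ed} applied to $(\mathbb{T},F_{1},F_{2}^{(k)},\varepsilon_{k})$ yields norm-one positive elements $h^{(k)}_{1},\dots,h^{(k)}_{l_{k}}\in C(\mathbb{T})$; put $\nu_{k}:=\tfrac12\min_{i}\tau_{\omega}(h^{(k)}_{i}(u))>0$ and feed $\nu_{k}$ back into the theorem to obtain $G^{(k)}_{1}\subset C(\mathbb{T})$, $G^{(k)}_{2}\subset\mathcal{W}$ and $\delta_{k}>0$. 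By the limits above there is $X_{k}\in\omega$ such that every $n\in X_{k}$ satisfies all hypotheses of Theorem \ref{thm:unitary-equivalence-ed} for $\varphi_{n},\psi_{n}$ with these data (the multiplicativity conditions being trivial, as $\varphi_{n},\psi_{n}$ are homomorphisms), hence admits a unitary $w^{(k)}_{n}\in\mathcal{W}^{\sim}$ with $\|w^{(k)}_{n}\varphi_{n}(f)a(w^{(k)}_{n})^{*}-\psi_{n}(f)a\|<\varepsilon_{k}$ for all $f\in F_{1}$, $a\in F_{2}^{(k)}$. Replacing $X_{k}$ by $(\bigcap_{j\le k}X_{j})\cap\{n\ge k\}$ we may assume $X_{1}\supseteq X_{2}\supseteq\cdots$ and $X_{k}\subseteq\{n\ge k\}$; define $w_{n}:=w^{(k(n))}_{n}$ with $k(n):=\max\{k:n\in X_{k}\}$, and $w_{n}:=1$ if $n\notin X_{1}$, so that $\{n:k(n)\ge k_{0}\}\supseteq X_{k_{0}}\in\omega$ for every $k_{0}$.

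Finally, $w:=[(w_{n})_{n}]$ is the required unitary. Taking $f=1$ shows $\|[w_{n},a]\|\to 0$ as $n\to\omega$ for $a$ in the dense set $\bigcup_{k}F_{2}^{(k)}$, hence for all $a\in\mathcal{W}$; so $(w_{n})_{n}\in\mathcal{W}^{\sim}_{\omega}$ and, being a sequence of unitaries, $w$ is a unitary in $F(\mathcal{W})$. The sequence $(w_{n}u_{n}w_{n}^{*})_{n}$ also asymptotically commutes with $\mathcal{W}$, so $(w_{n}u_{n}w_{n}^{*}-v_{n})_{n}$ lies in $(\mathcal{W}^{\sim})^{\omega}\cap\mathcal{W}'$, and it suffices to check $\|(w_{n}u_{n}w_{n}^{*}-v_{n})b\|\to 0$ as $n\to\omega$ for each $b\in\mathcal{W}$. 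For such $b$, writing $w_{n}^{*}b=bw_{n}^{*}+[w_{n}^{*},b]$ with $\|[w_{n}^{*},b]\|\to 0$, together with $\|w_{n}u_{n}bw_{n}^{*}-v_{n}b\|<\varepsilon_{k(n)}$ along $\omega$ (the case $f=\iota$, $a=b$), we get $\|(w_{n}u_{n}w_{n}^{*}-v_{n})b\|\to 0$. Hence $(w_{n}u_{n}w_{n}^{*}-v_{n})_{n}\in\mathrm{Ann}(\mathcal{W},(\mathcal{W}^{\sim})^{\omega})$, i.e.\ $wuw^{*}=v$ in $F(\mathcal{W})$. The essential input here is Theorem \ref{thm:unitary-equivalence-ed}, which we use as a black box; the delicate (though routine) points are the bookkeeping identifying $\tau_{\omega}$ on $F(\mathcal{W})$ with limits along $\omega$ of $\tau$ on the unitary lift living in $\mathcal{W}^{\sim}$—needed both for the trace-matching hypothesis and for the strictly positive lower bounds $\nu_{k}$—and the ultrafilter reindexing converting the countable family of approximate conjugacies into a single exact conjugacy, both sensitive to the non-unitality of $\mathcal{W}$.
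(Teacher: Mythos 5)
Your proposal is correct and follows essentially the same route as the paper's proof: lift $u,v$ to central sequences of unitaries via Corollary \ref{cor:u-lift}, translate the trace hypotheses into limits along $\omega$, apply Theorem \ref{thm:unitary-equivalence-ed} for a sequence of tolerances $1/k$ with the positivity constants $\nu_k$, and reassemble the approximate conjugating unitaries over a decreasing chain of sets in $\omega$. Your extra care with $X_k\subseteq\{n\ge k\}$ and with the final verification modulo the annihilator are harmless refinements of the same argument.
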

\begin{proof}
The only if part is obvious. We will show the if part. 
By Corollary \ref{cor:u-lift}, there exist unitaries $(u_n)_n$ and $(v_n)_n$ in $\mathcal{W}^{\sim}_{\omega}$ such that 
$u=[(u_n)_n]$ and $v=[(v_n)_n]$. 
For any $n\in\mathbb{N}$, define unital homomorphisms $\varphi_n$ and $\psi_n$ from $C(\mathbb{T})$ to $\mathcal{W}^{\sim}$ by $\varphi_n(f):= f(u_n)$ and $\psi_n(f):= f(v_n)$, respectively. 
Then we have
$$
| \tau (\varphi_n (f)) - \tau_{\omega} (f(u)) | \to 0, \quad \| [\varphi_n (f), a ] \| \to 0, \quad \| [\psi_n (f), a] \| \to 0,
$$
$$
| \tau (\varphi_n (f)) - \tau (\psi_n (f) ) | \to 0 
$$
as $n\to \omega$ for any $f\in C(\mathbb{T})$ and $a\in \mathcal{W}$. 

Let $F_1:=\{1, \iota\} \subset C(\mathbb{T})$, and let $\{F_{2,k}\}_{k\in\mathbb{N}}$ be a sequence of finite subsets of $\mathcal{W}$ such that 
$F_{2,k}\subset F_{2,k+1}$ and $\mathcal{W}=\overline{\bigcup_{k\in\mathbb{N}} F_{2,k}}$. 
For any $k\in\mathbb{N}$, applying Theorem \ref{thm:unitary-equivalence-ed} to $F_1$, $F_{2,k}$ and $1/k$, we obtain 
mutually orthogonal positive elements $h_{1,k},h_{2,k},...,h_{l(k),k}$ in $C(\mathbb{T})$ of norm one. 
Let 
$$
\nu_k := \frac{1}{2} \min\{\tau_{\omega} (h_{1,k}(u)),\tau_{\omega} (h_{2,k}(u)),...,\tau_{\omega} (h_{l(k),k}(u)) \} >0. 
$$ 
Applying Theorem \ref{thm:unitary-equivalence-ed} to $\nu_k$, we obtain finite subsets $G_{1,k}\subset C(\mathbb{T})$, $G_{2,k}\subset \mathcal{W}$ and 
$\delta_k>0$. We may assume that $G_{1,k}\subset G_{1,k+1}$, $G_{2,k}\subset G_{2,k+1}$ and $\delta_{k}> \delta_{k+1}$. 
It can be easily checked that there exists a sequence $\{X_k\}_{k\in\mathbb{N}}$ of elements in $\omega$ such that $X_k\supset X_{k+1}$ and 
for any $n\in X_{k}$, 
$$
| \tau (\varphi_n (h_{i,k})) - \tau_{\omega} (h_{i,k}(u)) | < \frac{\tau_{\omega} (h_{i,k}(u))}{2}, \quad 1\leq \forall i \leq l(k),
$$
$$
\| [\varphi_n (f), a ] \| < \delta_{k}, \quad \| [\psi_n (f), a] \|< \delta_k, \quad \forall f\in G_{1,k}, a\in G_{2,k},
$$
$$
| \tau (\varphi_n (f)) - \tau (\psi_n (f) ) | < \delta_k, \quad \forall f\in G_{1,k}.
$$
Since we have $\tau (\varphi_n (h_{i,k})) \geq \nu_k$ by the above, Theorem \ref{thm:unitary-equivalence-ed} implies that for any $n\in X_{k}$, there exists a unitary element 
$w_{k,n}$ in $\mathcal{W}^{\sim}$ such that 
$$
\| w_{k,n} \varphi_n(f)a w_{k,n}^* - \psi_n(f)a \| < \frac{1}{k}
$$
for any $f\in F_1$ and $a\in F_{2,k}$. Since $F_1=\{1, \iota\}$, we have 
$$
\| [w_{k,n}, a ] \| < \frac{1}{k}, \qquad \| w_{k,n} u_n a w_{k,n}^* - v_n a \| < \frac{1}{k}
$$
for any $n\in X_{k}$ and $a\in F_{2,k}$. Let 
$$
w_{n} := \left\{\begin{array}{cl}
1 & \text{if } n\notin X_1   \\
w_{k,n} & \text{if } n\in X_k\setminus X_{k+1}\quad (k\in\mathbb{N})
\end{array}
\right..
$$
Then we have 
$$
\| [w_n ,a ] \| \to 0, \quad \| w_n u_n w_n^*a - v_na \| \to 0
$$
as $n\to \omega$ for any $a\in \mathcal{W}$. Therefore $[(w_n)_n]$ is a unitary element in $F(\mathcal{W})$ and 
$$
[(w_n)_n]u [(w_n)_n]^* = v. 
$$
\end{proof}

Hiroki Matui told us the following lemma. 

\begin{lem}\label{lem:unitary-uhf}
For any  faithful tracial state $\tau_0$ on $C(\mathbb{T})$, there exists a unital homomorphism $\varphi$ from $C(\mathbb{T})$ to $M_{2^{\infty}}$ such that 
$\tau_0 = \tau^{\prime} \circ \varphi$ where $\tau^{\prime}$ is the unique tracial state on $M_{2^{\infty}}$.  
\end{lem}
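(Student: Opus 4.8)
The plan is to exploit that a unital $*$-homomorphism $\varphi\colon C(\mathbb{T})\to M_{2^{\infty}}$ amounts to the choice of a unitary $u:=\varphi(\iota)\in M_{2^{\infty}}$ (then $\varphi(f)=f(u)$ by continuous functional calculus), and that $\tau^{\prime}\circ\varphi$ is precisely the spectral distribution of $u$: $\tau^{\prime}(f(u))=\int_{\mathbb{T}}f\,d\mu_{u}$ for a Borel probability measure $\mu_{u}$ on $\mathbb{T}$. Writing $\mu$ for the measure corresponding to $\tau_{0}$ (so $\tau_{0}(f)=\int_{\mathbb{T}}f\,d\mu$, and faithfulness of $\tau_{0}$ is exactly $\operatorname{supp}\mu=\mathbb{T}$), the task becomes: realize $\mu$ as the spectral distribution of some unitary in $M_{2^{\infty}}=\varinjlim\bigl(M_{2^{n}}(\mathbb{C}),\,a\mapsto a\otimes 1_{2}\bigr)$. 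I would build $u$ as a norm-limit of diagonal unitaries sitting in the matrix blocks.

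Next I would pass to the quantile picture. Identify $\mathbb{T}$ with $[0,1)$ via $t\leftrightarrow e^{2\pi i t}$, put $F(t):=\mu\bigl([0,t)\bigr)$ and let $G(s):=\inf\{t:F(t)>s\}$ be the generalized right inverse; then $G\colon[0,1]\to[0,1]$ satisfies $G_{*}\bigl(\mathrm{Leb}_{[0,1]}\bigr)=\mu$. The crucial point is that $\operatorname{supp}\mu=\mathbb{T}$ forces $F$ to be strictly increasing, hence $G$ to be continuous on $[0,1]$, hence uniformly continuous; let $\omega$ be a modulus of continuity for $s\mapsto e^{2\pi i G(s)}$ on $[0,1]$. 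Choose $n_{1}<n_{2}<\cdots$ with $\omega(2^{-n_{k}})<2^{-k}$, set $N_{k}:=2^{n_{k}}$, and define
\[
u_{k}:=\operatorname{diag}\Bigl(e^{2\pi i G(0/N_{k})},\,e^{2\pi i G(1/N_{k})},\,\dots,\,e^{2\pi i G((N_{k}-1)/N_{k})}\Bigr)\in M_{N_{k}}(\mathbb{C}),
\]
viewed inside $M_{2^{\infty}}$ via the canonical embeddings. Under $M_{N_{k}}\hookrightarrow M_{N_{k+1}}=M_{N_{k}}\otimes M_{N_{k+1}/N_{k}}$, the image of $u_{k}$ is the diagonal unitary whose entries coincide with those of $u_{k+1}$ after replacing each $e^{2\pi i G(j/N_{k})}$ by the nearby numbers $e^{2\pi i G(j/N_{k}+i/N_{k+1})}$ with $0\le i<N_{k+1}/N_{k}$; hence $\| u_{k}-u_{k+1}\|\le\omega(2^{-n_{k}})<2^{-k}$. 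So $(u_{k})_{k}$ is norm-Cauchy, its limit $u\in M_{2^{\infty}}$ is a unitary, and I set $\varphi(f):=f(u)$, a unital $*$-homomorphism from $C(\mathbb{T})$ to $M_{2^{\infty}}$.

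It then remains to verify $\tau^{\prime}\circ\varphi=\tau_{0}$. For $f\in C(\mathbb{T})$, norm-continuity of continuous functional calculus and of $\tau^{\prime}$, together with the fact that $\tau^{\prime}$ restricts to the normalized trace on $M_{N_{k}}(\mathbb{C})$, gives
\[
\tau^{\prime}(\varphi(f))=\tau^{\prime}(f(u))=\lim_{k\to\infty}\tau^{\prime}(f(u_{k}))=\lim_{k\to\infty}\frac{1}{N_{k}}\sum_{j=0}^{N_{k}-1}f\bigl(e^{2\pi i G(j/N_{k})}\bigr).
\]
The right-hand side is a left Riemann sum, with mesh $1/N_{k}\to 0$, for the continuous function $s\mapsto f\bigl(e^{2\pi i G(s)}\bigr)$ on $[0,1]$, so it converges to $\int_{0}^{1}f\bigl(e^{2\pi i G(s)}\bigr)\,ds=\int_{\mathbb{T}}f\,d\mu=\tau_{0}(f)$, as required.

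The only genuine obstacle is arranging that the diagonal unitaries are norm-Cauchy across the tensor embeddings, and this is exactly where faithfulness enters: if $\operatorname{supp}\mu\neq\mathbb{T}$ then $G$ has jumps — at the $F$-values of the gaps of $\operatorname{supp}\mu$ — the elements $u_{k}$ are no longer norm-Cauchy, and one would be forced to perturb inside multiplicity blocks rather than entrywise. Under the stated hypothesis this does not happen, and there is nothing further to arrange: $K_{1}(M_{2^{\infty}})=0$, so the trace is the only invariant of such homomorphisms that needs to be matched.
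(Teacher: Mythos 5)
Your construction is correct, but it is a genuinely different argument from the one in the paper. The paper's proof is soft: it identifies $C(\mathbb{T})$ with $\{f\in C([0,1]) : f(0)=f(1)\}$, extends $\tau_0$ to a faithful tracial state on $C([0,1])$, invokes R\o rdam's theorem that every faithful tracial state on $C([0,1])$ is realized by a unital homomorphism into the Jiang--Su algebra $\mathcal{Z}$, and then uses $M_{2^{\infty}}\cong M_{2^{\infty}}\otimes\mathcal{Z}$ to land in $M_{2^{\infty}}$. You instead build the unitary $u=\varphi(\iota)$ by hand as a norm-limit of diagonal unitaries sampled along the quantile function $G$ of the measure $\mu$, with faithfulness entering exactly where it must: full support forces $F$ to be strictly increasing, hence $G$ (composed with $s\mapsto e^{2\pi i G(s)}$, which absorbs the endpoint identification) to be uniformly continuous, which is what makes the diagonal approximants norm-Cauchy across the embeddings $a\mapsto a\otimes 1$. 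Your approach is elementary and self-contained, avoids citing R\o rdam's theorem and $\mathcal{Z}$-stability, and makes transparent why faithfulness cannot be dropped (e.g.\ $\tfrac13\delta_1+\tfrac23\delta_{-1}$ would require a projection of trace $\tfrac13\notin\mathbb{Z}[1/2]$); the price is some bookkeeping you elide --- fixing conventions for the generalized inverse at $s=1$, and verifying the quantile-transform identity $G_*(\mathrm{Leb})=\mu$ --- all of which is standard. The closing remark about $K_1(M_{2^{\infty}})=0$ concerns uniqueness and is not needed for the existence statement being proved.
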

\begin{proof}
We identify $C(\mathbb{T})$ with $\{f\in C([0,1])\; |\; f(0)=f(1) \}$. Note that $\tau_{0}$ extends to a faithful tracial state $\tilde{\tau}_{0}$ on $C([0,1])$. 
By \cite[Theorem 2.1 (i)]{Ror}, there exists a unital homomorphism $\psi$ from $C([0,1])$ to $\mathcal{Z}$ such that $\tilde{\tau}_0=\tau_{\mathcal{Z}}\circ \psi$, 
where $\tau_{\mathcal{Z}}$ is the unique tracial state on $\mathcal{Z}$. Define a unital homomorphism $\varphi$ from $C(\mathbb{T})$ to $M_{2^{\infty}}\otimes \mathcal{Z}$ by 
$\varphi := 1\otimes \psi|_{C(\mathbb{T})}$. Since $M_{2^{\infty}}$ is $\mathcal{Z}$-stable, we obtain the conclusion. 
\end{proof}

Note that we identify $F(\mathcal{W})$ with $F(\mathcal{W}\otimes M_{2^{\infty}})$ in the following lemmas.

\begin{lem}\label{lem:induce-uhf-unitary}
Let $u$  be a unitary element in $F(\mathcal{W})$ such that $\tau_{\omega} (f(u)) >0$ for any $f\in C(\mathbb{T})_{+}\setminus \{0\}$. Then there exist 
a unitary element $(v_n)_n$ in $(M_{2^{\infty}})_{\omega}$ and a unitary element $w$ in $F(\mathcal{W})$ such that 
$$
wuw^* = [(h_n\otimes v_n)_n]
$$
where $\{h_{n}\}_{n\in\mathbb{N}}$ is an approximate unit for $\mathcal{W}$. 
\end{lem}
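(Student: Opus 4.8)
The plan is to realize $u$, up to unitary equivalence in $F(\mathcal{W})$, by an explicit unitary built out of the UHF tensor factor and then to invoke Theorem~\ref{thm:unitary-equivalence}. Since $\tau_\omega(f(u))>0$ for every $f\in C(\mathbb{T})_{+}\setminus\{0\}$, the assignment $\tau_0(f):=\tau_\omega(f(u))$ is a faithful tracial state on $C(\mathbb{T})$, so Lemma~\ref{lem:unitary-uhf} yields a unital homomorphism $\varphi\colon C(\mathbb{T})\to M_{2^\infty}$ with $\tau_0=\tau'\circ\varphi$, where $\tau'$ denotes the unique tracial state on $M_{2^\infty}$.

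Next I would use $M_{2^\infty}\cong\bigotimes_{n\in\mathbb{N}}M_{2^\infty}$, hence $F(\mathcal{W})\cong F(\mathcal{W}\otimes M_{2^\infty})$, under which we regard $u$ as an element of $F(\mathcal{W}\otimes M_{2^\infty})$. For each $n$ set $v_n:=1\otimes\cdots\otimes 1\otimes\varphi(\iota)\otimes 1\otimes\cdots$, with $\varphi(\iota)$ in the $n$-th tensor factor. Then $(v_n)_n$ is a central sequence of unitaries in $M_{2^\infty}$, so it is a unitary in $(M_{2^\infty})_\omega$, and $(h_n\otimes v_n)_n$ is a central sequence in $\mathcal{W}\otimes M_{2^\infty}$. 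Checking this, together with the fact that $[(h_n^k\otimes a_n)_n]=[(h_n\otimes a_n)_n]$ in $F(\mathcal{W}\otimes M_{2^\infty})$ for any bounded sequence $(a_n)_n$ (because $(h_n^k-h_n)x\to 0$ for $x\in\mathcal{W}$), shows that $f\mapsto[(h_n\otimes f(v_n))_n]$ is a well-defined unital homomorphism from $C(\mathbb{T})$ into $F(\mathcal{W}\otimes M_{2^\infty})$ sending $\iota$ to $v:=[(h_n\otimes v_n)_n]$; in particular $f(v)=[(h_n\otimes f(v_n))_n]$ for all $f\in C(\mathbb{T})$. Since $\tau'$ is the product tracial state on $\bigotimes_n M_{2^\infty}$, we have $\tau'(f(v_n))=\tau'(\varphi(f))=\tau_0(f)$ for every $n$, and the same computation as in the proof of Proposition~\ref{pro:key-pro}~(ii) (alternatively Remark~\ref{rem:trace-ideal}, applied with an element of $\mathrm{Ped}(\mathcal{W})\otimes M_{2^\infty}$) gives
$$
\tau_\omega(f(v))=\lim_{n\to\omega}\tau'(f(v_n))=\tau_0(f)=\tau_\omega(f(u))
$$
for every $f\in C(\mathbb{T})$.

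To conclude, since $\tau_\omega(f(u))>0$ for all $f\in C(\mathbb{T})_{+}\setminus\{0\}$ and $\tau_\omega(f(u))=\tau_\omega(f(v))$ for all $f\in C(\mathbb{T})$, Theorem~\ref{thm:unitary-equivalence} provides a unitary element $w$ in $F(\mathcal{W})$ with $wuw^*=v=[(h_n\otimes v_n)_n]$, which is the assertion. I do not expect a genuine obstacle here: essentially all of the substance is carried by Theorem~\ref{thm:unitary-equivalence} and Lemma~\ref{lem:unitary-uhf}, and what remains is the bookkeeping of the central-sequence computations and the trace identity sketched above. The one point that must not be skipped is the verification that the relevant sequences really are central in $\mathcal{W}\otimes M_{2^\infty}$ and that the associated maps descend to well-defined homomorphisms on $F(\mathcal{W}\otimes M_{2^\infty})$.
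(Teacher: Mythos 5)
Your proposal is correct and follows essentially the same route as the paper's proof: apply Lemma \ref{lem:unitary-uhf} to the faithful tracial state $f\mapsto\tau_{\omega}(f(u))$, build the central sequence $v_n$ with $\varphi(\iota)$ in the $n$-th tensor factor of $\bigotimes_n M_{2^{\infty}}$, verify the trace identity $\tau_{\omega}(f(u))=\tau_{\omega}(f([(h_n\otimes v_n)_n]))$, and conclude by Theorem \ref{thm:unitary-equivalence}. The additional bookkeeping you supply (centrality of the sequences and well-definedness of the induced homomorphism) is exactly what the paper leaves implicit.
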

\begin{proof}
By Lemma \ref{lem:unitary-uhf}, there exists a unital homomorphism $\varphi$ from $C(\mathbb{T})$ to $M_{2^{\infty}}$ such that 
$\tau^{\prime}(\varphi (f)) = \tau_{\omega} (f(u))$ for any $f\in C(\mathbb{T})$, where $\tau^{\prime}$ is the unique tracial state on $M_{2^{\infty}}$. 
For any $n\in\mathbb{N}$, let 
$$ 
v_n := \overbrace{1\otimes \cdots \otimes 1}^n \otimes \varphi (\iota) \otimes 1\otimes \cdots \in \bigotimes_{n\in\mathbb{N}} M_{2^{\infty}}\cong M_{2^{\infty}}. 
$$
Then $(v_n)_n$ is a unitary element in $(M_{2^{\infty}})_{\omega}$ and we have 
$$
\tau_{\omega} (f(u)) = \tau_{\omega} (f([(h_n\otimes v_n)_n]))
$$
for any $f\in C(\mathbb{T})$. Therefore we obtain the conclusion by Theorem \ref{thm:unitary-equivalence}.  
\end{proof}

For a Lipschitz continuous map $U$, we denote by $\mathrm{Lip} (U)$ its Lipschitz constant. 

\begin{lem}\label{lem:full-spectrum}
Let $u$  be a unitary element in $F(\mathcal{W})$ such that $\tau_{\omega} (f(u)) >0$ for any $f\in C(\mathbb{T})_{+}\setminus \{0\}$, and let $z_0\in\mathbb{T}$. Then there exists 
a continuous path of unitaries $U: [0,1] \to F(\mathcal{W})$ such that 
$$
U(0)=z_0 1,\quad U(1)=u, \quad  \mathrm{Lip} (U) \leq \pi .
$$
\end{lem}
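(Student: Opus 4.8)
The plan is to reduce, with the UHF-valued model from Lemma~\ref{lem:induce-uhf-unitary}, to a unitary that carries a single explicit logarithm, and then to connect it to $z_0 1$ by the one-parameter unitary group generated by that logarithm.

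First I would observe that the assertion is invariant under conjugating $u$ by a fixed unitary $w\in F(\mathcal W)$: such a conjugation fixes $z_0 1$, is isometric (hence preserves Lipschitz constants), and replaces $u$ by $wuw^{*}$. Hence, applying Lemma~\ref{lem:induce-uhf-unitary} — or rather the explicit central sequence constructed in its proof out of Lemma~\ref{lem:unitary-uhf} — I may assume
$$
u=[(h_n\otimes v_n)_n]\in F(\mathcal W\otimes M_{2^\infty})\cong F(\mathcal W),\qquad v_n=1^{\otimes n}\otimes V_0\otimes1\otimes\cdots,
$$
where $V_0\in M_{2^\infty}$ is one fixed unitary. (One could also bypass the proof of Lemma~\ref{lem:induce-uhf-unitary}: the functional $f\mapsto\tau_\omega(f(u))$ is a faithful tracial state $\tau_0$ on $C(\mathbb T)$, Lemma~\ref{lem:unitary-uhf} gives $\varphi\colon C(\mathbb T)\to M_{2^\infty}$ with $\tau'\circ\varphi=\tau_0$, and putting $V_0=\varphi(\iota)$ and $v_n$ as above one checks $\tau_\omega(f([(h_n\otimes v_n)_n]))=\tau_0(f)=\tau_\omega(f(u))$ using well-definedness of $\tau_\omega$, so Theorem~\ref{thm:unitary-equivalence} conjugates $u$ onto $[(h_n\otimes v_n)_n]$.)

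Next I would choose a self-adjoint $a_0=a_0^{*}\in M_{2^\infty}$ with $\|a_0\|\le\pi$ and $z_0^{*}V_0=\exp(ia_0)$ (take $a_0=-i\log(z_0^{*}V_0)$ for the branch of $\log$ with values in $[-i\pi,i\pi]$, so $\mathrm{sp}(a_0)\subseteq[-\pi,\pi]$). Put $a_n:=1^{\otimes n}\otimes a_0\otimes1\otimes\cdots$, so $a_n=a_n^{*}$, $\|a_n\|\le\pi$, $z_0^{*}v_n=\exp(ia_n)$, and $(a_n)_n$ is a central sequence in $M_{2^\infty}$ since a fixed element pushed into the $n$-th tensor slot asymptotically commutes with every element of $M_{2^\infty}$. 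Then $(\exp(ita_n))_n$ is a central sequence for every $t$ (continuous functional calculus of a bounded central sequence), and consequently $(h_n\otimes z_0\exp(ita_n))_n$ is a central sequence in $\mathcal W\otimes M_{2^\infty}$ — here one uses that $\{h_n\}$ is an approximate unit, exactly as in the proof of Lemma~\ref{lem:induce-uhf-unitary}. I would therefore set
$$
U(t):=[(h_n\otimes z_0\exp(ita_n))_n]\in F(\mathcal W\otimes M_{2^\infty})\cong F(\mathcal W),\qquad t\in[0,1].
$$
Since $\{h_n^{2}\otimes1\}$ is again an approximate unit, each $U(t)$ is a unitary in $F(\mathcal W)$; moreover $U(0)=z_0[(h_n\otimes1)_n]=z_0 1$ and $U(1)=[(h_n\otimes v_n)_n]=u$. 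Finally $\|U(s)-U(t)\|\le\sup_n\|\exp(isa_n)-\exp(ita_n)\|\le(\sup_n\|a_n\|)\,|s-t|\le\pi|s-t|$, so $U$ is continuous with $\mathrm{Lip}(U)\le\pi$; undoing the initial conjugation preserves all three conclusions.

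The only delicate point is the reduction of the second paragraph: for a generic central sequence of unitaries $(v_n)_n$ a logarithm $(a_n)_n$ need not be central, so $(\exp(ita_n))_n$ might not define a path in $F(\mathcal W)$ at all. Passing to the model in which each $v_n$ is one fixed unitary placed in the $n$-th tensor factor of $M_{2^\infty}$ is exactly what forces the logarithm to be a central sequence and keeps the resulting one-parameter group inside $F(\mathcal W)$; everything after that is a direct computation.
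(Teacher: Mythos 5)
Your overall strategy is the same as the paper's: conjugate $u$ onto the model $[(h_n\otimes v_n)_n]$ coming from Lemma \ref{lem:induce-uhf-unitary} and then build the Lipschitz-$\pi$ path inside the UHF tensor factor. The paper, however, obtains that path by quoting \cite[Lemma 1]{HO} for the unitary $(v_n)_n\in (M_{2^{\infty}})_{\omega}$, whereas you try to produce it by hand by exponentiating an explicit logarithm of the single unitary $V_0=\varphi(\iota)$, and this is where there is a genuine gap. You set $a_0=-i\log (z_0^{*}V_0)$ ``for the branch of $\log$ with values in $[-i\pi,i\pi]$''; continuous functional calculus with such a branch requires the branch point to miss $\mathrm{sp}(z_0^{*}V_0)$, i.e.\ requires $\mathrm{sp}(z_0^{*}V_0)\neq\mathbb{T}$. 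But here the spectrum is forced to be all of $\mathbb{T}$: the hypothesis $\tau_{\omega}(f(u))>0$ for every $f\in C(\mathbb{T})_{+}\setminus\{0\}$ says exactly that $\tau_0=\tau^{\prime}\circ\varphi$ is faithful, hence $\varphi$ is injective, hence $\mathrm{sp}(\varphi(\iota))=\mathbb{T}$, and multiplying by $z_0^{*}$ only rotates the circle. So your $a_0$ is not defined. Nor can you retreat to the mere existence of some self-adjoint $a_0$ with $\|a_0\|\le\pi$ and $e^{ia_0}=z_0^{*}V_0$: a unitary with full spectrum in a C$^*$-algebra need not be an exponential of a self-adjoint element at all (UHF algebras have exponential rank $1+\varepsilon$, not $1$), so this would need a separate argument and fails for some unitaries of $M_{2^{\infty}}$.

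The fix stays inside your framework and is essentially the proof of \cite[Lemma 1]{HO}: do not insist on one fixed $V_0$. Choose unitaries $V_n$ in finite-dimensional subalgebras $M_{2^{k_n}}\subset M_{2^{\infty}}$ whose eigenvalue distributions converge weakly to the measure associated with $\tau_0$, and set $v_n:=1^{\otimes n}\otimes V_n\otimes 1\otimes\cdots$. Your fixed-slot observation still shows that any uniformly bounded sequence supported in the $n$-th tensor slot is central, and Theorem \ref{thm:unitary-equivalence} still conjugates $u$ onto $[(h_n\otimes v_n)_n]$, since only the limiting spectral distribution enters the trace condition. Now each $z_0^{*}V_n$ is a unitary in a matrix algebra, hence genuinely of the form $\exp(ia_n^{(0)})$ with $a_n^{(0)}$ self-adjoint and $\|a_n^{(0)}\|\le\pi$ by diagonalization; with $a_n:=1^{\otimes n}\otimes a_n^{(0)}\otimes 1\otimes\cdots$ the remainder of your computation (centrality of $(\exp(ita_n))_n$, the endpoints, and the estimate $\|e^{isa_n}-e^{ita_n}\|\le\pi|s-t|$) goes through verbatim.
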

\begin{proof}
Let $\{h_n\}_{n\in\mathbb{N}}$ be an approximate unit for $A$. 
By Lemma \ref{lem:induce-uhf-unitary},  there exist a unitary element $(v_n)_n$ in $(M_{2^{\infty}})_{\omega}$ and a unitary element $w$ in 
$F(\mathcal{W})$ such that $wuw^* = [(h_n\otimes v_n)_n]$. 
There exists a continuous path of unitaries $V:[0,1]\to (M_{2^{\infty}})_{\omega}$ such that 
$$
V(0)=z_0 1, \quad V(1)= (v_n)_n, \quad \mathrm{Lip} (U) \leq \pi .
$$
(See, for example, \cite[Lemma 1]{HO}.) 
For any $t\in [0,1]$, let $(v_n(t))_n$ be a representative of $V(t)$. 
Define a continuous path of unitaries $U: [0,1] \to F(\mathcal{W})$ by 
$$
U(t):= w^*[(h_n\otimes v_n(t))_n]w
$$
for any $t\in [0,1]$. Then $U$ has the desired property. 
\end{proof}

The following theorem is the main theorem in this section. 

\begin{thm}\label{thm:homotopy}
Let $u$  be a unitary element in $F(\mathcal{W})$. There exists 
a continuous path of unitaries $U: [0,1] \to F(\mathcal{W})$ such that 
$$
U(0)=1,\quad U(1)=u, \quad \mathrm{Lip} (U) \leq 2\pi .
$$
\end{thm}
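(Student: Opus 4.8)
The plan is to reduce the general case to the case of a unitary with full spectrum (in the sense that $\tau_\omega(f(u)) > 0$ for all nonzero $f \in C(\mathbb{T})_+$), for which Lemma \ref{lem:full-spectrum} already provides a path of Lipschitz constant $\le \pi$. Given an arbitrary unitary $u \in F(\mathcal{W})$, the idea is to write $u$ as a product $u = u_1 u_2$ of two unitaries, each of which has full spectrum, and then concatenate the two paths. Concretely, fix a point $z_0 \in \mathbb{T}$ (say $z_0 = 1$). If I can produce unitaries $u_1, u_2 \in F(\mathcal{W})$ with $u = u_1 u_2$, with $\tau_\omega(f(u_i)) > 0$ for all $f \in C(\mathbb{T})_+ \setminus \{0\}$, then Lemma \ref{lem:full-spectrum} gives paths $U_1$ from $1$ to $u_1$ and $U_2$ from $1$ to $u_2$, each with Lipschitz constant $\le \pi$; the path $t \mapsto U_1(t) u_2$ runs from $u_2$ to $u_1 u_2 = u$ with Lipschitz constant $\le \pi$ (right multiplication by a unitary is isometric), and concatenating it with $U_2$ gives a path from $1$ to $u$ with total Lipschitz constant $\le 2\pi$ (after reparametrizing so each half is traversed on an interval of length $1/2$, the constant on each piece is $\le 2\pi$, hence $\le 2\pi$ overall).

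**Splitting off a full-spectrum factor.** The main point is therefore to produce the factorization $u = u_1 u_2$ with both factors having full spectrum. Here I would use the $M_{2^\infty}$-absorption of $\mathcal{W}$ (Theorem \ref{thm:Razak}) and identify $F(\mathcal{W})$ with $F(\mathcal{W} \otimes M_{2^\infty})$, as in Lemma \ref{lem:induce-uhf-unitary}. Pick any faithful tracial state $\tau_0$ on $C(\mathbb{T})$ and apply Lemma \ref{lem:unitary-uhf} to obtain a unital homomorphism $\varphi : C(\mathbb{T}) \to M_{2^\infty}$ with $\tau' \circ \varphi = \tau_0$, where $\tau'$ is the trace on $M_{2^\infty}$. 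Setting $g_n := \overbrace{1 \otimes \cdots \otimes 1}^{n} \otimes \varphi(\iota) \otimes 1 \otimes \cdots \in M_{2^\infty}$ and $u_2 := [(h_n \otimes g_n)_n] \in F(\mathcal{W} \otimes M_{2^\infty})$, one gets a unitary with $\tau_\omega(f(u_2)) = \tau_0(f) > 0$ for all nonzero $f \in C(\mathbb{T})_+$, because $\tau_0$ is faithful. Crucially, since the copy of $M_{2^\infty}$ used to build $u_2$ involves only tensor factors beyond position $n$, the sequence $(h_n \otimes g_n)_n$ is central and commutes with any fixed central sequence; so $u_2$ can be arranged to commute with $u$ in $F(\mathcal{W})$. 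Then set $u_1 := u u_2^* = u u_2^{-1}$. It remains to check that $u_1$ also has full spectrum: but $u_1$ and $u_2$ commute, and $u_2$ is a "generic" unitary, so for any nonzero $f \in C(\mathbb{T})_+$ we can estimate $\tau_\omega(f(u_1))$ from below using the joint spectral picture of the commuting pair $(u_1, u_2)$ and Lemma \ref{lem:split} together with Proposition \ref{pro:w-comparison}; alternatively, absorb a second independent copy of $M_{2^\infty}$ and twist $u_1$ by the same device so that it too acquires full spectrum while changing $u = u_1 u_2$ only by reshuffling the two commuting generic factors.

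**The main obstacle.** The delicate step is guaranteeing that the factor $u_1 = u u_2^{-1}$ genuinely has full spectrum rather than merely nonzero trace on its spectral projections for a dense set of $f$; one must be careful that multiplying an arbitrary unitary by a full-spectrum commuting unitary does not accidentally collapse part of the spectrum. The cleanest way to handle this, which I would adopt, is to introduce two mutually commuting copies of $M_{2^\infty}$ inside $\mathcal{W} \cong \mathcal{W} \otimes M_{2^\infty} \otimes M_{2^\infty}$: use the first copy to replace $u$ by the unitarily equivalent $u \otimes 1$ — which is harmless for the homotopy problem, since a path for $u \otimes 1$ conjugates back to a path for $u$ of the same Lipschitz constant — and then in the algebra $F(\mathcal{W} \otimes M_{2^\infty} \otimes M_{2^\infty})$ use the extra tensor leg to build a full-spectrum unitary $u_2$ commuting with $u \otimes 1$, so that $u_1 := (u \otimes 1) u_2^{-1}$ is a product of commuting unitaries with $u_2$ full-spectrum; full spectrum of $u_1$ then follows because $u_1 u_2 = u \otimes 1$ and $\tau_\omega(f(u_1)) \ge $ a positive quantity obtained by integrating $f$ against the product of the spectral distribution of $u$ with the faithful measure $\tau_0$, using Lemma \ref{lem:split} for the trace on $F(\mathcal{W})$ acting on the tensor factor. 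Modulo this bookkeeping the estimate $\mathrm{Lip}(U) \le 2\pi$ drops out of concatenating the two $\pi$-Lipschitz paths from Lemma \ref{lem:full-spectrum}.
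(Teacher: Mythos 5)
Your strategy is viable and genuinely different from the paper's. The paper reduces to Lemma \ref{lem:full-spectrum} by an \emph{additive} corner modification: it localizes the spectral measure of $u$ near a point $z_0$, cuts out a projection $p$ of small positive trace under $h(u)$, replaces $pup$ first by $z_0p$ (costing $\mathrm{Lip}<\varepsilon$, via the Cuntz-type polar decomposition argument) and then by $vp$ for a trace-independent full-spectrum unitary $v$ (costing $\pi$), so that $u'+vp$ has full spectral distribution and Lemma \ref{lem:full-spectrum} finishes with another $\pi$; a diagonal argument then removes the $\varepsilon$. You instead factor $u$ \emph{multiplicatively} as $u=u_1u_2$ with $u_2$ a full-spectrum unitary built in deep tensor factors of $M_{2^{\infty}}$, commuting with and trace-independent from $u$, and $u_1=uu_2^{-1}$; full spectrum of $u_1$ follows from the multiplicative convolution identity $\tau_{\omega}(f(uu_2^{-1}))=\int\!\!\int f(zw^{-1})\,d\mu(z)\,d\tau_0(w)>0$, and concatenating the two $\pi$-Lipschitz paths from Lemma \ref{lem:full-spectrum} gives $2\pi$ on the nose, with no $\varepsilon$ and no diagonal argument. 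Both proofs rest on the same two pillars (Lemma \ref{lem:full-spectrum} and the $M_{2^{\infty}}$-absorption of $\mathcal{W}$); yours is arguably more streamlined.

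Two points need tightening. First, the assertion that $(h_n\otimes g_n)_n$ ``commutes with any fixed central sequence'' is false as stated: two central sequences need not asymptotically commute (the representative of $u$ may well live in the same tensor positions as $g_n$). What is true, and what you must invoke, is the slow reindexation trick (as the paper does for its $v$ and $p$): for each fixed $x$ one has $\|[x,1\otimes g_k]\|\to 0$ as $k\to\infty$, so replacing $g_n$ by $g_{k(n)}$ for a suitably fast $k(n)$ depending on the chosen representative $(u_n)_n$ of $u$ yields both the commutation $[u_2,u]=0$ in $F(\mathcal{W})$ and the factorization $\tau_{\omega}(g(u)h(u_2))=\tau_{\omega}(g(u))\tau_{\omega}(h(u_2))$; the latter is exactly what makes the joint distribution of the commuting pair $(u,u_2)$ a product measure and justifies the convolution formula. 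Second, your second paragraph hedges between two mechanisms for the full spectrum of $u_1$ (citing Lemma \ref{lem:split} and Proposition \ref{pro:w-comparison}, which are not the relevant tools here), and the ``two copies of $M_{2^{\infty}}$'' device in the third paragraph is unnecessary and slightly misleading --- under the isomorphism $F(\mathcal{W})\cong F(\mathcal{W}\otimes M_{2^{\infty}})$ the image of $u$ is not literally $u\otimes 1$. Commit to the single clean argument: one deep tensor copy, reindexation, product joint distribution, convolution against the faithful measure $\tau_0$ (the inner integral $\int f(zw^{-1})\,d\tau_0(w)$ is a strictly positive continuous function of $z$, hence bounded below). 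With that, the proof is complete.
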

\begin{proof}
Let $\varepsilon>0$ and $\delta >0$. 
We denote by $\mu$ the probability measure on $\mathbb{T}$ corresponding to $f \mapsto \tau_{\omega} (f(u))$. 
Since $\mu(\mathbb{T})=1$, there exists an element $z_0$ in $\mathbb{T}$ such that $\mu (\{z\in\mathbb{T}\; |\; |z-z_0|< \delta \})>0$. 
Let $h$ be a positive element in $C(\mathbb{T})$ such that 
$$
\{z\in\mathbb{T}\; |\; |z-z_0|< \delta \} \subset \overline{\mathrm{supp}\; h} \subset \{z\in\mathbb{T}\; |\; |z-z_0|< 2\delta \}. 
$$
Then we have $d_{\tau_{\omega}}(h(u))>0$. Proposition \ref{pro:key-pro} implies that there exists projection $p$ in $\overline{h(u)F(\mathcal{W})h(u)}$ such that 
$\tau_{\omega} (p) >0$. 
Similar arguments as in the proof of \cite[Lemma 1.7]{Cu} show  
that there exist a unitary element $u^{\prime}$ in $(1-p)F(\mathcal{W})(1-p)$ and a continuous path of unitaries $V_1:[0,1]\to F(\mathcal{W})$ such that 
$$
V_1(0) = u^{\prime} + z_0 p, \quad V_1(1)= u, \quad \mathrm{Lip} (V_1) < \varepsilon .
$$
Indeed, we have 
$$
\| u- ((1-p)u(1-p)+ z_0 p) \| < 6\delta 
$$
and, taking a sufficiently small $\delta>0$ and using polar decomposition, we obtain a continuous path of unitaries $V_1$ as above. 
By Lemma \ref{lem:unitary-uhf} and the proof of Lemma \ref{lem:induce-uhf-unitary}, it is easy to see that there exist a unitary element $v$ in $F(\mathcal{W})$ such that 
$\tau_{\omega} (f(v))>0$ for any $f\in C(\mathbb{T})_{+}\setminus \{0\}$.
Using Lemma \ref{lem:full-spectrum}, Lemma \ref{lem:split} and the slow reindexation trick, we may assume that 
$vp=pv$ and $\tau_{\omega}(pv)=\tau_{\omega}(p)\tau_{\omega}(v)$, and we see that there exists a continuous path of unitaries 
$V_2:[0,1]\to F(\mathcal{W})$ such that 
$$
V_2(0) = u^{\prime}+ vp, \quad V_2(1)= u^{\prime}+ z_0 p, \quad \mathrm{Lip} (V_2) \leq \pi .
$$
(See, for example, \cite{Oc} for the slow reindexation trick.) 
Since we have $\tau_{\omega} (f(u^{\prime}+vp))=\tau_{\omega} (f(u^{\prime})) + \tau_{\omega}(f(v))\tau_{\omega}(p)>0$ for  any $f\in C(\mathbb{T})_{+}\setminus \{0\}$, 
it follows from Lemma \ref{lem:full-spectrum} that there exists a continuous path of unitaries 
$V_3:[0,1]\to F(\mathcal{W})$ such that 
$$
V_3(0) = 1, \quad V_3(1)= u^{\prime}+ v p, \quad \mathrm{Lip} (V_3) \leq \pi .
$$
Connecting $V_1$, $V_2$ and $V_3$, we obtain a continuous path of unitaries $U: [0,1] \to F(\mathcal{W})$ such that 
$$
U(0)=1,\quad U(1)=u, \quad \mathrm{Lip} (U) < 2\pi +\varepsilon .
$$
We obtain the conclusion by the usual diagonal argument. 
\end{proof}

We shall show another application of Theorem \ref{thm:unitary-equivalence-ed}. 

\begin{thm}\label{thm:unitary-equivalence-projections}
Let $p$ and $q$ be projections in $F(\mathcal{W})$ such that $0< \tau_{\omega} (p) <1$.  
Then $p$ and $q$ are unitarily equivalent if and only if 
$
\tau_{\omega} (p)= \tau_{\omega} (q)
$. 
\end{thm}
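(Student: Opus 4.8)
The ``only if'' direction is immediate since $\tau_\omega$ is a trace on $F(\mathcal{W})$, so the plan concerns the converse. The idea is to repeat the proof of Theorem \ref{thm:unitary-equivalence} with the circle $\mathbb{T}$ replaced by the two-point space $\{0,1\}$, so that $C(\{0,1\})=\mathbb{C}^2$ and a unital completely positive map (resp.\ a unital homomorphism) out of $C(\{0,1\})$ is precisely a positive contraction (resp.\ a projection) in the target; write $e_0$ for the projection $\chi_{\{0\}}\in\mathbb{C}^2$ and, for a projection $r$ and $h=s\,e_0+t(1-e_0)$ with $s,t\ge 0$, write $h(r)=sr+t(1-r)$. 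Assume $\tau_\omega(p)=\tau_\omega(q)=:\theta$, and note that the standing hypothesis $0<\theta<1$ is exactly what guarantees $\tau_\omega(h(p))=s\theta+t(1-\theta)>0$ for every nonzero positive $h\in\mathbb{C}^2$, which is the analogue of the condition ``$\tau_\omega(f(u))>0$ for all $f\in C(\mathbb{T})_+\setminus\{0\}$'' in Theorem \ref{thm:unitary-equivalence}.

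First I would lift $p$ and $q$ to central sequences $(p_n)_n$ and $(q_n)_n$ of positive contractions in $\mathcal{W}$ (a C$^*$-quotient admits positive contractive lifts, and centrality is automatic as these lie in $\mathcal{W}_\omega$), and define unital c.p.\ maps $\varphi_n,\psi_n\colon\mathbb{C}^2\to\mathcal{W}^\sim$ by $\varphi_n(e_0)=p_n$ and $\psi_n(e_0)=q_n$; these are automatically completely positive because $\mathbb{C}^2$ is abelian. As in the proof of Theorem \ref{thm:unitary-equivalence} one then checks that $|\tau(\varphi_n(f))-\tau_\omega(f(p))|\to 0$, $\|[\varphi_n(f),a]\|\to 0$, $\|(\varphi_n(fg)-\varphi_n(f)\varphi_n(g))a\|\to 0$ and $|\tau(\varphi_n(f))-\tau(\psi_n(f))|\to 0$ as $n\to\omega$ for all $f,g\in\mathbb{C}^2$ and $a\in\mathcal{W}$, and similarly for $\psi_n$: the commutator estimate uses that $(p_n)_n$ is central, the product estimate uses that $p=[(p_n)_n]$ is a projection in $F(\mathcal{W})$, i.e.\ $(p_n^2-p_n)_n\in\mathrm{Ann}(\mathcal{W},\mathcal{W}^\omega)$, together with the identity $\varphi_n(fg)-\varphi_n(f)\varphi_n(g)=c_{f,g}(p_n-p_n^2)$ for a scalar $c_{f,g}$ depending only on $f$ and $g$, and the trace estimate uses $\tau(p_n),\tau(q_n)\to\theta$.

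Next I would run the argument of the proof of Theorem \ref{thm:unitary-equivalence} essentially verbatim, with $F_1=\{1,e_0\}$ and an increasing sequence $\{F_{2,k}\}_k$ of finite subsets of $\mathcal{W}$ with dense union. Applying Theorem \ref{thm:unitary-equivalence-ed} to $F_1$, $F_{2,k}$ and $1/k$ produces norm-one positive elements $h_{1,k},\dots,h_{l(k),k}\in\mathbb{C}^2$, and by the observation above $\nu_k:=\tfrac12\min_i\tau_\omega(h_{i,k}(p))>0$; one then picks a decreasing sequence $\{X_k\}_k$ in $\omega$ along which all the estimates of the previous paragraph hold (in particular $|\tau(\varphi_n(h_{i,k}))-\tau_\omega(h_{i,k}(p))|<\tau_\omega(h_{i,k}(p))/2$, so $\tau(\varphi_n(h_{i,k}))\ge\nu_k$), and for $n\in X_k$ Theorem \ref{thm:unitary-equivalence-ed} supplies a unitary $w_{k,n}\in\mathcal{W}^\sim$ with $\|w_{k,n}\varphi_n(f)aw_{k,n}^*-\psi_n(f)a\|<1/k$ for $f\in F_1$ and $a\in F_{2,k}$. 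Setting $w_n:=w_{k,n}$ for $n\in X_k\setminus X_{k+1}$ and $w_n:=1$ for $n\notin X_1$ yields a central sequence of unitaries in $\mathcal{W}^\sim$ with $\|w_np_nw_n^*a-q_na\|\to 0$ as $n\to\omega$ for all $a\in\mathcal{W}$, so that $w:=[(w_n)_n]$ is a unitary in $F(\mathcal{W})$ with $wpw^*=q$. The one point requiring genuine care is verifying that the proof of Theorem \ref{thm:unitary-equivalence} really does survive the substitution of $\mathbb{C}^2$ for $C(\mathbb{T})$; this is the case because that proof uses $C(\mathbb{T})$ only through $F_1=\{1,\iota\}$ and through the trace, centrality and approximate-multiplicativity estimates recorded above, and these all have direct analogues here. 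Everything else is routine transcription.
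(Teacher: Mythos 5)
Your proposal is correct and follows essentially the same route as the paper: the paper likewise observes that $C^*(1,p)\cong C(\{0,1\})\cong\mathbb{C}^2$, defines the unital c.p.\ maps $\varphi_n(( \lambda_1,\lambda_2))=\lambda_1p_n+\lambda_2(1-p_n)$ and $\psi_n((\lambda_1,\lambda_2))=\lambda_1q_n+\lambda_2(1-q_n)$, verifies the same trace, commutator and approximate-multiplicativity estimates, and then declares the rest identical to the proof of Theorem \ref{thm:unitary-equivalence}. Your additional details (the scalar identity $\varphi_n(fg)-\varphi_n(f)\varphi_n(g)=c_{f,g}(p_n-p_n^2)$ and the role of $0<\tau_\omega(p)<1$ in guaranteeing $\tau_\omega(h(p))>0$) are accurate elaborations of exactly what the paper leaves implicit.
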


\begin{proof}
The only if part is obvious. We will show the if part. 
Note that the C$^*$-algebra generated by $1$ and $p$ is isomorphic to $C(\{0,1\})\cong \mathbb{C}^2$. 
Since $\tau_{\omega}(p)>0$ and $\tau_{\omega}(1-p)>0$, we have $\tau_{\omega}(f(p))>0$ for any $f\in C(\{0,1\})_{+}\setminus \{0\}$. Also, we have 
$\tau_{\omega}(f(p))=\tau_{\omega}(f(q))$ for any $f\in C(\{0,1\})$. 
There exist positive contractions $(p_n)_n$ and $(q_n)_n$ in $\mathcal{W}_{\omega}$ such that $p=[(p_n)_n]$ and $q=[(q_n)_n]$. 
For any $n\in\mathbb{N}$, define unital c.p. maps $\varphi_n$ and $\psi_n$ from $C(\{0,1\})\cong \mathbb{C}^2$ to $\mathcal{W}^{\sim}$ by 
$\varphi_n((\lambda_1,\lambda_2)):= \lambda_1p_n+ \lambda_2(1-p_n)$ and $\psi_n((\lambda_1,\lambda_2)):= \lambda_1q_n+ \lambda_2(1-q_n)$, respectively. 
Then we have 
$$
| \tau (\varphi_n (f)) - \tau_{\omega} (f(p)) | \to 0, \quad \| [\varphi_n(f),a]\| \to 0, \quad \| [\psi_n(f),a]\| \to 0,   
$$
$$
\| (\varphi_n(fg)- \varphi_n(f)\varphi_n(g))a\| \to 0, \quad \| (\psi_n(fg)- \psi_n (f)\psi_n(g))a\| \to 0,
$$
$$
|\tau (\varphi_n(f))-\tau (\psi_n(f))| \to 0
$$
as $n\to\omega$ for any $f,g\in C(\{0,1\})$, $a\in \mathcal{W}$. 
Therefore the rest of proof is same as the proof of Theorem \ref{thm:unitary-equivalence}. 
\end{proof}

The following corollary is an answer to \cite[Question 2.14]{Kir2}. 

\begin{cor}\label{cor:Kirchberg-question}
The unit $1$ in $F(\mathcal{W})$ is infinite. 
\end{cor}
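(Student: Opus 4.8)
The plan is to exhibit a non-unitary isometry $U$ in $F(\mathcal{W})$ --- that is, an element with $U^{*}U=1$ and $UU^{*}\neq 1$ --- since this is exactly the assertion that the unit $1$ of $F(\mathcal{W})$ is infinite.

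First I would invoke Proposition \ref{pro:key-pro}(ii) with $\theta=0$ to obtain a \emph{non-zero} projection $e\in F(\mathcal{W})$ with $\tau_{\omega}(e)=0$; equivalently, $e$ is a non-zero projection lying in $\ker\varrho_{\mathcal{W}}$. This is where the genuinely non-von-Neumann-algebraic behaviour of $F(\mathcal{W})$ enters, and it is really the only point that needs a moment's care: one must make sure the projection produced by Proposition \ref{pro:key-pro}(ii) for $\theta=0$ is non-zero, since it is this fact that breaks the $\mathrm{II}_{1}$-factor intuition (in a finite von Neumann algebra the unit is of course finite). Now $1-e$ is a projection with $d_{\tau_{\omega}}(1-e)=\tau_{\omega}(1-e)=1$, so Proposition \ref{pro:key-pro}(iii), applied to $h=1-e$ and $\theta=\tfrac{1}{2}$, yields a non-zero projection $q\in\overline{(1-e)F(\mathcal{W})(1-e)}$, in particular $q\leq 1-e$, with $\tau_{\omega}(q)=\tfrac{1}{2}$. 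Put $p:=q+e$; since $qe=0$ this is a projection, $q\leq p$, $q\neq p$ (because $e\neq 0$), and $\tau_{\omega}(p)=\tau_{\omega}(q)=\tfrac{1}{2}$.

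Next, since $0<\tau_{\omega}(p)=\tau_{\omega}(q)<1$, I would apply Theorem \ref{thm:unitary-equivalence-projections} to the pair of projections $p$ and $q$ to get a unitary $w\in F(\mathcal{W})$ with $wqw^{*}=p$. Then $v:=wq$ satisfies $v^{*}v=q$ and $vv^{*}=p$, so $p$ is Murray--von Neumann equivalent to its proper subprojection $q$. Finally I would set $U:=v^{*}+(1-p)$ and check, using $(1-p)q=0$ (valid since $q\leq p$) together with $v=pvq$, that $U^{*}U=vv^{*}+(1-p)=p+(1-p)=1$ while $UU^{*}=v^{*}v+(1-p)=q+(1-p)$, and the latter equals $1$ only if $q=p$, which is false. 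Hence $U$ is a non-unitary isometry in $F(\mathcal{W})$, so $1\in F(\mathcal{W})$ is infinite. (Equivalently, one may simply note that $p\leq 1$ is an infinite projection, and that a unital C$^{*}$-algebra whose unit dominates an infinite projection has infinite unit.)

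No genuinely hard analytic step is involved here: all the real work has already been carried out in Proposition \ref{pro:key-pro} (which rests on Theorem \ref{thm:Razak} and Theorem \ref{thm:Matui-Sato}) and in Theorem \ref{thm:unitary-equivalence-projections}. The corollary is just the observation that these results, taken together, force $F(\mathcal{W})$ to contain --- below its unit --- a projection equivalent to a proper subprojection of itself.
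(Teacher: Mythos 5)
Your proposal is correct and follows essentially the same route as the paper: both arguments combine Proposition \ref{pro:key-pro} (to produce a non-zero trace-zero projection together with a trace-$\tfrac12$ projection orthogonal to it) with Theorem \ref{thm:unitary-equivalence-projections} (to identify two projections of equal trace in $(0,1)$ that differ by the non-zero trace-zero piece), yielding a projection equivalent to a proper subprojection. The only cosmetic difference is that the paper compares $1-p$ with $1-(p+q)$ and concludes $1\sim 1-q$ directly, whereas you exhibit the infinite projection $q+e\sim q$ below the unit and then pass to $1$; these are the same argument.
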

\begin{proof}
By Proposition \ref{pro:key-pro}, there exist mutually orthogonal non-zero projections $p$ and $q$ in $F(\mathcal{W})$ such that 
$\tau_{\omega}(p)=1/2$ and $\tau_{\omega}(q)=0$. Theorem \ref{thm:unitary-equivalence-projections} implies that $1-p$ is unitarily equivalent 
to $1-(p+q)$. Therefore $1=(1-p)+p$ is Murray-von Neumann equivalent to $(1-(p+q))+p=1-q$.  
Hence $1$ is Murray-von Neumann equivalent to a proper subprojection. Consequently, $1$ is infinite. 
\end{proof}

The corollary above suggests the following question. 

\begin{que}
Let $A$ be a simple separable ($\mathcal{Z}$-stable) stably projectionless C$^*$-algebra. Is $1$ infinite in $F(A)$?
\end{que}

Yuhei Suzuki suggested the following corollary. 

\begin{cor}
The tracial state $\tau_{\omega}$ on $F(\mathcal{W})$ induces an order isomorphism $\tau_{\omega *}$ from $K_0(F(\mathcal{W}))$ onto $\mathbb{R}$. 
\end{cor}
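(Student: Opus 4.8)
The plan is to verify that $\tau_{\omega *}$ is a well-defined, bijective, order-preserving and order-reflecting group homomorphism. That $\tau_\omega$ induces a positive homomorphism $\tau_{\omega *}\colon K_0(F(\mathcal{W}))\to\mathbb{R}$, with $\tau_{\omega *}([p])=(\tau_\omega\otimes\mathrm{Tr}_n)(p)$ for a projection $p$ in $M_n(F(\mathcal{W}))$, is standard once one knows that $F(\mathcal{W})$ is unital with tracial state $\tau_\omega$ (Proposition~\ref{pro:w-comparison}(i)). For surjectivity, Proposition~\ref{pro:key-pro}(ii) supplies, for every $\theta\in[0,1]$, a projection in $F(\mathcal{W})$ of trace $\theta$; hence the range of $\tau_{\omega *}$ is a subgroup of $(\mathbb{R},+)$ containing $[0,1]$, so it is all of $\mathbb{R}$.

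Next I would reduce to projections in $F(\mathcal{W})$ itself. By Theorem~\ref{thm:Razak}(i), $\mathcal{W}\cong\mathcal{W}\otimes M_{N^\infty}$ for every $N$, so $F(\mathcal{W})\cong F(\mathcal{W}\otimes M_{N^\infty})$, and a standard reindexation argument (of the kind already invoked around Lemma~\ref{lem:induce-uhf-unitary}) shows that $F(\mathcal{W}\otimes M_{N^\infty})$ absorbs $M_{N^\infty}$ tensorially; writing $F(\mathcal{W})\cong M_{N^\infty}\otimes B$ one gets $M_N(F(\mathcal{W}))\cong M_N\otimes M_{N^\infty}\otimes B\cong M_{N^\infty}\otimes B\cong F(\mathcal{W})$. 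Thus every class in $K_0(F(\mathcal{W}))$ has the form $[p]-[q]$ with $p,q$ projections in $F(\mathcal{W})$, and, applying Theorem~\ref{thm:homotopy} in all matrix sizes via this isomorphism, $K_1(F(\mathcal{W}))=0$. To prove injectivity of $\tau_{\omega *}$, take projections $p,q$ in $F(\mathcal{W})$ with $\tau_\omega(p)=\tau_\omega(q)=:t$. If $0<t<1$, Theorem~\ref{thm:unitary-equivalence-projections} gives a unitary $w$ with $wpw^*=q$, hence $[p]=[q]$. The remaining values of $t$ are then handled by additivity of the class map, the infiniteness of $1$ (Corollary~\ref{cor:Kirchberg-question}), and the decomposition of an arbitrary projection as an orthogonal sum of copies of $1$ and a projection of trace in $[0,1)$, together with the one missing ingredient described below. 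Once $\tau_{\omega *}$ is bijective it is automatically order-reflecting: its positive cone is the set of classes of projections over $F(\mathcal{W})$, and by the surjectivity argument every $x$ with $\tau_{\omega *}(x)\ge 0$ is such a class.

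The main obstacle is the claim that a projection $r$ in $F(\mathcal{W})$ with $\tau_\omega(r)=0$ satisfies $[r]=0$ in $K_0(F(\mathcal{W}))$ --- equivalently, that the ideal $\ker\varrho_{\mathcal{W}}$ contributes nothing to $K_0$. My intended route: such an $r$ lies in $\ker\varrho_{\mathcal{W}}$, and by strict comparison (Proposition~\ref{pro:w-comparison}(ii)) it is Cuntz-below every projection of positive trace, hence Murray--von Neumann equivalent to a subprojection of arbitrarily ``small'' projections; feeding in the distinguished trace-zero, $K_0$-trivial projection produced in the proof of Corollary~\ref{cor:Kirchberg-question}, together with the extra room that the infiniteness of $1$ creates, one should force $[r]=0$. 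An alternative packaging is the six-term exact sequence for $0\to\ker\varrho_{\mathcal{W}}\to F(\mathcal{W})\to F(\mathcal{W})/\ker\varrho_{\mathcal{W}}\to 0$: by Proposition~\ref{pro:sato} and Remark~\ref{rem:trace-ideal} the quotient is $\pi_{\tau}(\mathcal{W})^{\prime\prime}_{\omega}$, the asymptotic centralizer of the hyperfinite ${\rm II}_1$ factor, whose $K_0$ is $\mathbb{R}$ via the trace and whose $K_1$ vanishes; combined with $K_1(F(\mathcal{W}))=0$ this reduces the whole corollary to the vanishing of $K_0(\ker\varrho_{\mathcal{W}})$. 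Either way, controlling the $K$-theory of $\ker\varrho_{\mathcal{W}}$ is the crux, and it is there that strict comparison, the absence of nonzero traces on that ideal, and Corollary~\ref{cor:Kirchberg-question} have to be combined with care.
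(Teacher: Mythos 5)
Your overall architecture (well-definedness, surjectivity via Proposition \ref{pro:key-pro}, injectivity for classes represented by projections of trace in $(0,1)$ via Theorem \ref{thm:unitary-equivalence-projections}, and order-reflection deduced from bijectivity) matches the paper, but you have left the crux open, and there the proposal has a genuine gap. You reduce everything to the claim that a projection $r\in F(\mathcal{W})$ with $\tau_{\omega}(r)=0$ satisfies $[r]=0$, and you offer only two unexecuted strategies: a comparison argument that ends with the assertion that one should be able to force $[r]=0$, and a six-term exact sequence that requires $K_0(\ker\varrho_{\mathcal{W}})=0$ --- which is exactly as hard as the statement you are trying to prove and is established nowhere in the paper. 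Neither route is carried out, so the injectivity argument is incomplete.

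The paper sidesteps this question entirely by two small maneuvers. First, when expressing $[p]$ and $[q]$ through projections in $F(\mathcal{W})$ it arranges a \emph{common} residual summand, $[p]=[p']+k[1]+[r]$ and $[q]=[q']+k[1]+[r]$ with $\tau_{\omega}(p')=\tau_{\omega}(q')<1$, so that $[r]$ (whose trace may well be $0$) cancels in $[p]-[q]$ and its class never has to be computed. Second, to compare $p'$ and $q'$ even when their common trace is $0$, it picks a projection $e$ with $p',q'\in eF(\mathcal{W})e$ and $\tau_{\omega}(p')<\tau_{\omega}(e)<1$ (available from Proposition \ref{pro:key-pro}) and applies Theorem \ref{thm:unitary-equivalence-projections} to the padded projections $p'+(1-e)$ and $q'+(1-e)$, whose trace $\tau_{\omega}(p')+1-\tau_{\omega}(e)$ lies strictly between $0$ and $1$; their unitary equivalence gives $[p']=[q']$ directly. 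This padding trick is the missing ingredient: with it, no information about the $K$-theory of the trace-kernel ideal is needed. (Separately, your assertion that the non-separable algebra $F(\mathcal{W})$ literally tensorially absorbs $M_{N^{\infty}}$ overstates what the reindexation argument yields --- one only gets a unital copy of $M_N(\mathbb{C})$ commuting with any prescribed separable subset --- but that is a repairable imprecision, not the essential defect.)
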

\begin{proof}
Let $p$ and $q$ be projections in $M_\infty (F(\mathcal{W}))$ such that $\tau_{\omega *}([p]-[q])=0$. Using Proposition \ref{pro:key-pro} and Proposition \ref{pro:w-comparison}, 
we see that there exist a natural number $k$, projections $p^{\prime}$, $q^{\prime}$ and $r$ in $F(\mathcal{W})$ such that $[p]=[p^{\prime}]+k[1]+[r]$, $[q]=[q^{\prime}]+k[1]+[r]$ and 
$\tau_{\omega}(p^{\prime})=\tau_{\omega}(q^{\prime})<1$. Moreover, we may assume that there exists a projection $e$ in $F(\mathcal{W})$ such that $p^{\prime},q^{\prime}\in eF(\mathcal{W})e$ 
and $\tau_{\omega}(p^{\prime}) <\tau_{\omega}(e)<1$. Theorem \ref{thm:unitary-equivalence-projections} implies that $p^{\prime}+1-e$ is unitarily equivalent to $q^{\prime}+1-e$ because we have 
$0<\tau_{\omega}(p^{\prime}+1-e)<1$. Then $[p^{\prime}]=[q^{\prime}]$, and hence $[p]=[q]$. Therefore $\tau_{\omega *}$ is injective. 
It follows from Proposition \ref{pro:key-pro} that $\tau_{\omega *}$ is surjective. 
Consequently, $\tau_{\omega *}$ is an order isomorphism from $K_0(F(\mathcal{W}))$ onto $\mathbb{R}$. 
\end{proof}

\section{Rohlin type theorem}\label{sec:Rohlin}

In this section we shall show that every trace scaling automorphism of $\mathcal{W}\otimes\mathbb{K}$ has the Rohlin property. 

\begin{Def}
Let $A$ be a separable C$^*$-algebra, and let $\alpha$ be an automorphism of $A$. We say that $\alpha$ has the 
\textit{Rohlin property} if for any $k\in\mathbb{N}$, there exist 
projections $\{ p_{1,i}\}_{i=0}^{k-1}$ and $\{p_{2,j}\}_{j=0}^{k}$ in $F(A)$ such that 
$$
\sum_{i=0}^{k-1}p_{1,i}+\sum_{j=0}^{k}p_{2,j}=1, \quad \alpha (p_{1,i})=p_{1,i+1}, \quad \alpha (p_{2,j})=p_{2,j+1}
$$
for any $i=0,1,...,k-2$ and $j=0,1,...,k-1$. 
\end{Def}

If $A$ is unital, then the definition above coincides with the usual definition (see, for example, \cite{Kis1}). 

We identify $F(\mathcal{W}\otimes\mathbb{K})$ with $F(\mathcal{W})$. 
We denote by the same symbol $\tau_{\omega}$ the unique tracial state on $F(\mathcal{W}\otimes\mathbb{K})$ for simplicity.  
Note that for any $[(x_n)_n]\in F(\mathcal{W}\otimes\mathbb{K})$, 
$\tau_{\omega} ([(x_n)_n]) = \lim_{n\to\omega} \tau\otimes\mathrm{Tr}  (x_nh) /\tau \otimes \mathrm{Tr} (h)$ 
for some $h\in \mathrm{Ped}\; (\mathcal{W}\otimes\mathbb{K})_{+}\setminus \{0\}$ (see Remark \ref{rem:trace-ideal}). 
The following lemma is a variant of \cite[Theorem 3.4]{MS1}. 

\begin{lem}\label{lem:weak^rohlin}
Let $\alpha$ be a trace scaling automorphism of $\mathcal{W}\otimes\mathbb{K}$. Then for any $k\in\mathbb{N}$, there exists 
a positive contraction $f$ in $F(\mathcal{W}\otimes\mathbb{K})$ such that 
$$
\tau_{\omega} (f)= \frac{1}{k}, \quad f\alpha^{j}(f)=0 
$$
for any $j=1,2,...,k-1$. 
\end{lem}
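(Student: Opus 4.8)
The plan is to realize the construction first inside the associated hyperfinite von Neumann algebra, where it reduces to the freeness of a trace scaling automorphism, and then to transport the resulting Rohlin element down to $F(\mathcal{W}\otimes\mathbb{K})$, using property (SI) (equivalently, $\mathcal{Z}$-stability of $\mathcal{W}$) to make the orthogonality relations exact; this follows the scheme of \cite[Theorem 3.4]{MS1}. First I would set up the von Neumann picture: let $M:=\pi_{\tau\otimes\mathrm{Tr}}(\mathcal{W}\otimes\mathbb{K})^{\prime\prime}$, which is the AFD factor of type $\mathrm{II}_{\infty}$, and let $\tilde{\alpha}\in\mathrm{Aut}(M)$ be the extension of $\alpha$, so that $\tilde{\alpha}$ scales the canonical trace of $M$ by $\lambda(\alpha)\neq 1$. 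Since $\mathcal{W}\otimes\mathbb{K}$ is simple and $\sigma$-unital, $\tau\otimes\mathrm{Tr}$ is a faithful lower semicontinuous densely defined trace, and $\mathrm{Ped}(\mathcal{W}\otimes\mathbb{K})$ contains a full positive element, Proposition \ref{pro:sato} yields a surjective homomorphism $\varrho:=\varrho_{\mathcal{W}\otimes\mathbb{K}}\colon F(\mathcal{W}\otimes\mathbb{K})\to M_{\omega}$; it is $\alpha$-equivariant for the induced automorphism $\tilde{\alpha}$ of $M_{\omega}$. By Remark \ref{rem:trace-ideal} one has $\ker\varrho=\{x\in F(\mathcal{W}\otimes\mathbb{K})\mid\tau_{\omega}(x^{*}x)=0\}$ and $\tau_{\omega}$ vanishes on $\ker\varrho$, so $\tau_{\omega}$ descends to a trace on $M_{\omega}$; and since $F(\mathcal{W}\otimes\mathbb{K})\cong F(\mathcal{W})$ has a unique tracial state (Proposition \ref{pro:w-comparison}), $\tau_{\omega}\circ\alpha=\tau_{\omega}$, so the induced automorphism $\tilde{\alpha}$ of the \emph{finite} von Neumann algebra $M_{\omega}$ preserves its trace $\mathrm{tr}$.

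Next I would produce a Rohlin element in $M_{\omega}$. Since $\tilde{\alpha}^{m}$ scales the trace of $M$ by $\lambda(\alpha)^{m}\neq 1$ for every $m\in\mathbb{Z}\setminus\{0\}$, no nonzero power of $\tilde{\alpha}$ is centrally trivial, so the $\mathbb{Z}$-action generated by $\tilde{\alpha}$ on $M$ is free and hence has the Rohlin property (Connes \cite{C1}, \cite{C2}; see also \cite{Oc}): for every $\ell\in\mathbb{N}$ there are projections $\{p_{i}\}_{i=0}^{\ell-1}$, $\{q_{j}\}_{j=0}^{\ell}$ in $M_{\omega}$ with $\sum_{i}p_{i}+\sum_{j}q_{j}=1$ and $\tilde{\alpha}(p_{i})=p_{i+1}$, $\tilde{\alpha}(q_{j})=q_{j+1}$ in the appropriate ranges. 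Taking $\ell$ to be a large multiple of $k$ and setting $g:=\sum_{m=0}^{\ell/k-1}\tilde{\alpha}^{mk}(p_{0}+q_{0})$, one checks that $g$ is a projection with $g\,\tilde{\alpha}^{j}(g)=0$ for $1\le j\le k-1$ and $\mathrm{tr}(g)=(1-\mathrm{tr}(q_{0}))/k$, so that $1/k-\varepsilon<\mathrm{tr}(g)\le 1/k$ once $\ell$ is large enough.

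Then I would lift and clean up. Lift $g$ to a positive contraction $f_{0}\in F(\mathcal{W}\otimes\mathbb{K})$ with $\varrho(f_{0})=g$; then $\tau_{\omega}(f_{0})=\mathrm{tr}(g)$, while $f_{0}\alpha^{j}(f_{0})\in\ker\varrho$ for $1\le j\le k-1$. Let $B$ be the ($\alpha$-invariant, separable) C$^{*}$-subalgebra generated by $\{\alpha^{i}(f_{0})\mid i\in\mathbb{Z}\}$. Since $\ker\varrho$ is a $\sigma$-ideal of $F(\mathcal{W}\otimes\mathbb{K})$ — this is where property (SI)/$\mathcal{Z}$-stability of $\mathcal{W}$ enters, cf. the arguments of \cite{MS2} — there is a positive contraction $e\in\ker\varrho$ with $[e,B]=0$ and $ey=y$ for all $y$ in the separable set $\{\alpha^{i}(f_{0}\alpha^{j}(f_{0}))\mid i\in\mathbb{Z},\ 1\le j\le k-1\}\subset B\cap\ker\varrho$. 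Put $f:=(1-e)f_{0}$, a positive contraction since $[e,f_{0}]=0$. Using $\alpha^{j}(B)=B$ (so $[\alpha^{j}(e),B]=0$) together with the identities obtained by applying $\alpha^{j}$ to $e\cdot\alpha^{-j}(f_{0})f_{0}=\alpha^{-j}(f_{0})f_{0}$, a direct computation gives $f\,\alpha^{j}(f)=0$ for $1\le j\le k-1$; and $\tau_{\omega}(f)=\tau_{\omega}(f_{0})$ since $\tau_{\omega}(e)=0$ and by Cauchy--Schwarz. Hence for each $N\in\mathbb{N}$ there is a positive contraction $f_{N}\in F(\mathcal{W}\otimes\mathbb{K})$ with $f_{N}\alpha^{j}(f_{N})=0$ ($1\le j\le k-1$) and $1/k-1/N<\tau_{\omega}(f_{N})\le 1/k$. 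Combining the sequence $(f_{N})_{N}$ by the standard reindexation (diagonal) argument in $F(\mathcal{W}\otimes\mathbb{K})$, exactly as in the proof of Theorem \ref{thm:unitary-equivalence}, produces a positive contraction $f$ with $f\alpha^{j}(f)=0$ for $1\le j\le k-1$ and $\tau_{\omega}(f)=\lim_{N}\tau_{\omega}(f_{N})=1/k$.

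The heart of the proof is the von Neumann input: that a trace scaling automorphism of the AFD factor of type $\mathrm{II}_{\infty}$, and each of its nonzero powers, is free and hence has the Rohlin property in $M_{\omega}$ — this is where the hypothesis $\lambda(\alpha)\neq 1$ is really used. The other genuinely technical step is arranging $f\alpha^{j}(f)=0$ \emph{on the nose} rather than merely modulo $\ker\varrho$, which is precisely what property (SI) delivers via the $\sigma$-ideal structure of the trace-kernel ideal; pinning the trace down to exactly $1/k$ by reindexation is then routine.
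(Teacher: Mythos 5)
Your proposal is correct in substance and shares the paper's central mechanism: reduce to the AFD II$_{\infty}$ factor $M=\pi_{\tau\otimes\mathrm{Tr}}(\mathcal{W}\otimes\mathbb{K})''$, extract a Rohlin tower for the trace scaling automorphism $\tilde{\alpha}$ in $M_{\omega}$ from Connes' work, and pull it back through the surjection $\varrho$ of Proposition \ref{pro:sato}. Two differences are worth recording. First, the paper invokes Connes' result in the form of a single exact tower $\sum_{j=1}^{k}\tilde{p}_{j}=1$ with $\tilde{\alpha}(\tilde{p}_{j})=\tilde{p}_{j+1}$, so the lifted contraction already has $\tau_{\omega}(e)=1/k$ on the nose; your two-tower version only gives trace close to $1/k$ and you must recover the exact value by reindexation at the end --- harmless, but a detour. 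Second, and more substantively, the step of upgrading $f_{0}\alpha^{j}(f_{0})\in\ker\varrho$ to exact orthogonality is handled differently: the paper runs the explicit functional-calculus perturbation of \cite[Proposition 3.3]{MS1} (cutting $e_{n}$ down by $g_{\varepsilon}(e_{n}')$ with $e_{n}'=e_{n}^{1/2}(\sum_{j}\alpha^{j}(e_{n}))e_{n}^{1/2}$), whereas you appeal to the fact that the trace-kernel ideal $\ker\varrho$ is a $\sigma$-ideal and excise it with a commuting positive contraction $e$, setting $f=(1-e)f_{0}$. Your computation $f\alpha^{j}(f)=(1-e)(1-\alpha^{j}(e))f_{0}\alpha^{j}(f_{0})=0$ is fine once $e$ acts as a unit on the relevant separable subset of $B\cap\ker\varrho$, and the $\sigma$-ideal approach is arguably cleaner and more conceptual. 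One correction of attribution, though: the $\sigma$-ideal property of the trace-kernel ideal is \emph{not} where property (SI) enters --- it is a soft consequence of quasicentral approximate units (Kirchberg, and \cite{KR}), valid without (SI) or $\mathcal{Z}$-stability; indeed the paper's own proof of this lemma does not use (SI) either (it only appears later, in Theorem \ref{thm:rohlin-type-scale}, to absorb the trace-zero remainder of the tower). So your argument is correct, but you should cite the $\sigma$-ideal property to Kirchberg/Kirchberg--R\o rdam rather than to (SI).
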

\begin{proof}
Note that $\pi_{\tau\otimes\mathrm{Tr}}(\mathcal{W}\otimes\mathbb{K})^{''}$ is the AFD factor of type II$_{\infty}$ and 
$\tilde{\alpha}$ is a trace scaling automorphism. 
Hence it follows from \cite[Lemma 5]{C2} and \cite[Theorem 1.2.5]{C2} that there exist projections $\{\tilde{p}_{j}\}_{j=1}^{k}$ in 
$(\pi_{\tau\otimes\mathrm{Tr}}(\mathcal{W}\otimes\mathbb{K})^{''})_{\omega}$ 
such that 
$$
\sum_{j=1}^k \tilde{p}_j=1, \quad \tilde{\alpha} (\tilde{p}_j) = \tilde{p}_{j+1}
$$
for any $j=1,2,...,k-1$. 
Proposition \ref{pro:sato} implies that there exists a positive contraction $e$ in $F(\mathcal{W}\otimes\mathbb{K})$ such that 
$\varrho_{\mathcal{W}\otimes\mathbb{K}} (e)= \tilde{p}_1$. It is easy to see that $\tau_{\omega} (e)=1/k$. 
Let $(e_n)_n$ be a representative of $e$.  
Then 
$$
\| \pi_{\tau\otimes\mathrm{Tr}}(e_n\alpha^{j}(e_n) )\hat{h} \|_2  \to 0 
$$
as $n\to \omega$ for any $h\in \mathrm{Ped}(\mathcal{W}\otimes\mathbb{K})$ and $j=1,2,...,k-1$. 
By similar arguments as in the proof of \cite[Proposition 3.3]{MS1}, one can prove the lemma. 
Indeed, put 
$$
e^{\prime}_n := e_n^{\frac{1}{2}}\left( \sum_{j=1}^{k-1}\alpha^{j}(e_n)\right)e_n^{\frac{1}{2}}.
$$
Then $\|\pi_{\tau\otimes\mathrm{Tr}}(e_n^{\prime})\hat{h}\|_2 \to 0$ as $n\to \omega$ for any $h\in \mathrm{Ped}(\mathcal{W}\otimes\mathbb{K})$. 
For any $\varepsilon >0$, define 
$$
g_{\varepsilon}(t):= \left\{\begin{array}{cl}
\varepsilon^{-1}t & \text{if } t\in [0,\varepsilon]   \\
1                 & \text{if } t\in [\varepsilon, \infty ) 
\end{array}
\right.
$$
and let $f_n:= e_n- e_n^{\frac{1}{2}}g_{\varepsilon}(e_n^{\prime})e_n^{\frac{1}{2}}$. 
The same proof as \cite[Proposition 3.3]{MS1} shows that 
$$
\| f_n\alpha^{j} (f_n) \|^2 < \varepsilon 
$$
for any $j=1,2,...,k-1$. 
For $h\in \mathrm{Ped}(\mathcal{W}\otimes\mathbb{K})_{+,1}$, we have 
\begin{align*}
\lim_{n\to\omega} \|\pi_{\tau\otimes\mathrm{Tr}} (e_n-f_n)\hat{h}\|_2 
& = \lim_{n\to\omega} \|\pi_{\tau\otimes\mathrm{Tr}}( e_n^{\frac{1}{2}}g_{\varepsilon}(e_n^{\prime})e_n^{\frac{1}{2}})\hat{h}\|_2 \\ 
& \leq \lim_{n\to\omega} \|\pi_{\tau\otimes\mathrm{Tr}}(g_{\varepsilon}(e_n^{\prime})e_n^{\frac{1}{2}})\hat{h}\|_2 \\
& = \lim_{n\to\omega} \|\pi_{\tau\otimes\mathrm{Tr}}(g_{\varepsilon}(e_n^{\prime})h^{\frac{1}{2}}e_n^{\frac{1}{2}})\widehat{h^{\frac{1}{2}}}\|_2 \\
& \leq  \lim_{n\to\omega} \|\pi_{\tau\otimes\mathrm{Tr}}(g_{\varepsilon}(e_n^{\prime}))\widehat{h^{\frac{1}{2}}}\|_2=0.
\end{align*}
Hence $\tau_{\omega} ([(f_n)_n])=\tau_{\omega} ([(e_n)_n])=1/k$ (see Remark \ref{rem:trace-ideal}). Therefore we obtain the conclusion by the usual diagonal argument. 
\end{proof}

\begin{lem}\label{lem:cyclic}
Let $\alpha$ be a trace scaling automorphism of $\mathcal{W}\otimes\mathbb{K}$. Then for any $k\in\mathbb{N}$, there exists 
a projection $p$ in $F(\mathcal{W}\otimes\mathbb{K})$ such that  
$$
\tau_{\omega} (p)= \frac{1}{k}, \quad p\alpha^{j}(p)=0 
$$
for any $j=1,2,...,k-1$, and $p$ is Murray-von Neumann equivalent to $\alpha(p)$
\end{lem}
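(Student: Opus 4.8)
The plan is to upgrade the positive contraction furnished by Lemma~\ref{lem:weak^rohlin} to a projection of the same trace, and then to obtain the Murray--von Neumann equivalence for free from the classification of projections in $F(\mathcal{W})$.

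If $k=1$ take $p=1$, so assume $k\ge 2$. First I would apply Lemma~\ref{lem:weak^rohlin} to get a positive contraction $f\in F(\mathcal{W}\otimes\mathbb{K})$ with $\tau_\omega(f)=1/k$ and $f\alpha^j(f)=0$ for $j=1,\dots,k-1$. Applying powers of $\alpha$, the elements $\alpha^0(f),\alpha^1(f),\dots,\alpha^{k-1}(f)$ are mutually orthogonal; since $F(\mathcal{W}\otimes\mathbb{K})\cong F(\mathcal{W})$ is unital with a unique tracial state we have $\tau_\omega\circ\alpha=\tau_\omega$, so $\sum_{i=0}^{k-1}\alpha^i(f)$ is a positive contraction of trace $1$, and comparing dimension functions gives $k\,d_{\tau_\omega}(f)=d_{\tau_\omega}\big(\sum_{i=0}^{k-1}\alpha^i(f)\big)\le d_{\tau_\omega}(1)=1$; with $d_{\tau_\omega}(f)\ge\tau_\omega(f)=1/k$ this forces $d_{\tau_\omega}(f)=1/k$.

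The main step is to produce a projection $p\in F(\mathcal{W})$ with $fp=pf=p$ and $\tau_\omega(p)=1/k$; equivalently, to realize the support projection of $f$ inside $F(\mathcal{W})$. For each $m\in\mathbb{N}$, Proposition~\ref{pro:key-pro}(iii) applied to $f$ (with $\theta=1/k-1/m<1/k\le d_{\tau_\omega}(f)$) gives a projection $q_m\in\overline{fF(\mathcal{W})f}$ with $\tau_\omega(q_m)=1/k-1/m$. Since $d_{\tau_\omega}(f)$ equals $1/k$ and not something larger, one cannot invoke Proposition~\ref{pro:key-pro}(iii) directly with $\theta=1/k$; instead I would pass to central sequences of contractions representing the $q_m$ and run the usual diagonal argument over the sequence index (as at the end of the proof of Lemma~\ref{lem:weak^rohlin}) to obtain a single projection $p$ with $fp=pf=p$ and $\tau_\omega(p)=\lim_m\tau_\omega(q_m)=1/k$. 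I expect this passage --- hitting the boundary value $1/k$ exactly while staying within the hereditary subalgebra generated by $f$ --- to be the principal obstacle; it is where the real-rank-zero-like behaviour of $F(\mathcal{W})$ (Propositions~\ref{pro:key-pro} and \ref{pro:w-comparison}) enters. (Alternatively, via Proposition~\ref{pro:sato} one may lift the projection $\varrho_{\mathcal{W}\otimes\mathbb{K}}(f)$ of the finite von Neumann algebra $\pi_{\tau\otimes\mathrm{Tr}}(\mathcal{W}\otimes\mathbb{K})^{''}_{\omega}$ along $\varrho_{\mathcal{W}\otimes\mathbb{K}}$, using that the trace-kernel ideal is a $\sigma$-ideal.)

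Once $p$ is in hand, the orthogonality is immediate: from $f\alpha^j(f)=0$ (hence also $\alpha^j(f)f=0$) the support projections of $f$ and $\alpha^j(f)$ in $F(\mathcal{W})^{**}$ are mutually orthogonal, and since $p\le\mathrm{supp}(f)$ and therefore $\alpha^j(p)\le\mathrm{supp}(\alpha^j(f))$, we get $p\alpha^j(p)=0$ for $j=1,\dots,k-1$. Finally $\tau_\omega(\alpha(p))=\tau_\omega(p)=1/k$ and $0<1/k<1$ since $k\ge2$, so Theorem~\ref{thm:unitary-equivalence-projections} shows that $p$ and $\alpha(p)$ are unitarily equivalent in $F(\mathcal{W})$, in particular Murray--von Neumann equivalent; for $k=1$ this is trivial. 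This completes the proof.
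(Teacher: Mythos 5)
Your argument is correct and is essentially the paper's proof: both obtain $f$ from Lemma~\ref{lem:weak^rohlin}, use Proposition~\ref{pro:key-pro}(iii) to find projections of trace $1/k-\varepsilon$ in $\overline{fF(\mathcal{W}\otimes\mathbb{K})f}$, deduce the Murray--von Neumann equivalence from $\tau_{\omega}\circ\alpha=\tau_{\omega}$ and Theorem~\ref{thm:unitary-equivalence-projections}, and reach the exact value $1/k$ by the usual diagonal argument (the paper diagonalizes at the very end rather than before checking orthogonality, which is immaterial). The only imprecision is your intermediate claim that $fp=pf=p$: the diagonal argument only yields $p\in\overline{fF(\mathcal{W}\otimes\mathbb{K})f}$, but that is already enough for $p\alpha^{j}(p)=0$, exactly as you then use it.
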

\begin{proof} 
By Lemma \ref{lem:weak^rohlin}, there exists 
a positive contraction $f$ in $F(\mathcal{W}\otimes\mathbb{K})$ such that 
$$
\tau_{\omega} (f)= \frac{1}{k}, \quad f\alpha^{j}(f)=0 
$$
for any $j=1,2,...,k-1$. Since $f$ is a contraction, $d_{\tau_{\omega}} (f) \geq \tau_{\omega}(f)=1/k$.
Let $\varepsilon >0$. (Note that we may assume $0<1/k-\varepsilon <1$.) By Proposition \ref{pro:key-pro}, there exists a projection $p$ in $\overline{fF(\mathcal{W}\otimes\mathbb{K})f}$ 
such that $\tau_{\omega}(p)=1/k-\varepsilon$. It is easy to see that $p\alpha^j(p)=0$ for any $j=1,2,....k-1$. Since the tracial state on $F(\mathcal{W}\otimes\mathbb{K})\cong F(\mathcal{W})$ 
is unique by Proposition \ref{pro:w-comparison}, $\tau_{\omega}\circ \alpha= \tau_{\omega}$, and hence $\tau_{\omega}(p)=\tau_{\omega}(\alpha(p))$. 
Therefore Theorem \ref{thm:unitary-equivalence-projections} implies that $p$ is Murray-von Neumann equivalent to $\alpha(p)$. 
Consequently, we obtain the conclusion by the usual diagonal argument. 
\end{proof}

The following theorem is the main theorem in this section. The proof is based on \cite{Kis0} and \cite{Kis00}.

\begin{thm}\label{thm:rohlin-type-scale}
Let $\alpha$ be a trace scaling automorphism of $\mathcal{W}\otimes\mathbb{K}$. Then $\alpha$ has the Rohlin property. 
\end{thm}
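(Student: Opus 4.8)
The plan is to fix $k\in\mathbb{N}$ and build the required pair of Rohlin towers, of heights $k$ and $k+1$, by cutting a single long ``cyclic'' Rohlin tower into blocks, following the scheme of \cite{Kis0} and \cite{Kis00}. Since $\gcd(k,k+1)=1$, I set $N:=2k+1=k+(k+1)$ and apply Lemma \ref{lem:cyclic} (with its parameter equal to $N$) to obtain a projection $p\in F(\mathcal{W}\otimes\mathbb{K})=F(\mathcal{W})$ with $\tau_{\omega}(p)=1/N$, $p\,\alpha^{j}(p)=0$ for $1\le j\le N-1$, and $p$ Murray--von Neumann equivalent to $\alpha(p)$. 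Applying $\alpha^{i}$ to the relations $p\,\alpha^{j}(p)=0$ shows that $\{\alpha^{m}(p)\}_{m=0}^{N-1}$ are pairwise orthogonal; since $\tau_{\omega}$ is the unique tracial state on $F(\mathcal{W})$ (Proposition \ref{pro:w-comparison}) we have $\tau_{\omega}\circ\alpha=\tau_{\omega}$, so $q:=\sum_{m=0}^{N-1}\alpha^{m}(p)$ is a projection with $\tau_{\omega}(q)=1$.

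The essential step is then to upgrade $p$ so that its orbit exhausts the unit, i.e.\ so that $\sum_{m=0}^{N-1}\alpha^{m}(p)=1$, equivalently $\alpha^{N}(p)=p$. The difficulty here is genuine: $1-q$ is only a projection of trace $0$ and need not be $0$, because $\tau_{\omega}$ is not faithful on $F(\mathcal{W})$ (indeed $\ker(\varrho_{\mathcal{W}})$ contains non-zero projections, cf.\ Proposition \ref{pro:key-pro}). To remove this trace-zero defect I would use that $p$ is in fact \emph{unitarily} equivalent to $\alpha(p)$ (Proposition \ref{pro:MvN-u}, since $\tau_{\omega}(p)<1$), pass to the corresponding $1$-cocycle of unitaries $u_{m}\in F(\mathcal{W})$ with $\alpha^{m}(p)=u_{m}pu_{m}^{*}$, and carry out a cocycle-straightening and absorption argument as in \cite{Kis0}, \cite{Kis00}, exploiting the strict comparison and uniqueness of trace on $F(\mathcal{W})$ (Proposition \ref{pro:w-comparison}), its abundance of projections (Proposition \ref{pro:key-pro}), and the classification of projections of trace $<1$ up to unitary equivalence (Theorem \ref{thm:unitary-equivalence-projections}); these say that $F(\mathcal{W})$ behaves, up to unitary equivalence, like the hyperfinite $\mathrm{II}_1$ factor $\pi_{\tau}(\mathcal{W})^{''}_{\omega}$, in which the corresponding exact towers exist by \cite{C2}.

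Granting such a $p$, the last step is purely combinatorial: partition $\{0,1,\dots,N-1\}$ into the length-$k$ block $[0,k)$ and the length-$(k+1)$ block $[k,N)$, and put $p_{1,i}:=\alpha^{i}(p)$ for $i=0,\dots,k-1$ and $p_{2,j}:=\alpha^{k+j}(p)$ for $j=0,\dots,k$. These are projections, pairwise orthogonality is immediate, $\alpha(p_{1,i})=\alpha^{i+1}(p)=p_{1,i+1}$ for $0\le i\le k-2$ and $\alpha(p_{2,j})=\alpha^{k+j+1}(p)=p_{2,j+1}$ for $0\le j\le k-1$, and $\sum_{i=0}^{k-1}p_{1,i}+\sum_{j=0}^{k}p_{2,j}=\sum_{m=0}^{N-1}\alpha^{m}(p)=1$. (For an arbitrary decomposition $N=ak+b(k+1)$ with $a,b\ge1$ one would instead sum the corresponding translates over all blocks of each length.) Hence $\alpha$ has the Rohlin property.

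I expect the middle step to be the real obstacle. In the von Neumann algebra the identity $\sum_{m=0}^{N-1}\alpha^{m}(p)=1$ is automatic from faithfulness of the trace, whereas in $F(\mathcal{W})$ one must genuinely use its $C^{*}$-algebraic structure to kill the trace-zero remainder, and this is precisely where the stable uniqueness and homotopy results of Sections \ref{sec:stable} and \ref{sec:Homotopy} come into play.
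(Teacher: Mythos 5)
Your overall strategy --- Lemma \ref{lem:cyclic} followed by a Kishimoto-style absorption of the trace-zero defect --- is indeed the paper's strategy, and you correctly identify both the obstacle (non-faithfulness of $\tau_{\omega}$ on $F(\mathcal{W})$) and the relevant tools (Proposition \ref{pro:w-comparison}, Proposition \ref{pro:key-pro}, Theorem \ref{thm:unitary-equivalence-projections}). The gap is in the precise form of your intermediate target. You propose to upgrade $p$ so that $\sum_{m=0}^{N-1}\alpha^{m}(p)=1$ exactly, i.e.\ so that the unit is partitioned by a single $\alpha$-orbit, and your final ``purely combinatorial'' step depends entirely on this. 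But the cocycle-straightening arguments of \cite{Kis0} and \cite{Kis00} that you invoke do not deliver this. What they (and the paper) deliver is a projection with exact period, $\alpha^{m}(p)=p$ and $p\alpha^{j}(p)=0$ for $1\le j\le m-1$, together with a possibly \emph{non-zero} $\alpha$-invariant remainder $f:=1-\sum_{j=0}^{m-1}\alpha^{j}(p)$ of trace zero. Distributing $f$ over an exact $N$-cycle $g_{0},\dots,g_{N-1}$ with $\alpha(g_{j})=g_{j+1}$ and $\sum_{j}g_{j}=f$ is the same Rohlin problem you started with, now posed inside the corner $fF(\mathcal{W})f$, where $\tau_{\omega}$, strict comparison and the projection machinery of Sections \ref{sec:Properties} and \ref{sec:Homotopy} give no leverage at all; nothing in your sketch produces such a cycle, and it is not clear one exists.

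The paper's route around this is different: after obtaining the exactly periodic tower (via the matrix-unit homomorphism of \cite[Lemma 4.3]{Kis0} and the argument of \cite[Lemma 2.1]{Kis0}), it uses property (SI) and the comparison of Theorem \ref{thm:Matui-Sato} to produce a partial isometry $v$ with $v^{*}v=1-\sum_{j=0}^{m-1}\alpha^{j}(p)$ and $vv^{*}\le p$ (possible because the remainder has dimension-function value $0<1/m$), and then runs the two-tower construction of \cite[Theorem 2.1]{Kis0} and \cite[Lemma 4.4]{Kis00}, which weaves the remainder into the towers by means of $v$; the resulting towers of heights $k$ and $k+1$ are \emph{not} blocks of a single orbit. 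So your closing remark is right that the middle step is the real obstacle, but the specific reduction you propose --- first kill the remainder, then cut one orbit into blocks --- is not the one that works; one must keep the remainder and let the two-tower construction absorb it.
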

\begin{proof}
For any $N\geq 2$, it follows from Lemma \ref{lem:cyclic} that there exists a (non-unital) homomorphism $\varphi$ from $M_{N}(\mathbb{C})$ to 
$F(\mathcal{W}\otimes\mathbb{K})$ such that 
$$
\alpha (\varphi (e_{ij})) = \varphi (e_{i+1j+1})
$$
for any $i,j=1,...,N-1$, where $\{e_{ij}\}_{i,j=1}^{N}$ is the standard matrix units of $M_{N}(\mathbb{C})$ (see \cite[Lemma 4.3]{Kis0} for details). 
By the same argument as in the proof of \cite[Lemma 2.1]{Kis0} and the usual diagonal argument, we see that for any $m\in\mathbb{N}$, 
there exists a projection $p$ in $F(\mathcal{W}\otimes\mathbb{K})$ such that 
$$
\tau_{\omega} (p)=\frac{1}{m}, \quad p\alpha^{j}(p)=0, \quad \alpha^{m}(p)=p 
$$
for any $j=1,2,...,m-1$. Note that there exists an element $v$ in $F(\mathcal{W}\otimes\mathbb{K})\cong F(\mathcal{W})$ such that 
$$
v^*v= 1-\sum_{j=0}^{m-1}\alpha^{j}(p), \quad vv^* \leq p 
$$
because $\mathcal{W}$ has property (SI) and we have $\tau_{\omega} (1-\sum_{j=0}^{m-1}\alpha^{j}(p))=0$. 
Therefore the rest of the proof is the same as \cite[Theorem 2.1]{Kis0} and \cite[Lemma 4.4]{Kis00}.  
\end{proof}

An automorphism $\alpha$ of $\mathcal{W}$ is said to be \textit{strongly outer} if $\tilde{\alpha}$ is not inner in $\pi_{\tau} (\mathcal{W})^{''}$. 
The same proof as Lemma \ref{lem:weak^rohlin} shows the following lemma.

\begin{lem}
Let $\alpha$ be an automorphism of $\mathcal{W}$ such that $\alpha^{m}$ is strongly outer for any $m\in\mathbb{Z}\setminus\{0\}$. Then for any $k\in\mathbb{N}$, there exists 
a positive contraction $f$ in $F(\mathcal{W})$ such that 
$$
\tau_{\omega} (f)= \frac{1}{k}, \quad f\alpha^{j}(f)=0 
$$
for any $j=1,2,...,k-1$. 
\end{lem}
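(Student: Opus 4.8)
The plan is to run the proof of Lemma~\ref{lem:weak^rohlin} essentially verbatim, the only substantive change being that the type~II$_\infty$ input from Connes is replaced by the corresponding type~II$_1$ statement.

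First I would record the von Neumann algebra side. Since $\mathcal{W}$ is a simple separable nuclear C$^*$-algebra with a unique tracial state $\tau$, the weak closure $M:=\pi_{\tau}(\mathcal{W})^{''}$ is the injective II$_1$ factor $\mathcal{R}$, and $\tilde{\alpha}$ is an automorphism of $M$. By uniqueness of the extension, $\widetilde{\alpha^m}=\tilde{\alpha}^m$ for every $m$, so the hypothesis that $\alpha^m$ is strongly outer for all $m\in\mathbb{Z}\setminus\{0\}$ says exactly that $\tilde{\alpha}^m$ is outer for all such $m$, i.e.\ $\tilde{\alpha}$ is aperiodic. By Connes' Rohlin lemma for aperiodic automorphisms of the hyperfinite II$_1$ factor (\cite{C1}), for each $k\in\mathbb{N}$ there exist projections $\{\tilde{p}_j\}_{j=1}^{k}$ in $M_{\omega}$ with $\sum_{j=1}^{k}\tilde{p}_j=1$ and $\tilde{\alpha}(\tilde{p}_j)=\tilde{p}_{j+1}$ for $j=1,\dots,k-1$; in particular $\tau_{\omega}(\tilde{p}_1)=1/k$ and $\tilde{p}_1\tilde{\alpha}^{j}(\tilde{p}_1)=\tilde{p}_1\tilde{p}_{j+1}=0$ for $j=1,\dots,k-1$.

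Next I would pull $\tilde{p}_1$ back to $F(\mathcal{W})$. Because $\mathcal{W}$ is simple, $\mathrm{Ped}(\mathcal{W})$ contains a full positive element and $\tau$ is faithful (Remark~\ref{rem:trace-ideal}), so Proposition~\ref{pro:sato} applies and $\varrho_{\mathcal{W}}\colon F(\mathcal{W})\to M_{\omega}$ is surjective. Choose a positive contraction $e\in F(\mathcal{W})$ with $\varrho_{\mathcal{W}}(e)=\tilde{p}_1$. Since $\tau_{\omega}$ on $F(\mathcal{W})$ factors through $\varrho_{\mathcal{W}}$ (Remark~\ref{rem:trace-ideal}), $\tau_{\omega}(e)=\tau_{\omega}(\tilde{p}_1)=1/k$. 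Picking a representative $(e_n)_n$ of $e$ by central positive contractions in $\mathcal{W}$, for any $h\in\mathrm{Ped}(\mathcal{W})$ and $j=1,\dots,k-1$ we get $\|\pi_{\tau}(e_n\alpha^{j}(e_n))\widehat{h}\|_2\to 0$ as $n\to\omega$, since $\varrho_{\mathcal{W}}$ intertwines $\alpha$ and $\tilde{\alpha}$ and $\varrho_{\mathcal{W}}(e)\,\tilde{\alpha}^{j}(\varrho_{\mathcal{W}}(e))=0$ in $M_{\omega}$. From here the excision procedure of \cite[Proposition~3.3]{MS1}, carried out exactly as in the proof of Lemma~\ref{lem:weak^rohlin} with $\tau$, $\mathcal{W}$ in place of $\tau\otimes\mathrm{Tr}$, $\mathcal{W}\otimes\mathbb{K}$, turns this approximate orthogonality into exact orthogonality: put $e'_n:=e_n^{\frac{1}{2}}\big(\sum_{j=1}^{k-1}\alpha^{j}(e_n)\big)e_n^{\frac{1}{2}}$, so $\|\pi_{\tau}(e'_n)\widehat{h}\|_2\to 0$ for each $h\in\mathrm{Ped}(\mathcal{W})$; with the cut-off function $g_{\varepsilon}$ from that proof set $f_n:=e_n-e_n^{\frac{1}{2}}g_{\varepsilon}(e'_n)e_n^{\frac{1}{2}}$. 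The same estimate gives $\|f_n\alpha^{j}(f_n)\|^2<\varepsilon$ for $j=1,\dots,k-1$, while $\lim_{n\to\omega}\|\pi_{\tau}(e_n-f_n)\widehat{h}\|_2=0$ forces $\tau_{\omega}([(f_n)_n])=1/k$ by Remark~\ref{rem:trace-ideal}. A diagonal argument over $\varepsilon\downarrow 0$ then yields a positive contraction $f\in F(\mathcal{W})$ with $\tau_{\omega}(f)=1/k$ and $f\alpha^{j}(f)=0$ for $j=1,\dots,k-1$.

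The only point needing genuine care --- the rest being a line-by-line transcription of the proof of Lemma~\ref{lem:weak^rohlin} --- is the very first step: one must confirm that $\pi_{\tau}(\mathcal{W})^{''}$ is the hyperfinite II$_1$ factor, that $\widetilde{\alpha^m}=\tilde{\alpha}^m$, and hence that the strong outerness of all nonzero powers of $\alpha$ is precisely the aperiodicity needed to invoke Connes' Rohlin towers in $M_{\omega}$. I do not anticipate any further obstacle.
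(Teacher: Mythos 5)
Your proof is correct and is essentially the paper's own argument: the paper simply asserts that the proof of Lemma \ref{lem:weak^rohlin} carries over, with $\pi_{\tau}(\mathcal{W})^{''}$ the hyperfinite II$_1$ factor and Connes' Rohlin towers for aperiodic automorphisms replacing the II$_{\infty}$ input, which is exactly what you have written out (including the reduction of strong outerness of all powers to aperiodicity of $\tilde{\alpha}$, the lift through the surjection $\varrho_{\mathcal{W}}$ of Proposition \ref{pro:sato}, and the Matui--Sato excision plus diagonal argument). The only slip is the citation: Connes' Rohlin lemma is \cite[Theorem 1.2.5]{C2}, not \cite{C1}.
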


\begin{lem}
Let $\alpha$ be an automorphism of $\mathcal{W}$ such that $\alpha^{m}$ is strongly outer for any  $m\in\mathbb{Z}\setminus\{0\}$. Then for any $k\in\mathbb{N}$, there exists 
a projection $p$ in $F(\mathcal{W})$ such that such that 
$$
\tau_{\omega} (p)= \frac{1}{k}, \quad p\alpha^{j}(p)=0 
$$
for any $j=1,2,...,k-1$, and $p$ is Murray-von Neumann equivalent to $\alpha(p)$. 
\end{lem}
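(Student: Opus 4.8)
The plan is to repeat the proof of Lemma~\ref{lem:cyclic} essentially verbatim, using the preceding lemma in place of Lemma~\ref{lem:weak^rohlin}; the hypothesis that $\alpha^{m}$ is strongly outer for every $m\in\mathbb{Z}\setminus\{0\}$ enters the argument only through that lemma, which we may assume.

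First I would apply the preceding lemma to the given $k$ to obtain a positive contraction $f\in F(\mathcal{W})$ with $\tau_{\omega}(f)=1/k$ and $f\alpha^{j}(f)=0$ for $j=1,\dots,k-1$. Since $f$ is a contraction, $d_{\tau_{\omega}}(f)\ge\tau_{\omega}(f)=1/k$. Fix $\varepsilon\in(0,1/k)$, so that $0<1/k-\varepsilon<1$; by Proposition~\ref{pro:key-pro}(iii) there is a projection $p\in\overline{fF(\mathcal{W})f}$ with $\tau_{\omega}(p)=1/k-\varepsilon$, which is legitimate since $1/k-\varepsilon<1/k\le d_{\tau_{\omega}}(f)$. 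Because $f\alpha^{j}(f)=0$ (hence also $\alpha^{j}(f)f=0$), the hereditary subalgebras $\overline{fF(\mathcal{W})f}$ and $\overline{\alpha^{j}(f)F(\mathcal{W})\alpha^{j}(f)}$ are mutually orthogonal; as $p\in\overline{fF(\mathcal{W})f}$ and $\alpha^{j}(p)\in\overline{\alpha^{j}(f)F(\mathcal{W})\alpha^{j}(f)}$, we get $p\alpha^{j}(p)=0$ for $j=1,\dots,k-1$. Next, $F(\mathcal{W})$ has a unique tracial state by Proposition~\ref{pro:w-comparison}(i), so $\tau_{\omega}\circ\alpha=\tau_{\omega}$ and hence $\tau_{\omega}(\alpha(p))=\tau_{\omega}(p)=1/k-\varepsilon\in(0,1)$; Theorem~\ref{thm:unitary-equivalence-projections} then shows that $p$ and $\alpha(p)$ are unitarily equivalent, in particular Murray-von Neumann equivalent.

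Finally, letting $\varepsilon\downarrow 0$ and passing to a diagonal sequence --- the usual reindexation argument in the central sequence C$^{*}$-algebra, exactly as at the end of the proof of Lemma~\ref{lem:cyclic} --- one replaces the value $1/k-\varepsilon$ by $1/k$ while keeping the relations $p\alpha^{j}(p)=0$ and $p\sim\alpha(p)$, which gives the conclusion. The only substantive input is the preceding lemma, which is assumed and whose proof copies that of Lemma~\ref{lem:weak^rohlin} (this is where strong outerness is used); within the present argument the sole mild subtlety is this last diagonalization making $\tau_{\omega}(p)$ exactly $1/k$, and that is routine.
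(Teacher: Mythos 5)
Your argument is correct, and for the construction of $p$ it is exactly what the paper intends: the paper's own proof merely says the projection is obtained ``in a similar way as in Lemma \ref{lem:cyclic}'', which is precisely the cutting-down (Proposition \ref{pro:key-pro}(iii) applied to $\overline{fF(\mathcal{W})f}$ with $\theta=1/k-\varepsilon<d_{\tau_{\omega}}(f)$), the orthogonality check, and the final diagonalization that you spell out. The one point where you diverge from the paper is the Murray--von Neumann equivalence of $p$ and $\alpha(p)$: you rerun the argument of Lemma \ref{lem:cyclic}, using $\tau_{\omega}\circ\alpha=\tau_{\omega}$ (uniqueness of the tracial state on $F(\mathcal{W})$, Proposition \ref{pro:w-comparison}) together with the classification of projections in Theorem \ref{thm:unitary-equivalence-projections}, which applies since $0<\tau_{\omega}(p)=1/k-\varepsilon<1$; the paper instead notes that every automorphism of $\mathcal{W}$ is approximately inner (Theorem \ref{thm:Razak}(ii)) and quotes \cite[Lemma 4.3]{M1}. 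Both routes are valid. Yours is uniform with Lemma \ref{lem:cyclic} and stays entirely within results proved in the paper; the paper's route leans on an external lemma of Matui, made available here by the approximate innerness of automorphisms of $\mathcal{W}$ --- a hypothesis that is special to this lemma and is not available for the trace scaling automorphisms of $\mathcal{W}\otimes\mathbb{K}$ treated in Lemma \ref{lem:cyclic}, which is presumably why the two proofs in the paper end differently. No gap in your version; the only implicit point, which you flag, is that the closing diagonal argument must carry the partial isometries implementing $p\sim\alpha(p)$ along with the projections, and that is routine.
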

\begin{proof}
In a similar way as in Lemma \ref{lem:cyclic}, we see that there exists 
a projection $p$ in $F(\mathcal{W})$ such that  
$$
\tau_{\omega} (p)= \frac{1}{k}, \quad p\alpha^{j}(p)=0 
$$
for any $j=1,2,...,k-1$. 
Since every automorphism of $\mathcal{W}$ is approximately inner by Theorem \ref{thm:Razak}, we obtain the conclusion by \cite[Lemma 4.3]{M1}. 
\end{proof}

By the same arguments as in the proof of Theorem \ref{thm:rohlin-type-scale}, we obtain the following theorem. 

\begin{thm}\label{thm:rohlin-type}
Let $\alpha$ be an automorphism of $\mathcal{W}$ such that $\alpha^{m}$ is strongly outer for any  $m\in\mathbb{Z}\setminus\{0\}$. Then $\alpha$ has the Rohlin property. 
\end{thm}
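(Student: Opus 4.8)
The plan is to run the proof of Theorem \ref{thm:rohlin-type-scale} essentially verbatim, with the trace-scaling input (Lemma \ref{lem:cyclic}) replaced by its strongly outer counterpart, the lemma stated immediately above. Fix $N\geq 2$. That lemma provides, for each $k$, a projection $p\in F(\mathcal{W})$ with $\tau_{\omega}(p)=1/k$, $p\alpha^{j}(p)=0$ for $1\leq j\leq k-1$, and $p$ Murray--von Neumann equivalent to $\alpha(p)$. As in \cite[Lemma 4.3]{Kis0}, combined with the usual diagonal argument over an increasing sequence of matrix sizes, this upgrades to a (non-unital) homomorphism $\varphi:M_{N}(\mathbb{C})\to F(\mathcal{W})$ with $\alpha(\varphi(e_{ij}))=\varphi(e_{i+1j+1})$ for $1\leq i,j\leq N-1$, where $\{e_{ij}\}_{i,j=1}^{N}$ are the standard matrix units of $M_{N}(\mathbb{C})$.

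Next I would apply the argument of \cite[Lemma 2.1]{Kis0} together with the usual diagonal trick to produce, for each $m\in\mathbb{N}$, a projection $p\in F(\mathcal{W})$ satisfying
$$
\tau_{\omega}(p)=\frac{1}{m},\quad p\alpha^{j}(p)=0\ \ (1\leq j\leq m-1),\quad \alpha^{m}(p)=p.
$$
The only facts about $F(\mathcal{W})$ needed here beyond the cyclic $M_{N}$-system are that $\tau_{\omega}$ is the unique tracial state on $F(\mathcal{W})$ (Proposition \ref{pro:w-comparison}), whence $\tau_{\omega}\circ\alpha=\tau_{\omega}$, and that two projections of equal trace in $F(\mathcal{W})$ with trace strictly between $0$ and $1$ are unitarily equivalent (Theorem \ref{thm:unitary-equivalence-projections}), which is what allows the connecting partial isometries to be chosen periodically.

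Finally, since $\tau_{\omega}\bigl(1-\sum_{j=0}^{m-1}\alpha^{j}(p)\bigr)=0$ and $\mathcal{W}$ has property (SI), there exists $v\in F(\mathcal{W})$ with $v^{*}v=1-\sum_{j=0}^{m-1}\alpha^{j}(p)$ and $vv^{*}\leq p$; feeding this into the construction of \cite[Theorem 2.1]{Kis0} and \cite[Lemma 4.4]{Kis00} yields the two families of projections $\{p_{1,i}\}_{i=0}^{k-1}$ and $\{p_{2,j}\}_{j=0}^{k}$ in $F(\mathcal{W})$ with the required covering and intertwining relations, which is exactly the Rohlin property. I expect the only genuinely delicate point to be the same one as in Theorem \ref{thm:rohlin-type-scale}: verifying that Kishimoto's splitting of a single periodic tower of height $m$ (with the null "error" block absorbed via $v$) into towers of heights $k$ and $k+1$ still goes through in the non-unital setting of $F(\mathcal{W})$. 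Since all the ingredients this relies on---property (SI), uniqueness of the trace, and the projection classification of Section \ref{sec:Homotopy}---have already been established for $F(\mathcal{W})$, this step is formal, and the theorem follows.
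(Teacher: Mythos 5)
Your proposal is correct and follows exactly the route the paper takes: the paper's proof of Theorem \ref{thm:rohlin-type} is literally ``by the same arguments as in the proof of Theorem \ref{thm:rohlin-type-scale}'', with Lemma \ref{lem:cyclic} replaced by its strongly outer counterpart (which the paper establishes via approximate innerness of $\alpha$ and \cite[Lemma 4.3]{M1}, though your alternative justification via the uniqueness of $\tau_{\omega}$ and Theorem \ref{thm:unitary-equivalence-projections} is equally valid here). No gaps.
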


\section{Outer conjugacy}\label{sec:Outer}

In this section we shall classify trace scaling automorphisms of $\mathcal{W}\otimes\mathbb{K}$ up to outer conjugacy. 
Using Theorem \ref{thm:homotopy} instead of \cite[Lemma 1]{HO}, we can prove the following theorem by essentially the same argument as in the proof of 
\cite[Theorem 1]{HO}. (See also \cite{I0} and \cite{Kis3}.) 

\begin{thm}
Let $A$ be a C$^*$-algebra which is isomorphic to $\mathcal{W}$ or $\mathcal{W}\otimes\mathbb{K}$, 
and let $\alpha$ be an automorphism of $A$ with the Rohlin property.  For any unitary element $u$ in $F(A)$, 
there exists a unitary element $v$ in $F(A)$ such that $u=v\alpha(v)^*$. 
\end{thm}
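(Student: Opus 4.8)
The plan is to follow the Herman--Ocneanu cocycle vanishing scheme, exactly as indicated before the statement: beyond the Rohlin property, the only feature of the central sequence algebra of a UHF algebra used in \cite[Theorem 1]{HO} is that its unitary group is connected by paths of uniformly bounded length, and this is precisely what Theorem \ref{thm:homotopy} supplies here (recall $F(A)\cong F(\mathcal{W})$ whether $A\cong\mathcal{W}$ or $A\cong\mathcal{W}\otimes\mathbb{K}$). I would first establish the approximate version: for every $\varepsilon>0$ there is a unitary $v_{\varepsilon}\in F(A)$ with $\|u-v_{\varepsilon}\alpha(v_{\varepsilon})^{*}\|<\varepsilon$.

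For this, fix $k\in\mathbb{N}$ with $2\pi/k<\varepsilon/2$ and use the Rohlin property to obtain projections $\{p_{1,i}\}_{i=0}^{k-1}$ and $\{p_{2,j}\}_{j=0}^{k}$ in $F(A)$ with $\sum_{i=0}^{k-1}p_{1,i}+\sum_{j=0}^{k}p_{2,j}=1$, $\alpha(p_{1,i})=p_{1,i+1}$ and $\alpha(p_{2,j})=p_{2,j+1}$. By Theorem \ref{thm:homotopy} choose a continuous path of unitaries $U\colon[0,1]\to F(A)$ with $U(0)=1$, $U(1)=u$ and $\mathrm{Lip}(U)\le 2\pi$, so that $\|U((i+1)/k)-U(i/k)\|\le 2\pi/k$. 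One then builds $v_{\varepsilon}$ on the two towers, in the same way as in \cite[Theorem 1]{HO}, by transporting the samples $U(i/k)$ by $\alpha^{i}$, arranged so that the telescoping product $v_{\varepsilon}\alpha(v_{\varepsilon})^{*}$ reproduces $u$ along each level of each tower up to a junction error of size at most $2\pi/k$; the mismatch of the two tower heights ($k$ and $k+1$) is exactly what lets the residual defect at the tops of the towers be absorbed, as is standard in Rohlin type arguments. Summing the $O(k)$ errors of size $O(1/k)$ gives $\|u-v_{\varepsilon}\alpha(v_{\varepsilon})^{*}\|<\varepsilon$.

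To pass to an exact solution, I would run the construction for $\varepsilon_{n}=2^{-n}$ with tower heights $k_{n}\to\infty$ and apply the usual successive correction (intertwining) argument: at each stage $v_{n}$ is modified by a unitary close to $1$ so that the resulting defects telescope to $0$, which is possible because the group of unitaries of $F(A)$ is path connected by Theorem \ref{thm:homotopy}. This yields a unitary $v\in F(A)$ with $u=v\alpha(v)^{*}$; equivalently, it shows that the set $\{\,w\alpha(w)^{*}\;:\;w\ \text{a unitary in}\ F(A)\,\}$ is closed. A diagonal (reindexation) argument of the kind used repeatedly above completes the proof.

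The step I expect to be the main obstacle is the explicit level by level construction of $v_{\varepsilon}$ together with checking that it is a unitary with the asserted defect. Since $u$ need not commute with the Rohlin projections, the transport between consecutive levels is only approximately isometric, and this is exactly why the quantitative bound $\mathrm{Lip}(U)\le 2\pi$ of Theorem \ref{thm:homotopy}, and not merely connectedness of the unitary group, is needed: it forces each of the roughly $k$ junction errors to be of order $1/k$, so that their sum is controlled independently of $k$. Keeping the correcting unitaries inside $F(A)$ and ensuring convergence in the approximate to exact step also requires some care, but this is routine once path connectedness of the unitary group is available, and proceeds exactly as in \cite[Theorem 1]{HO} (see also \cite{I0} and \cite{Kis3}).
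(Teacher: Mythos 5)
Your proposal is correct and follows essentially the same route as the paper: the paper gives no detailed argument here, but explicitly proves this theorem by running the Herman--Ocneanu cocycle-vanishing argument of \cite[Theorem 1]{HO} with Theorem \ref{thm:homotopy} (the Lipschitz-controlled path connectedness of the unitary group of $F(\mathcal{W})\cong F(A)$) substituted for \cite[Lemma 1]{HO}, exactly as you describe. Your outline of the two-tower construction, the $O(k)\cdot O(1/k)$ error bookkeeping, and the approximate-to-exact passage via the usual diagonal/reindexation argument matches the intended proof.
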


The following lemma is an immediate consequence of the theorem above and Corollary \ref{cor:u-lift}. 

\begin{lem}\label{lem:stability}
Let $A$ be a C$^*$-algebra which is isomorphic to $\mathcal{W}$ or $\mathcal{W}\otimes\mathbb{K}$, 
and let $\alpha$ be an automorphism of $A$ with the Rohlin property. 
Then 
for any finite subsets $E\subset A^{\sim}$, $F\subset A$ and $\varepsilon >0$, there exist a finite subset $G\subset A$ and 
$\delta >0$ such that the following holds. If $u$ is a unitary element in $M(A)$ satisfying 
$$
\| [u,a] \| < \delta 
$$
for any $a\in G$, then there exists a unitary element $v$ in $A^{\sim}$ such that 
$$
\| [v,x] \| < \varepsilon, \quad \| (u -v\alpha (v)^* )y \| < \varepsilon, \quad  \| y(u -v\alpha (v)^* ) \| < \varepsilon
$$
for any $x\in E$ and $y\in F$. 
\end{lem}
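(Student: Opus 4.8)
The plan is to deduce this local statement from the central‑sequence statement of the theorem above by the usual reindexation (compactness) argument. I would argue by contradiction: fix finite sets $E\subset A^{\sim}$, $F\subset A$ and $\varepsilon>0$ for which the conclusion fails, choose an increasing sequence $G_{1}\subset G_{2}\subset\cdots$ of finite subsets of $A$ with dense union (recall $A$ is separable), and take $\delta_{n}=1/n$. Negating the assertion then produces, for each $n$, a unitary $u_{n}\in M(A)$ with $\|[u_{n},a]\|<1/n$ for all $a\in G_{n}$, but such that no unitary $v\in A^{\sim}$ satisfies $\|[v,x]\|<\varepsilon$ for $x\in E$ together with $\|(u_{n}-v\alpha(v)^{*})y\|<\varepsilon$ and $\|y(u_{n}-v\alpha(v)^{*})\|<\varepsilon$ for $y\in F$.

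Since the $G_{n}$ exhaust a dense subset of $A$ and each $u_{n}$ is a contraction, $(u_{n})_{n}$ is a bounded central sequence of unitaries in $M(A)$, so, using the identification $F(A)\cong M(A)^{\omega}\cap A^{\prime}/\mathrm{Ann}(A,M(A)^{\omega})$ recorded in Section~\ref{sec:Pre}, it represents a unitary $u\in F(A)$. Applying the theorem above gives a unitary $v\in F(A)$ with $u=v\alpha(v)^{*}$. I would then lift $v$ to a unitary central sequence: by Corollary~\ref{cor:u-lift} when $A\cong\mathcal{W}$, and by Proposition~\ref{pro:u-lift} together with the fact that $\mathcal{W}\otimes\mathbb{K}\subset\overline{\mathrm{GL}((\mathcal{W}\otimes\mathbb{K})^{\sim})}$ when $A\cong\mathcal{W}\otimes\mathbb{K}$, there is a unitary $(v_{n})_{n}$ in $A^{\sim}_{\omega}$ with $v=[(v_{n})_{n}]$; in particular each $v_{n}$ is a unitary in $A^{\sim}$ and $\lim_{n\to\omega}\|[v_{n},x]\|=0$ for every $x\in A^{\sim}$.

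Unwinding $u=v\alpha(v)^{*}$ in $F(A)$ means exactly that $(u_{n}-v_{n}\alpha(v_{n})^{*})_{n}\in\mathrm{Ann}(A,M(A)^{\omega})$, i.e.
\[
\lim_{n\to\omega}\|(u_{n}-v_{n}\alpha(v_{n})^{*})y\|=0=\lim_{n\to\omega}\|y(u_{n}-v_{n}\alpha(v_{n})^{*})\|
\]
for all $y\in A$. Hence the set of indices $n$ for which simultaneously $\|[v_{n},x]\|<\varepsilon$ ($x\in E$), $\|(u_{n}-v_{n}\alpha(v_{n})^{*})y\|<\varepsilon$ and $\|y(u_{n}-v_{n}\alpha(v_{n})^{*})\|<\varepsilon$ ($y\in F$) belongs to $\omega$, so it is nonempty; for any such $n$ the unitary $v_{n}\in A^{\sim}$ fulfils all the requirements of the conclusion for $u_{n}$, contradicting the choice of $u_{n}$. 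This contradiction proves the lemma.

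There is no serious obstacle once the theorem is available—the proof is essentially bookkeeping with sequences along the ultrafilter. The two points deserving a little care are (i) checking that $(u_{n})_{n}$ genuinely descends to a \emph{unitary} of $F(A)$, for which asymptotic centrality only on a dense subset of $A$ suffices because the $u_{n}$ are contractions, and (ii) in the stabilized case, the availability of the unitary lifting $F(\mathcal{W}\otimes\mathbb{K})\to(\mathcal{W}\otimes\mathbb{K})^{\sim}_{\omega}$, which rests on $\mathcal{W}\otimes\mathbb{K}$ being approximable by invertibles in its unitization and is the one ingredient not literally stated as Corollary~\ref{cor:u-lift}.
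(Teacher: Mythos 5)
Your argument is correct and is exactly the deduction the paper intends: it declares Lemma \ref{lem:stability} an immediate consequence of the cocycle-vanishing theorem and Corollary \ref{cor:u-lift}, and your contradiction/ultrafilter unwinding (including the observation that the stabilized case needs Proposition \ref{pro:u-lift} with stable rank one of $\mathcal{W}\otimes\mathbb{K}$, since Corollary \ref{cor:u-lift} is stated only for $\mathcal{W}$) is precisely the standard way to make that deduction explicit.
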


The following theorem is the main theorem in this paper. 

\begin{thm}\label{thm:main-i}
Let $\alpha$ and $\beta$ be trace scaling automorphisms of $\mathcal{W}\otimes\mathbb{K}$. Then $\alpha$ and $\beta$ are outer conjugate if and only if 
$\lambda (\alpha)=\lambda (\beta)$. 
\end{thm}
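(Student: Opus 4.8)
The plan is to handle the two implications separately. The \emph{only if} direction is a short trace computation: if $\alpha=\mathrm{Ad}(u)\circ\gamma\circ\beta\circ\gamma^{-1}$ for some $\gamma\in\mathrm{Aut}(\mathcal{W}\otimes\mathbb{K})$ and some unitary $u$ in $M(\mathcal{W}\otimes\mathbb{K})$, then since $\tau\otimes\mathrm{Tr}$ is the unique densely defined lower semicontinuous trace on $\mathcal{W}\otimes\mathbb{K}$ up to scalar multiple, we have $(\tau\otimes\mathrm{Tr})\circ\gamma=\lambda(\gamma)\,\tau\otimes\mathrm{Tr}$ for some $\lambda(\gamma)\in\mathbb{R}^{\times}_{+}$, hence $(\tau\otimes\mathrm{Tr})\circ\gamma^{-1}=\lambda(\gamma)^{-1}\,\tau\otimes\mathrm{Tr}$, while $\mathrm{Ad}(u)$ preserves $\tau\otimes\mathrm{Tr}$. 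Composing, $\lambda(\alpha)=\lambda(\gamma)\lambda(\beta)\lambda(\gamma)^{-1}=\lambda(\beta)$.

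For the \emph{if} direction, assume $\lambda(\alpha)=\lambda(\beta)$. By Theorem \ref{thm:Razak}(iii) the automorphisms $\alpha$ and $\beta$ are approximately unitarily equivalent, so $\alpha\circ\beta^{-1}$ is approximately inner; by Theorem \ref{thm:rohlin-type-scale} both $\alpha$ and $\beta$ have the Rohlin property, so Lemma \ref{lem:stability} applies to each. I would then run the Bratteli--Elliott--Evans--Kishimoto intertwining argument, in the form used in \cite[Theorem 1]{HO} (compare \cite{EEK}, \cite{EK}, \cite{Kis0}, \cite{I0}, \cite{Kis3}). Fix an increasing sequence of finite subsets $F_1\subset F_2\subset\cdots$ with dense union in $\mathcal{W}\otimes\mathbb{K}$ and a summable sequence $\varepsilon_n>0$, and inductively construct unitaries $w_n$ in $(\mathcal{W}\otimes\mathbb{K})^{\sim}$ and unitaries $u_n$ in $M(\mathcal{W}\otimes\mathbb{K})$ so that, writing $z_n:=w_nw_{n-1}\cdots w_1$, the automorphisms $\alpha$ and $\mathrm{Ad}(u_n)\circ\mathrm{Ad}(z_n)\circ\beta\circ\mathrm{Ad}(z_n)^{-1}$ agree to within $\varepsilon_n$ on $F_n$, while $\|[w_{n+1},x]\|<\varepsilon_{n+1}$ for $x$ in a sufficiently large finite set (and the analogous control on $z_n^{-1}$), so that $\gamma:=\lim_n\mathrm{Ad}(z_n)$ is a genuine automorphism and $u:=\lim_n u_n$ is a unitary in $M(\mathcal{W}\otimes\mathbb{K})$. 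In the inductive step the discrepancy between $\alpha$ and the current conjugate of $\beta$ on $F_{n+1}$ is, by the approximate innerness of $\alpha\circ\beta^{-1}$, implemented up to a small error by a unitary that nearly commutes with a large finite set; Lemma \ref{lem:stability} (whose proof rests on the Rohlin property together with Theorem \ref{thm:homotopy} and Corollary \ref{cor:u-lift}, exactly as in \cite[Theorem 1]{HO} with Theorem \ref{thm:homotopy} replacing \cite[Lemma 1]{HO}) then splits this correction into a coboundary $w_{n+1}\alpha(w_{n+1})^{*}$ to be absorbed into $\gamma$ and a residual unitary to be absorbed into $u_{n+1}$, without disturbing the approximations already arranged on $F_n$. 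Passing to the limit gives $\alpha=\mathrm{Ad}(u)\circ\gamma\circ\beta\circ\gamma^{-1}$.

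I expect the main obstacle to be the bookkeeping of this intertwining in the non-unital setting: one must work throughout with the multiplier algebra $M(\mathcal{W}\otimes\mathbb{K})$ and the strict topology, track how each application of Lemma \ref{lem:stability} shifts the finite sets and tolerances at all earlier stages, and verify \emph{simultaneously} that the conjugating sequence $\mathrm{Ad}(z_n)$ converges to an automorphism and that the cocycle unitaries $u_n$ converge. The genuinely C$^*$-analytic input---cohomology vanishing from the Rohlin property---is already supplied by Lemma \ref{lem:stability} and the theorem preceding it, so the remaining work is the delicate but essentially routine organization of the approximations, parallel to \cite[Theorem 1]{HO} and \cite[Section 3]{EK}.
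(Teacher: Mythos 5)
Your skeleton matches the paper's proof exactly: the only-if direction is the trace computation you give, and the if direction combines Theorem \ref{thm:Razak} (iii) (approximate unitary equivalence from $\lambda(\alpha)=\lambda(\beta)$), Theorem \ref{thm:rohlin-type-scale} (the Rohlin property), and the cohomology-vanishing Lemma \ref{lem:stability}, fed into an Evans--Kishimoto intertwining whose limit automorphism and strictly convergent multiplier unitary give the outer conjugacy. The two-sided estimates $\|y(u-v\alpha(v)^*)\|<\varepsilon$ and $\|(u-v\alpha(v)^*)y\|<\varepsilon$ in Lemma \ref{lem:stability} are indeed what make the accumulated unitaries a strict Cauchy sequence in $M(\mathcal{W}\otimes\mathbb{K})$, as you anticipate.

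The one substantive defect is that you set the induction up one-sidedly, always perturbing $\beta$ so that $\mathrm{Ad}(u_n)\circ\mathrm{Ad}(z_n)\circ\beta\circ\mathrm{Ad}(z_n)^{-1}$ approaches $\alpha$ on $F_n$. Run literally, this scheme cannot close the key estimate: to apply Lemma \ref{lem:stability} at stage $n+1$ you must check that the corrector unitary almost commutes with the finite set $G_n$ that the lemma produces, but $G_n$ depends on the automorphism to which the lemma is applied --- here the $n$-th perturbed copy of $\beta$ --- and so is only available \emph{after} the $n$-th approximating unitary has been chosen; that unitary therefore cannot have been required to be accurate on the preimage of $G_n$, and the almost-commutation hypothesis is unverifiable. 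The paper (following \cite{EK}) resolves this by alternating: in each round one of the two automorphisms is held fixed, the data $(G_k,\delta_k)$ of Lemma \ref{lem:stability} is extracted from the fixed one before either approximating unitary of that round is chosen, and both approximations are then arranged on a finite set containing the preimage of $G_k$, which is precisely what yields $\|[u_{k+1},a]\|<\delta_k$ for $a\in G_k$. The alternation is thus not optional bookkeeping but the step that makes the stability lemma applicable; the resulting limit identity $\mathrm{Ad}(w_0')\circ\theta\circ\alpha\circ\theta^{-1}=\mathrm{Ad}(w_1')\circ\gamma\circ\beta\circ\gamma^{-1}$ then rearranges to the outer conjugacy you want. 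Since you defer to \cite{EK} and \cite{HO} for the precise form, this is repairable, but as written the inductive step would fail.
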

\begin{proof}
The only if part is obvious. We will show the if part. 
Theorem \ref{thm:rohlin-type-scale} implies that $\alpha$ and $\beta$ have the Rohlin property. 
Since $\lambda (\alpha)= \lambda (\beta)$, $\alpha$ is approximately unitarily equivalent to $\beta$ by Theorem \ref{thm:Razak}. 
Therefore we obtain the conclusion by Lemma \ref{lem:stability} and the Bratteli-Elliott-Evans-Kishimoto intertwining argument \cite{EK} 
(see also \cite{I1}, \cite{Kis1}, \cite{M}, \cite{Nak} and \cite{Sa} for similar arguments). 
Indeed, let $\{x_n\}_{n\in\mathbb{N}}$ be a dense set in the unit ball of $\mathcal{W}\otimes\mathbb{K}$. By induction, we shall construct 
sequences of automorphisms $\{\alpha_{2n}\}_{n=0}^\infty$, $\{\beta_{2n+1}\}_{n=0}^\infty$ of $\mathcal{W}\otimes\mathbb{K}$ and sequences of 
unitaries $\{u_n\}_{n=0}^\infty$, $\{v_n\}_{n=0}^\infty$, $\{w_n\}_{n=0}^\infty$, $\{\widetilde{w}_n\}_{n=0}^\infty$ in $(\mathcal{W}\otimes\mathbb{K})^{\sim}$ as follows: 
Put $\alpha_{0}:=\alpha$, $\beta_{1}:=\beta$, and let $F_1:=\{x_1,x_1^* \}$, $F_1^{\prime}:=\{x_1,x_1^* \}$, $E_1:=\{1 \}$. 
Applying Lemma \ref{lem:stability} to $\beta_1$, $E_1$, $F_1^{\prime}$ and $1/2$, we obtain a finite subset $G_1\subset \mathcal{W}\otimes\mathbb{K}$ 
and $\delta_1>0$. Set 
$$
F_2:= \beta_1^{-1}(G_1)\cup F_1 \cup \{x_2, x_2^* \}.
$$
Since $\beta_1$ is approximately unitarily equivalent to $\alpha_0$, there exists a unitary element $u_0$ in $(\mathcal{W}\otimes\mathbb{K})^{\sim}$ such that 
$$
\| \beta_1 (a) -u_0 \alpha_0(a) u_0^* \| < \frac{\delta_1}{2} \eqno{(1)} 
$$
for any $a\in F_2$. Put $\alpha_{2}:= \mathrm{Ad} (u_0) \circ \alpha_{0}$, $v_0:=u_0$, and let $w_0:=u_0\alpha_{0}(v_0)v_0^*$, $\widetilde{w}_0:=w_0$. 
Set
$$
E_{2} := F_2\cup \mathrm{Ad} (v_0)(F_2)\cup \{\widetilde{w}_0\}, \quad F_2^{\prime}:= F_2\cup F_2\widetilde{w}_0^*. 
$$
Applying Lemma \ref{lem:stability} to $\alpha_2$, $E_2$, $F_2^{\prime}$ and $1/2^{2}$, we obtain a finite subset $G_2\subset \mathcal{W}\otimes\mathbb{K}$ 
and $\delta_2>0$. We may assume that $\delta_2 < \delta_1/2$. Set 
$$
F_3:= \alpha_2^{-1}(G_2)\cup F_2 \cup \{x_3, x_3^* \}.
$$
Since $\alpha_2$ is approximately unitarily equivalent to $\beta_1$, there exists a unitary element $u_1$ in $(\mathcal{W}\otimes\mathbb{K})^{\sim}$ such that 
$$
\| \alpha_2 (a) -u_1 \beta_1(a) u_1^* \| < \frac{\delta_2}{2} \eqno{(2)} 
$$
for any $a\in F_2$. Put $\beta_{3}:= \mathrm{Ad} (u_1)\circ \beta_1$. By $(1)$ and $(2)$, we have 
$$
\| [u_1,a] \| < \delta_1
$$
for any $a\in G_1$. Hence there exists a unitary element $v_1$ such that 
$$
\| [v_1,x] \| < \frac{1}{2}, \quad \| y(u_1- v_1\beta_1(v_1)^*) \| < \frac{1}{2}
$$
for any $x\in E_1$ and $y\in F_1^{\prime}$ by Lemma \ref{lem:stability}. Put $w_1:=u_1\beta_{1}(v_1)v_1^*$, $\widetilde{w}_1:=w_{1}$, and set 
$$
E_{3} := F_3\cup \mathrm{Ad} (v_1)(F_3)\cup \{\widetilde{w}_1\}, \quad F_3^{\prime}:= F_3\cup F_3\widetilde{w}_1^*.
$$
Applying Lemma \ref{lem:stability} to $\beta_3$, $E_3$, $F_3^{\prime}$ and $1/2^{3}$, we obtain a finite subset $G_3\subset \mathcal{W}\otimes\mathbb{K}$ 
and $\delta_3>0$. We may assume that $\delta_3 < \delta_2/2$. Set 
$$
F_4:= \beta_3^{-1}(G_3)\cup F_3 \cup \{x_4, x_4^* \}.
$$
Since $\beta_3$ is approximately unitarily equivalent to $\alpha_2$, there exists a unitary element $u_2$ in $(\mathcal{W}\otimes\mathbb{K})^{\sim}$ such that 
$$
\| \beta_3 (a) -u_2 \alpha_2(a) u_2^* \| < \frac{\delta_3}{2} \eqno{(3)} 
$$
for any $a\in F_3$. Put $\alpha_{4}:= \mathrm{Ad} (u_2)\circ \alpha_2$. By $(2)$ and $(3)$, we have 
$$
\| [u_2,a] \| < \delta_2
$$
for any $a\in G_2$. Hence there exists a unitary element $v_2$ such that 
$$
\| [v_2,x] \| < \frac{1}{2^2}, \quad \| y(u_2- v_2\alpha_2(v_2)^*) \| < \frac{1}{2^2}
$$
for any $x\in E_2$ and $y\in F_2^{\prime}$ by Lemma \ref{lem:stability}. 
Put $w_2:=u_2\alpha_{2}(v_2)v_2^*$, and let $\widetilde{w}_2:= w_2v_2\widetilde{w}_{0}v_2^*$. 
Set 
$$
E_{4} := F_4\cup \mathrm{Ad} (v_2 v_0)(F_4)\cup \{\widetilde{w}_2\}, \quad F_4^{\prime}:= F_4\cup F_4\widetilde{w}_2^*.
$$
Applying Lemma \ref{lem:stability} to $\alpha_4$, $E_4$, $F_4^{\prime}$ and $1/2^{4}$, we obtain a finite subset $G_4\subset \mathcal{W}\otimes\mathbb{K}$ 
and $\delta_4>0$. We may assume that $\delta_4 < \delta_3/2$. Set 
$$
F_5:= \alpha_4^{-1}(G_4)\cup F_4 \cup \{x_5, x_5^* \}.
$$
Since $\alpha_4$ is approximately unitarily equivalent to $\beta_3$, there exists a unitary element $u_3$ in $(\mathcal{W}\otimes\mathbb{K})^{\sim}$ such that 
$$
\| \alpha_4 (a) -u_3 \beta_3(a) u_3^* \| < \frac{\delta_4}{2} \eqno{(4)} 
$$
for any $a\in F_4$. Put $\beta_{5}:= \mathrm{Ad} (u_3)\circ \beta_3$. By $(3)$ and $(4)$, we have 
$$
\| [u_3,a] \| < \delta_3
$$
for any $a\in G_3$. Hence there exists a unitary element $v_3$ such that 
$$
\| [v_3,x] \| < \frac{1}{2^3}, \quad \| y(u_3- v_3\beta_3(v_3)^*) \| < \frac{1}{2^3}
$$
for any $x\in E_3$ and $y\in F_3^{\prime}$ by Lemma \ref{lem:stability}. 
Put $w_3:=u_3\beta_{3}(v_3)v_3^*$, and let $\widetilde{w}_3:= w_3v_3\widetilde{w}_{1}v_3^*$. 

Repeating this process, we obtain sequences $\{\alpha_{2n}\}_{n=0}^\infty$, $\{\beta_{2n+1}\}_{n=0}^\infty$, $\{u_n\}_{n=0}^\infty$, 
$\{v_n\}_{n=0}^\infty$, $\{w_n\}_{n=0}^\infty$ and $\{\widetilde{w}_n\}_{n=0}^\infty$ such that 
\begin{align*}
& (\mathrm{i})\; \alpha_{2n}= \mathrm{Ad}(u_{2n-2})\circ \alpha_{2n-2}, \quad \beta_{2n+1}= \mathrm{Ad}(u_{2n-1})\circ \beta_{2n-1}, \\
& (\mathrm{ii})\; w_{2n}= u_{2n}\alpha_{2n}(v_{2n})v_{2n}^*, \quad w_{2n+1}= u_{2n+1}\beta_{2n+1}(v_{2n+1})v_{2n+1}^*, \\ 
&  \quad\; \widetilde{w}_{n+1}=w_{n+1}v_{n+1}\widetilde{w}_{n-1}v_{n+1}^*, \\
& (\mathrm{iii})\; \|\beta_{2n-1} (x_i)- \alpha_{2n}(x_i) \| <\frac{\delta_{2n-1}}{2}, \quad 1\leq \forall i \leq 2n, \\
& (\mathrm{iv})\; \|\alpha_{2n} (x_i)- \beta_{2n+1}(x_i) \| <\frac{\delta_{2n}}{2}, \quad 1\leq \forall i \leq 2n+1, \\
& (\mathrm{v})\; \| [v_{2n}, x_i] \| < \frac{1}{2^{2n}}, \quad \| [v_{2n}, \mathrm{Ad} (v_{2n-2}v_{2n-4}\cdots v_{0})(x_i)] \| < \frac{1}{2^{2n}}, \\
&  \quad\;  \|[v_{2n}, \widetilde{w}_{2n-2}] \| < \frac{1}{2^{2n}},   \quad 1\leq \forall i \leq 2n, \\
& (\mathrm{vi})\; \| [v_{2n+1}, x_i] \| < \frac{1}{2^{2n+1}}, \quad \| [v_{2n+1}, \mathrm{Ad} (v_{2n-1}v_{2n-3}\cdots v_{1})(x_i)] \| < \frac{1}{2^{2n+1}}, \\
&  \quad\; \|[v_{2n+1}, \widetilde{w}_{2n-1}] \| < \frac{1}{2^{2n+1}}, \quad 1\leq \forall i \leq 2n+1, \\
& (\mathrm{vii})\;  \| x_i(u_{2n}- v_{2n}\alpha_{2n}(v_{2n})^*) \| < \frac{1}{2^{2n}},  \\
& \quad\; \| x_i^*\widetilde{w}_{2n-2}^*(u_{2n}- v_{2n}\alpha_{2n}(v_{2n})^*) \| < \frac{1}{2^{2n}}, \quad 1\leq \forall i \leq 2n,  \\
& (\mathrm{viii})\; \| x_i(u_{2n+1}- v_{2n+1}\beta_{2n+1}(v_{2n+1})^*) \| < \frac{1}{2^{2n+1}},  \\
& \quad\; \| x_i^*\widetilde{w}_{2n-1}^*(u_{2n+1}- v_{2n+1}\beta_{2n+1}(v_{2n+1})^*) \| < \frac{1}{2^{2n+1}}, \quad 1\leq \forall i \leq 2n+1,  
\end{align*}
for any $n\in\mathbb{N}$, where $\{\delta_n\}_{n\in\mathbb{N}}$ is a sequence of positive numbers such that $\delta_n< \delta_{n-1}/2$. 

For any $n\in\mathbb{N}$, define $\theta_{n}:=\mathrm{Ad}(v_{2n}v_{2n-2}\cdots v_{0})$ and $\gamma_{n}:=\mathrm{Ad}(v_{2n+1}v_{2n-1}\cdots v_{1})$. 
By (v), (vi) and the same proof as \cite[Theorem 3.5]{I1}, the point-norm limit maps 
$\theta = \lim_{n\to\infty} \theta_{n}$ and $\gamma= \lim_{n\to\infty} \gamma_n$ exist and define automorphisms on $\mathcal{W}\otimes\mathbb{K}$. 
 
For any $n\in\mathbb{N}$, (ii) and (vii) imply that  
$$
\| x_i(w_{2n}-1)\| < \frac{1}{2^{2n}}, \quad \| (w_{2n}-1)\widetilde{w}_{2n-2}x_i \| < \frac{1}{2^{2n}}
$$
for any $i=1,2....,2n$. By (ii) and (v), we have for any $n\in\mathbb{N}$, 
\begin{align*}
\| x_{i}  (\widetilde{w}_{2n}-\widetilde{w}_{2n-2}) \| 
& = \| x_i w_{2n}v_{2n}\widetilde{w}_{2n-2}v_{2n}^*- x_i \widetilde{w}_{2n-2}v_{2n}v_{2n}^*\| \\
& \leq \| x_i w_{2n}[v_{2n},\widetilde{w}_{2n-2}]\| + \| x_{i}(w_{2n}-1)\widetilde{w}_{2n-2}v_{2n} \| \\
& < \frac{1}{2^{2n-1}}
\end{align*}
and
\begin{align*}
\|  (\widetilde{w}_{2n}-\widetilde{w}_{2n-2})x_i \| 
& = \| w_{2n}v_{2n}\widetilde{w}_{2n-2}v_{2n}^*x_i- \widetilde{w}_{2n-2}v_{2n}v_{2n}^*x_i\| \\ 
& \leq \| w_{2n}[v_{2n}, \widetilde{w}_{2n-2}]v_{2n}^*x_i\| +\| (w_{2n}-1)\widetilde{w}_{2n-2}x_{i}\| \\
& < \frac{1}{2^{2n-1}} 
\end{align*}
for any $i=1,2....,2n$. Therefore $\{\widetilde{w}_{2n}\}_{n\in\mathbb{N}}$ is a strict Cauchy sequence of unitaries in $(\mathcal{W}\otimes\mathbb{K})^{\sim}$. 
Since $M(\mathcal{W}\otimes\mathbb{K})$ is strictly complete, there exists a unitary element $w_0^{\prime}$ in $M(\mathcal{W}\otimes\mathbb{K})$ such that 
$\{\widetilde{w}_{2n}\}_{n\in\mathbb{N}}$ converges strictly to $w_0^{\prime}$. In a similar way, we see that there exists a unitary element $w_1^{\prime}$ in 
$M(\mathcal{W}\otimes\mathbb{K})$ such that $\{\widetilde{w}_{2n+1}\}_{n\in\mathbb{N}}$ converges strictly to $w_1^{\prime}$. 

It can be easily checked that 
$$
\alpha_{2n+2}= \mathrm{Ad}(\widetilde{w}_{2n}) \circ \theta_{n}\circ \alpha \circ \theta_{n}^{-1}, \quad \beta_{2n+3}= \mathrm{Ad}(\widetilde{w}_{2n+1}) \circ \gamma_{n}\circ \beta \circ \gamma_{n}^{-1}
$$
for any $n\in\mathbb{N}$. It follows from (iv) that for any $n\in\mathbb{N}$, we have 
$$
\| \alpha_{2n+2}(x_i) - \beta_{2n+3}(x_i) \| < \frac{\delta_{2n+2}}{2}
$$
for any $i=1,2,...,2n+3$. 
Therefore we see that 
$$
\mathrm{Ad}(w_0^{\prime})\circ \theta \circ \alpha \circ \theta^{-1} (x) = \mathrm{Ad}(w_1^{\prime})\circ \gamma \circ \beta \circ \gamma^{-1} (x) 
$$
for any $x\in \mathcal{W}\otimes\mathbb{K}$ because $\{\widetilde{w}_{n}\}_{n\in\mathbb{N}}$ is a bounded sequence and $\lim_{n\to\infty}\delta_n=0$. 
\end{proof}

By Theorem \ref{thm:rohlin-type} and the same proof as above, we obtain the following theorem. 

\begin{thm}\label{thm:main-ii}
Let $\alpha$ and $\beta$ be automorphisms of $\mathcal{W}$. If $\alpha^m$ and $\beta^m$ are strongly outer for any $m\in\mathbb{Z}\setminus\{0\}$, then 
$\alpha$ and $\beta$ are outer conjugate. 
\end{thm}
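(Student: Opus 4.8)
The plan is to repeat, essentially verbatim, the proof of Theorem~\ref{thm:main-i}, with $\mathcal{W}\otimes\mathbb{K}$ everywhere replaced by $\mathcal{W}$. That proof used only two structural facts about $\alpha$ and $\beta$: that each has the Rohlin property, and that they are approximately unitarily equivalent. The first follows from Theorem~\ref{thm:rohlin-type}, since by hypothesis $\alpha^{m}$ and $\beta^{m}$ are strongly outer for every $m\in\mathbb{Z}\setminus\{0\}$. The second follows immediately from Theorem~\ref{thm:Razak}~(ii): the automorphism $\alpha\circ\beta^{-1}$ of $\mathcal{W}$ is approximately inner, i.e.\ $\alpha$ and $\beta$ are approximately unitarily equivalent. (Here, unlike in Theorem~\ref{thm:main-i}, there is no trace-scaling invariant to be matched, as $\mathcal{W}$ carries a unique tracial \emph{state} and every automorphism preserves it.)

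With these two facts in hand, I would invoke Lemma~\ref{lem:stability}, which is stated for $A\cong\mathcal{W}$ as well as for $\mathcal{W}\otimes\mathbb{K}$, and run the Bratteli--Elliott--Evans--Kishimoto intertwining exactly as in the proof of Theorem~\ref{thm:main-i}. Fixing a dense sequence $\{x_n\}_{n\in\mathbb{N}}$ in the unit ball of $\mathcal{W}$, I would build inductively automorphisms $\{\alpha_{2n}\}$, $\{\beta_{2n+1}\}$ of $\mathcal{W}$ and unitaries $\{u_n\},\{v_n\},\{w_n\},\{\widetilde w_n\}$ in $\mathcal{W}^{\sim}$ obeying the analogues of conditions (i)--(viii) of that proof: at each stage one uses approximate unitary equivalence to correct $\alpha_{2n}$ toward $\beta_{2n-1}$ (resp.\ $\beta_{2n+1}$ toward $\alpha_{2n}$) by an inner automorphism $\mathrm{Ad}(u_j)$, and then Lemma~\ref{lem:stability} to absorb $u_j$ by almost central unitaries $v_j$. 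Conditions (v)--(vi) then force $\theta_n:=\mathrm{Ad}(v_{2n}v_{2n-2}\cdots v_0)$ and $\gamma_n:=\mathrm{Ad}(v_{2n+1}v_{2n-1}\cdots v_1)$ to converge in the point-norm topology to automorphisms $\theta,\gamma$ of $\mathcal{W}$, while conditions (ii), (v), (vii) force $\{\widetilde w_{2n}\}$ and $\{\widetilde w_{2n+1}\}$ to be strict Cauchy sequences of unitaries, which converge strictly in the strictly complete multiplier algebra $M(\mathcal{W})$ to unitaries $w_0',w_1'$. Finally the identities $\alpha_{2n+2}=\mathrm{Ad}(\widetilde w_{2n})\circ\theta_n\circ\alpha\circ\theta_n^{-1}$ and $\beta_{2n+3}=\mathrm{Ad}(\widetilde w_{2n+1})\circ\gamma_n\circ\beta\circ\gamma_n^{-1}$, combined with (iv) and $\delta_n\to0$, yield
\[
\mathrm{Ad}(w_0')\circ\theta\circ\alpha\circ\theta^{-1}=\mathrm{Ad}(w_1')\circ\gamma\circ\beta\circ\gamma^{-1},
\]
so that $\alpha$ and $\beta$ are outer conjugate.

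Since this is a transcription of the argument already given, I do not expect a genuinely new obstacle. The one point worth flagging is conceptual rather than technical: in Theorem~\ref{thm:main-i} the Rohlin towers were produced from Connes' analysis of trace-scaling automorphisms of the AFD factor of type II$_{\infty}$ (through Lemma~\ref{lem:weak^rohlin} and Lemma~\ref{lem:cyclic}), whereas here they come from the strong outerness hypothesis, entering solely through Theorem~\ref{thm:rohlin-type}; everything downstream of ``the Rohlin property together with approximate unitary equivalence'' is formally identical. The only bookkeeping to redo is that the estimates (v)--(viii) hold with $\mathcal{W}$ in place of $\mathcal{W}\otimes\mathbb{K}$, which is immediate because Lemma~\ref{lem:stability} was already proved for $\mathcal{W}$ and because $M(\mathcal{W})$ is strictly complete.
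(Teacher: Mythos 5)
Your proposal is correct and is exactly the paper's argument: the paper proves this theorem by citing Theorem \ref{thm:rohlin-type} for the Rohlin property and then repeating the proof of Theorem \ref{thm:main-i} verbatim, with approximate unitary equivalence supplied by Theorem \ref{thm:Razak} (ii) and the intertwining run through Lemma \ref{lem:stability}, which is already stated for $A\cong\mathcal{W}$. No further comment is needed.
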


%
%

\section*{Acknowledgments}
The author would like to thank Hiroki Matui for many helpful discussions and valuable suggestions. 
He is also grateful to the people in University of M\"unster, where a part of this work was done, 
for their hospitality.

\end{document}